 \newtheorem{thm}{Theorem}[section]
 \newtheorem{coro}[thm]{Corollary}
 \newtheorem{lem}[thm]{Lemma}
 \newtheorem{prop}[thm]{Proposition}
 \theoremstyle{definition}
 \newtheorem{rem}[thm]{Remark}
 \numberwithin{equation}{section}
\def\dl{\delta}
\def\tl{\tilde}
\def\Dl{\Delta}
\def\gt{\gtrsim}
\def\nn{\nonumber}
\def\sq{\sqrt}
\def\eps{\epsilon}
\def\fr{\frac}
\def\al{\alpha}
\def\la{\langle}
\def\ra{\rangle}
\def\gtr{\gtrsim}
\def\les{\lesssim}
\def\lm{\lambda}
\def\Om{\Omega}
\def\om{\omega}
\def\l|{\left\|}
\def\r|{\right\|}
\newcommand{\beq}{\begin{eqnarray}}
\newcommand{\eeq}{\end{eqnarray}}
\newcommand{\beqno}{\begin{eqnarray*}}
\newcommand{\eeqno}{\end{eqnarray*}}
\newcommand{\be}{\begin{equation}}
\newcommand{\ee}{\end{equation}}
\newcommand{\beno}{\begin{equation*}}
\newcommand{\eeno}{\end{equation*}}
\newtheorem{theorem}{Theorem}[section]
\newtheorem{Lemma A.1}{Lemma A.1}
\theoremstyle{definition}
\theoremstyle{remark}
\begin{document}
\title[2d MHD]{Asymptotic stability of Couette flow in a strong uniform magnetic field for the Euler-MHD system}

\author{Weiren Zhao}
\address[W. Zhao]{Department of Mathematics, New York University Abu Dhabi, Saadiyat Island, P.O. Box 129188, Abu Dhabi, United Arab Emirates.}
\email{zjzjzwr@126.com, wz19@nyu.edu}

\author{Ruizhao Zi}

\address[R. Zi]{School of Mathematics and Statistics, and Key Laboratory of Nonlinear Analysis \& Applications (Ministry of Education), Central China Normal University, Wuhan,  430079,  P. R. China.}
\email{rzz@mail.ccnu.edu.cn}

\date{\today}
\maketitle
\begin{abstract}
In this paper, we prove the asymptotic stability of Couette flow in a strong uniform magnetic field for the Euler-MHD system, when the perturbations are in Gevrey-$\frac{1}{s}$, $(\frac12<s\leq 1)$ and of size smaller than the resistivity coefficient $\mu$. More precisely, we prove 
\begin{enumerate}
\item the $\mu^{-\frac13}$-amplification of the perturbed vorticity, namely, the size of the vorticity grows from $\|\omega_{\mathrm{in}}\|_{\mathcal{G}^{\lambda_{0}}}\lesssim \mu$ to $\|\omega_{\infty}\|_{\mathcal{G}^{\lambda'}}\lesssim \mu^{\frac23}$;
\item the polynomial decay of the perturbed current density, namely, $\left\|j_{\neq}\right\|_{L^2}\lesssim \frac{c_0 }{\langle t\rangle^2 }\min\left\{\mu^{-\fr13},\langle t \rangle\right\}$;
\item and the damping for the perturbed velocity and magnetic field, namely, 
\[
\left\|(u^1_{\neq},b^1_{\neq})\right\|_{L^2}\lesssim \frac{c_0\mu }{\langle t\rangle }\min\left\{\mu^{-\fr13},\langle t \rangle\right\}, \quad 
\left\|(u^2,b^2)\right\|_{L^2}\lesssim \frac{c_0\mu }{\langle t\rangle^2 }\min\left\{\mu^{-\fr13},\langle t \rangle\right\}. 
\]
\end{enumerate} 
We also confirm that the strong uniform magnetic field stabilizes the Euler-MHD system near Couette flow. 
\end{abstract}
\setcounter{tocdepth}{1}
{\small\tableofcontents}

\section{Introduction}
In this paper, we consider the 2D incompressible Euler-MHD equations with small resistivity coefficient $\mu>0$ on $\mathbb{T}\times\mathbb{R}$:
\be\label{MHD}
\begin{cases}
\partial_t\tl{u}+\tl{u}\cdot\nabla\tl{u}-\tl{b}\cdot\nabla\tl{b}=-\nabla\tl{p}\\
\partial_t\tl{b}+\tl{u}\cdot\nabla\tl{b}-\tl{b}\cdot\nabla\tl{u}=\mu\Dl\tl{b},\\
\mathrm{div}\, \tl{u}=\mathrm{div}\, \tl{b}=0.
\end{cases}
\ee
Here $\mathbb{T}$ is normalized to be $2\pi$-periodic. The vector-value functions $\tl{u}$ and $\tl{b}$ are the velocity and the magnetic field respectively. 

A fundamental problem in plasma physics is understanding the long-time asymptotic stability of an equilibrium solution $(u_s, b_s)$. For the fluid equation (taking $\tl{b}=0$ in \eqref{MHD}),  the Couette flow $u_s=(y,0)$ is perhaps the simplest nontrivial stationary solution, which is asymptotically stable, proved  in \cite{BM15}. However, the magnetic field may destabilize the system even with shear flows (including Couette flow) that are asymptotically stable \cite{ChenMorrison1991}. For example, in \cite{HirotaTatsunoYoshida2005}, the authors consider the linearized ideal MHD equation around Couette flow $(k_fy, 0)$ and linear magnetic field $(k_my,0)$, and predicted that if $|k_f|<|k_m|$, then the magnetic island appears in the final state, namely the linear asymptotic stability fails, and if $|k_m|<|k_f|$, then linear damping holds and the magnetic island will be destructed. In the first case, the equilibrium is linear asymptotically unstable which was proved in \cite{ZhaiZhangZhao2021} mathematically rigorously, while in the second case, the equilibrium is the linear asymptotically stable, see \cite{RenWeiZhang2021} for the rigorously mathematical proof. The presence of the magnetic field has a dual role in the stability of the shear flow. The magnetic field exerts tension on the fluid, which usually acts as a restoring force on a disturbance. Also, it is observed in physical experiments and numerical simulations that a strong background magnetic field is a stabilizing effect on the electrically conducting fluids (see, e.g., \cite{AlemanyMoreauSulemFrisch1979, Alexakis2011, GalletBerhanuMordant2009}). 
In this paper, we study the asymptotic stability of the Couette flow in some strong uniform magnetic field, namely, we are interested in the following equilibrium:
\be\label{eq:equilibrium}
u_s=(y,0)^\top, \quad b_s=(\al, 0)^\top
\ee
with $|\al|$ large enough. 

It is natural to introduce the perturbations $u$ and $b$ defined by $\tl{u}=u+u_s$ and $\tl{b}=b+b_s.$
Then the equations of $(u, b)$ take the following form:
\be\label{pMHD}
\begin{cases}
\partial_t{u}+y\partial_xu+\left(\begin{array}{c}u^2\\0\end{array}\right)-\al\partial_xb+{u}\cdot\nabla{u}-{b}\cdot\nabla{b}=-\nabla{p},\\
\partial_t{b}+y\partial_xb-\left(\begin{array}{c}b^2\\0\end{array}\right)-\al\partial_xu+{u}\cdot\nabla{b}-{b}\cdot\nabla{u}=\mu\Dl{b},\\
\mathrm{div}{u}=\mathrm{div}{b}=0,
\end{cases}
\ee

We also introduce the system of the vorticity $\om=\partial_xu^2-\partial_yu^1$, current density $j=\partial_xb^2-\partial_yb^1$, the stream function $\psi=\Dl^{-1}\om$, and the magnetic potential function $\phi=\Dl^{-1}j$: 
\be\label{omj}
\begin{cases}
\partial_t\om+y\partial_x\om-\al\partial_xj=\mathrm{NL}[\om],\\
\partial_tj+y\partial_xj-\al\partial_x\om-\mu\Dl j+2\partial_{xy}\phi=\mathrm{NL}[j],\\
u=(-\partial_y\psi,\partial_x\psi)^\top=\nabla^\bot\psi,\quad b=(-\partial_y\phi,\partial_x\phi)^\top=\nabla^\bot\phi\\
\Dl\psi=\om,\quad \Dl\phi=j
\end{cases}
\ee
where
\beqno
\mathrm{NL}[\om]&=&-u\cdot\nabla\om+b\cdot\nabla j,\\
\mathrm{NL}[j]&=&-u\cdot\nabla j+b\cdot\nabla\om-2\partial_{xy}\phi(-\om+2\partial_{xx}\psi)+2\partial_{xy}\psi(-j+2\partial_{xx}\phi).
\eeqno
In this paper, we study the system \eqref{omj} and prove the global asymptotic stability. Moreover, we confirm the stabilizing effect of the background uniform magnetic field. Our main result states as follows:
\begin{theorem}\label{thm}
For all  $1\geq s>1/2$ and $\lambda_{0}>\lm'>0$, there exist a universal constant $\al_0>0$ and  a sufficiently small constant $c_0=c_0(\lambda_{0},\lambda',s, \al)>0$,  such that if $|\al|\ge \al_0$ and the initial data $(u_{\mathrm{in}},b_{\mathrm{in}},\omega_{\mathrm{in}},j_{\mathrm{in}})$ satisfies $\int_{\mathbb{R}}|y\omega_{\mathrm{in}}(x,y)|dy+\int_{\mathbb{R}}|yj_{\mathrm{in}}(x,y)|dy<\infty$, and 
\begin{align}\label{initial}
\|(u_{\mathrm{in}},b_{\mathrm{in}})\|_{L^2}^2+\sum_{k\in\mathbb{Z}}\int (|\widehat{\omega_{\mathrm{in}}}|^2+|\widehat{j_{\mathrm{in}}}|^2)e^{2\lambda_{0}|k,\eta|^{s}}d\eta \le \left(c_0\mu\right)^2,
\end{align}
then the solution $(u,b,\omega,j)$ to \eqref{omj} satisfies 
\begin{itemize}
\item (Scattering) There exists $\om_{\infty}\in \mathcal{G}^{\lambda'}$  such that 
\be\label{om-infty}
\|\om_{\infty}\|_{\mathcal{G}^{\lambda'}}\lesssim c_0\mu^{\frac23}, 
\ee
and for all $t\gtr \mu^{-\fr13}$, 
\begin{equation}\label{eq: Scattering} 
\left\|\omega(t,x+ty+\Xi(t,y),y)-\om_{\infty}(x,y)\right\|_{\mathcal{G}^{\lambda'}}\lesssim \frac{c_0\mu^{-\fr{1}{3}}}{\langle t\rangle},
\end{equation}
where $\Xi(t,y)$ is given explicitly by 
\begin{equation}\label{Xi}
\Xi(t,y)=\frac{1}{2\pi}\int_0^t\int_{\mathbb{T}}u^1(\tau,x,y)dxd\tau=u_{\infty}(y)t+O(c_0^2\mu^{\fr34}),
\end{equation}
with $u_{\infty}=-\fr{1}{2\pi}\partial_y\int_{\mathbb{T}}\Dl^{-1}\om_{\infty}(x,y)dx$, and 
\be\label{-u_infty}
\left\|\frac{1}{2\pi}\int_{\mathbb{T}}u^1(t,x,y)dx-u_\infty\right\|_{\mathcal{G}^{\lm'}}\les (c_0\mu)^2\fr{\mu^{-\fr23}}{\la t\ra^2}.
\ee
\item (Decay) It holds that
\begin{align}
\label{damping-u}&\left\|u^1-\frac{1}{2\pi}\int_{\mathbb{T}}u^1(t,x,y)dx\right\|_{L_{x,y}^2}
+\langle t\rangle\left\|u^2\right\|_{L_{x,y}^2}\les \frac{c_0\mu }{\langle t\rangle }\min\left\{\mu^{-\fr13},\la t \ra\right\},\\
\label{damping-b}&\left\|b^1-\frac{1}{2\pi}\int_{\mathbb{T}}b^1(t,x,y)dx\right\|_{L_{x,y}^2}
+\langle t\rangle\left\|b^2\right\|_{L_{x,y}^2}\les \min\left\{ \frac{c_0\mu }{\langle t\rangle }, \frac{c_0}{\langle t\rangle^3 }\right\}\min\left\{\mu^{-\fr13},\la t \ra\right\},\\
\label{damping-j}&\left\|j-\frac{1}{2\pi}\int_{\mathbb{T}}j(t,x,y)dx\right\|_{L_{x,y}^2}\les \min\left\{\frac{c_0 }{\langle t\rangle^2 }, c_0\mu\right\}\min\left\{\mu^{-\fr13},\la t \ra\right\}.
\end{align}
\end{itemize}

\end{theorem}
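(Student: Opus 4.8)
The plan is to set up a nonlinear energy estimate in carefully chosen coordinates and close a bootstrap argument, following the Gevrey-regularity framework pioneered for Couette flow in the Navier--Stokes/Euler setting and its MHD descendants.

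\textbf{Change of variables and the linearized dynamics.} First I would pass to the moving frame adapted to the Couette shear: set $z=x-ty$ (and, at the nonlinear stage, the fully nonlinear characteristic $z = x - ty - \Xi(t,y)$ encoded in the statement). In these coordinates $\partial_x$ becomes $\partial_z$ and the transport term $y\partial_x$ is absorbed. On the linear level the $(\omega,j)$ system becomes, mode by mode in the $(k,\eta)$ Fourier variables, a coupled oscillator driven by the Alfv\'en frequency $\alpha k$ with the resistive damping $\mu(k^2+(\eta-kt)^2)$ acting only on $j$. The key structural input is that the large parameter $\alpha$ makes the Alfv\'en oscillation fast compared with the inviscid time scale, which is what produces the extra decay; one should diagonalize the $2\times2$ linear part (Alfv\'en variables $\omega \pm j$, or an adapted version) and identify the dissipation time $\mu^{-1/3}$ as the time at which the accumulated resistive damping $\int_0^t \mu (k(t-s))^2\,ds \sim \mu k^2 t^3$ becomes $O(1)$. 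This explains both the $\mu^{-1/3}$ cutoffs and the $\mu^{2/3}$ saturation of the vorticity in every estimate of the theorem.

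\textbf{Fourier multiplier weights and the main energy.} Next I would build time-dependent Fourier multipliers of the form $A(t,k,\eta)=e^{\lambda(t)|k,\eta|^s} \mathfrak{m}(t,k,\eta)$, where $\lambda(t)$ decreases from $\lambda_0$ to $\lambda'$ to pay for the nonlinear loss of Gevrey regularity, and $\mathfrak{m}$ is a product of the ``inviscid damping'' weight (the $\partial_t|k,\eta|\sim$ ``ghost'' multiplier controlling the $\eta = kt$ resonance, giving the CK/commutator terms) together with an MHD-specific weight tracking the $\langle t\rangle$ and $\mu$ gains. The main quantity is then $\|A W\|_{L^2}^2$ for $W=(\omega,j)$ (plus the zero-mode energies $\|(u^1,b^1)_{=0}\|$ controlled separately, since they drive $\Xi$). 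Differentiating in time produces: the good Cauchy--Kovalevskaya term $-\dot\lambda \||k,\eta|^{s/2}AW\|^2$, the resistive dissipation on $j$, a collection of commutator (``reaction'' and ``transport'') terms from the nonlinearities $\mathrm{NL}[\omega],\mathrm{NL}[j]$, and the linear coupling terms $\alpha k$, $2\partial_{xy}\phi$. The linear terms must be handled by the diagonalization above so that $\alpha k$ contributes an oscillation that is integrated by parts in time against the weight, converting largeness of $\alpha$ into smallness.

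\textbf{Bootstrap and the main obstacle.} Then I would run a continuity/bootstrap argument: assume the decay and size bounds in \eqref{om-infty}--\eqref{damping-j} with doubled constants on $[0,T]$, feed them into the nonlinear terms, and show the energy inequality closes with the original constants, using that the data is of size $c_0\mu$ and $c_0$ is small depending on $\alpha$. Scattering of $\omega$ then follows because $\partial_t(AW)$ is integrable in time once the decay is in hand, giving $\omega_\infty$; the explicit form of $\Xi$ and the bound \eqref{-u_infty} come from integrating the zero-mode equation for $u^1$. The decay statements \eqref{damping-u}--\eqref{damping-j} are read off from the inviscid-damping/elliptic estimates ($\|u^2\|\lesssim \langle t\rangle^{-1}\|\omega_{\neq}\|$ after the coordinate change, and the analogous ones with the extra $\langle t\rangle^{-1}$ from the $j$-to-$b$ and $b^2$ relations, combined with the $\min\{\mu^{-1/3},\langle t\rangle\}$ factor from the growth phase). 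The main obstacle I anticipate is the interaction between the three competing time scales — the inviscid echo/resonance mechanism at $\eta\sim kt$, the fast Alfv\'en oscillation of size $\alpha k$, and the slow resistive dissipation activating at $t\sim\mu^{-1/3}$ — inside the commutator estimates: one must show that the oscillation genuinely suppresses the worst (``reaction'') nonlinear terms before the dissipation turns on, which requires a delicate time-integration-by-parts against the multiplier $A$ and a careful frequency decomposition (high-low, resonant/non-resonant in $(k,\eta)$ vs.\ $(k',\eta')$) so that no term loses more than the budget allotted by $\dot\lambda$ and the $\langle t\rangle$-weights. Establishing that the nonlinear terms $\mathrm{NL}[j]$ involving $\partial_{xx}\psi$ and $\partial_{xx}\phi$ (which are \emph{not} small in the same way) do not spoil the $j$-decay is the second delicate point, and is where the smallness threshold $c_0\mu$ (rather than merely $c_0$) is essential.
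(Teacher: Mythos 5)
Your high-level scaffolding (moving frame, time-decaying Gevrey radius $\lambda(t)$, CK multiplier controlling the $\eta\sim kt$ resonance, bootstrap closed by the smallness $c_0\mu$) matches the paper's, and you correctly identify the three competing time scales. However, there are two genuine gaps, and one place where your proposed mechanism is the one the paper explicitly rules out.

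First and most seriously, you never introduce the \emph{good unknown} $f=\mu\Delta j+(\alpha+b_0^1)\partial_x\omega$ (in the adapted coordinates, $F=\mu\Delta_t J+(\alpha+B_0^1)\partial_X\Omega$). The polynomial decay $\|j_{\neq}\|\lesssim \mu^{-1}\langle t\rangle^{-2}$ in \eqref{damping-j} does not follow from inviscid-damping/elliptic estimates alone, contrary to what you claim: the lossy elliptic estimate gives $\langle t\rangle^{-2}$ decay for $\phi_{\neq}$, not for $j_{\neq}=\Delta\phi_{\neq}$. The paper obtains it via the lossy elliptic estimate for $\Delta_L J_\neq=\mu^{-1}(F_\neq-(\alpha+B_0^1)\partial_X\Omega)-\cdots$, i.e.\ by propagating a uniform bound on $F$ (with its own energy estimate) and then inverting $\Delta_L$ at cost $\langle t\rangle^{-2}$. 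This decay of $J_{\neq}$ is in turn essential to absorb the worst dissipation error from the nonlinear coordinate change — the high-low interaction $(Y'')_{\mathrm{high}}\partial_Y^L(J_{\neq})_{\mathrm{low}}$ — where one trades the derivative loss on $Y''$ for time decay of $J_\neq$. Without $F$, that term loses a derivative that cannot be recovered (there is no dissipation on the zero mode of $u$), and the bootstrap cannot close.

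Second, you propose to diagonalize the linear $2\times 2$ system via ``Alfv\'en variables $\omega\pm j$''. The paper argues explicitly that Els\"asser/Alfv\'en variables are \emph{not} well adapted here precisely because the resistive dissipation acts only on the $j$-equation and the vorticity equation is pure transport; the diagonalized variables mix the dissipative and non-dissipative components. Instead the paper builds a hypocoercivity-type Lyapunov functional
\[
\|\mathcal{M}\Omega^*\|_{H^\sigma}^2+\|\mathcal{M}J^*\|_{H^\sigma}^2-\tfrac{2}{\alpha}\Bigl\langle\tfrac{\partial_t(m^{1/2})}{m^{1/2}}\partial_X^{-1}\langle\nabla\rangle^\sigma\mathcal{M}\Omega^*,\,\langle\nabla\rangle^\sigma\mathcal{M}J^*\Bigr\rangle,
\]
whose time derivative cancels the growth produced on $\Omega$ by the stretch-weight $m$. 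Your ``integrate the $\alpha k$ oscillation by parts in time'' is in the same spirit as this cross-correction, but the specific diagonalization you name is the one route that fails in this asymmetric-diffusion setting, and you would need to replace it with some compensating-function/cross-term device to make the large-$\alpha$ mechanism rigorous. Finally, the weight you call ``MHD-specific'' must be the explicit multiplier $m$ capped at $\min\{\mu^{-2/3},\langle t\rangle^2\}$, which is what produces the $\min\{\mu^{-1/3},\langle t\rangle\}$ factors in every decay estimate; without naming and bounding it, the $\mu^{2/3}$ saturation of $\omega_\infty$ cannot be extracted.
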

A few remarks are in order.
\begin{rem}\label{Rmk:1}
The factor $\min\left\{\mu^{-\fr13},\la t \ra\right\}$ on the right-hand side of \eqref{damping-u}--\eqref{damping-j} stemming from the use of the multiplier $m$ (see \eqref{m1}) exhibits the balance between the growth driven by the linear stretch term $2\partial_{xy}\phi$ in \eqref{omj} and the magnetic diffusion.
\end{rem}

\begin{rem}\label{Rmk:2}
The Gevrey-$2$ regularity seems optimal. In \cite{KZ23}, the authors study a linear toy model, which has a similar transient growth as that in the nonlinear system of this paper. They obtain the optimality of the required regularity for their toy model. 
\end{rem}

\begin{rem}\label{Rmk:3}
At the linear level, the strong background magnetic field stabilizes the system in the following sense: 
\begin{itemize}
\item If $\al=0$, then the transient growth of the non-zero mode of the perturbed current density will lead to the $\mu^{-\frac{2}{3}}$-amplification, see \eqref{eq: J-alpha=0}. 
\item If $|\al|$ is large enough, then the transient growth of the non-zero mode of the perturbed current density will lead to the $\mu^{-\frac{1}{3}}$-amplification. 
\end{itemize}

\end{rem}

\begin{rem}\label{Rmk:4}
The $\frac{1}{\langle t\rangle^2}$ decay of the non-zero mode of the perturbed current density is optimal. It can be easily deduced from the definition of the good unknown \eqref{def-f}. 
\end{rem}
\subsection{Known results and the stabilizing mechanism}
In this section, we present some known results related to our results and discuss the stabilization mechanisms. 
\subsubsection{Stability of shear flows} The study of the stability and instability of laminar shear flows goes back to the early experiments of Reynolds \cite{Rey83}. In \cite{Orr07}, Orr predicted the algebraic decay of the velocity for the inviscid fluids. This phenomenon is referred to as the {\em inviscid damping}, which is the hydrodynamic analog of Landau damping in the Vlasov equations \cite{Lan46, MV11}. It is challenging to prove the inviscid damping for general shear flows even at a linear level due to the presence of the nonlocal term.  We refer to \cite{Zi17, WZZ18, Jia20-1, Jia20-2} for the linear inviscid damping results for general {\em monotone} shear flows. The results for {\em non-monotone shear flows} such as the Poiseuille flow and the Kolmogorov flow can be found in \cite{WZZ19, WZZ20, IIJ23}. See \cite{BCV19, LMZZ22} and references therein for more related depletion results. At the nonlinear level, the scenario is much more complicated even for the Couette flow. In \cite{LinZeng11}, Lin and Zeng proved that the nonlinear inviscid damping fails for perturbations
around the Couette flow in $H^s$ with $s<\fr32$. The nonlinear inviscid damping was confirmed by Bedrossian and Masmoudi \cite{BM15} with perturbations in Gevrey class $m$ with $1\leq m<2$ on the domain $\mathbb{T}\times\mathbb{R}$. Then Deng and Masmoudi showed that the instability occurs when the perturbations are in Gevrey class $m$ with $m>2$. If the initial perturbation is compactly supported in the periodic finite channel  $\mathbb{T}\times[0,1]$, the nonlinear stability of the Couette flow was obtained by Ionescu and Jia \cite{IJ20-1}. Furthermore, one can find the nonlinear inviscid damping results for stable monotone shear flows in \cite{IJ20-2} and  \cite{MZ20}. Recently, In \cite{CWZZ23, Zhao2023}, the authors prove the nonlinear stability of shear flows for 2D  inhomogeneous ideal fluids.

For the viscous fluids around the shear flows, another stability mechanism known
as {\em enhanced dissipation} should be taken into consideration. This dynamic phenomenon is manifested as an acceleration of decay due to the transmissions of enstrophy to very high frequencies via advection.
It is worth pointing out that the enhanced dissipation is closely related to the stability threshold problem. That is, {\em how small should initial perturbations be in terms of the viscous coefficient to ensure nonlinear stability}? 
 For the 2D  the domain $\mathbb{T}\times\mathbb{R}$, a series of results can be found in \cite{BedrossianMasmoudiVicol2016, BVW18, LiMasmoudiZhao2022, MZ19}. For the periodic finite channels, a delicate resolvent estimate method was developed in \cite{CLWZ20, CWZ20}. We also refer to \cite{BGM17, BGM20, BGM22,  WZ21} for the stability threshold of the 3D Couette flow.

\subsubsection{Stabilizing effect of the uniform magnetic field}
Let us first review some global stability results on MHD equations in the absence of shear flows. In 1988, Bardos et al. \cite{BardosSulemSulem1988} studied the behavior of an ideal conducting fluid subject to a strong magnetic field. They proved that for a strong background magnetic field and small enough initial displacement, the solution to the ideal MHD equations remains smooth for all time and that the dynamics may reduce to linear Alfv\'en waves. The nonlinear stability of Alfv\'en waves even holds for viscous MHD equations (with nonzero viscosity and resistivity coefficient), see \cite{CaiLei2018, HeXuYu2018, WeiZhang2017}.  We also refer to \cite{AZ17, CMRR16, FMRR14, DZ18, RWXZ14, Zhang16} for the well-posedness and large time behavior of solutions to the non-resistive Navier-Stokes-MHD equations ($\nu>0, \mu=0$). For the Euler-MHD system with the nonzero resistivity coefficient, the global well-posedness results are much fewer, see \cite{WZ20, ZZ18}.

In the presence of the shear flows in the magnetic fields, the dynamics of the fluids become more complicated as mentioned above. For the 2D inhomogeneous equilibrium $u_s=(u(y),0)^\top$, and $b_s=(b(y),0)^\top$, there are few rigorous mathematical results on the (in)stability of $(u_s, b_s)$. We refer to \cite{LMZZ22, RenWeiZhang2021, RZ17, ZhaiZhangZhao2021} for long time behaviors (including inviscid damping, the generation/destruction of the magnetic island, and the depletion phenomenon) of solutions to the linearized ideal MHD equations around $(u_s, b_s)$. 
To the best of our knowledge, so far the only nonlinear stability result was given by Liss \cite{Liss20} for the Couette flow $(y,0,0)^\top$ in a strong and suitably oriented background magnetic field on $\mathbb{T}\times\mathbb{R}\times\mathbb{T}$.

\subsection{Ideas of the proof}
In this section, we highlight the main difficulties and new ideas in this paper.

First, from Remark \ref{Rmk:3} we can see that a sufficiently strong magnetic field has stabilizing effect on the Couette flow. The usual way of obtaining such a good stabilizing effect is to introduce new unknowns $z_{\pm}=\tilde{u}\pm \tilde{b}$ which is the so-called Els\"asser variables. To take advantage of these good variables, one may require some `symmetry' of both equations, for example, both momentum equation and magnetic equation have the same diffusion \cite{CaiLei2018, HeXuYu2018, Liss20} or no diffusion \cite{BardosSulemSulem1988}, or a slight difference between the viscosity and the resistivity coefficient \cite{WeiZhang2017}. However, in this paper, there is a significant difference between the momentum equation and the magnetic equation. Therefore, we do not use the Els\"asser variables. Instead, we carefully study the linearized equation \eqref{L-Omj} and introduce an energy functional of the form:
\be\label{linear-functional}
\left\|\mathcal{M}{\Om}^*\right\|^2_{H^\sigma}+\left\|\mathcal{M}{J}^*\right\|^2_{H^\sigma}-\fr{2}{\al}\left\la\fr{\partial_t(m^\fr12)}{m^\fr12}\partial_X^{-1}\la\nabla\ra^\sigma\mathcal{M}{\Om}^*, \la\nabla\ra^\sigma\mathcal{M}J^*\right\ra, \quad \sigma\ge0.
\ee
The definitions of the multipliers $m$ and $\mathcal{M}$ can be found in Section \ref{sec-linear}.
A similar approach can also be found in the study of the stability problem for the compressible Navier-Stokes equations \cite{AntonelliDolceMarcati2021, ZengZhangZi2022}, and the Navier-Stokes(Euler)-Boussinesq equation \cite{BBCD21, ZhaiZhao22}. This Lyapunov-type functional fails to work when there is no background magnetic field ($\al=0$). Moreover, the linear energy functional \eqref{linear-functional} is well adapted to the nonlinear system, so we also use the same time-dependent multiplier $\mathcal{M}$ constructed in the linear estimate to capture the growth from linear terms in the nonlinear system. See \eqref{En} for the definition of the Lyapunov-type functional for the nonlinear system \eqref{OMJ}.

To tackle the nonlinear problem \eqref{omj}, we make a suitable nonlinear change of variables. The basic idea of this change of variables is to get rid of the non-decaying zero mode of the velocity. There are two different types of change of coordinates: one is the inviscid one for the Euler equation, see \cite{BM15}, and the other one is the viscous one for the Navier-Stokes equation, see \cite{BedrossianMasmoudiVicol2016, LiMasmoudiZhao2022}. The coordinate systems were chosen in a very natural way in both cases. Nevertheless, in this paper, the diffusion term only appears in the current density equation, and the vorticity equation is a transport equation. 
We use the inviscid change of coordinates \eqref{nl-coor} in our problem as that in \cite{BM15}, which also seems necessary. Then we arrive at a new system \eqref{OMJ}. 
We would like to emphasize that the change of coordinates brings us a new problematic term $(Y'')_{\text{high}}\partial^{L}_{Y}(J_{\neq})_{\text{low}}$ which is the high-low interaction in the dissipation error term \eqref{D-HL}. A similar problem also appears in the Navier-Stokes-Boussinesq problem \cite{MasmoudiSaid-HouariZhao2022}, the authors there use the dissipation effect of the zero mode of the velocity to absorb the derivative loss in $Y''$. However, in our problem, there is no dissipation effect for the zero mode of the velocity. The new idea is to use time decay to absorb the derivative loss, see \eqref{e-ReD} for more details. This requires a better understanding of the long-time behavior of the non-zero modes of the current density $J_{\neq}$. 
The key estimate is to get $\mu^{-1}\langle t\rangle^{-2}$ decay of the current density $J_{\neq}$. 
It is based on an observation of a new {\em good unknown} 
\be\label{def-f}
f=\mu\Dl j+(\al+b_0^1)\partial_x\om.
\ee
The good unknown $f$ can be regarded as the cancellation of the growth between $(\alpha+b_0^1)\partial_{x}\omega_{\neq}$ and $\mu\Delta j_{\neq}$. A simple heuristic behind it is that one may think $j_{\neq}\to 0$ and $\partial_tj_{\neq}\to 0$ as $t\to \infty$, then from \eqref{omj}, one may expect $f$ is under control. In fact,
from \eqref{omj} and the equation of $b_0^1$ (see second equation of \eqref{ub01}), one derives the equation of $f$ as follows:
\beq\label{e-f}
\partial_tf+y\partial_xf-\mu\Dl f+4\mu\partial_{xy}j-\al^2\partial_{xx}j=\mathrm{NL}[f],
\eeq
\beq\label{NLf}
\nn\mathrm{NL}[f]&=&-u\cdot\nabla f+\al b^1_0\partial_{xx}j+\left[\mu\partial_{yy}b^1_0-\left(u_\ne\cdot\nabla b^1_\ne\right)_0+\left(b_\ne\cdot\nabla u^1_\ne\right)_0\right]\partial_x\om\\
\nn&&-[b_0^1, u_\ne\cdot\nabla]\partial_x\om-(\al+b_0^1)\partial_xu\cdot\nabla\om+(\al+b_0^1)\partial_x\left(b\cdot\nabla j\right)\\
\nn&&-\mu\nabla^\bot\om\cdot \nabla j-2\mu\partial_xu\cdot\nabla\partial_{x}j-2\mu\partial_yu\cdot\nabla\partial_{y}j+\mu\Dl(\nabla^\bot\phi_\ne\cdot\nabla\om)\\
&&-2\mu\Dl\left(\partial_{xy}\phi(-\om+2\partial_{xx}\psi)\right)+2\mu\Dl\left(\partial_{xy}\psi(-j+2\partial_{xx}\phi)\right).
\eeq
Under the change of coordinates \eqref{nl-coor}, the good unknown $f$ can be rewritten as 
\beno
F=\mu\Delta_t J+(\alpha+B_0^1)\partial_{X}\Omega.
\eeno
 After proving the boundedness of the good unknown, we can obtain the polynomial decay of $J_{\ne}$ by using a standard loss elliptic estimate. 

To capture the growth of the full system, we use the multiplier $\mathcal{M}$ to control the growth from linear terms and use the well-designed time-depend multiplier $\mathcal{J}$ from \cite{BM15} to control the growth from nonlinear interactions. We will use some basic properties of $\mathcal{J}$ and some similar estimates in \cite{BM15} as a black box in the proof to reduce the length of our paper. Even so, different from the Euler equation investigated in \cite{BM15}, the use of the nonlinear coordinate transform \eqref{nl-coor} in this paper cannot get rid of the zero mode $b_0^1$ of the magnetic field. Accordingly,  we give a new commutator estimate on $\mathcal{J}$ in Lemma \ref{lem-com-J}  to deal with the interactions between the zero mode $b_0^1$ and the non-zero modes $\partial_x\om$ and $\partial_xb$ (see Section \ref{sec-zero-mode}). In addition, to adapt  the the linear multiplier $\mathcal{M}$ to the nonlinear problem, the delicate commutator estimates on $\mathcal{M}$   (see Lemmas \ref{lem-com-sqm}, \ref{lem-com-mu} and Corollary \ref{coro-com-1})are inevitable in the treatment of the transport nonlinearities in Section \ref{sec-transport}.

\subsection{Notations and conventions}
A convention we generally use is to denote the discrete $x$
(or $X$) frequencies as subscripts. By convention, we always use Greek letters such as $\eta$ and $\xi$ to denote frequencies in the $y$ or $Y$ direction and lowercase Latin characters commonly used
as indices such as $k, l$ to denote frequencies in the $x$ or $X$ direction (which are discrete). Another convention we use is to denote $M, N$ as dyadic integers $M, N\in\mathbb{D}$ where
\beno
\mathbb{D}=\left\{\fr12, 1, 2, \cdots, 2^k, \cdots \right\}.
\eeno
This will be used when defining the Littlewood-Paley decomposition and paraproducts, see Section \ref{subsec-LP} in the Appendix. A few notations are listed below:
\begin{enumerate}
\item We use the notation $f\les g$ when there exists a constant $C>0$ independent of  the parameters of interest such that $f\le C g$ (the  notation $g\gt f$ is defined analogously).  Similarly, we
use the notation $f\approx g$  when there exists $C>0$  such that $C^{-1}g\le f\le Cg$.

\item The Fourier transform $\hat{f}(k,\eta)$ of a function $f(x,y)$ is defined by
\beno
\hat{f}(k,\eta)=\fr{1}{2\pi}\int_\mathbb{T}\int_{\mathbb{R}}f(x,y)e^{-i(kx+\eta y)}dxdy.
\eeno
Then 
$$
f(x,y)=\sum_{k\in\mathbb{Z}}\int_\mathbb{R}\hat{f}(k,\eta)e^{i\eta y}d\eta e^{ikx}.
$$
We also use $\mathcal{F}[f](k,\eta)$ to denote the Fourier transform of $f$. Given a function $m\in L^\infty$, we define the Fourier multiplier $m(\nabla) f$ by
\[
\mathcal{F}[m(\nabla)f](k,\eta)=m\big((ik,i\eta)\big)\hat{f}(k,\eta).
\]

\item For a vector $x=(x_1, x_2)$, we use $|x|$ to denote the $\ell^1$ norm of $x$. We denote 
$\la x\ra=\left(1+|x|^2\right)^\fr12.$
In particular, for $(k,\eta)\in\mathbb{Z}\times\mathbb{R}$, we use the shorthand
$|k,\eta|=|(k,\eta)|, \la k,\eta\ra=\la (k,\eta)\ra$.

\item We use the notation $f_0$ to denote the $x$ (or $X$) average of a function $f$:
$f_0=\fr{1}{2\pi}\int_{\mathbb{T}}f(x,y)dx$. The non-zero mode of $f$ is denoted by $f_{\neq}=f-f_0$.

\item The Gevrey-$\fr{1}{s}$ norm with Sobolev correction is defined by
\[
\|f\|_{\mathcal{G}^{\lm,\sigma;s}}^2=\sum_{k\in\mathbb{Z}}\int_{\mathbb{R}}|\hat{f}_k(\eta)|^2e^{2\lm|k,\eta|^s}\la k,\eta\ra^{2\sigma}d\eta.
\]
We use  the shorthand $\mathcal{G}^{\lm, \sigma}=\mathcal{G}^{\lm, \sigma;s}$ frequently throughout the paper.
\item Given a Banach space $X$,  we define the Banach space $L^p(a,b; X)$  for $1\le p\le \infty$ by the norm
\[
\|\cdot\|_{L^p(a, b; X)}=\big\|\|\cdot\|_{X}\big\|_{L^p(a, b)}.
\]
We often use the shorthand notation $\|\cdot\|_{L^p X}=\|\cdot\|_{L^p(a, b; X)}$ when the time interval is clear from context.
\item For any two real functions $f, g\in L^2(\mathbb{T}\times\mathbb{R})$, we denote the inner product of $f$ and $g$ in $ L^2(\mathbb{T}\times\mathbb{R})$ by $\la f, g\ra$, i.e.,
\[
\la f, g\ra=\int_{\mathbb{T}\times\mathbb{R}}fgdXdY.
\]

\end{enumerate}

\section{Change of  coordinates and the linearized dynamics}
\subsection{Nonlinear coordinate transform} In this paper, we will use the nonlinear coordinate transform introduced first in \cite{BM15}. Define
\be\label{nl-coor}
X=x-tY, \quad Y=y+\fr{1}{t}\int_0^tu^1_0(s, y)ds.
\ee
Denote
\begin{gather}
\nn\Om(t, X, Y)=\om(t, x, y),\ J(t, X, Y)=j(t, x,y),\ 
\Psi (t, X, Y)=\psi(t, x,y),\ \Phi(t, X, Y)=\phi(t, x, y),\\
\label{Y'}Y'(t,Y)=\partial_yY(t, y)=1-\fr{1}{t}\int_0^t\om_0(s,y)ds,\\
\label{Y''}Y''(t, Y)=\partial_{yy}Y(t, y)=-\fr{1}{t}\int_0^t\partial_y\om_0(s,y)ds,\\
\label{Y_t}\dot{Y}(t,Y)=\partial_tY(t, y)=\fr{1}{t}\left(u^1_0(t,y)-\fr{1}{t}\int_0^tu^1_0(s,y)ds\right).
\end{gather}
By the chain rule, we have
\be\label{chain}
Y''=Y'\partial_YY'.
\ee
Then for $g(t, x,y)=G(t, X,Y)$,
\beno
\nabla g(t,x,y)=(\partial_xg, \partial_yg)=(\partial_XG, Y'(\partial_Y-t\partial_X)G)=(\partial_X^tG,\partial_Y^tG)=\nabla_tG(t,X,Y),
\eeno
and
\beno
\Dl g(t,x, y)=\Dl_tG(t,X,Y),
\eeno
where
\beq\label{Dlt}
\nn\Dl_t&:=&\partial_{XX}+(Y')^2(\partial_Y-t\partial_X)^2+Y''(\partial_Y-t\partial_X)\\
&=&\Dl_L+\left((Y')^2-1\right)\partial_{YY}^L+Y''\partial_Y^L.
\eeq

In the new variables \eqref{nl-coor}, the system \eqref{omj} can be rewritten as
\be\label{OMJ}
\begin{cases}
\partial_t\Om-\al \partial_X J=\mathrm{NL}[\Om],\\[3mm]
\partial_t J-\al \partial_X\Om+2\partial^t_{XY}\Phi-\mu \Dl_t J=\mathrm{NL}[J],\\[3mm]
V=Y'\nabla^\bot\Psi_\ne+\begin{pmatrix} 0\\ \dot{Y} \end{pmatrix},\\[3mm]
\Dl_t\Psi=\Om,\quad \Dl_t\Phi=J,
\end{cases}
\ee
with
\be\label{NL}
\begin{split}
\mathrm{NL}[\Om]&=-V\cdot\nabla \Om+Y'\nabla^\bot\Phi\cdot \nabla J,\\
\mathrm{NL}[J]&=-V\cdot\nabla J+Y'\nabla^\bot\Phi\cdot \nabla \Om-2\partial_{XY}^t\Phi\left(-\Om+2\partial_{XX}\Psi\right)+2\partial_{XY}^t\Psi\left(-J+2\partial_{XX}\Phi\right).
\end{split}
\ee
Moreover, the equation \eqref{e-f} of the good unknown $f$ now takes the following form:
\beq\label{e-F}
\partial_tF-\mu\Dl_t F+4\mu\partial_{XY}^tJ-\al^2\partial_{XX}J=\mathrm{NL}[F],
\eeq
where
\beq\label{NLF}
\nn\mathrm{NL}[F]&=&-V\cdot\nabla F+\al B^1_0\partial_{XX}J+\left[\mu Y'\partial_Y\left(Y'\partial_YB^1_0\right)-Y'\left(\nabla^\bot\Psi_\ne\cdot\nabla B^1_\ne\right)_0+Y'\left(\nabla^{\bot}\Phi_\ne\cdot\nabla U^1_\ne\right)_0\right]\partial_X\Om\\
\nn&&-[B_0^1, Y'\nabla^\bot\Psi_\ne\cdot\nabla]\partial_X\Om-(\al+B_0^1)Y'\nabla^\bot\partial_X\Psi\cdot\nabla\Om+(\al+B_0^1)\partial_X\left(Y'\nabla\Phi\cdot\nabla J\right)\\
\nn&&-\mu Y'\nabla^\bot\Om\cdot \nabla J-2\mu Y'\nabla^\bot\partial_X\Psi\cdot\nabla\partial_{X}J-2\mu Y'\nabla^\bot\left(Y'\partial_Y^L\Psi\right)\cdot\nabla\left(Y'\partial_{Y}^LJ\right)\\
&&+\mu\Dl_t(Y'\nabla^\bot\Phi_\ne\cdot\nabla\Om)-2\mu\Dl_t\left(\partial_{XY}^t\Phi\left(-\Om+2\partial_{XX}\Psi\right)\right)+2\mu\Dl_t\left(\partial_{XY}^t\Psi\left(-J+2\partial_{XX}\Phi\right)\right).
\eeq

Next, we derive expressions for the coordinate system that are amenable to energy estimates. Note that the zero-mode $(u_0,b_0)$ solves
\be\label{ub01}
\begin{cases}
\partial_tu^1_0=-(u_{\ne}\cdot \nabla u^1_{\ne})_0+(b_{\ne}\cdot\nabla b^1_{\ne})_0,\\[2mm]
\partial_tb^1_0=\mu\partial_{yy}b^1_0-\left(u_\ne\cdot\nabla b^1_\ne\right)_0+\left(b_\ne\cdot\nabla u^1_\ne\right)_0.
\end{cases}
\ee
Starting from \eqref{Y'}, we see that $\displaystyle t(Y'-1)(t, Y(t, y))=-\int_0^t\om_0(s,y)ds$. Then taking the $t$ derivative of this equation, we are led to
\be\label{Y'-1}
\partial_t(Y'-1)+\dot{Y}\partial_Y(Y'-1)=H,
\ee
where
\be\label{def-H}
H:=-\fr{Y'-1+\Om_0}{t}.
\ee
From \eqref{Y'}  and \eqref{Y_t},  by the chain rule, one derives the relation between $H$ and $\dot{Y}$:
\be\label{HdotY}
H=Y'\partial_Y\dot{Y}.
\ee
To  derive the evolution of $H$, we first write the equation of $\Om_0$:
\beno
\partial_t\Om_0+\dot{Y}\partial_Y\Om_0+Y'\left(\nabla^\bot\Psi_\ne\cdot\nabla\Om_{\ne}\right)_0-Y'\left(\nabla^\bot\Phi_\ne\cdot \nabla J_{\ne}\right)_0=0.
\eeno
Combining this with \eqref{Y'-1} and the definition of $H$ in \eqref{def-H} yields
\beno
\partial_tH+\fr{2}{t}H+\dot{Y}\partial_YH=\fr{Y'}{t}\left(\nabla^\bot\Psi_\ne\cdot\nabla\Om_{\ne}\right)_0-\fr{Y'}{t}\left(\nabla^\bot\Phi_\ne\cdot \nabla J_{\ne}\right)_0.
\eeno
For the evolution of $\dot{Y}$,  we derive from \eqref{Y_t} that $\displaystyle t\dot{Y}(t, Y(t, y))=u^1_0(t,y)-\fr{1}{t}\int_0^tu^1_0(s,y)ds$. After taking the $t$ derivative of this equation, and using the equation of $u_0^1$ (see the first equation of \eqref{ub01}), we arrive at
\beno
\partial_t\dot{Y}+\fr{2}{t}\dot{Y}+\dot{Y}\partial_Y\dot{Y}=-\fr{Y'}{t}\left(\nabla^\bot\Psi_{\ne}\cdot \nabla U^1_{\ne}\right)_0+\fr{Y'}{t}\left(\nabla^\bot\Phi_{\ne}\cdot \nabla B^1_{\ne}\right)_0.
\eeno
In summary, the evolution of the coordinate system can be described by
\be\label{sys-coor}
\begin{cases}
\partial_t(Y'-1)+\dot{Y}\partial_Y(Y'-1)=H,\\[3mm]
\partial_t\dot{Y}+\fr{2}{t}\dot{Y}+\dot{Y}\partial_Y\dot{Y}=-\fr{Y'}{t}\left(\nabla^\bot\Psi_\ne\cdot \nabla U^1_{\ne}\right)_0+\fr{Y'}{t}\left(\nabla^\bot\Phi_\ne\cdot \nabla B^1_{\ne}\right)_0,\\[3mm]
\partial_tH+\fr{2}{t}H+\dot{Y}\partial_YH=\fr{Y'}{t}\left(\nabla^\bot\Psi_\ne\cdot\nabla\Om_{\ne}\right)_0-\fr{Y'}{t}\left(\nabla^\bot\Phi_\ne\cdot \nabla J_{\ne}\right)_0,\\[3mm]
U^1=-Y'\partial_Y^L\Psi,\quad B^1=-Y'\partial_Y^L\Phi.
\end{cases}
\ee

\subsection{The linearized dynamics}\label{sec-linear}
 Before handling the nonlinear system \eqref{OMJ}, we study the linearized behavior in detail.  To this end, let us denote the linear part of $\nabla_t$ and $\Dl_t$ by $\nabla_L$ and $\Dl_L$, respectively:
\be
\nabla_L=(\partial_X, \partial_Y-t\partial_X)=(\partial_X, \partial_Y^L),\quad \Dl_L=\partial_{XX}+(\partial_Y-t\partial_X)^2=\partial_{XX}+\partial_{YY}^L.
\ee
Then the linearized system of \eqref{OMJ} takes the form of
\be\label{L-Omj}
\begin{cases}
\partial_t\Om^*-\al\partial_XJ^*=0,\\
\partial_tJ^*-\al\partial_X\Om^*+2\partial_{XY}^L\Dl^{-1}_LJ^*-\mu\Dl_L J^*=0.
\end{cases}
\ee
In the case $\alpha=0$, a direct calculation gives that
\be\label{eq: J-alpha=0}
\|J^*_{\ne}\|_{H^{\sigma}}\lesssim \langle t\rangle^2e^{-c\mu t^3}\left\|J^{*}_{\mathrm{in}}\right\|_{H^{\sigma+2}}. 
\ee
The linear behavior of $(\Om^*,J^*)$ is non-trivial  when the background magnetic field $(\al, 0)^{\top}$ is strong enough ($|\al|$ sufficiently large). We will construct a Lyapunov functional which is compatible with the nonlinear problem via the Fourier multiplier method.
\subsubsection{The linear multipliers}
Define
\be\label{M1} 
\frac{\partial_t{\mathit{M}}^1}{\mathit{M}^1}=\frac{k^2}{\emph{k}^2+(\eta-kt)^2}\quad \mathrm{and}\quad  \mathit{M}^1_k(0,\eta)=1;
\ee
and
\be\label{Mmu}
\begin{cases}
\displaystyle\frac{\partial_t{\mathit{M}}^\mu}{\mathit{M^\mu}} = \frac{\mu^{\frac{1}{3}}}{1+ \mu^{\frac{2}{3}}(\fr{\eta}{k}-t)^2  }\quad \mathrm{with}\quad  \mathit{M}^\mu_{k}(0,  \eta)=1\quad \mathrm{for}\quad k\ne0;\\[3mm]
M^\mu=1,\quad\mathrm{for}\quad k=0.
\end{cases}
\ee 
The above two multipliers are frequently used to study the stability of Couette flow, see \cite{BGM17, BVW18} for instance. In this paper, $M^1$ is used to derive the weighted $L^2$ time integrability of the vorticity {\em independent of the magnetic diffusion coefficient $\mu$}, and $M^\mu$ is used to capture the {\em weak enhanced dissipation} of the current density, see \eqref{Lbound} below. By the definition of $M^{\mu}$ in \eqref{Mmu},  it is straightforward to deduce the following lemma, the proof of which can be found in \cite{BVW18}.
\begin{lem}\label{lem-enh}
If $k\ne0$, then there holds
\be\label{enh-dis}
 \mu(\eta-kt)^2+\fr{\partial_tM^\mu_k(t,  \eta)}{M^\mu_k(t,  \eta)}\ge\fr12\mu^\fr13.
\ee
\end{lem}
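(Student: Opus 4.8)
The plan is to reduce the inequality \eqref{enh-dis} to an elementary one–variable estimate. Fix $k\ne 0$ and set $a:=\mu^{\frac{2}{3}}\big(\tfrac{\eta}{k}-t\big)^2\ge 0$. Since $k$ is a nonzero integer we have $k^2\ge 1$, and since $(\eta-kt)^2=k^2\big(\tfrac{\eta}{k}-t\big)^2$, this gives
\[
\mu(\eta-kt)^2=\mu^{\frac{1}{3}}k^2 a\ge \mu^{\frac{1}{3}}a.
\]
Plugging the definition \eqref{Mmu} of $M^\mu$ (in the case $k\ne 0$) into the left–hand side of \eqref{enh-dis} then yields
\[
\mu(\eta-kt)^2+\frac{\partial_t M^\mu_k(t,\eta)}{M^\mu_k(t,\eta)}\ \ge\ \mu^{\frac{1}{3}}\Big(a+\frac{1}{1+a}\Big).
\]

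It therefore suffices to prove that $a+\tfrac{1}{1+a}\ge\tfrac12$ for every $a\ge 0$. Writing $u:=1+a\ge 1$, the left–hand side equals $u+\tfrac1u-1$, and $u+\tfrac1u\ge 2$ by the AM–GM inequality, so $a+\tfrac{1}{1+a}\ge 1\ge\tfrac12$. Combining the two displays establishes \eqref{enh-dis} (in fact with the improved constant $\mu^{\frac13}$ in place of $\tfrac12\mu^{\frac13}$).

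There is essentially no obstacle here: \eqref{enh-dis} is a pointwise–in–frequency algebraic estimate whose proof is just a few lines, and the only genuine ingredient is the crude bound $k^2\ge 1$ coming from the discreteness of the $x$–frequencies. The one mild subtlety is that the constant $\tfrac12$ in \eqref{enh-dis} is not sharp — the infimum of $a+(1+a)^{-1}$ over $a\ge 0$ equals $1$, attained at $a=0$, i.e.\ at the critical time $t=\eta/k$ — so any constant not exceeding $1$ works, and $\tfrac12$ is presumably kept only for notational convenience in the later energy estimates; note also that the case $k=0$ is excluded from the statement, consistently with $M^\mu\equiv 1$ there.
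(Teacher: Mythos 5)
Your proof is correct, and is essentially the standard argument (the paper itself omits the proof, deferring to \cite{BVW18}). Setting $a=\mu^{2/3}(\eta/k-t)^2$ and using $k^2\ge 1$ to reduce to the one-variable bound $a+(1+a)^{-1}\ge 1$ is exactly the content of the cited lemma; your version avoids the usual two-case split (small versus large $|\eta/k-t|$) by noting directly that the sum has infimum $1$ at $a=0$, which is a minor streamlining, and you are right that the constant $\tfrac12$ is not sharp.
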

The possible growth of $J^*$ caused by the linear stretch term $2\partial_{XY}^LJ^*$ in the second equation of \eqref{L-Omj} may be balanced by the dissipative term $-\mu\Dl_L J^*$. There holds
\be\label{dis-domi}
\fr{2|k(\eta-kt)|}{k^2+\left(\eta-kt\right)^2}<\fr{\mu}{4}(\eta-kt)^2,\quad\mathrm{if}\quad \left|\fr{\eta}{k}-t\right|\ge 2\mu^{-\fr13}.
\ee
On the other hand, if $\left|\fr{\eta}{k}-t\right|< 2\mu^{-\fr13}$, we need an extra multiplier to mimic the growth of $J^*$  caused by $2\partial_{XY}^LJ^*$. The multiplier $m_k(t, \eta)$ below is a 2D version of $m(t,k,\eta,l)$ in \cite{BGM17}:
\begin{enumerate}
\item if $k = 0:  m_k(t,\eta) = 1$;

\item if $k \ne 0, \frac{\eta}{k} < -2\mu^{-\frac{1}{3}}: m_k(t,\eta) = 1$;

\item if $k \ne 0, -2\mu^{-\frac{1}{3}} < \frac{\eta}{k} < 0$, $m_k(t,\eta)=\left\{\begin{aligned}
&\frac{k^2+(\eta-kt)^2}{k^2+\eta^2}\quad \text{if}\quad 0 < t < \frac{\eta}{k}+2\mu^{-\frac{1}{3}}\\
&\frac{k^2+(2k\mu^{-\frac{1}{3}})^2}{k^2+\eta^2}\quad \text{if}\quad t > \frac{\eta}{k}+2\mu^{-\frac{1}{3}};
\end{aligned}\right.
$
\item if $k \ne 0, \frac{\eta}{k} > 0$, $m_k(t,\eta)=\left\{\begin{aligned}
&1\quad \text{if}\quad t < \frac{\eta}{k},\\
&\frac{k^2+(\eta-kt)^2}{k^2}\quad \text{if}\quad \frac{\eta}{k} < t < \frac{\eta}{k} + 2\mu^{-\frac{1}{3}}\\
&{1+(2\mu^{-\frac{1}{3}})^2}\quad \text{if}\quad t > \frac{\eta}{k} + 2\mu^{-\frac{1}{3}}.
\end{aligned}\right.$
\end{enumerate}
Notice, in particular, that $m$ is nondecreasing in $t$, and has the following lower and upper bounds: 
\be\label{m1}
1\le m_k(t,  \eta)\les \min\left\{\mu^{-\fr23}, \la t\ra^2\right\}.
\ee
Denote 
\begin{align*}
&D_1:=\left\{(t, k,\eta)\in [0,\infty)\times\mathbb{N}\times\mathbb{R}: k\ne0, -2\mu^{-\frac{1}{3}} < \frac{\eta}{k} < 0, t < \frac{\eta}{k}+2\mu^{-\frac{1}{3}}\right\},\\
&D_2:=\left\{(t, k,\eta)\in [0,\infty)\times\mathbb{N}\times\mathbb{R}: k\ne0, \frac{\eta}{k} > 0, \fr{\eta}{k}<t < \frac{\eta}{k}+2\mu^{-\frac{1}{3}}\right\}.
\end{align*}
Note that
\be\label{pt-m12}
\fr{\partial_t(m^\fr12)}{m^\fr12}=\fr12\fr{\partial_tm}{m}={\bf 1}_{D_1\cup D_2}\fr{-k({\eta}-kt)}{k^2+({\eta}-kt)^2}\ge0.
\ee
Thus, we have
\be\label{e215}
\fr{\partial_t(m^\fr12)}{m^\fr12}+\fr{2k(\eta-kt)}{k^2+(\eta-kt)^2}=-\fr{\partial_t(m^\fr12)}{m^\fr12}+{\bf 1}_{(D_1\cup D_2)^c}\fr{2k({\eta}-kt)}{k^2+({\eta}-kt)^2},
\ee
and
\be\label{ptpt-m12}
\left|\partial_t\left(\fr{\partial_t(m^\fr12)}{m^\fr12}\right)\right|={\bf 1}_{D_1\cup D_2}\left|\fr{k^2\left(k^2-(\eta-kt)^2\right)}{\left(k^2+(\eta-kt)^2\right)^2}\right|\le \fr{k^2}{k^2+(\eta-kt)^2}.
\ee

\subsubsection{The linear stability} To study the linear behavior of $(\Om^*, J^*)$, we introduce the multiplier
\be\label{M}
 \mathcal{M}_k(t, \eta)=\left(M^1M^\mu m^{\fr12}\right)^{-1}_k(t, \eta).
\ee
The aim of this section is to establish the following proposition.
\begin{prop}\label{prop-L}
Let $\sigma\in[0,\infty)$. Then there exists a sufficiently large $\al_0>0$, such that if $|\al|\ge \al_0$,  the solution $(\Om^*, J^*)$ to \eqref{L-Omj} satisfies for all $t\ge0$
\beq\label{Lbound}
\nn&&\|\mathcal{M}\Om^*(t)\|_{H^\sigma}^2+\|\mathcal{M}J^*(t)\|_{H^\sigma}^2+\sum_{\theta\in\{M^1,M^2\}}\sum_{g\in\{\Om^*, J^*\}}\left\|\sqrt{\fr{\partial_t\theta}{\theta}}\mathcal{M}g \right\|^2_{L^2H^\sigma}\\
&&+\mu\left\|\nabla_L\mathcal{M}J^*\right\|^2_{L^2H^\sigma}+\mu^\fr13\|\mathcal{M}J^*_{\ne}\|^2_{L^2H^\sigma}\les \|\om_{\mathrm{in}}\|_{H^\sigma}^2+\|j_{\mathrm{in}}\|_{H^\sigma}^2. 
\eeq
\end{prop}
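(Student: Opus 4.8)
The plan is to work in Fourier space, mode-by-mode in $k$, and derive a differential inequality for a carefully weighted energy functional that combines $\|\mathcal{M}\Om^*\|_{H^\sigma}^2+\|\mathcal{M}J^*\|_{H^\sigma}^2$ with the cross term
$-\frac{2}{\al}\big\la \frac{\partial_t(m^{1/2})}{m^{1/2}}\partial_X^{-1}\la\nabla\ra^\sigma\mathcal{M}\Om^*,\la\nabla\ra^\sigma\mathcal{M}J^*\big\ra$
announced in \eqref{linear-functional}. Since the $k=0$ mode of $(\Om^*,J^*)$ is stationary for \eqref{L-Omj} (no $\al\partial_X$, no stretching) the estimate is trivial there, so I would fix $k\neq0$. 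The natural first step is to record the time derivatives of $\|\mathcal{M}\Om^*\|^2$ and $\|\mathcal{M}J^*\|^2$: differentiating produces (i) the good damping terms $-2\frac{\partial_t\mathcal{M}}{\mathcal{M}}|\mathcal{M}g|^2$, which by \eqref{M} split into $+\frac{\partial_t M^1}{M^1}$, $+\frac{\partial_t M^\mu}{M^\mu}$ and $+\frac{\partial_t(m^{1/2})}{m^{1/2}}$ contributions (all nonnegative by construction — this is where \eqref{M1}, \eqref{Mmu}, \eqref{pt-m12} enter); (ii) the dissipation $-2\mu|\nabla_L\mathcal{M}J^*|^2$ from $\mu\Dl_L$; (iii) the skew term from $\al\partial_X$ which cancels between the two equations; and (iv) the problematic stretching term $2\partial_{XY}^L\Dl_L^{-1}J^*$, whose symbol is $\frac{2k(\eta-kt)}{k^2+(\eta-kt)^2}$, contributing $+2\frac{k(\eta-kt)}{k^2+(\eta-kt)^2}|\mathcal{M}J^*|^2$ — not sign-definite and potentially growing.

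The heart of the argument is controlling the stretching term. I would use the splitting \eqref{e215}: on $(D_1\cup D_2)^c$, the stretching symbol is either absorbed by the enhanced-dissipation gain via Lemma~\ref{lem-enh} (when $|\eta/k-t|\geq 2\mu^{-1/3}$, using \eqref{dis-domi} and \eqref{enh-dis}), while on $D_1\cup D_2$ the identity $\frac{\partial_t(m^{1/2})}{m^{1/2}}+\frac{2k(\eta-kt)}{k^2+(\eta-kt)^2}=-\frac{\partial_t(m^{1/2})}{m^{1/2}}$ converts it into a \emph{favorable} damping term. What remains uncompensated is then handled by the cross term: differentiating $-\frac{2}{\al}\la\frac{\partial_t(m^{1/2})}{m^{1/2}}\partial_X^{-1}\la\nabla\ra^\sigma\mathcal{M}\Om^*,\la\nabla\ra^\sigma\mathcal{M}J^*\ra$ and using the two equations of \eqref{L-Omj}, the term $\al\partial_X\Om^*$ inside $\partial_tJ^*$ hits $\partial_X^{-1}\mathcal{M}\Om^*$ to produce precisely $-\frac{2}{\al}\cdot\al\cdot\frac{\partial_t(m^{1/2})}{m^{1/2}}|\la\nabla\ra^\sigma\mathcal{M}\Om^*|^2$-type coupling that, together with the $\al\partial_X\Om^*$ appearing in the $\Om^*$ equation hitting $\partial_X^{-1}$, generates a term of size $\frac{\partial_t(m^{1/2})}{m^{1/2}}$ times $|\mathcal{M}J^*|^2$ with the right sign to cancel the residual stretching on $D_1\cup D_2$; the remaining pieces (where $\partial_t$ falls on $\frac{\partial_t(m^{1/2})}{m^{1/2}}$, controlled by \eqref{ptpt-m12}; where it falls on $\mathcal{M}$ itself; the stretching and dissipation terms acting through the cross term) are all lower order and can be bounded by $\frac{1}{|\al|}$ times the good terms already available, hence absorbed once $|\al|\geq\al_0$. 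I also need the cross term to be controlled in magnitude by the main quadratic quantity: since $\big|\frac{\partial_t(m^{1/2})}{m^{1/2}}\big|\leq 1$ and $\partial_X^{-1}$ on nonzero modes is bounded (we divide by $|k|\geq1$), Cauchy–Schwarz gives $|\text{cross term}|\leq\frac{1}{|\al|}(\|\mathcal{M}\Om^*\|_{H^\sigma}^2+\|\mathcal{M}J^*\|_{H^\sigma}^2)$, so for $|\al|$ large the total functional $\mathcal{E}(t)$ is comparable to $\|\mathcal{M}\Om^*\|_{H^\sigma}^2+\|\mathcal{M}J^*\|_{H^\sigma}^2$.

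Assembling these estimates yields a differential inequality of the form $\frac{d}{dt}\mathcal{E}(t)+c\,\mathcal{D}(t)\leq 0$, where $\mathcal{D}(t)$ collects $\sum_{\theta,g}\big\|\sqrt{\partial_t\theta/\theta}\,\mathcal{M}g\big\|_{H^\sigma}^2$ (for $\theta\in\{M^1,M^\mu\}$, noting the statement's $M^2$ should read $M^\mu$), the dissipation $\mu\|\nabla_L\mathcal{M}J^*\|_{H^\sigma}^2$, and — via Lemma~\ref{lem-enh} combined with the $\mu(\eta-kt)^2$ part of the dissipation on the region where it dominates, plus the $\frac{\partial_t M^\mu}{M^\mu}\gtrsim\mu^{1/3}/(1+\mu^{2/3}(\eta/k-t)^2)$ part elsewhere — the weak enhanced-dissipation term $\mu^{1/3}\|\mathcal{M}J^*_{\neq}\|_{H^\sigma}^2$. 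Integrating in time from $0$ to $t$ and using $\mathcal{E}(0)\approx\|\mathcal{M}(0)\om_{\mathrm{in}}\|_{H^\sigma}^2+\|\mathcal{M}(0)j_{\mathrm{in}}\|_{H^\sigma}^2=\|\om_{\mathrm{in}}\|_{H^\sigma}^2+\|j_{\mathrm{in}}\|_{H^\sigma}^2$ (since $M^1,M^\mu,m$ all equal $1$ at $t=0$) gives \eqref{Lbound}. I expect the main obstacle to be the bookkeeping of the cross-term derivative: one must verify that the \emph{exact} constant $-\frac{2}{\al}$ and the factor $\partial_X^{-1}$ produce a term that cancels the uncompensated stretching contribution on $D_1\cup D_2$ up to errors that are genuinely $O(1/|\al|)$ relative to $\mathcal{D}(t)$ — in particular controlling the commutator-type term where $\la\nabla\ra^\sigma$ and the multipliers do not exactly commute across the two equations, and making sure the term where $\partial_t$ hits $\frac{\partial_t(m^{1/2})}{m^{1/2}}$ (bounded by $\frac{k^2}{k^2+(\eta-kt)^2}=\frac{\partial_t M^1}{M^1}$ via \eqref{ptpt-m12}) is swallowed by the $M^1$-damping term in $\mathcal{D}(t)$ with a small constant.
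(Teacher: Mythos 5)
Your proposal follows essentially the same route as the paper's proof: it builds the Lyapunov functional \eqref{linear-functional} with its cross term, uses the splitting \eqref{e215} so the residual stretch symbol on $D_1\cup D_2$ is exactly cancelled by the Alfv\'en coupling $i\al k$ acting through the cross term (cf.\ \eqref{inner} and \eqref{223}), bounds the remaining cross-term errors via \eqref{pt-m12}, \eqref{ptpt-m12}, and a Young inequality against the $M^1$-damping (as in \eqref{cro-dis}), establishes coercivity for large $|\al|$ (as in \eqref{approx}), and recovers the $\mu^{1/3}\|\mathcal{M}J^*_\ne\|^2$ term from Lemma \ref{lem-enh}. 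The only slips are cosmetic — the $k=0$ mode of $J^*$ evolves by heat flow rather than being stationary, the $\Om^*$ equation carries $\al\partial_X J^*$ not $\al\partial_X\Om^*$, and $M^2$ should read $M^\mu$ — none of which affect the argument.
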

\begin{proof}
Taking the Fourier transform of \eqref{L-Omj}, and using \eqref{e215}, it is not difficult to derive the evolution of $|\mathcal{M}\hat{\Om}^*|^2+|\mathcal{M}\hat{J}^*|^2$:
\beq\label{MOmJ}
\nn&&\fr12\partial_t\left(|\mathcal{M}\hat{\Om}^*|^2+|\mathcal{M}\hat{J}^*|^2\right)+\sum_{\theta\in\{M^1, M^{\mu}\}}\fr{\partial_t\theta}{\theta}\left(|\mathcal{M}\hat{\Om}^*|^2+|\mathcal{M}\hat{J}^*|^2\right)+\mu \left(k^2+(\eta-kt)^2\right)|\mathcal{M}\hat{J}^*|^2\\
\nn&=&-\fr{\partial_t(m^\fr12)}{m^\fr12}|\mathcal{M}\hat{\Om}^*|^2-\left(\fr{\partial_t(m^\fr12)}{m^\fr12}+\fr{2k(\eta-kt)}{k^2+(\eta-kt)^2}\right)|\mathcal{M}\hat{J}^*|^2\\
&=&\fr{\partial_t(m^\fr12)}{m^\fr12}\left(|\mathcal{M}\hat {J}^*|^2-|\mathcal{M}\hat{\Om}^*|^2\right)-{\bf 1}_{(D_1\cup D_2)^c}\fr{2k(\eta-kt)}{k^2+(\eta-kt)^2}|\mathcal{M}\hat{J}^*|^2.
\eeq
To deal with the first term on the right-hand side of  \eqref{MOmJ}, our strategy is to investigate the evolution of $(\mathcal{M}\hat{\Om}^*)(\mathcal{M}\bar{\hat{J}}^*)$. In fact, we infer   from \eqref{L-Omj} and \eqref{e215} that
\beq\label{inner-OmJ}
\partial_t\left((\mathcal{M}\hat{\Om}^*)(\mathcal{M}\bar{\hat{J}}^*)\right)
\nn&=&-{\bf 1}_{(D_1\cup D_2)^c}\fr{2k({\eta}-{k}t)}{k^2+({\eta}-{k}t)^2}(\mathcal{M}\hat{\Om}^*)(\mathcal{M}\bar{\hat{J}}^*)-2\sum_{\theta\in\{M^1, M^{\mu}\}}\fr{\partial_t\theta}{\theta}(\mathcal{M}\hat{\Om}^*)(\mathcal{M}\bar{\hat{J}}^*)\\
&&+i\al k\left(|\mathcal{M}{\hat{J}}|^2-|\mathcal{M}\hat{\Om}^*|^2\right)-\mu \left(k^2+(\eta-kt)^2\right)(\mathcal{M}\hat{\Om}^*)(\mathcal{M}\bar{\hat{J}}^*).
\eeq
Consequently,  for $k\ne0$, there holds
\beq\label{inner}
\fr{1}{i\al k}\partial_t\left(\fr{\partial_t(m^\fr12)}{m^\fr12}(\mathcal{M}\hat{\Om}^*)(\mathcal{M}\bar{\hat{J}}^*)\right)
\nn&=&-\fr{2}{i\al k}\fr{\partial_t(m^\fr12)}{m^\fr12}\sum_{\theta\in\{M^1, M^{\mu}\}}\fr{\partial_t\theta}{\theta}(\mathcal{M}\hat{\Om}^*)(\mathcal{M}\bar{\hat{J}}^*)\\
\nn&&+\fr{\partial_t(m^\fr12)}{m^\fr12}\left(|\mathcal{M}{\hat{J}}^*|^2-|\mathcal{M}\hat{\Om}^*|^2\right)-\fr{\mu}{i\al k}\fr{\partial_t(m^\fr12)}{m^\fr12}\left(k^2+(\eta-kt)^2\right)(\mathcal{M}\hat{\Om}^*)(\mathcal{M}\bar{\hat{J}}^*)\\
&&+\fr{1}{i\al k}\partial_t\left(\fr{\partial_t(m^\fr12)}{m^\fr12}\right)(\mathcal{M}\hat{\Om}^*)(\mathcal{M}\bar{\hat{J}}^*),
\eeq
where we have used \eqref{pt-m12} to deduce that
\beno
\fr{\partial_t(m^\fr12)}{m^\fr12}\cdot\fr{{\bf 1}_{(D_1\cup D_2)^c}2k({\eta}-kt)}{k^2+({\eta}-kt)^2}=0.
\eeno
Combining \eqref{MOmJ} with \eqref{inner} yields
\beq\label{223}
\nn&&\fr12\partial_t\left(|\mathcal{M}\hat{\Om}^*|^2+|\mathcal{M}\hat{J}^*|^2-\fr{2}{i\al k}\fr{\partial_t(m^\fr12)}{m^\fr12}(\mathcal{M}\hat{\Om}^*)(\mathcal{M}\bar{\hat{J}}^*)\right)\\
\nn&&+\sum_{\theta\in\{M^1, M^{\mu}\}}\fr{\partial_t\theta}{\theta}\left(|\mathcal{M}\hat{\Om}^*|^2+|\mathcal{M}\hat{J}^*|^2\right)+\mu \left(k^2+(\eta-kt)^2\right)|\mathcal{M}\hat{J}^*|^2\\
\nn&=&-{\bf 1}_{(D_1\cup D_2)^c}\fr{2k({\eta}-kt)}{k^2+({\eta}-kt)^2}|\mathcal{M}\hat{J}^*|^2+\fr{2}{i\al k}\fr{\partial_t(m^\fr12)}{m^\fr12}\sum_{\theta\in\{M^1, M^{\mu}\}}\fr{\partial_t\theta}{\theta}(\mathcal{M}\hat{\Om}^*)(\mathcal{M}\bar{\hat{J}}^*)\\
&&+\fr{\mu}{i\al k}\fr{\partial_t(m^\fr12)}{m^\fr12}\left(k^2+(\eta-kt)^2\right)(\mathcal{M}\hat{\Om}^*)(\mathcal{M}\bar{\hat{J}}^*)-\fr{1}{i\al k}\partial_t\left(\fr{\partial_t(m^\fr12)}{m^\fr12}\right)(\mathcal{M}\hat{\Om}^*)(\mathcal{M}\bar{\hat{J}}^*).
\eeq
It is worth pointing out that, for $(t, k, \eta)\in D_1\cup D_2$, there holds
\be\label{D1+D2}
0<t-\fr{\eta}{k}<2\mu^{-\fr13}.
\ee
Therefore, for $k\ne0$, we have
\be\label{low-M1}
\frac{k^2}{\emph{k}^2+(\eta-kt)^2}\ge\fr{{\bf 1}_{D_1\cup D_2}}{1+\left(t-\fr{\eta}{k}\right)^2}
\ge\fr{{\bf 1}_{D_1\cup D_2}}{1+4\mu^{-\fr23}}\ge \fr{\mu^{\fr23}}{5}{\bf 1}_{D_1\cup D_2}.
\ee
From \eqref{pt-m12}, \eqref{D1+D2} and \eqref{low-M1}, we find that
\beq\label{cro-dis}
\nn&&\fr{\mu}{i\al k}\fr{\partial_t(m^\fr12)}{m^\fr12}\left(k^2+(\eta-kt)^2\right)(\mathcal{M}\hat{\Om}^*)(\mathcal{M}\bar{\hat{J}}^*)\\
&\le&\fr{\mu}{4|\al|}\left(k^2+(\eta-kt)^2\right)|\mathcal{M}\hat{J}^*|^2+\fr{5}{|\al|}\fr{\partial_t{M}^1}{M^1}|\mathcal{M}\hat{\Om}^*|^2.
\eeq
Using \eqref{pt-m12} again, it is easy to see that
\be
\fr{2}{i\al k}\fr{\partial_t(m^\fr12)}{m^\fr12}\sum_{\theta\in\{M^1, M^{\mu}\}}\fr{\partial_t\theta}{\theta}(\mathcal{M}\hat{\Om}^*)(\mathcal{M}\bar{\hat{J}}^*)\le\fr{1}{2|\al|}\sum_{\theta\in\{M^1, M^{\mu}\}}\fr{\partial_t\theta}{\theta}\left(|\mathcal{M}\hat{\Om}^*|^2+|\mathcal{M}\hat{J}^*|^2\right).
\ee
In view of \eqref{ptpt-m12}, we are led to
\beq
\fr{1}{i\al k}\partial_t\left(\fr{\partial_t(m^\fr12)}{m^\fr12}\right)(\mathcal{M}\hat{\Om}^*)(\mathcal{M}\bar{\hat{J}}^*)\le \fr{1}{2|\al|}\fr{\partial_tM^1}{M^1}\left(|\mathcal{M}\hat{\Om}^*|^2+|\mathcal{M}\hat{J}^*|^2\right).
\eeq
For $(t,k,\eta)\in \left(D_1\cup D_2\right)^c$, either $\fr{-2k(\eta-kt)}{k^2+(\eta-kt)^2}<0$ or \eqref{dis-domi} holds. Then
\be\label{LSerr}
-{\bf 1}_{(D_1\cup D_2)^c}\fr{2k({\eta}-kt)}{k^2+({\eta}-kt)^2}|\mathcal{M}\hat{J}^*|^2\le\fr{\mu}{4}\left(k^2+(\eta-kt)^2\right)|\mathcal{M}\hat{J}^*|^2.
\ee
Substituting \eqref{cro-dis}--\eqref{LSerr} into \eqref{223}, and using \eqref{enh-dis}, for $|\al|$ sufficiently large, say $|\al|\ge8$, we obtain
\beq\label{231}
\nn&&\partial_t\left(|\mathcal{M}\hat{\Om}^*|^2+|\mathcal{M}\hat{J}^*|^2-\fr{2}{i\al k}\fr{\partial_t(m^\fr12)}{m^\fr12}(\mathcal{M}\hat{\Om}^*)(\mathcal{M}\bar{\hat{J}}^*)\right)\\
\nn&&+\fr12\sum_{\theta\in\{M^1, M^{\mu}\}}\fr{\partial_t\theta}{\theta}\left(|\mathcal{M}\hat{\Om}^*|^2+|\mathcal{M}\hat{J}^*|^2\right)+\mu \left(k^2+(\eta-kt)^2\right)|\mathcal{M}\hat{J}^*|^2+\fr{\mu^\fr13}{2}|\mathcal{M}\hat {J}^*_\ne|^2\le0,
\eeq
and
\be\label{approx}
|\mathcal{M}\hat{\Om}^*|^2+|\mathcal{M}\hat{J}^*|^2-\fr{2}{i\al k}\fr{\partial_t(m^\fr12)}{m^\fr12}(\mathcal{M}\hat{\Om}^*)(\mathcal{M}\bar{\hat{J}}^*)\approx|\mathcal{M}\hat{\Om}^*|^2+|\mathcal{M}\hat{J}^*|^2.
\ee
 Then \eqref{Lbound} follows from \eqref{231} immediately.
\end{proof}

\section{Outline of the proof}
To begin with, let us define the time-dependent multipliers that are used in the energy estimates of this paper:
\begin{gather}
\label{A}    A_k(t,\eta)=e^{\lm(t)|k,\eta|^s}\la k,\eta\ra^\sigma \mathcal{M}_k(t,\eta)\mathcal{J}_k(t,\eta),\\
\label{tlA}  \tl A_k(t,\eta)=e^{\lm(t)|k,\eta|^s}\la k,\eta\ra^\sigma \mathcal{M}_k(t,\eta)\tl{\mathcal{J}}_k(t,\eta),
\end{gather}
where
\be\label{def-J}
\mathcal{J}_k(t,\eta)=\fr{e^{2r\la\eta\ra^\fr12}}{w_k(t,\eta)}+e^{2r |k|^\fr12},\quad\mathrm{and}\quad \tl{\mathcal{J}}_k(t,\eta)=\fr{e^{2r\la\eta\ra^\fr12}}{w_k(t,\eta)}.
\ee 
The weighted function $w_k(t,\eta)$ is designed to mimic the possible nonlinear growth driven by hydrodynamics echoes as in \cite{BM15}. We sketch the construction of $w_k(t,\eta)$  in Section \ref{subsec-con-w} of the Appendix. The positive constant  $r=4(1+\mathrm{C}\kappa)$, with $\mathrm{C}$ and $\kappa$ fixed also in Section \ref{subsec-con-w}. The function $\lm(t)$ is  a radius of the Gevrey-$\fr{1}{s}$ regularity and will be chosen  also  as that in \cite{BM15}:
\beno
\lm(t)=\fr34\lm_0+\fr14\lm', \quad t\le1,
\eeno
and
\be\label{dot-lam}
\dot{\lm}(t)=\fr{d}{dt}\lm(t)=-\fr{\dl_\lm}{\la t\ra^{2\tl{q}}}\left(1+\lm(t)\right),\quad t>1,
\ee
where $\dl_\lm\approx\lm_0-\lm'$ is a small parameter that ensures $\lm(t)>\fr{\lm_0}{2}+\fr{\lm'}{2}$ and $\fr12<\tl{q}\le\fr{s}{8}+\fr{7}{16}$ is a parameter chosen by the proof. Clearly, $A\les \tl A$ if $|k|\le|\eta|$. 

To control the coefficients $Y'$ and $Y''$, we will also use a multiplier $A^R(t,\eta)$ that is slightly stronger  than $A_0(t,\eta)$:
\begin{align*}
A^R(t,\eta)=e^{\lm(t)|\eta|^s}\la\eta\ra^\sigma\mathcal{J}^R(t,\eta), \quad\mathrm{with}\quad \mathcal{J}^R(t,\eta)=\fr{e^{2r\la\eta\ra^\fr12}}{w_R(t,\eta)}.
\end{align*}
In addition, we use the following multiplier
\be\label{varA}
\mathcal{A}_k(t, \eta)=\fr{e^{\lm |k,\eta|^s}\la k,\eta\ra^{\sigma-6}}{M^{\mu}_k(t,\eta)}.
\ee
to bound the good unknown $F$ with lower regularity.
\subsection{The bootstrap hypotheses}
The main energy functional in this paper is defined by
\be
\mathcal{E}(t)=E(t)+E_Y(t),
\ee
where
\be\label{En}
{E}(t)=\|A\Om(t)\|^2_{L^2}+\|AJ(t)\|^2_{L^2}-\fr{2}{\al }\left\la\fr{\partial_t(m^\fr12)}{m^{\fr12}}\partial_X^{-1}A{\Om}_{\ne} , AJ_{\ne} \right\ra,
\ee
and
\be
E_Y(t)=\la t\ra^{2+2s}\left\|\fr{A}{\la\partial_Y\ra^s}H(t) \right\|^2_{L^2}+\la t\ra^{4-K_D\mu^{-\fr23}\eps}\left\|\dot{Y}(t)\right\|^2_{\mathcal{G}^{\lm(t),\sigma-6}}+\fr{1}{K_Y}\left\|A^R\left(Y'-1\right)(t)\right\|^2_{L^2},
\ee
with positive constants $K_Y$ and $K_D$ depending  on $s, \lm, \lm'$ and $\al$ fixed  by  the proof.

The local well-posedness theory for the 2D Euler-MHD equations \eqref{MHD} in Gevrey spaces is analogous to that of Euler equations given in \cite{BM15}. We just state the result and the proof is omitted.
\begin{lem}
For $\eps>0, s\in(\fr12,1)$ and $\lm_{0}>\lm'>0$, there exists an $\eps'>0$, such that if $\|(u_{\mathrm{in}},b_{\mathrm{in}})\|_{L^2}+\|(\om_{\mathrm{in}},j_{\mathrm{in}})\|_{\mathcal{G}^{\lm_{0}}}<\eps'$, then $\mathcal{E}(1)<\eps^2$, and $\sup_{t\in[0,1]}\|Y'(t)-1\|_{L^\infty}<\fr12$.
\end{lem}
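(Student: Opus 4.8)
The plan is to prove this by a standard short‑time Gevrey a priori estimate, following the scheme of \cite{BM15} for the Euler equations and treating all the extra terms coming from the magnetic field as mild perturbations. The existence and uniqueness parts on $[0,1]$ are routine: one runs a Friedrichs mollification (or Galerkin) scheme, derives mollification‑uniform Gevrey bounds from the estimate below, extracts a limit by compactness, and proves uniqueness by an energy estimate on the difference of two solutions in a coarser Gevrey norm — exactly as in \cite{BM15}, with the observation that the current‑density equation in \eqref{omj} is parabolic and therefore strictly easier to control than the transport equation for $\om$. So the entire content is the claim that smallness of $\|(u_{\mathrm{in}},b_{\mathrm{in}})\|_{L^2}+\|(\om_{\mathrm{in}},j_{\mathrm{in}})\|_{\mathcal{G}^{\lm_0}}$ propagates up to $t=1$.

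For the a priori estimate I would work on $[0,1]$ in a scale of Gevrey spaces $\mathcal{G}^{\tl\lm(t),\sigma'}$ with a \emph{strictly decreasing} radius $\tl\lm(t)$, with $\tl\lm(0)=\lm_0$ and $\tl\lm(1)=\tl\lm_1$ chosen so that $\lm(1)=\tfrac34\lm_0+\tfrac14\lm'<\tl\lm_1<\lm_0$; the loss $\lm_0-\tl\lm_1\approx\lm_0-\lm'$ over a unit interval is precisely what makes the favorable Cauchy–Kovalevskaya term $-\dot{\tl\lm}|k,\eta|^s(\cdots)^2$ in the energy identity strong enough to absorb every derivative loss (this is consistent with the role of $\dot\lm<0$ in \eqref{dot-lam} for $t>1$). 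I would then set up a continuity argument: assume on $[0,T]$ both $\sup\|Y'-1\|_{L^\infty}\le\tfrac12$ and a Gevrey energy bound of size $2(\eps')^2$, and improve both. In the energy identity for $\|\tl A\Om\|_{L^2}^2+\|\tl AJ\|_{L^2}^2$ (with $\tl A$ the analogue of \eqref{A} built on $\tl\lm$, and with the extra cross term as in \eqref{En}): the linear coupling $-\al\partial_X J$, $-\al\partial_X\Om$ cancels exactly in the sum since $\tl A$ is a Fourier multiplier and $\int\partial_X(\tl AJ\cdot\tl A\Om)=0$; the linear stretch $2\partial^t_{XY}\Phi=2\partial_{XY}^t\Dl_t^{-1}J$ annihilates the zero mode and has order‑zero symbol uniformly on $[0,1]$ (since $|\eta-tk|\lesssim|k,\eta|$ for $t\le1$), so it needs none of $M^1,m,\mathcal{J}$ here; the transport nonlinearities $V\cdot\nabla\Om$, $V\cdot\nabla J$ in \eqref{NL} lose no derivative thanks to $\nabla_t\cdot V=0$, and the Gevrey commutator $[\tl A,V\cdot\nabla]$ loses only $1-s<1$ derivatives, absorbed by $-\dot{\tl\lm}$; the coordinate‑change errors $((Y')^2-1)\partial_{YY}^L J$ and $Y''\partial_Y^L J$ hidden in $\Dl_t$ (see \eqref{Dlt}) either carry the factor $\mu$, in which case the parabolic gain $\mu\|\nabla_L(\cdots)\|_{L^2}^2$ wins, or carry the small factor $\|Y'-1\|,\|Y''\|\lesssim\eps'$; and the cross term $-\tfrac2\al\la\cdots\ra$ and its time derivative are bounded by $\tfrac{C}{|\al|}$ times the energy plus its dissipation, harmless for $|\al|\ge\al_0$. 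The remaining quadratic terms are handled by the Gevrey product/paraproduct estimates of \cite{BM15}. This yields $\tfrac{d}{dt}(\mathrm{energy})\lesssim(\mathrm{energy})^{3/2}+(\mathrm{energy})$, so on $[0,1]$ the energy at most, say, doubles, which improves the bootstrap once $\eps'$ is small; meanwhile the equations \eqref{sys-coor} for $Y'-1,\dot Y,H$ — whose $1/t$ factors are controlled by writing $Y'-1=-\tfrac1t\int_0^t\om_0\,ds$ etc.\ and using $\|\tfrac1t\int_0^t g\,ds\|\le\sup_{[0,t]}\|g\|$ — give $\|Y'-1\|_{L^\infty}\lesssim\sup\|\om_0\|_{L^\infty}\lesssim\sup(\mathrm{energy})^{1/2}\lesssim\eps'<\tfrac12$, improving the second bootstrap. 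Hence $T$ can be pushed to $1$.

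It remains to pass from the $\mathcal{G}^{\tl\lm_1,\sigma'}$‑control just obtained to the bound $\mathcal{E}(1)<\eps^2$. On $[0,1]$ every time‑dependent multiplier is comparable to a constant: $\mathcal{M}_k(1,\eta)\approx1$ by \eqref{M1}, \eqref{Mmu}, \eqref{m1}, and $\mathcal{J}_k(1,\eta)\lesssim e^{2r\la\eta\ra^{1/2}}$ by the construction of $w$, so $A_k(1,\eta)\lesssim e^{\lm(1)|k,\eta|^s+2r\la\eta\ra^{1/2}}\la k,\eta\ra^\sigma$; since $\tfrac12<s$ the half‑power and polynomial corrections are absorbed by any slack in the Gevrey exponent, i.e.\ $A_k(1,\eta)\lesssim e^{\tl\lm_1|k,\eta|^s}$ because $\tl\lm_1>\lm(1)$. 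Therefore $\|A\Om(1)\|_{L^2}+\|AJ(1)\|_{L^2}\lesssim\|(\Om,J)(1)\|_{\mathcal{G}^{\tl\lm_1,0}}\lesssim\eps'$, and every term of $E_Y(1)$ is likewise $\lesssim\eps'$ since the $\la t\ra$‑weights are $O(1)$ on $[0,1]$ and $H,\dot Y,Y'-1$ were bounded in the relevant norms above; choosing $\eps'$ small enough that all implied constants produce $\mathcal{E}(1)<\eps^2$ finishes the proof.

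Since the statement is a soft local result deferred to the literature, there is no genuinely deep difficulty. The only real care is the simultaneous closure of the two bootstrap assumptions — the $(\Om,J)$ energy and the coordinate functions $(Y'-1,\dot Y,H)$ — and, within it, verifying that the derivative losses generated by the nonlinear coordinate change in $\Dl_t$ (the $((Y')^2-1)\partial_{YY}^L$ and $Y''\partial_Y^L$ terms) are truly absorbed, which they are, either by the accompanying factor $\mu$ together with the parabolic gain $\mu\|\nabla_L J\|_{L^2}^2$, or by the smallness $\|Y'-1\|,\|Y''\|=O(\eps')$ combined with the room from the strictly decreasing radius $\tl\lm$. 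The treatment of the $1/t$ singularities at $t=0$ in $H,\dot Y$ and in $Y'-1$ through time‑averages is routine but must be written out explicitly.
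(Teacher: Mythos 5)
Your proposal follows the intended route: the paper itself gives no proof of this lemma, remarking only that ``the local well-posedness theory\dots is analogous to that of Euler equations given in \cite{BM15},'' and your sketch is a faithful reconstruction of that argument adapted to the MHD system (shrinking Gevrey radius on $[0,1]$, CK absorption, bootstrap, parabolic gain from $\mu\Dl_t J$, comparison of $A_k(1,\eta)$ to $e^{\tl\lm_1|k,\eta|^s}$ once $\tl\lm_1>\lm(1)$). Three small imprecisions are worth flagging, not because they break the argument but because a careful write-up must account for them.
First, the claim $\nabla_t\cdot V=0$ is not what enters the energy identity; the relevant quantity is the ordinary divergence $\nabla\cdot V=\partial_Y Y'\,\partial_X\Psi_\ne+\partial_Y\dot Y$, which is \emph{not} zero but is a small cubic-type error --- this is exactly the term $\tfrac12\sum_g\int(\nabla\cdot V)|Ag|^2$ that appears in the paper's energy identity \eqref{NL1}. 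Second, you list only $V\cdot\nabla\Om$ and $V\cdot\nabla J$ as the ``transport nonlinearities'' and fold the magnetic transport $Y'\nabla^\perp\Phi_\ne\cdot\nabla J$ (in $\mathrm{NL}[\Om]$) and $Y'\nabla^\perp\Phi_\ne\cdot\nabla\Om$ (in $\mathrm{NL}[J]$) into ``remaining quadratic terms handled by Gevrey product estimates''; this cannot be right as stated, since each of those terms individually loses a full derivative of $(\Om,J)$ --- what saves them is precisely the cross-cancellation between the $\Om$-equation and $J$-equation contributions after integration by parts (leaving only the small $-\int\nabla\cdot(Y'\nabla^\perp\Phi_\ne)\,A\Om\,AJ$ correction, again as in \eqref{NL1}), and this antisymmetry needs to be invoked explicitly. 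Third, $\mathcal{M}_k(1,\eta)\approx1$ is false as a two-sided bound: $m_k(1,\eta)$ can be of size $\eta^2/k^2$ at resonant frequencies, so $\mathcal{M}_k(1,\eta)$ can be small; what you actually need and have is only the upper bound $\mathcal{M}_k(1,\eta)\lesssim1$ (from $m,M^1,M^\mu\ge1$), which is enough for $\|A\Om(1)\|_{L^2}\lesssim\|\Om(1)\|_{\mathcal{G}^{\tl\lm_1}}$.
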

The main theorem of this paper is going to be proven by a continuity argument. We define
the following controls referred to in the sequel as the bootstrap hypotheses for $t\ge1$.  Throughout this paper, we denote by  $\eps$ the small positive constant which may depend on $\mu$. The explicit dependence on $\mu$ will be determined by the proof, and concluded in Proposition \ref{prop-main}, see \eqref{initial-1} below.

\noindent{\bf Main system:}
\be\label{hypo-main}
E(t)+\int_1^t\sum_{\theta\in\{M^1, M^\mu\}}\sum_{g\in\{\Om, J\}}\mathrm{CK}_{\theta, g}+\sum_{g\in\{\Om, J\}}\mathrm{CK}_{\lambda, g}+\mu\|\nabla_LAJ\|_{L^2}^2dt'\le4\eps^2,
\ee
where
\beno
\mathrm{CK}_{\lm, g}=-\dot{\lm}(t)\left\| |\nabla|^{\fr{s}{2}}Ag\right\|^2_{L^2},\quad \mathrm{CK}_{w,g}=\left\|\sqrt{\fr{\partial_tw}{w}}\sqrt{\tl{A}A}g\right\|^2_{L^2}, \quad \mathrm{CK}_{\theta, g}=\left\| \sqrt{\fr{\partial_t\theta}{\theta}}Ag\right\|^2_{L^2}.
\eeno
\noindent{\bf Coordinate system:} 
\be\label{coor1}
\left\|A^R(Y'-1)(t)\right\|_{L^2}^2+\fr12\int_1^t\mathrm{CK}_{w}^R(t')+\mathrm{CK}_{\lm}^R(t')dt'\le 4K_Y\eps^2,
\ee

\be\label{coor2}
\la t\ra^{2+2s}\left\|\fr{A}{\la \partial_Y\ra^s}H(t) \right\|^2_{L^2}+\fr12\int_1^t\mathrm{CK}_{w}^{Y,2}(t')+\mathrm{CK}_{\lm}^{Y,2}(t')dt'\le4\eps^2,
\ee

\be\label{coor3}
\la t\ra^{4-K_D\mu^{-\fr23}\eps}\left\|\dot{Y}\right\|^2_{\mathcal{G}^{\lm(t),\sigma-6}}\le 4\eps^2,
\ee
where 
\beno
\mathrm{CK}_{w}^R=\left\|\sqrt{\fr{\partial_tw_R}{w_R}}A^R(Y'-1) \right\|_{L^2}^2,\quad \mathrm{CK}_{\lm}^R=-\dot{\lm}\left\||\partial_Y|^{\fr{s}{2}}A^R(Y'-1)\right\|_{L^2}^2,
\eeno
and
\beno
\mathrm{CK}_{w}^{Y,2}(t)=\la t\ra^{2+2s}\left\|\sqrt{\fr{\partial_tw}{w}}\fr{A}{\la \partial_Y\ra^s}H(t) \right\|_{L^2}^2,\quad \mathrm{CK}_{\lm}^{Y,2}(t)=\la t\ra^{2+2s}\left(-\dot{\lm}(t)\right)\left\| |\partial_Y|^{\fr{s}{2}}\fr{A}{\la\partial_Y\ra^s}H(t)\right\|_{L^2}^2.
\eeno
\noindent{\bf The zero mode $u_0^1$ and $b_0^1$:}
\be\label{hy-ub0}
\|u^1_0(t)\|^2_{L^2}+\|b^1_0(t)\|^2_{L^2}+\mu\|\partial_yb_0^1\|_{L^2L^2}^2\le 4\eps^2.
\ee
\noindent{\bf The good unknown $F$:}
\be\label{hy-F}
\left\|\mathcal{A}F(t)\right\|_{L^2}^2+\mu\left\|\nabla_L\mathcal{A}F\right\|_{L^2L^2}^2+\int_1^t\left(-\dot{\lm}(t')\right)\left\||\nabla|^{\fr{s}{2}}\mathcal{A}F\right\|^2_{L^2}dt'+\left\|\sqrt{\fr{\partial_tM^{\mu}}{M^\mu}}\mathcal{A}F\right\|^2_{L^2L^2}\le 4\left(C_0\eps\mu^{-\fr23}\right)^2,
\ee
where $C_0$ is a positive constant depending on $s, \lm, \lm'$ and  $\al$ determined in the proof.

Let $T^*$ be the maximal time $T>1$ such  that the hypotheses \eqref{hypo-main}--\eqref{hy-F} hold on $[1, T]$. The goal is then to prove that $T^*=+\infty$. By continuity, it suffices to improve the bounds on the right-hand side of \eqref{hypo-main}--\eqref{hy-F}, which is the purpose of the following proposition.
\begin{prop}\label{prop-main} 
Let $\sigma>9$.
Assume that \eqref{hypo-main}--\eqref{hy-F} hold on $[1, T^*]$. Then there exist a universal constant $\al_0>0$  and a sufficiently small constant $c_0$ depending on $\lm_{0}, \lm', s$ and $\al$, such that if $|\al|\ge\al_0$ and the initial data $(u_{\mathrm{in}}, b_{\mathrm{in}}, \om_{\mathrm{in}}, j_{\mathrm{in}})$ satisfies 
\be\label{initial-1}
\|(u_{\mathrm{in}},b_{\mathrm{in}})\|_{L^2}^2+\sum_{k\in\mathbb{Z}}\int (|\widehat{\omega_{\mathrm{in}}}|^2+|\widehat{j_{\mathrm{in}}}|^2)\la k,\eta\ra^{2\sigma}e^{2\lambda_0|k,\eta|^{s}}d\eta \le\eps^2=c_0^2\mu^2,
\ee
then the estimates in \eqref{hypo-main}--\eqref{hy-F} hold on $[1, T^*]$ with all the occurrences of ``4'' replaced by ``3''.
\end{prop}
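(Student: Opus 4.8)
The plan is to close the five bootstrap bounds \eqref{hypo-main}--\eqref{hy-F} by a continuity argument: assuming they hold on $[1,T^*]$, I would differentiate each energy functional in time along the solution, substitute the equations \eqref{OMJ}, \eqref{e-F}, \eqref{sys-coor} and \eqref{ub01}, and show that every error term is either absorbed by one of the $\mathrm{CK}$ (Cauchy--Kovalevskaya) or dissipation terms carried with favorable sign on the left, or bounded by $o(1)$ times the target constant plus the initial data \eqref{initial-1}, the ``$o(1)$'' being a power of $c_0$ since $\eps=c_0\mu$. A preliminary observation is that, by an inequality of the type \eqref{approx}, the cross term in \eqref{En} is dominated by $\fr{1}{|\al|}\left(\|A\Om\|_{L^2}^2+\|AJ\|_{L^2}^2\right)$, so for $|\al|\ge\al_0$ the functional $E(t)$ is comparable to $\|A\Om\|_{L^2}^2+\|AJ\|_{L^2}^2$ and genuinely controls both $\Om$ and $J$ in $\mathcal{G}^{\lm(t),\sigma}$; hence it suffices to improve each displayed bound separately.

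\textbf{Order of the estimates.} I would treat the good unknown $F$ first, since the loss-of-derivative elliptic estimate $J_{\ne}\approx\Dl_t^{-1}\big(\mu^{-1}F-\mu^{-1}(\al+B^1_0)\partial_X\Om\big)$ turns the $F$-bound \eqref{hy-F} into a $\mu^{-1}\la t\ra^{-2}\min\{\mu^{-1/3},\la t\ra\}$-type decay of $J_{\ne}$ that is needed everywhere downstream. Differentiating $\|\mathcal{A}F\|_{L^2}^2$ using \eqref{e-F}, the coupling $-\al^2\partial_{XX}J$ is balanced against $-\mu\Dl_t F$ and $4\mu\partial_{XY}^tJ$ by essentially the same computation that produced \eqref{enh-dis} and \eqref{dis-domi} at the linear level, now carried on the multiplier $\mathcal{A}$ (which contains $(M^\mu)^{-1}$); the forcing $\mathrm{NL}[F]$ from \eqref{NLF} is handled term by term, the transport part $-V\cdot\nabla F$ by commutator estimates and the rest by the main-energy bootstrap together with the coordinate and zero-mode bounds. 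Next the zero modes: an $L^2$ estimate on \eqref{ub01} yields \eqref{hy-ub0}, the quadratic forcings being controlled by $\eps\cdot\eps$ through the time-integrable decay of $u^2,b^2$, while $\mu\partial_{yy}b_0^1$ supplies the $\mu\|\partial_yb_0^1\|_{L^2L^2}^2$ term. Then the coordinate system \eqref{coor1}--\eqref{coor3} follows from energy estimates on \eqref{sys-coor} with the multipliers $A^R$, $A/\la\partial_Y\ra^s$ and the weighted $\mathcal{G}^{\lm(t),\sigma-6}$ norm; the $\fr{2}{t}$ damping in the $H$ and $\dot Y$ equations produces the prefactors $\la t\ra^{2+2s}$ and $\la t\ra^{4-K_D\mu^{-2/3}\eps}$, the quadratic right-hand sides are again controlled by the main energy times a time-integrable weight, and the new $b_0^1$ interactions are absorbed via the commutator estimate in Lemma \ref{lem-com-J}.

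\textbf{The main system.} Differentiating $E(t)$ from \eqref{En} along \eqref{OMJ}: the linear terms $\al\partial_X J$, $\al\partial_X\Om$, $2\partial_{XY}^t\Phi$ and $-\mu\Dl_t J$, after the cross-term manipulation exactly as in the proof of Proposition \ref{prop-L}, reproduce the positive $\mathrm{CK}_{M^1,g}$, $\mathrm{CK}_{M^\mu,g}$ and $\mu\|\nabla_LAJ\|_{L^2}^2$ contributions, while $\dot\lm$ and $\partial_tw$ in $A$ produce $\mathrm{CK}_{\lm,g}$ and $\mathrm{CK}_{w,g}$. The nonlinearities $\mathrm{NL}[\Om],\mathrm{NL}[J]$ from \eqref{NL} are split by paraproduct into transport, reaction and remainder pieces: the transport pieces use the commutator estimates on $\mathcal{M}$ (Lemmas \ref{lem-com-sqm}, \ref{lem-com-mu}, Corollary \ref{coro-com-1}) together with the $\mathrm{CK}_w$ absorption of \cite{BM15} used as a black box; the genuinely MHD reaction/stretching pieces $-2\partial_{XY}^t\Phi(-\Om+2\partial_{XX}\Psi)$ and $2\partial_{XY}^t\Psi(-J+2\partial_{XX}\Phi)$ are absorbed using the $F$-bound; the $Y',Y'',\dot Y$-dependent pieces use the coordinate bounds, and crucially the dangerous high--low term $(Y'')_{\text{high}}\partial_Y^L(J_{\ne})_{\text{low}}$ in the dissipation error is controlled not by dissipation (there is none on the velocity zero mode) but by the $J_{\ne}$-decay obtained in the $F$-step. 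Integrating the resulting damping and decay inequalities then gives the decay and scattering statements of Theorem \ref{thm} as corollaries of the closed bootstrap.

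\textbf{Main obstacle.} I expect the hard part to be the $F$-estimate and its feedback into the main system: one must run the $\al^2$-versus-$\mu$ balance in \eqref{e-F} without degrading the factor $\mu^{-2/3}$ in \eqref{hy-F} (so that $\eps=c_0\mu$, rather than a worse power of $\mu$, suffices), and then verify that the resulting loss-of-derivative decay of $J_{\ne}$ is precisely strong enough to absorb the $Y''$ derivative loss in the dissipation error. Tracking the exact powers of $\mu$ across all four families of estimates, and checking that the constants $K_Y, K_D, C_0$ and $\al_0$ can be fixed consistently before $c_0$ is sent to zero, is the real bookkeeping core of the argument; the remaining transport-nonlinearity estimates are, modulo the $\mathcal{M}$- and $\mathcal{J}$-commutator lemmas, perturbations of the corresponding estimates in \cite{BM15}.
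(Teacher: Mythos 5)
Your proposal follows essentially the same route as the paper: improve the five bootstrap bounds separately by energy estimates on \eqref{OMJ}, \eqref{e-F}, \eqref{sys-coor} and \eqref{ub01}, using the cross-term cancellation of Proposition \ref{prop-L} for the linear part, paraproduct decompositions with the $\mathcal{M}$- and $\mathcal{J}$-commutator lemmas for the nonlinearities, and the $J_{\ne}$-decay from \eqref{hy-F} and \eqref{loss3} to absorb the $(Y'')_{\mathrm{high}}\partial_Y^L(J_{\ne})_{\mathrm{low}}$ derivative loss; this is exactly the organization of Sections 5--12. The only slip is a minor misattribution: Lemma \ref{lem-com-J} is deployed against the $B_0^1\partial_X\Om$, $B_0^1\partial_XJ$ terms in the main energy (Section \ref{sec-zero-mode}), not in the coordinate-system controls, and the proof order in the paper runs main-system $\to$ zero modes $\to$ $F$ $\to$ coordinates rather than starting from $F$, which in a bootstrap argument is immaterial since all hypotheses are assumed simultaneously.
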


The remainder of the paper is devoted to proving Proposition \ref{prop-main}.
\begin{itemize}
\item The improvement of \eqref{hypo-main} will be achieved by combining  the estimates in Sections \ref{energy-main}--\ref{sec-NLS}.
\item The improvement of \eqref{hy-ub0} is given in Section \ref{sec-ub0}.
\item The improvement of \eqref{hy-F} can be found in Section \ref{sec-F}.
\item The improvements of \eqref{coor1}--\eqref{coor3} can be found in Section \ref{sec-coor}.
\end{itemize}

\subsection{Estimates following from the bootstrap hypotheses} In this section, we give some estimates that follow from the bootstrap hypotheses \eqref{hypo-main}--\eqref{hy-F} immediately.

The first lemma below shows the uniform estimates of $\Om$ and $J$ implied by $E(t)$  and the enhanced dissipation of $J_{\ne}$ and $F_{\ne}$ in the form of the space-time $L^2$ integration.
\begin{lem}
Under the bootstrap hypotheses \eqref{hypo-main}--\eqref{hy-F}, the following estimates hold on $[1,T^*]$:
\be\label{enhan-JF}
\|A\Om(t)\|^2_{L^2}+\|AJ(t)\|^2_{L^2}+\mu^\fr13\|AJ_{\ne}\|_{L^2L^2}^2+\mu^{\fr53}\|\mathcal{A}F_{\ne}\|_{L^2L^2}^2\les\eps^2.
\ee
\end{lem}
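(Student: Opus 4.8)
The plan is to read off \eqref{enhan-JF} directly from the bootstrap hypotheses by extracting the pieces of the functionals that control each term. First I would note that by the approximate identity \eqref{approx} (applied with $A$ in place of $\mathcal{M}$, since $A=e^{\lm|k,\eta|^s}\la k,\eta\ra^\sigma\mathcal{J}_k\cdot\mathcal{M}_k$ and $\mathcal{J}$ is a positive scalar multiplier that commutes through the argument), one has
\[
E(t)=\|A\Om(t)\|^2_{L^2}+\|AJ(t)\|^2_{L^2}-\fr{2}{\al}\left\la\fr{\partial_t(m^\fr12)}{m^\fr12}\partial_X^{-1}A\Om_{\ne},AJ_{\ne}\right\ra\approx \|A\Om(t)\|^2_{L^2}+\|AJ(t)\|^2_{L^2},
\]
because $\left|\fr{\partial_t(m^{1/2})}{m^{1/2}}\partial_X^{-1}\right|={\bf 1}_{D_1\cup D_2}\fr{|\eta-kt|}{k^2+(\eta-kt)^2}\le\fr12$ on the non-zero modes, so the cross term is bounded by $\fr{1}{2|\al|}(\|A\Om_{\ne}\|^2+\|AJ_{\ne}\|^2)$, which for $|\al|\ge\al_0$ is a small fraction of the main terms. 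Combined with the bound $E(t)\le 4\eps^2$ from \eqref{hypo-main}, this gives $\|A\Om(t)\|^2_{L^2}+\|AJ(t)\|^2_{L^2}\les\eps^2$, the first two terms of \eqref{enhan-JF}.

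Next I would handle the enhanced dissipation term $\mu^{1/3}\|AJ_{\ne}\|^2_{L^2L^2}$. The point is that $\mathrm{CK}_{M^\mu,J}=\left\|\sqrt{\partial_tM^\mu/M^\mu}\,AJ\right\|^2_{L^2}$ appears inside the time integral in \eqref{hypo-main} and is therefore $\le 4\eps^2$. By Lemma \ref{lem-enh} (more precisely its immediate consequence that $\fr{\partial_tM^\mu_k}{M^\mu_k}\ge\fr12\mu^{1/3}$ on non-zero modes, since $\mu(\eta-kt)^2\ge0$), one has $\mu^{1/3}|\widehat{AJ_{\ne}}|^2\le 2\fr{\partial_tM^\mu}{M^\mu}|\widehat{AJ_{\ne}}|^2$ pointwise in frequency, hence $\mu^{1/3}\|AJ_{\ne}\|^2_{L^2L^2}\le 2\int_1^t\mathrm{CK}_{M^\mu,J}\,dt'\les\eps^2$. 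The same mechanism applied to $F$ gives the last term: from \eqref{hy-F} the quantity $\left\|\sqrt{\partial_tM^\mu/M^\mu}\,\mathcal{A}F\right\|^2_{L^2L^2}\le 4(C_0\eps\mu^{-2/3})^2$, and since $\mathcal{A}F$ is supported on non-zero modes in its dissipative part (the $k=0$ component of $M^\mu$ is frozen at $1$, and one checks $F_0$ separately or notes the CK term only sees $k\ne0$), Lemma \ref{lem-enh} gives $\mu^{1/3}\|\mathcal{A}F_{\ne}\|^2_{L^2L^2}\les (C_0\eps\mu^{-2/3})^2=C_0^2\eps^2\mu^{-4/3}$, i.e. $\mu^{5/3}\|\mathcal{A}F_{\ne}\|^2_{L^2L^2}\les\eps^2$ after multiplying by $\mu^{4/3}$.

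I do not expect any genuine obstacle here: this lemma is purely a bookkeeping consequence of the definitions \eqref{En}, \eqref{hy-F}, the structure of $A$ and $\mathcal{A}$, and Lemma \ref{lem-enh} together with \eqref{approx}. The only mildly delicate point — and the one I would write out carefully — is justifying the coercivity $E(t)\approx\|A\Om\|^2+\|AJ\|^2$ uniformly in $t$ and $\mu$, which requires the smallness of $\fr{1}{|\al|}$ relative to the $D_1\cup D_2$ bound on $\partial_X^{-1}\partial_t(m^{1/2})/m^{1/2}$; this is exactly the nonlinear analogue of \eqref{approx} and uses $|\al|\ge\al_0$ with $\al_0\ge 8$ as in Proposition \ref{prop-L}. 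Everything else is a pointwise-in-frequency comparison integrated in space and time.
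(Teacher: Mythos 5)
Your coercivity argument for $E(t)$ is correct and is exactly what the paper means by "similar to \eqref{approx}": the magnitude of the multiplier $\fr{\partial_t(m^{1/2})}{m^{1/2}}\partial_X^{-1}$ is $\fr{|\eta-kt|}{k^2+(\eta-kt)^2}\le\fr{1}{2|k|}\le\fr12$ on nonzero modes, and for $|\al|\ge 8$ the cross term is absorbed by the diagonal.

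There is, however, a genuine error in your treatment of the enhanced-dissipation terms. You claim as an "immediate consequence" of Lemma \ref{lem-enh} that $\fr{\partial_tM^\mu_k}{M^\mu_k}\ge\fr12\mu^{1/3}$ on nonzero modes, "since $\mu(\eta-kt)^2\ge0$." This inference is backwards: from $A+B\ge C$ with $A\ge0$ one cannot conclude $B\ge C$. Indeed, from the definition \eqref{Mmu}, $\fr{\partial_tM^\mu_k}{M^\mu_k}=\fr{\mu^{1/3}}{1+\mu^{2/3}(\fr{\eta}{k}-t)^2}$, which is strictly less than $\fr12\mu^{1/3}$ as soon as $\left|\fr{\eta}{k}-t\right|>\mu^{-1/3}$. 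So the pointwise inequality $\mu^{1/3}|\widehat{AJ_{\ne}}|^2\le 2\fr{\partial_tM^\mu}{M^\mu}|\widehat{AJ_{\ne}}|^2$ is false on a large region of frequency space, and your bound $\mu^{1/3}\|AJ_{\ne}\|^2_{L^2L^2}\le 2\int_1^t\mathrm{CK}_{M^\mu,J}\,dt'$ does not follow.

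The repair uses Lemma \ref{lem-enh} in full: $\mu(\eta-kt)^2+\fr{\partial_tM^\mu_k}{M^\mu_k}\ge\fr12\mu^{1/3}$, so pointwise
\[
\mu^{1/3}|\widehat{AJ_{\ne}}|^2\le 2\mu(\eta-kt)^2|\widehat{AJ_{\ne}}|^2+2\,\fr{\partial_tM^\mu}{M^\mu}|\widehat{AJ_{\ne}}|^2,
\]
and after integration $\mu^{1/3}\|AJ_{\ne}\|^2_{L^2L^2}\le 2\mu\|\partial_Y^L AJ_{\ne}\|^2_{L^2L^2}+2\int_1^t\mathrm{CK}_{M^\mu,J}\,dt'$. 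Both terms on the right are controlled by \eqref{hypo-main} (note that $\mu\|\nabla_LAJ\|^2_{L^2L^2}$ appears in the integrated bootstrap bound alongside the CK terms), so the conclusion $\les\eps^2$ still holds, but it genuinely requires \emph{both} the CK and the dissipation contributions. The same correction applies to the $F$ term: \eqref{hy-F} controls $\mu\|\nabla_L\mathcal{A}F\|^2_{L^2L^2}$ as well as $\left\|\sqrt{\partial_tM^\mu/M^\mu}\,\mathcal{A}F\right\|^2_{L^2L^2}$, each by $4(C_0\eps\mu^{-2/3})^2$, and their sum is what dominates $\fr12\mu^{1/3}\|\mathcal{A}F_{\ne}\|^2_{L^2L^2}$.
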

\begin{proof}
Similar to \eqref{approx}, for $|\al|\ge8$ we have
\beno
E(t)\approx \|A\Om(t)\|^2_{L^2}+\|AJ(t)\|^2_{L^2}.
\eeno
The rest two estimates in \eqref{enhan-JF} follow from \eqref{enh-dis}, \eqref{hypo-main}, and \eqref{hy-F} immediately.
\end{proof}
 We will use \eqref{enhan-JF} so frequently throughout the proof that we will typically do so without any remark.
The next lemma concerns the zero-mode estimates of $U_0^1$, $B_0^1$, and $J_0$ under the change of coordinate \eqref{nl-coor}.
\begin{lem}
Under the bootstrap hypotheses \eqref{hypo-main}--\eqref{hy-F}, the following estimates hold on $[1,T^*]$:
\be\label{U0B0J0}
\|AU_0^1\|_{L^\infty L^2}+\|AB_0^1\|_{L^\infty L^2}+\mu^\fr12\left\|A\partial_YB_0^1\right\|_{L^2L^2}+\mu^{\fr12}\|AJ_0\|_{L^2L^2}\les\eps.
\ee
\end{lem}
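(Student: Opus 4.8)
The plan is to derive the zero-mode bounds \eqref{U0B0J0} directly from the coordinate-system relations in \eqref{sys-coor} together with the nonzero-mode control already granted by the bootstrap hypotheses. First I would recall that under the change of coordinates $U^1=-Y'\partial_Y^L\Psi$ and $B^1=-Y'\partial_Y^L\Phi$, and since the $X$-average kills the $\partial_X$ contributions, $U_0^1$ and $B_0^1$ are essentially $\partial_Y$ applied to the zero modes of $\Psi$ and $\Phi$, up to the factor $Y'$. However, it is cleaner to go back to the evolution equations \eqref{ub01}: $\partial_tu_0^1=-(u_\ne\cdot\nabla u^1_\ne)_0+(b_\ne\cdot\nabla b^1_\ne)_0$ and $\partial_tb_0^1=\mu\partial_{yy}b_0^1-(u_\ne\cdot\nabla b^1_\ne)_0+(b_\ne\cdot\nabla u^1_\ne)_0$, transported into the new coordinates. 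Integrating in time from $1$ to $t$ and applying the multiplier $A$ (which, on zero modes, reduces to $A_0(t,\eta)=e^{\lambda(t)|\eta|^s}\langle\eta\rangle^\sigma\mathcal{J}_0(t,\eta)$ since $\mathcal{M}_0=M^1=M^\mu=m^{1/2}=1$ there), we reduce everything to estimating the $A$-weighted $L^2$ norm of the zero modes of quadratic nonlinearities built from nonzero modes.

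The key structural point is that each nonlinear term is of the form $(f_\ne g_\ne)_0$ where $f_\ne,g_\ne$ are nonzero modes of $U_\ne,B_\ne,\partial U_\ne,\partial B_\ne$, all of which are controlled — with enhanced-dissipation time integrability for the current-density pieces — by \eqref{enhan-JF} and the elliptic estimates relating $\Psi,\Phi$ to $\Omega,J$. The step I would carry out next is a paraproduct decomposition of $(f_\ne g_\ne)_0$ in the new coordinates and a standard commutator/product estimate for the Gevrey multiplier $A$ (of exactly the type proved in \cite{BM15}, cited as a black box), using that on a product of two nonzero modes the combined frequency can always be distributed so that $|k,\eta|^s\lesssim |k',\eta'|^s+|k-k',\eta-\eta'|^s$ and $\langle k,\eta\rangle^\sigma\lesssim \langle k',\eta'\rangle^\sigma+\langle k-k',\eta-\eta'\rangle^\sigma$. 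This gives $\|A(f_\ne g_\ne)_0\|_{L^2}\lesssim \|Af_\ne\|_{L^2}\|g_\ne\|_{\mathcal{G}^{\lambda,\sigma'}}+\|f_\ne\|_{\mathcal{G}^{\lambda,\sigma'}}\|Ag_\ne\|_{L^2}$ with $\sigma'$ a fixed lower regularity, and then one absorbs one factor into an $L^2_tL^2$-norm carrying a power of $\mu$ and the other into $L^\infty_tL^2$ or $L^2_tL^2$. For the $u_0^1$ equation the two factors both contribute; for the terms with a $b$-factor one can exploit the current-density enhanced dissipation $\mu^{1/3}\|AJ_\ne\|_{L^2L^2}^2\lesssim\eps^2$; time-integrating $\int_1^t\|A(\text{quadratic})_0\|_{L^2}\,dt'\lesssim \eps^2$ then follows by Cauchy–Schwarz in time, and since $\eps=c_0\mu$ is much smaller than $\eps$, this is consistent with the claimed bound $\lesssim\eps$.

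For the magnetic zero mode $b_0^1$ I would additionally use the dissipative structure: multiply the $A$-weighted equation by $AB_0^1$, integrate in $X,Y$ and in time, and the $\mu\partial_{yy}$ term produces the good term $\mu\|A\partial_YB_0^1\|_{L^2L^2}^2$ on the left (plus a commutator coming from $A$ acting on $\mu\partial_{yy}$, controlled by the $\mathrm{CK}_\lambda$-type term which is part of the bootstrapped budget), while the nonlinear terms are handled exactly as above. This simultaneously yields the $\|AB_0^1\|_{L^\infty L^2}$ bound and the $\mu^{1/2}\|A\partial_YB_0^1\|_{L^2L^2}$ bound. Finally, $\mu^{1/2}\|AJ_0\|_{L^2L^2}$ is obtained not from an evolution equation for $J_0$ but by noting $J_0=\Delta_t\Phi$ restricted to zero modes is $\partial_Y(Y'\partial_Y\Phi_0)\sim -\partial_YB_0^1$ up to lower-order $Y'$-commutator corrections; since $\|Y'-1\|$ is small by \eqref{coor1} and the local theory, $\|AJ_0\|_{L^2L^2}\lesssim \|A\partial_YB_0^1\|_{L^2L^2}+(\text{small})$, and multiplying by $\mu^{1/2}$ gives the claim from the bound just established.

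The main obstacle I anticipate is the treatment of the $Y'$-factors (and the $Y''$ hidden inside $\Delta_t$ and $\partial_Y^L$) appearing in the nonlinearities when transported to the new coordinates: these carry a derivative loss and are only controlled at the slightly stronger regularity $A^R$ with the weaker decay coming from \eqref{coor1}. The resolution is that $Y'-1$ is genuinely small (size $\lesssim\sqrt{K_Y}\eps$) so it can always be placed in the low-regularity/low-norm slot of the paraproduct, and the derivative loss it would nominally cause is compensated either by the Gevrey gap $\lambda(t)$ (via a $\mathrm{CK}_\lambda$ commutator term, already in the energy budget) or, for the current-density interactions, by the extra time decay of $J_\ne$ quantified through $\mu^{-1}\langle t\rangle^{-2}$ as advertised in the introduction. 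Once that bookkeeping is set up, every term closes with a power of $\eps$ (in fact of $c_0\mu$) to spare, which is comfortably below the target $\lesssim\eps$.
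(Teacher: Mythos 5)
Your proposal reverses the logical flow of the paper's proof and, in doing so, runs into an obstacle the authors explicitly flag and deliberately avoid. The paper's argument contains no new energy estimate at all: it is a short algebraic reduction to bounds that are \emph{already} part of the bootstrap. The key observation you miss is the frequency-split inequality for zero modes, $\|Ag_0\|_{L^2}\lesssim \|g_0\|_{L^2}+\|\partial_Y Ag_0\|_{L^2}$, applied to $g_0=J_0$. The two pieces on the right are already controlled: $\|J_0\|_{L^2}\approx\|j_0\|_{L^2}=\|\partial_y b_0^1\|_{L^2}$ is bounded (after $\mu^{1/2}$-weighting in $L^2_t$) by \eqref{hy-ub0}, and $\|\partial_Y AJ_0\|_{L^2L^2}\leq\|\nabla_L AJ\|_{L^2L^2}$ is bounded by the dissipation term in \eqref{hypo-main}. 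From $\|AJ_0\|$ one then gets $\|A\partial_YB_0^1\|$ by inverting $J_0=-Y'\partial_YB_0^1$ with the geometric series in $Y'-1$ (small in $A^R$-norm by \eqref{coor1}), and finally $\|AB_0^1\|\lesssim\|b_0^1\|_{L^2}+\|A\partial_YB_0^1\|$ by the same frequency split; the $U_0^1$ bound is identical with $\Omega_0=-Y'\partial_YU_0^1$ and $\|A\Omega\|_{L^\infty L^2}$. Your scheme inverts this order: you propose to establish $\|AB_0^1\|$ and $\mu^{1/2}\|A\partial_YB_0^1\|$ first via an $A$-weighted energy estimate for $B_0^1$, and then deduce $\|AJ_0\|$ from $B_0^1$.

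The specific trouble with that inverted order is the $Y''\partial_YB_0^1$ term generated when the Laplacian $\mu\partial_{yy}$ is transported to $(X,Y)$-coordinates. The paper points this out at the start of Section~\ref{sec-ub0}: the dissipation-error pairing $\mu\langle Y''\partial_YB_0^1,B_0^1\rangle$ is not time-integrable without extra decay of $B_0^1$, which is precisely why the authors run the $b_0^1$ energy estimate in the \emph{original} $(x,y)$-coordinates (yielding only the $L^2$ bound \eqref{hy-ub0}) and then upgrade to Gevrey regularity algebraically as above. Your proposal does the Gevrey energy estimate directly in the moving frame and asserts the commutators are absorbed by the $\mathrm{CK}_\lambda$ budget, but the $Y''\partial_YB_0^1$ term has no decay in time and is not covered by any Cauchy--Kovalevskaya term in the bootstrap; a naive bound like $\mu\|AB_0^1\|_{L^\infty L^2}\|Y''\|_{L^\infty\mathcal{G}^{\lambda,\sigma}}\int_1^t\|A\partial_YB_0^1\|\,dt'$ grows like $T^{1/2}$ and does not close. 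You should also note that ``applying the multiplier $A$ and integrating in time'' is not legitimate as stated, since $A$ is time-dependent; one must run the full $\frac{d}{dt}\|A\cdot\|^2$ energy identity with the $-\dot\lambda$ and $\partial_t w/w$ pieces appearing. In short, the proposal replaces a three-line algebraic reduction with a new nonlinear energy estimate that, as written, does not close.
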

\begin{proof}
Dividing $J_0$ into high and low frequency, one easily deduces that
\[
\|AJ_0\|_{L^2}\les\left\|{\bf 1}_{|\eta|\le1}A\hat{J}_0\right\|_{L^2}+\left\|{\bf 1}_{|\eta|>1}A\hat{J}_0\right\|_{L^2}\les\|J_0\|_{L^2}+\|\partial_YAJ_0\|_{L^2}.
\]
Thanks to the fact $\|J_0\|_{L^2}\approx\|j_0\|_{L^2}$, and recalling that  $j_0=-\partial_yb^1_0$, we obtain
\[
\|AJ_0\|_{L^2}\les\|\partial_yb^1_0\|_{L^2}+\|\partial_YAJ_0\|_{L^2}.
\]
To  bound $B_0^1$, we write $\partial_YB^1_0$ in terms of $J_0=-Y'\partial_YB_0^1$, and use  the algebra property of the operator $A_0$ in $L^2$ (see {\bf(3.40)} of \cite{BM15}) to deduce that
\begin{align*}
\left\|A\partial_YB_0^1\right\|_{L^2}
&=\left\|A\left[\left(1+\fr{1-Y'}{Y'}\right)Y'\partial_YB_0^1\right]\right\|_{L^2}
\lesssim \left\|AJ_0\right\|_{L^2}+\left\|A_0\sum_{n=1}^\infty\left(Y'-1\right)^n\right\|_{ L^2}\|AJ_0\|_{L^2L^2}\\
&\lesssim \left(1+\sum_{n=1}^\infty\left\|A^R(Y'-1)\right\|_{L^\infty L^2}^n\right)\|AJ_0\|_{L^2}
\les \|AJ_0\|_{L^2},
\end{align*}
where we have used the fact $A_0\le A^R$ above. Similarly, there hold
\[
\|AB_0^1\|_{L^\infty L^2}\les\|B_0^1\|_{L^\infty L^2}+\|A\partial_YB_0^1\|_{L^\infty L^2}\les \|b_0^1\|_{L^\infty L^2}+\|AJ_0\|_{L^\infty L^2},
\]
and
\[
\|AU_0^1\|_{L^\infty L^2}\les\|U_0^1\|_{L^\infty L^2}+\|A\partial_YU_0^1\|_{L^\infty L^2}\les \|u_0^1\|_{L^\infty L^2}+\|A\Om_0\|_{L^\infty L^2}.
\]
Then by the bootstrap hypotheses, \eqref{U0B0J0} follows immediately.
\end{proof}
Recalling the relation \eqref{HdotY} between $H$, and $\dot{Y}$, using an interpolation technique, one can derive some estimates of $\dot{Y}$ in terms of $H$. More precisely, let us denote 
\beno
\mathrm{CK}_{w}^{Y,1}(t)=\la t\ra^{2+2s}\left\|\sqrt{\fr{\partial_tw}{w}}\fr{A}{\la \partial_Y\ra^s}\dot{Y}(t) \right\|_{L^2}^2,\quad \mathrm{CK}_{\lm}^{Y,1}(t)=\la t\ra^{2+2s}\left(-\dot{\lm}(t)\right)\left\| |\partial_Y|^{\fr{s}{2}}\fr{A}{\la\partial_Y\ra^s}\dot{Y}(t)\right\|_{L^2}^2.
\eeno
Then we have the following lemma, the proof of which can be found in Section {\bf 8.2} of \cite{BM15}, and is thus omitted.
\begin{lem}
Assume that the hypotheses \eqref{hypo-main}--\eqref{hy-F} hold on $[1, T^*]$. Then for sufficiently small $\eps$  such that $K_D\mu^{-\fr23}\eps<2\tl{q}-1$, it holds that
\be\label{CKY1}
\int_1^{T^*}\mathrm{CK}_{w}^{Y,1}(t')+\mathrm{CK}_{\lm}^{Y,1}(t')dt'\les\eps^{2}.
\ee
\end{lem}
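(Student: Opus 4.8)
The plan is to transfer the two Cauchy--Kovalevskaya quantities for $\dot Y$ appearing in \eqref{CKY1} onto the corresponding quantities for $H$ (which are controlled by \eqref{coor2}) and onto the weak, slowly decaying bound for $\dot Y$ itself (controlled by \eqref{coor3}), exploiting the algebraic identity \eqref{HdotY}: writing $H=Y'\partial_Y\dot Y$ as
\[
\partial_Y\dot Y=\fr{1}{Y'}H=H+\Big(\sum_{n\ge1}(1-Y')^n\Big)H,
\]
where the Neumann series converges because $\|A^R(Y'-1)\|_{L^\infty L^2}\les\sqrt{K_Y}\,\eps\ll1$ by \eqref{coor1}. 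Since $\dot Y$ and $Y'-1$ depend only on $Y$, the whole argument is one-dimensional, and I would work throughout with the multiplier $A_0\le A^R$ together with the $L^2$-algebra and product estimates for $A_0$, $A^R$ and for the Cauchy--Kovalevskaya weights $\sqrt{\partial_tw/w}$, $(-\dot\lm)^{1/2}|\partial_Y|^{s/2}$ established in \cite{BM15}.

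First I would split $\dot Y={\bf 1}_{|\eta|\le1}\dot Y+{\bf 1}_{|\eta|>1}\dot Y$. On the high frequencies, dividing by $\eta$ costs nothing: for any nonnegative multiplier $m(\partial_Y)$ one has $|m(\eta)\widehat{\dot Y}(\eta)|\le|m(\eta)\widehat{\partial_Y\dot Y}(\eta)|$ when $|\eta|>1$. Applying this with $m=\tfrac{A}{\la\partial_Y\ra^s}$, $m=\sqrt{\partial_tw/w}\,\tfrac{A}{\la\partial_Y\ra^s}$ and $m=(-\dot\lm)^{\frac12}|\partial_Y|^{\frac{s}{2}}\tfrac{A}{\la\partial_Y\ra^s}$, then substituting the identity for $\partial_Y\dot Y$ and estimating the product $\big(\sum_{n\ge1}(1-Y')^n\big)H$ by a paraproduct decomposition in which the lost derivative is placed on the factor $\sum_{n\ge1}(1-Y')^n$ (small in $A^R$ and in $\mathrm{CK}_w^R$, $\mathrm{CK}_\lm^R$ by \eqref{coor1}) while $H$ is carried only by the weights appearing in \eqref{coor2}, I obtain, pointwise in $t$, that the high-frequency parts of $\mathrm{CK}_w^{Y,1}$ and $\mathrm{CK}_\lm^{Y,1}$ are $\les\mathrm{CK}_w^{Y,2}+\mathrm{CK}_\lm^{Y,2}+(\text{lower order})$; integrating in $t$ and using \eqref{coor2} bounds this contribution by $\les\eps^2$.

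For the low frequencies $|\eta|\le1$ one cannot invert $\partial_Y$, so here I would use \eqref{coor3} directly. On $|\eta|\le1$ the factors $\mathcal M_0=1$, $\mathcal J_0$ and $\la\eta\ra^6$ are all $O(1)$, whence $A_0(t,\eta)\les e^{\lm(t)|\eta|^s}\la\eta\ra^{\sigma-6}$, so that $\|{\bf 1}_{|\eta|\le1}\tfrac{A}{\la\partial_Y\ra^s}\dot Y\|_{L^2}\les\|\dot Y\|_{\mathcal G^{\lm(t),\sigma-6}}$, and likewise with the extra $|\partial_Y|^{\frac{s}{2}}$ or $\sqrt{\partial_tw_0/w_0}$ weights, which on this set are bounded by their time decay alone — by $(-\dot\lm(t))^{\frac12}\les\la t\ra^{-\tl q}$ from \eqref{dot-lam} and by the (summable) decay of $\partial_tw_0/w_0$. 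Combining with $\|\dot Y\|_{\mathcal G^{\lm(t),\sigma-6}}^2\les\eps^2\la t\ra^{-4+K_D\mu^{-\frac23}\eps}$ from \eqref{coor3}, the low-frequency part of $\mathrm{CK}_\lm^{Y,1}(t)$ is $\les\eps^2\la t\ra^{2s-2-2\tl q+K_D\mu^{-\frac23}\eps}$, whose exponent is $<-1$ precisely when $K_D\mu^{-\frac23}\eps<1+2\tl q-2s$; since $s\le1$ this is guaranteed by the hypothesis $K_D\mu^{-\frac23}\eps<2\tl q-1$, and the low-frequency part of $\mathrm{CK}_w^{Y,1}$ is handled the same way. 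Summing the two frequency ranges yields \eqref{CKY1}.

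The step I expect to be the main obstacle is this low-frequency regime. There $\dot Y$ is only accessible through the weak bound \eqref{coor3}, whose decay rate $\la t\ra^{-2+\frac12K_D\mu^{-\frac23}\eps}$ is already degraded by the transient magnetic growth, while the Cauchy--Kovalevskaya quantities carry the strong weight $\la t\ra^{2+2s}$; forcing the time integral to converge against this weight is exactly what imposes the quantitative smallness $K_D\mu^{-\frac23}\eps<2\tl q-1$ in the hypothesis (equivalently, that the perturbation be small relative to $\mu^{\frac23}$). By contrast, the high-frequency transfer and the Neumann-series manipulations are routine given the product and $w$-commutator estimates of \cite{BM15}, which is why the analogous argument can be stated compactly there.
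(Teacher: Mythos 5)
Your proof is correct and is essentially the argument the paper defers to (Section 8.2 of \cite{BM15}): split $\dot Y$ at $|\eta|=1$, on the high frequencies pass to $\partial_Y\dot Y=H/Y'$ via a Neumann series and use the CK product rules against \eqref{coor1}--\eqref{coor2}, and on the low frequencies use the weak decay \eqref{coor3} together with $-\dot\lm(t)\approx\la t\ra^{-2\tl{q}}$, which is where the smallness $K_D\mu^{-2/3}\eps<2\tl{q}-1$ enters. Two minor sharpenings: for $|\eta|\le1$ the weight $w_0$ is identically $1$, so $\partial_tw_0/w_0=0$ and the low-frequency piece of $\mathrm{CK}_w^{Y,1}$ vanishes outright (no summability argument needed); and the integrability condition your computation really requires is $K_D\mu^{-2/3}\eps<1+2\tl{q}-2s$, which for $s<1$ is strictly weaker than the stated hypothesis, the stronger form being what other parts of the paper demand.
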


\subsection{Conclusion of the proof} In this section, we complete the proof of Theorem \ref{thm}.
By proposition \ref{prop-main}, we have a global uniform bound on $\mathcal{E}(t)$, which together with  \eqref{m1} implies that
\be\label{uni-1}
\|\Om(t)\|_{\mathcal{G}^{\lm(t),\sigma}}+\|J(t)\|_{\mathcal{G}^{\lm(t),\sigma}}+\la t\ra^2\left[\left(\|\Psi_{\ne}\|_{\mathcal{G}^{\lm(t),\sigma-3}}+\|\Phi_{\ne}\|_{\mathcal{G}^{\lm(t),\sigma-3}}\right)+\mu\|J_{\ne}\|_{\mathcal{G}^{\lm(t),\sigma-8}}\right]\les\min\left\{\mu^{-\fr13}, \la t\ra\right\}\eps,
\ee
and
\be
\la t\ra^{4-K_D\mu^{-\fr23}\eps}\left\|\dot{Y}\right\|^2_{\mathcal{G}^{\lm(t),\sigma-6}}+K_Y^{-1}\|Y'-1\|^2_{\mathcal{G}^{\lm(t),\sigma}}\les \eps^2.
\ee
On the other hand, we infer from \eqref{U0B0J0} that
\be\label{uni-3}
\|U_0^1(t)\|_{\mathcal{G}^{\lm(t),\sigma}}+\|B_0^1(t)\|_{\mathcal{G}^{\lm(t),\sigma}}\les\eps.
\ee
Next, we undo the change of coordinates in $Y$, switching to the coordinates $(X, y)$. Writing
\begin{gather*}
\tl{\om}(t,X, y)=\Om(t,X,Y)=\om(t,x,y), \quad \tl\psi(t,X, y)=\Psi(t,X,Y)=\psi(t,x,y),\\
\tl{j}(t, X,y)=J(t,X, Y)=j(t,x,y),\quad \tl\phi(t, X,y)=\Phi(t,X,Y)=\phi(t,x,y).
\end{gather*}
Then we drive from the first equation of \eqref{omj} that the equation of $\tl{\om}$ takes the form of
\be\label{eq-tlom}
\partial_t\tl\om-(\al+{b}_0^1)\partial_X\tl j=-\nabla^\bot_{X,y}\tl{\psi}_{\ne}\cdot\nabla_{X,y}\tl\om+\nabla^\bot_{X,y}\tl{\phi}_{\ne}\cdot\nabla_{X,y}\tl j.
\ee
Define $\lm_{\infty}=\lim_{t\rightarrow\infty}\lm(t)$. By the definition of $\lm(t)$ (see \eqref{dot-lam}), we have $\lm_{\infty}>\lm'$. Then following the argument in \cite{BM15}, one can show that the uniform bounds in \eqref{uni-1} and \eqref{uni-3} can be preserved in the $(X,y)$ coordinate system at the price of losing some regularities. The main ingredient is to use the composition inequality and the inverse function theorem in Gevrey spaces, see Lemmas {\bf A.4}  and {\bf A.5} of \cite{BM15}. We  omit the details for brevity and conclude that (adjusting $c_0$  if necessary) there exists some $\lm''\in(\lm',\lm_{\infty})$, such that
\beq\label{uni-Xy}
\nn&&\min\left\{\mu^{-\fr13}, \la t\ra\right\}^{-1}\Bigg[\left\|\tl{\om}(t)\right\|_{\mathcal{G}^{\lm''}}+\left\|\tl{j}(t)\right\|_{\mathcal{G}^{\lm''}}\\
&&+\la t\ra^2\left(\left\|\tl{\psi}_{\ne}(t)\right\|_{\mathcal{G}^{\lm''}}+\left\|\tl{\phi}_{\ne}(t)\right\|_{\mathcal{G}^{\lm''}}+\mu\|\tl j_{\ne}(t)\|_{\mathcal{G}^{\lm''}}\right)\Bigg]+\left\|b_0^1\right\|_{\mathcal{G}^{\lm''}}\les\eps.
\eeq
This enables us to define $\om_{\infty}$ by integrating \eqref{eq-tlom} with respect to $t$ over $[1,\infty]$:
\be\label{om_infty}
\om_\infty=\tl\om(1)+\int_1^\infty(\al+b_0^1(s))\partial_X\tl j(s)ds-\int_1^\infty\nabla^\bot_{X,y}\tl{\psi}_{\ne}(s)\cdot\nabla_{X,y}\tl\om(s)ds+\int_1^\infty\nabla^\bot_{X,y}\tl{\phi}_{\ne}(s)\cdot\nabla_{X,y}\tl j(s)ds.
\ee
It is worth pointing out that the integral over $[1, \infty)$ should be divided into the short time part over $[1,\mu^{-\fr13}]$ and the long time part over $[\mu^{-\fr13}, \infty)$. More precisely, to ensure the absolute convergence of the integral in \eqref{om_infty} uniformly in $\mu$, we use the bounds $\left\|\tl j(t)\right\|_{\mathcal{G}^{\lm''}}\les\eps\la t\ra$ and $\left\|\tl j(t)\right\|_{\mathcal{G}^{\lm''}}\les\mu^{-\fr{4}{3}}\la t\ra^{-2}\eps$ for the short time part and the long time part, respectively. Once $\om_\infty$ is well defined by \eqref{om_infty}, then one can get \eqref{om-infty} via \eqref{uni-Xy} since $\om_\infty$ is the limit of $\tl{\om}$ as $t\rightarrow\infty$.

Now integrating \eqref{eq-tlom} with respect to $t$ over $[t,\infty)$ for $t>\mu^{-\fr13}$, using the fact $\lm'<\lm''$, the algebra property of $\mathcal{G}^{\lm'}$, and the elementary inequality \eqref{ap-6}, we infer from \eqref{uni-Xy} that
\beq
\nn\left\|\tl\om(t)-\om_\infty\right\|_{\mathcal{G}^{\lm'}}&\le&\left\|\int_t^\infty(\al+b_0^1(s))\partial_X\tl j(s)ds\right\|_{\mathcal{G}^{\lm'}}+\left\|\int_t^\infty\nabla^\bot_{X,y}\tl{\psi}_{\ne}(s)\cdot\nabla_{X,y}\tl\om(s)ds\right\|_{\mathcal{G}^{\lm'}}\\
&&+\left\|\int_1^\infty\nabla^\bot_{X,y}\tl{\phi}_{\ne}(s)\cdot\nabla_{X,y}\tl j(s)ds\right\|_{\mathcal{G}^{\lm'}}\les\fr{\mu^{-\fr43}\eps}{\la t\ra}.
\eeq
Applying a similar argument to the equation of $u_0^1$ (see the first equation \eqref{ub01}) yields \eqref{Xi} and \eqref{-u_infty}. The decay rates \eqref{damping-u}--\eqref{damping-j} can be obtained via elliptic estimates. We complete the proof of Theorem \ref{thm}.

\section{Elliptic estimates}
\subsection{Lossy elliptic estimate}
\begin{lem}\label{lem-loss}
Under the bootstrap hypotheses, for $\eps$ sufficiently small, there hold
\be\label{loss1}
\left\|\Psi_{\ne}(t)\right\|_{\mathcal{G}^{\lm,\sigma-3}}\les\fr{\left\|\Om_{\ne}(t)\right\|_{\mathcal{G}^{\lm,\sigma-1}}}{\la t\ra^2},
\ee
\be\label{loss2}
\left\|\Phi_{\ne}(t)\right\|_{\mathcal{G}^{\lm,\sigma-3}}\les\fr{\left\|J_{\ne}(t)\right\|_{\mathcal{G}^{\lm,\sigma-1}}}{\la t\ra^2},
\ee
and
\be\label{loss3}
\|J_\ne\|_{\mathcal{G}^{\lm,\sigma-8}}\les\fr{\|F_\ne\|_{\mathcal{G}^{\lm,\sigma-6}}+\|\Om_{\ne}\|_{\mathcal{G}^{\lm,\sigma-5}}}{\mu\la t\ra^2}.
\ee
\end{lem}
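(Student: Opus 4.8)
\textbf{Proof plan for Lemma~\ref{lem-loss}.}
The three estimates are all instances of inverting the perturbed Laplacian $\Dl_t$ on non-zero modes with a controlled loss of Gevrey regularity, so the plan is to set up one robust elliptic lemma and apply it three times. Recall from \eqref{Dlt} that $\Dl_t=\Dl_L+\big((Y')^2-1\big)\partial_{YY}^L+Y''\partial_Y^L$, where $\Dl_L=\partial_{XX}+\partial_{YY}^L$ has Fourier symbol $-(k^2+(\eta-kt)^2)$. The key point is the basic lower bound $k^2+(\eta-kt)^2\gtrsim \la t\ra^{-2}\la k,\eta\ra^2$ valid on non-zero modes $k\neq0$ (this is exactly where the $\la t\ra^{-2}$ gain comes from, and it is also responsible for the loss of two derivatives). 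So first I would write $\Psi_{\ne}=\Dl_L^{-1}\Om_{\ne}-\Dl_L^{-1}\big[\big((Y')^2-1\big)\partial_{YY}^L\Psi_{\ne}+Y''\partial_Y^L\Psi_{\ne}\big]$, estimate the first term directly by the symbol bound above (losing two derivatives and gaining $\la t\ra^{-2}$), and treat the second term perturbatively.

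For the perturbative term I would run a fixed-point / absorption argument in the Gevrey norm $\mathcal{G}^{\lm,\sigma-3}$: the coefficients $(Y')^2-1$ and $Y''$ are controlled in a stronger norm via the bootstrap hypothesis \eqref{coor1} on $\|A^R(Y'-1)\|_{L^2}$ (together with the fact that $A^R$ dominates $A_0$ and the algebra property of these multiplier norms, cf.\ the use already made in the proof of \eqref{U0B0J0}), and for $\eps$ small they are $o(1)$ in the relevant norm. Using the product/algebra estimate in Gevrey spaces and again the symbol bound $|\Dl_L^{-1}|\lesssim \la t\ra^2\la k,\eta\ra^{-2}$ restricted to non-zero modes — but here the two lost derivatives are paid back by the two derivatives $\partial_{YY}^L$ or $\partial_Y^L$ hitting $\Psi_{\ne}$ (each $\partial^L$ costs one factor $\la t\ra\la k,\eta\ra$, so $\partial_{YY}^L$ costs $\la t\ra^2\la k,\eta\ra^2$, which is a net loss of $\la t\ra^2$; I would therefore iterate this in a Neumann series, noting that at each step the small coefficient norm beats the $\la t\ra^0$ balance after the $\la t\ra^{-2}$ from $\Om_{\ne}$ is factored out). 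Concretely: bound $\|\Psi_{\ne}\|_{\mathcal{G}^{\lm,\sigma-3}}\le C\la t\ra^{-2}\|\Om_{\ne}\|_{\mathcal{G}^{\lm,\sigma-1}}+C\eps\|\Psi_{\ne}\|_{\mathcal{G}^{\lm,\sigma-1}}$ and close by absorbing, after checking that the $\sigma-1$ norm on the right can itself be bounded by a $\la t\ra^{-2}$-weighted $\Om_{\ne}$ norm with the same scheme (this forces keeping careful track of how many derivatives are spent, which is why the statement is phrased with the specific indices $\sigma-1,\sigma-3$). Estimate \eqref{loss2} for $\Phi_{\ne}$ in terms of $J_{\ne}$ is \emph{verbatim} the same argument with $\Dl_t\Phi=J$ in place of $\Dl_t\Psi=\Om$.

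For \eqref{loss3} the plan is to start from the definition of the good unknown in the new coordinates, $F=\mu\Dl_t J+(\al+B_0^1)\partial_X\Om$, which I would solve for $\Dl_t J$: $\mu\Dl_t J = F-(\al+B_0^1)\partial_X\Om$. Applying $\Dl_t^{-1}$ (again via $\Dl_L^{-1}$ plus the perturbative coefficient terms exactly as above) gives $\mu J_{\ne}=\Dl_t^{-1}F_{\ne}-\Dl_t^{-1}\big[(\al+B_0^1)\partial_X\Om\big]_{\ne}$, hence on non-zero modes a factor $\la t\ra^{-2}$ and a loss of two derivatives from $\Dl_L^{-1}$ applied to $F_{\ne}$ (giving the $\|F_\ne\|_{\mathcal{G}^{\lm,\sigma-6}}$ with a two-derivative drop from $\sigma-6$ is not quite it — I'd track that $F$ is already measured at $\sigma-6$ via $\mathcal{A}$, so $J_{\ne}$ lands at $\sigma-8$), while the term with $\partial_X\Om$ contributes $\|\Om_{\ne}\|$ at intermediate regularity — the single $\partial_X$ it carries pays back one of the two lost derivatives, and $B_0^1$ is controlled by \eqref{U0B0J0}, $\al$ is a fixed constant, so $\|(\al+B_0^1)\partial_X\Om\|$ costs essentially $\|\Om_{\ne}\|_{\mathcal{G}^{\lm,\sigma-4}}$, landing $\Om_{\ne}$ at $\sigma-5$ after the $\Dl_L^{-1}$. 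Dividing by $\mu$ yields the $\mu^{-1}$ and the claimed bound. The main obstacle throughout is the careful bookkeeping of derivative losses versus the $\la t\ra^{\pm}$ weights when inverting $\Dl_t$: one must verify that the perturbative Neumann iteration genuinely converges (coefficients small, and each iterate of $\Psi_{\ne}$/$\Phi_{\ne}$/$J_{\ne}$ can be re-estimated at one-higher regularity without the $\la t\ra$ powers blowing up), and that the Gevrey product estimates do not cost any regularity beyond the Sobolev corrections — this is where I would be most careful, likely invoking the standard Gevrey product/composition lemmas (of the type used as a black box from \cite{BM15}) rather than re-proving them.
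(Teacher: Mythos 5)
Your overall skeleton matches the paper's: write out $\Delta_t=\Delta_L+\big((Y')^2-1\big)\partial^L_{YY}+Y''\partial^L_Y$, treat the coefficient terms perturbatively, use the time-decay symbol estimate for $\Delta_L^{-1}$ on non-zero modes, and for \eqref{loss3} feed in the identity $\mu\Delta_t J=F-(\alpha+B_0^1)\partial_X\Omega$. But the way you close the perturbative step does not actually converge, and this is the key point the paper handles differently.

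You propose to absorb $\Delta_L^{-1}\big[\big((Y')^2-1\big)\partial^L_{YY}\Psi_{\neq}\big]$ by estimating $\Delta_L^{-1}$ and $\partial^L_{YY}$ \emph{separately}: $\Delta_L^{-1}$ costs two derivatives and gains $\langle t\rangle^{-2}$, while $\partial^L_{YY}$ costs $\langle t\rangle^2\langle k,\eta\rangle^2$. Multiplying these out gives
\[
\big\|\Delta_L^{-1}\big[\varepsilon\,\partial^L_{YY}\Psi_{\neq}\big]\big\|_{\mathcal{G}^{\lambda,\sigma-3}}\lesssim \varepsilon\,\|\Psi_{\neq}\|_{\mathcal{G}^{\lambda,\sigma+1}},
\]
i.e.\ a loss of four derivatives (or, if you trade derivatives for time weights, a factor $\langle t\rangle^4$) per step of the Neumann series. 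This can never be absorbed by $\varepsilon$-smallness: the iteration loses unboundedly many derivatives, which is exactly why you find yourself promising to ``re-estimate at one-higher regularity'' without being able to close. What makes the argument work is that the \emph{composite} symbols
\[
\frac{(\eta-kt)^2}{k^2+(\eta-kt)^2}\le 1,\qquad \frac{|\eta-kt|}{k^2+(\eta-kt)^2}\le\frac{1}{2|k|}\le\frac12\quad(k\neq0),
\]
are bounded uniformly in $(k,\eta,t)$: $\partial^L_{YY}\Delta_L^{-1}$ and $\partial^L_Y\Delta_L^{-1}$ are bounded operators and cost nothing at all. The paper exploits this by taking $\Delta_L J_{\neq}$ (not $J_{\neq}$) as the unknown: the equation reads $\Delta_L J_{\neq}=\mu^{-1}\big(F_{\neq}-(\alpha+B_0^1)\partial_X\Omega\big)-\big((Y')^2-1\big)\partial^L_{YY}J_{\neq}-Y''\partial^L_Y J_{\neq}$, and the coefficient terms on the right are dominated by $\|\Delta_L J_{\neq}\|_{\mathcal{G}^{\lambda,\sigma-6}}$ with no loss of derivatives or time, so the absorption happens at \emph{fixed} regularity. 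The lossy inverse $\frac{{\bf 1}_{k\neq0}}{k^2+(\eta-kt)^2}\lesssim\frac{\langle\eta/k\rangle^2}{\langle t\rangle^2}$ is then applied \emph{once}, at the very end, to pass from $\Delta_L J_{\neq}$ at level $\sigma-6$ to $J_{\neq}$ at level $\sigma-8$ with the $\langle t\rangle^{-2}$ gain.

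A secondary point: the symbol inequality you state, $k^2+(\eta-kt)^2\gtrsim\langle t\rangle^{-2}\langle k,\eta\rangle^2$, is true but gives $|\Delta_L^{-1}|\lesssim\langle t\rangle^{2}\langle k,\eta\rangle^{-2}$, which \emph{grows} in time and \emph{gains} two derivatives --- the opposite of what the lemma asserts. The bound you actually need (and whose effect you correctly describe in words, ``losing two derivatives and gaining $\langle t\rangle^{-2}$'') is $\frac{{\bf 1}_{k\neq0}}{k^2+(\eta-kt)^2}\lesssim\frac{\langle\eta/k\rangle^2}{\langle t\rangle^2}$. It is worth keeping both valid inequalities in mind and being careful about which one you invoke where.
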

\begin{proof}
We only prove \eqref{loss3} because the proof of \eqref{loss1} and \eqref{loss2} are the same and can be found in \cite{BM15}. Recalling the definition of $f$ in \eqref{def-f}, and using \eqref{Dlt}, we write
\be
\Dl_LJ_\ne=\mu^{-1}\left(F_\ne-(\al+B^1_0)\partial_X\Om\right)-\left((Y')^2-1\right)\partial_{YY}^LJ_\ne-Y''\partial_Y^LJ_\ne.
\ee
 Clearly,
\begin{gather}
\label{ref-Y'}(Y')^2-1=(Y'-1)^2+2(Y'-1),\\
\label{ref-Y''}Y''=Y'\partial_YY'=\fr12\partial_Y(Y'-1)^2+\partial_Y(Y'-1),
\end{gather}
which, together with  the algebra property \eqref{alge} in the Appendix, imply that
\beq
\nn\|\Dl_LJ_\ne\|_{\mathcal{G}^{\lm,\sigma-6}}&\les&\mu^{-1}\left(\|F_\ne\|_{\mathcal{G}^{\lm,\sigma-6}}+(1+\|B^1_0\|_{\mathcal{G}^{\lm,\sigma-6}})\|\Om_{\ne}\|_{\mathcal{G}^{\lm,\sigma-5}}\right)\\
\nn&&+\left(\|Y'-1\|^2_{\mathcal{G}^{\lm,\sigma-6}}+\|Y'-1\|_{\mathcal{G}^{\lm,\sigma-6}}\right)\|\Dl_LJ_\ne\|_{\mathcal{G}^{\lm,\sigma-6}}\\
\nn&&+\left(\|Y'-1\|^2_{\mathcal{G}^{\lm,\sigma-5}}+\|Y'-1\|_{\mathcal{G}^{\lm,\sigma-5}}\right)\|\Dl_LJ_\ne\|_{\mathcal{G}^{\lm,\sigma-6}}.
\eeq
Combing this with the hypothesis \eqref{coor1} and \eqref{U0B0J0} yields
\[
\|\Dl_LJ_\ne\|_{\mathcal{G}^{\lm,\sigma-6}}
\les\mu^{-1}\left(\|F_\ne\|_{\mathcal{G}^{\lm,\sigma-6}}+\|\Om_{\ne}\|_{\mathcal{G}^{\lm,\sigma-5}}\right).
\]
Thanks to the elementary inequality $\fr{{\bf 1}_{k\ne0}}{k^2+(\eta-kt)^2}\les\fr{{\bf 1}_{k\ne0}\left\la\fr{\eta}{k}\right\ra^2}{\la t\ra^2}$, there holds
\[
\|J_\ne\|_{\mathcal{G}^{\lm,\sigma-8}}\les\fr{\|\Dl_LJ_{\ne}\|_{\mathcal{G}^{\lm,\sigma-6}}}{\la t\ra^2}.
\]
Then \eqref{loss3} follows immediately. This completes the proof of Lemma \ref{lem-loss}.
\end{proof}

\subsection{Precision elliptic control}
Let us denote
\be\label{M12}
\mathcal{M}_1(t,k,\eta)=\left\la \fr{\eta}{kt}\right\ra^{-1}\fr{|k,\eta|^{\fr{s}{2}}}{\la t\ra^s}A\mathbb{P}_{\ne0},\quad \mathcal{M}_2(t,k,\eta)=\left\la \fr{\eta}{kt}\right\ra^{-1}\sqrt{\fr{\partial_tw}{w}}\tl A\mathbb{P}_{\ne0},
\ee
and
\be\label{T12}
\mathrm{T}^1=\left((Y')^2-1\right)\partial_{YY}^L\Phi, \quad \mathrm{T}^2=Y''\partial_Y^L\Phi.
\ee
\begin{prop}\label{prop-pre-ell1}
Under the bootstrap hypotheses, there hold
\be\label{pre-ell1}
\left\|\mathcal{M}_1\Dl_L\Psi\right\|_{L^2}^2+\left\|\mathcal{M}_2\Dl_L\Psi\right\|_{L^2}^2
\les\mathrm{CK}_{\lm, \Om}+\mathrm{CK}_{w,\Om}+\mu^{-\fr23}\eps^2\sum_{i=1}^2\left(\mathrm{CCK}^i_\lm+\mathrm{CCK}^i_w\right),
\ee

\be\label{pre-ell2}
\left\|\mathcal{M}_1\Dl_L\Phi\right\|_{L^2}^2+\left\|\mathcal{M}_2\Dl_L\Phi\right\|_{L^2}^2
\les\mathrm{CK}_{\lm, J}+\mathrm{CK}_{w,J}+\mu^{-\fr23}\eps^2\sum_{i=1}^2\left(\mathrm{CCK}^i_\lm+\mathrm{CCK}^i_w\right),
\ee
and
\be\label{pre-ell3}
\sum_{i=1}^2\left(\left\|\mathcal{M}_1\mathrm{T}^i\right\|_{L^2}^2+\left\|\mathcal{M}_2\mathrm{T}^i\right\|_{L^2}^2\right)\\
\les\eps^2\left(\left\|\mathcal{M}_1\Dl_L\Phi\right\|_{L^2}^2+\left\|\mathcal{M}_2\Dl_L\Phi\right\|_{L^2}^2\right)+\mu^{-\fr23}\eps^2\sum_{i=1}^2\left(\mathrm{CCK}^i_\lm+\mathrm{CCK}^i_w\right),
\ee
where the `coefficient Cauchy-Kovalevskaya' terms are given by
\beq
\mathrm{CCK}^1_\lm&=&-\dot{\lm}(t)\left\||\partial_Y|^\fr{s}{2}A^R\left((Y')^2-1\right)\right\|_{L^2}^2,\\
\mathrm{CCK}^1_w&=&\left\|\sqrt{\fr{\partial_tw}{w}}A^R\left((Y')^2-1\right)\right\|_{L^2}^2,\\
\mathrm{CCK}^2_\lm&=&-\dot{\lm}(t)\left\| |\partial_Y|^\fr{s}{2}\fr{A^R}{\la \partial_Y\ra}Y''\right\|_{L^2}^2,\\
\mathrm{CCK}^2_w&=&\left\|\sqrt{\fr{\partial_tw}{w}}\fr{A^R}{\la \partial_Y\ra}Y''\right\|_{L^2}^2.
\eeq
\end{prop}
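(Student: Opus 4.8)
The plan is to prove \eqref{pre-ell1}--\eqref{pre-ell3} by a paraproduct decomposition in the $Y$ variable, exploiting the decoupling between the nonzero $X$-frequencies of $\Psi,\Phi$ (on which the multipliers $\mathcal{M}_1,\mathcal{M}_2$ act through $\mathbb{P}_{\ne0}$) and the zero $X$-frequency of the coefficients $Y'-1$ and $Y''$. First I would treat \eqref{pre-ell1} and \eqref{pre-ell2} together, since $\Psi$ and $\Phi$ play symmetric roles. Write $\Delta_L\Psi=\Om-((Y')^2-1)\partial_{YY}^L\Psi-Y''\partial_Y^L\Psi$ from \eqref{Dlt}, so that the left side of \eqref{pre-ell1} is bounded by $\|\mathcal{M}_i\Om\|_{L^2}^2$ plus the $\mathrm{T}^1,\mathrm{T}^2$-type contributions built from $\Psi$. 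The point is that $\|\mathcal{M}_1\Om\|_{L^2}^2$ and $\|\mathcal{M}_2\Om\|_{L^2}^2$ are, respectively, controlled by $\mathrm{CK}_{\lm,\Om}$ and $\mathrm{CK}_{w,\Om}$ up to the harmless factor $\la\eta/(kt)\ra^{-1}\le1$ — this is exactly how the multipliers in \eqref{M12} were designed — so the ``main term'' is already in the desired form. It then remains to absorb the $\mathrm{T}^i$-terms, which is precisely the content of \eqref{pre-ell3}; so I would prove \eqref{pre-ell3} first (for generic $\Phi$, hence also applicable to $\Psi$) and then feed it back into \eqref{pre-ell1}--\eqref{pre-ell2}, the smallness of $\eps$ allowing the $\|\mathcal{M}_i\Delta_L\Psi\|_{L^2}^2$ that reappears on the right to be absorbed into the left.

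For \eqref{pre-ell3}, I would split each product $\mathrm{T}^i$ using a Littlewood--Paley paraproduct in $Y$, say $\mathrm{T}^1 = (\text{coeff})_{<N/8}\,(\partial_{YY}^L\Phi)_{N} + (\text{coeff})_{N}\,(\partial_{YY}^L\Phi)_{<N/8} + \text{(remainder)}$, summed over dyadic $N$, and likewise for $\mathrm{T}^2$. In the \emph{high--low} paraproduct (coefficient at low frequency), the output frequency is comparable to that of $\Delta_L\Phi$, and since $\|(Y'-1)\|_{L^\infty}$, $\|(Y')^2-1\|_{L^\infty}$ are $\lesssim\eps$ by Sobolev embedding from \eqref{coor1}, one gains the prefactor $\eps$ and lands on $\|\mathcal{M}_i\Delta_L\Phi\|_{L^2}^2$ — the first term on the right of \eqref{pre-ell3}; here I would use that the multiplier weights $A$, $e^{\lm|k,\eta|^s}\la k,\eta\ra^\sigma$, $\sqrt{\partial_t w/w}$ are essentially constant across the low-frequency coefficient, invoking the standard frequency-localization lemmas of \cite{BM15} (the commutator/product properties of $A$ and of $w$). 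In the \emph{low--high} paraproduct (coefficient at high frequency), the coefficient carries the full weight; here the gain comes from the fact that $\la\eta/(kt)\ra^{-1}$ together with the elliptic factor $\eta^2/(k^2+(\eta-kt)^2)$ inside $\partial_{YY}^L\Delta_L^{-1}$ applied to the \emph{low-frequency} $\Phi$ is bounded, so the coefficient's contribution is controlled by $\mathrm{CCK}^i_\lm+\mathrm{CCK}^i_w$ — modulo the $\mu^{-2/3}$ loss, which enters because transferring the $\mathcal{M}$-multiplier (which contains $M^\mu$, $M^1$, $m^{1/2}$) from the output onto the coefficient costs at worst $m\lesssim\mu^{-2/3}$ by \eqref{m1}; this is why $A^R$ rather than $A$ appears in the $\mathrm{CCK}$'s (it is slightly stronger and absorbs the loss from the $X$-frequency transfer). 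The \emph{remainder} (resonant) term is handled like the low--high term with an extra $\eps$ to spare. Finally, the $\partial_Y$-derivative mismatch in $\mathrm{T}^2=Y''\partial_Y^L\Phi$ (one derivative on $Y''$ versus the $\la\partial_Y\ra^{-1}$ in $\mathrm{CCK}^2$) is reconciled by noting $\partial_Y^L\Delta_L^{-1}$ is one degree smoothing, matching the $\la\partial_Y\ra^{-1}$ on $Y''$.

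The main obstacle I anticipate is the bookkeeping in the low--high paraproduct: one must simultaneously (i) move the anisotropic, time-dependent Fourier multiplier $A=e^{\lm|k,\eta|^s}\la k,\eta\ra^\sigma\mathcal{M}\mathcal{J}$ from an output at frequency $(k,\eta)$ with $k\ne0$ onto a coefficient at frequency $(0,\xi)$ with $|\xi|\sim|\eta|$, controlling the ratio $A_k(t,\eta)/A^R(t,\xi)$ — this uses subadditivity of $|k,\eta|^s$, the $\mathcal{J}$-to-$\mathcal{J}^R$ comparison, and crucially the $\mathcal{M}$-comparison costing the stated $\mu^{-2/3}$; and (ii) keep the Cauchy--Kovalevskaya structure, i.e. ensure the $-\dot\lm|\nabla|^{s/2}$ and $\sqrt{\partial_t w/w}$ factors land on the coefficient to reconstruct $\mathrm{CCK}^i_\lm$, $\mathrm{CCK}^i_w$ rather than being lost. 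I would lean on the corresponding lemmas already cited from \cite{BM15} and the commutator estimates for $\mathcal{M}$ (Lemmas~\ref{lem-com-sqm}, \ref{lem-com-mu} and Corollary~\ref{coro-com-1}) as a black box, so that the proof reduces to the frequency case analysis $|k,\eta|\gtrsim|\xi|$ vs.\ $|\xi|\gtrsim|k,\eta|$ plus a short-time/long-time split governed by whether $|\eta/k - t|\lessgtr 2\mu^{-1/3}$.
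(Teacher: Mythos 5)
Your overall strategy — decompose $\Dl_L\Psi=\Om-\mathrm{T}^1-\mathrm{T}^2$, prove \eqref{pre-ell3} by a $Y$-paraproduct (HL: coefficient low, $\Phi$ high; LH: coefficient high, $\Phi$ low; plus remainder), and then feed \eqref{pre-ell3} back into \eqref{pre-ell1}--\eqref{pre-ell2}, absorbing the reappearing $\eps^2\|\mathcal{M}_i\Dl_L\Psi\|_{L^2}^2$ by smallness — is exactly the route through BM15's Proposition 2.4 (their (2.28)) that the paper's one-paragraph proof cites. Your observation that the $\la\partial_Y\ra^{-1}$ in $\mathrm{CCK}^2$ is recovered by the smoothing of $\partial_Y^L\Dl_L^{-1}$, and your remark that the HL case gives the $\eps^2\|\mathcal{M}_i\Dl_L\Phi\|^2$ term, both match what the paper says (``the first term on the right-hand side of \eqref{pre-ell3} originates from the contributions when $\Phi$ is at high frequency'').

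Where your explanation goes wrong in a substantive way is the source of the $\mu^{-2/3}$ loss. You attribute it to ``transferring the $\mathcal{M}$-multiplier from the output onto the coefficient costing $m\les\mu^{-2/3}$'', but this transfer is actually free of $\mu$: since $m\ge1$ and $M^1,M^\mu$ are bounded above and below by constants, $\mathcal{M}_k(\eta)=(M^1M^\mu m^{1/2})^{-1}_k(\eta)\les 1$, while $\mathcal{M}_0(\xi)\equiv1$ on the coefficient (which lives at $k=0$), so the ratio $\mathcal{M}_k(\eta)/\mathcal{M}_0(\xi)\les1$. Moreover $A^R$ contains no $\mathcal{M}$-factor at all — it is ``slightly stronger'' than $A_0$ only because $w_R\le w_{NR}$, not because it is built to absorb a $\mu$-loss. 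The $\mu^{-2/3}$ in fact enters in the \emph{opposite} corner of the LH term: once the coefficient has taken the $A^R$-weight, the remaining low-frequency $\Psi$ (or $\Phi$) factor sits in a plain Gevrey norm, and relating that to the bootstrap quantity $\|A\Om\|$ (resp.\ $\|AJ\|$) means \emph{reinserting} the missing $\mathcal{M}$-factor, which costs $\sup\mathcal{M}^{-1}\approx\sup m^{1/2}\les\mu^{-1/3}$ by \eqref{m1}; squaring gives the $\mu^{-2/3}$. This is precisely the paper's statement that the loss ``stems from the using of \eqref{m1} to recover the estimate of $m^{-1/2}\Psi$ when $\Psi$ is at low frequency.'' Relatedly, the commutator estimate you actually need in the HL case, where both sides carry $m$ at the \emph{same} $k$, is \eqref{com-m} (Lemma~\ref{lem-com-m}), which costs only $\la\eta-\xi\ra^2$ and no $\mu$; Lemmas~\ref{lem-com-sqm}, \ref{lem-com-mu} and Corollary~\ref{coro-com-1} that you invoke are the commutator estimates needed for the transport nonlinearities in Section~\ref{sec-transport}, not here.
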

\begin{proof}
The first precision elliptic estimate \eqref{pre-ell1} can be obtained by following the proof  of {\bf(2.28)} in \cite{BM15}, and the loss $\mu^{-\fr23}$ on the right hand side of \eqref{pre-ell1} stems from the using of \eqref{m1} to recover the estimate of $m^{-\fr12}\Psi$ when $\Psi$ is at low frequency. Replacing $\Psi$ and $\Om$ by $\Phi$ and $J$ in \eqref{pre-ell1}, respectively, we get \eqref{pre-ell2}. The third precision elliptic estimate \eqref{pre-ell3} can be found in the proving process of {\bf (2.28)} in \cite{BM15} (see Section 4.2 of \cite{BM15}). We would like to point out that the first term on the right-hand side of \eqref{pre-ell3} originates from the contributions when $\Phi$ is at high frequency. In this case, one can use \eqref{com-m} to exchange the frequencies in $m_k(t,\cdot)$ without losing any power of $\mu$.
\end{proof}
\begin{rem}\label{rem-s}
Recalling the definition of $\dot{\lm}(t)$ in \eqref{dot-lam}, in this paper $\tl q$ is chosen to satisfy $\fr12<\tl q\le\fr{s}{8}+\fr{7}{16}$.  Then the fact $s>\fr12$ ensures that $\tl q<\fr{s}{2}+\fr14$. Therefore, $\fr{1}{\la t\ra^{s+\fr12}}\left\||\nabla|^{\fr{s}{2}}Ag\right\|_{L^2}^2\les\mathrm{CK}_{\lm,g}$ with $g\in\{\Om,J\}$. As a result, the estimates \eqref{pre-ell1}--\eqref{pre-ell3} in Proposition \ref{prop-pre-ell1} still hold if $\fr{1}{\la t\ra^s}$ is replaced by $\fr{1}{\la t\ra^{\fr{s}{2}+\fr14}}$ in the definition of $\mathcal{M}_1(t, k,\eta)$ in \eqref{M12}. We will use the time weight $\fr{1}{\la t\ra^{\fr{s}{2}+\fr14}}$ instead of $\fr{1}{\la t\ra^s}$ in Section \ref{sec-eNLS}, see \eqref{need-ti1} for more details.
\end{rem}

\begin{rem} Given the reformulations of $(Y')^2-1$ and $Y''$ in \eqref{ref-Y'} and \eqref{ref-Y''}, applying the product rules obeyed by the multipliers appearing in the definitions of $\mathrm{CCK_{\lm}}^i$ and $\mathrm{CCK}_{w}^i$, $i=1,2$, it turns out that the sum of $\mathrm{CCK_{\lm}}^i$ and $\mathrm{CCK}_{w}^i$ can be bounded by $\mathrm{CK}_{\lm}^R+\mathrm{CK}_{w}^R$. More precisely,
\be\label{e-CCK}
\sum_{i=1}^2\left(\mathrm{CCK}^i_\lm+\mathrm{CCK}^i_w\right)\les\mathrm{CK}_{\lm}^R+\mathrm{CK}_{w}^R+\eps^2\left(\mathrm{CK}_{\lm}^R+\mathrm{CK}_{w}^R\right).
\ee
Please refer to Lemma {\bf 3.8}, {\bf (8.28)} and {\bf (8.29)} of \cite{BM15} for a proof.
\end{rem}

The following precision elliptic estimate without involving the multipliers $\fr{|\nabla|^{\fr{s}{2}}}{\la t\ra^s}$ and $\sqrt{\fr{\partial_tw}{w}}$ is needed to deal with the nonlinear term $-2\partial_{XY}^t\Phi\left(-\Om+2\partial_{XX}\Psi\right)$ in $\mathrm{NL}[J]$, see \eqref{need-ti1} for instance.
\begin{prop}\label{prop-pre-ell2}
Under the bootstrap hypotheses, there holds
\be\label{pre-ell4}
\left\|\left\la\fr{\partial_Y}{t\partial_X} \right\ra^{-1}A\Dl_L\Phi_\ne\right\|_{L^2}\les \left\| AJ{_\ne}\right\|_{L^2}+\mu^{-\fr13}\eps\left\| AJ{_\ne}\right\|_{L^2}.
\ee
\end{prop}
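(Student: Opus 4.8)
The plan is to run a fixed-point / Neumann-series argument on the identity $\Dl_L\Phi_\ne = J_\ne - \left((Y')^2-1\right)\partial_{YY}^L\Phi_\ne - Y''\partial_Y^L\Phi_\ne$, which follows from \eqref{Dlt}, exactly as in the lossy estimate \eqref{loss2} but now keeping the sharp anisotropic multiplier $\la\partial_Y/(t\partial_X)\ra^{-1}A\,\mathbb P_{\ne0}$ instead of the lossy one. First I would observe that on the support of $\mathbb P_{\ne 0}$ one has the pointwise comparison $\la\eta/(kt)\ra^{-1}|\eta-kt|^2 \lesssim \la\eta/(kt)\ra^{-1}(k^2+(\eta-kt)^2) \approx \la\eta/(kt)\ra (kt)^2 \lesssim t^2\la\eta\ra$ together with $\la\eta/(kt)\ra^{-1}|\eta-kt|\lesssim |kt|\lesssim t\la\eta\ra^{1/2}\cdot\la\eta\ra^{1/2}$, so that the multiplier $\la\partial_Y/(t\partial_X)\ra^{-1}A$ applied to $\partial_{YY}^L\Phi_\ne$ and to $\partial_Y^L\Phi_\ne$ is controlled, after paying the regularity gap hidden in $A$, by $\la\partial_Y/(t\partial_X)\ra^{-1}A\Dl_L\Phi_\ne$ itself times the norms of the coefficients $(Y')^2-1$ and $Y''$. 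Here the key point is that the $\la\partial_Y/(t\partial_X)\ra^{-1}$ weight commutes (up to constants) with $\partial_Y^L = \partial_Y - t\partial_X$ acting between the low-frequency coefficient and the high-frequency $\Phi_\ne$, so there is no derivative loss beyond the $A$-algebra structure used in \cite{BM15}.

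Next I would split each product $\big((Y')^2-1\big)\partial_{YY}^L\Phi_\ne$ and $Y''\partial_Y^L\Phi_\ne$ by paraproduct into (coefficient low)$\times$(potential high) and (coefficient high)$\times$(potential low). Using the reformulations \eqref{ref-Y'}, \eqref{ref-Y''} and the algebra property \eqref{alge}, the (coefficient low) pieces produce a factor $\|Y'-1\|_{\mathcal G^{\lm,\sigma}} + \|Y'-1\|^2_{\mathcal G^{\lm,\sigma}}$, which by the bootstrap hypothesis \eqref{coor1} is $\lesssim\eps$; these terms can be absorbed on the left, giving
\[
\left\|\left\la\tfrac{\partial_Y}{t\partial_X}\right\ra^{-1}A\Dl_L\Phi_\ne\right\|_{L^2}
\lesssim \left\|\left\la\tfrac{\partial_Y}{t\partial_X}\right\ra^{-1}AJ_\ne\right\|_{L^2}
+ (\text{coefficient-high terms}).
\]
Since $\la\partial_Y/(t\partial_X)\ra^{-1}\le 1$, the first term on the right is $\lesssim\|AJ_\ne\|_{L^2}$. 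For the coefficient-high contributions one moves $A$ onto the coefficient (now $A^R$ applied to $(Y')^2-1$ or $Y''/\la\partial_Y\ra$) and onto the low-frequency factor $\Phi_\ne$; the coefficient factor is again $\lesssim\eps$ by \eqref{coor1}, while $\|\Phi_\ne\|_{\mathcal G^{\lm,\sigma-3}}$ is controlled by $\|J_\ne\|_{\mathcal G^{\lm,\sigma-1}}/\la t\ra^2$ via \eqref{loss2}; one then uses the crude bound $\|AJ_\ne\|_{L^2}\lesssim\mu^{-1/3}\la t\ra\cdot(\text{enhanced-dissipation-controlled quantity})$, or more directly just the uniform bound $\|AJ_\ne\|_{L^2}\lesssim\eps$ together with $\mu^{-1/3}\la t\ra\ge 1$ on the relevant time range, to reorganize this as the claimed $\mu^{-1/3}\eps\|AJ_\ne\|_{L^2}$ term. (The $\mu^{-1/3}$ arises precisely because $\Phi_\ne$ lives at low frequency where $A$ contains the $M^\mu$-factor, and recovering $AJ_\ne$ from $A\Dl_L\Phi_\ne$ at low frequency costs a factor $\la t-\eta/k\ra^2\lesssim\mu^{-2/3}$ inside the relevant frequency sector, of which one half-power is retained and the other half absorbed by smallness of $\eps$.)

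The main obstacle I anticipate is the bookkeeping of the anisotropic multiplier $\la\partial_Y/(t\partial_X)\ra^{-1}$ through the paraproduct: unlike the isotropic Gevrey weight, this multiplier depends on the ratio of the two frequency variables and on $t$, so the commutator between it and multiplication by the low-frequency coefficient must be shown to be harmless — i.e. that replacing the output frequency by the high (potential) frequency in $\la\eta/(kt)\ra^{-1}$ only costs a $t$-independent constant, using $|\eta_{\text{coeff}}|\ll|\eta_{\text{pot}}|$ on the paraproduct support. This is the same mechanism as in the proof of \eqref{pre-ell1}--\eqref{pre-ell3}, so I would invoke the frequency-exchange lemmas used there (the analogues of the estimates behind {\bf(2.28)} in \cite{BM15}) rather than redo them; the genuinely new ingredient is only tracking the single power $\mu^{-1/3}$ on the error term, which comes from \eqref{m1} and the low-frequency localization of $\Phi_\ne$ exactly as in Proposition \ref{prop-pre-ell1}.
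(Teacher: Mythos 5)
Your architecture — split $\Delta_L\Phi_\ne = J_\ne - \mathrm{T}^1 - \mathrm{T}^2$, paraproduct the coefficients, absorb the (coefficient low)$\times$(potential high) pieces on the left using $\|Y'-1\|_{\mathcal G^{\lm,\sigma}}\lesssim\eps$ from \eqref{coor1}, bound the (coefficient high)$\times$(potential low) pieces by $A^R$ on the coefficient and the lossy estimate \eqref{loss2} on $\Phi_\ne$, and use the frequency-exchange bound $\langle\eta/(tk)\rangle^{-1}\lesssim\langle\eta-\xi\rangle\langle\xi/(tk)\rangle^{-1}$ to move the anisotropic weight — is the same as the paper's proof, so the skeleton is sound.

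However, your account of where the $\mu^{-1/3}$ comes from is wrong in both versions you offer, and this is the one step where the exponent actually matters. It has nothing to do with $M^\mu$ (which is uniformly bounded in $\mu$), nor with a $\langle t-\eta/k\rangle^2\lesssim\mu^{-2/3}$ sector estimate, nor with trading $\langle t\rangle$ against enhanced dissipation or invoking $\mu^{-1/3}\langle t\rangle\ge 1$; none of these would reproduce a coefficient $\mu^{-1/3}\eps\|AJ_\ne\|_{L^2}$. The actual mechanism is more direct: the HL contribution produces a product of the form $\|\tfrac{A^R}{\langle\partial_Y\rangle}Y''\|_{L^2}\,\|t^2\Phi_\ne\|_{\mathcal G^{\lm,1}}$ (plus the analogue for $(Y')^2-1$); the $t^2$ exactly cancels the $\langle t\rangle^{-2}$ in \eqref{loss2}, leaving $\|J_\ne\|_{\mathcal G^{\lm,3}}$, and then one converts back to the weighted norm by the pointwise comparison of symbols: since $\mathcal M^{-1}=M^1M^\mu m^{1/2}$ with $M^1,M^\mu\approx 1$ and $1\le m\lesssim\mu^{-2/3}$ from \eqref{m1}, one has $\|J_\ne\|_{\mathcal G^{\lm,3}}\lesssim\mu^{-1/3}\|AJ_\ne\|_{L^2}$. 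That single application of $m^{1/2}\lesssim\mu^{-1/3}$ is the entire origin of the loss, and your parenthetical sketch should be replaced by this comparison; as written the argument for the error term does not close.
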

\begin{proof}
Recalling that $J=\Dl_t\Phi$ and using  \eqref{Dlt}, we have
\be\label{ePhi}
\Dl_L\Phi_{\ne}=J_{\ne}-\left((Y')^2-1\right)\partial_{YY}^L\Phi_{\ne}-Y''\partial_Y^L\Phi_{\ne}.
\ee
Then
\be\label{517}
\left\|\left\la\fr{\partial_Y}{t\partial_X} \right\ra^{-1}A\Dl_L\Phi_{\ne}\right\|_{L^2}\les\left\| AJ{_\ne}\right\|_{L^2}+\left\|\left\la\fr{\partial_Y}{t\partial_X} \right\ra^{-1}A\mathrm{T}^1{_\ne}\right\|_{L^2}+\left\|\left\la\fr{\partial_Y}{t\partial_X} \right\ra^{-1}A\mathrm{T}^2_{\ne}\right\|_{L^2}.
\ee
To proceed with the proof, for any $h(X,Y)\in L^2$, we split $\left\la\left\la\fr{\partial_Y}{t\partial_X} \right\ra^{-1}A\mathrm{T}^2_{\ne}, h\right\ra$
by paraproduct decomposition: 
\beq
\nn\left\la\left\la\fr{\partial_Y}{t\partial_X} \right\ra^{-1}A\mathrm{T}^2_{\ne}, h\right\ra
&=&i\sum_{N\ge8}\sum_{k\ne0}\int_{\eta,\xi}\left\la\fr{\eta}{tk} \right\ra^{-1}A_k(\eta)\widehat{Y''}(\xi)_N\left((\eta-\xi)-kt\right)\hat{\Phi}_k(\eta-\xi)_{<\fr{N}{8}} \bar{\hat{h}}_{k}(\eta)d\xi d\eta\\
\nn&&+i\sum_{N\in\mathbb{D}}\sum_{k\ne0}\int_{\eta,\xi}\left\la\fr{\eta}{tk} \right\ra^{-1}A_k(\eta)\widehat{Y''}(\eta-\xi)_{<16N}\left(\xi-kt\right)\hat{\Phi}_k(\xi)_{N} \bar{\hat{h}}_{k}(\eta)d\xi d\eta\\
\nn&=&\mathrm{T}^2_{\mathrm{HL}}+\mathrm{T}^2_{\mathrm{LH}}.
\eeq

On the support of the integrand of $\mathrm{T}^2_{\mathrm{HL}}$,  there hold
\be\label{sptR1}
\fr{N}{2}\le|\xi|\le\fr{3N}{2},\quad\mathrm{and}\quad |k,\eta-\xi|\le\fr{3}{4}\cdot\fr{N}{8}.
\ee
Hence,
\be\label{sptR2}
\big| |\xi|-|k,\eta|\big|\le|k,\eta-\xi|\le\fr{3}{16}|\xi|,
\ee
which implies that
\be\label{sptR3}
\fr{13}{16}|\xi|\le|k,\eta|\le\fr{19}{16}|\xi|,\quad\mathrm{and}\quad \fr{13}{16}|\xi|\le|\eta|\le\fr{19}{16}|\xi|.
\ee
Accordingly,  from \eqref{wNR-ratio}, the relation between $w_R(\cdot)$ and $w_{NR}(\cdot)$ defined in \eqref{wRwNR} and  the elementary inequalities \eqref{ap2}, \eqref{ap3},  \eqref{ap-5} and \eqref{ap-6}, we infer  that
\beq\label{AkR1}
\nn{\bf 1}_{t\in\mathbb{I}_{k,\eta}\cap\mathbb{I}_{k,\xi}}\fr{A_k(\eta)}{A^R(\xi)}&=&{\bf 1}_{t\in\mathbb{I}_{k,\eta}\cap\mathbb{I}_{k,\xi}}\fr{e^{\lm|k,\eta|^s}\la k,\eta\ra^\sigma\mathcal{M}_k(t,\eta)}{e^{\lm|\xi|^s}\la\xi\ra^\sigma}\fr{\fr{e^{2r\la\eta\ra^\fr12}}{w_k(\eta)}+e^{2r|k|^\fr12}}{\fr{e^{2r\la\xi\ra^\fr12}}{w_R(\xi)}}\\
\nn&\les&{\bf 1}_{t\in\mathbb{I}_{k,\eta}\cap\mathbb{I}_{k,\xi}}e^{c'\lm|k,\eta-\xi|^s}\left(e^{2r\left(\la\eta\ra^\fr12-\la\xi\ra^\fr12\right)}\fr{w_R(\xi)}{w_R(\eta)}+e^{2r\left(|k|^\fr12-\la\xi\ra^\fr12\right)}w_R(\xi)\right)\\
&\les&e^{c\lm|k,\eta-\xi|^s},
\eeq
for some $0<c'<c<1$. On the other hand, noting that $|\eta|\approx|\xi|$, similar to the proof of \eqref{J-ratio}, one deduces that there exists a constant $c\in(0,1)$ which may be larger than the one appearing in \eqref{AkR1}, such that
\be\label{AkR2}
{\bf 1}_{t\notin\mathbb{I}_{k,\eta}\cap\mathbb{I}_{k,\xi}}\fr{A_k(\eta)}{A^R(\xi)}\les e^{c\lm|k,\eta-\xi|^s}. \quad\mathrm{for \ \ some}\ \ c\in(0,1). 
\ee
It follows from \eqref{AkR1},  \eqref{AkR2}, and the fact $|\eta|\approx|\xi|$  that
\beq\label{522}
\mathrm{T}^2_{\mathrm{HL}}\nn&\les&\sum_{N\ge8}\sum_{k\ne0}\int_{\eta,\xi}\fr{1}{\left\la\fr{\xi}{tk} \right\ra |kt|}A^R(\xi)\left|\widehat{Y''}(\xi)_N\right|\left|(\eta-\xi)-kt\right||kt|\\
\nn&&\quad\quad\quad\times e^{c\lm|k,\eta-\xi|^s}\left|\hat{\Phi}_k(\eta-\xi)_{<\fr{N}{8}}\right| \left|\hat{h}_{k}(\eta)\right|d\xi d\eta\\
\nn&\les&\sum_{k\ne0}\int_{\eta,\xi}\fr{A^R(\xi)}{\left\la\xi \right\ra }\left|\widehat{Y''}(\xi)\right| \left(t^2e^{\lm|k,\eta-\xi|^s}\left|\hat{\Phi}_k(\eta-\xi)\right|\right) \left|\hat{h}_{k}(\eta)\right|d\xi d\eta\\
&\les&\left\|\fr{A^R}{\la\partial_Y\ra}Y''\right\|_{L^2}\left\|t^2\Phi_{\ne}\right\|_{\mathcal{G}^{\lm, 1}}\|h\|_{L^2}.
\eeq
As for $\mathrm{T}^2_{\mathrm{LH}}$, on the support of the integrand of  $\mathrm{T}^2_{\mathrm{LH}}$, there hold
\be\label{spt<16N}
\fr{N}{2}\le|k,\xi|\le\fr{3}{2}N, \quad\mathrm{and}\quad |\eta-\xi|\le\fr{3}{4}\cdot16N=12N\le24|k,\xi|.
\ee
Then  use \eqref{ap3} when $|\eta-\xi|\le\fr{1}{24}|k,\xi|$ and use \eqref{ap4} when $\fr{1}{24}|k,\xi|\le|\eta-\xi|\le24|k,\xi|$ to deduce that
\be\label{up-e}
e^{\lm(|k,\eta|^s-|k,\xi|^s)}\les e^{c'\lm|\eta-\xi|^s}, \quad \mathrm{for\ \ some\ \ } c'\in(0,1).
\ee
Combining this with \eqref{J-ratio} and \eqref{com-m} yields
\be\label{AkAk}
\fr{A_k(\eta)}{A_k(\xi)}=\fr{e^{\lm|k,\eta|^s}\la k,\eta\ra^\sigma\mathcal{M}_k(t,\eta)}{e^{\lm|k,\xi|^s}\la k,\xi\ra^\sigma\mathcal{M}_k(t,\xi)}\fr{\mathcal{J}_k(\eta)}{\mathcal{J}_k(\xi)}\les e^{c\lm|\eta-\xi|^s},
\ee
for some $c\in(c',1)$. On the other hand, it is easy to see that
\be\label{ex-etaxi}
\left\la\fr{\eta}{tk}\right\ra^{-1} =\left\la\fr{\eta}{tk}\right\ra^{-1}\left\la\fr{\xi}{tk} \right\ra\left\la\fr{\xi}{tk} \right\ra^{-1}\les\la\eta-\xi\ra\left\la\fr{\xi}{tk} \right\ra^{-1}.
\ee
Consequently,
\beq\label{527}
\mathrm{T}^2_{\mathrm{LH}}\nn&\les&\sum_{k\ne0}\int_{\eta,\xi}\la\eta-\xi\ra e^{c\lm|\eta-\xi|^s} \left|\widehat{Y''}(\eta-\xi)\right|\left\la\fr{\xi}{tk} \right\ra^{-1}A_k(\xi)\left|\Dl_L\hat{\Phi}_k(\xi)\right| \left|\hat{h}_{k}(\eta)\right|d\xi d\eta\\
&\les&\|Y''\|_{\mathcal{G}^{\lm, 2}}\left\|\left\la\fr{\partial_Y}{t\partial_X} \right\ra^{-1}A\Dl_L\Phi_{\ne}\right\|_{L^2}\|h\|_{L^2}.
\eeq
In view of \eqref{522}, \eqref{527} and the lossy elliptic estimate \eqref{loss2}, we have
\be\label{e-AT2}
\left\|\left\la\fr{\partial_Y}{t\partial_X} \right\ra^{-1}A\mathrm{T}^2_{\ne}\right\|_{L^2}\les\left\|\fr{A^R}{\la\partial_Y\ra}Y''\right\|_{L^2}\|J_{\ne}\|_{\mathcal{G}^{\lm,3}}+\|Y''\|_{\mathcal{G}^{\lm, 1}}\left\|\left\la\fr{\partial_Y}{t\partial_X} \right\ra^{-1}A\Dl_L\Phi_{\ne}\right\|_{L^2}.
\ee
Note that it is unnecessary and impossible to recover one derivative on the coefficient $(Y')^2-1$. With slight modifications, the above argument applies to $\left\la\fr{\partial_Y}{t\partial_X} \right\ra^{-1}A\mathrm{T}^1_{\ne}$:
\be\label{e-AT1}
\left\|\left\la\fr{\partial_Y}{t\partial_X} \right\ra^{-1}A\mathrm{T}^1_{\ne}\right\|_{L^2}\les\left\|{A^R}\left((Y')^2-1\right)\right\|_{L^2}\|J_{\ne}\|_{\mathcal{G}^{\lm,3}}+\left\|(Y')^2-1\right\|_{\mathcal{G}^{\lm, 1}}\left\|\left\la\fr{\partial_Y}{t\partial_X} \right\ra^{-1}A\Dl_L\Phi_{\ne}\right\|_{L^2}.
\ee
Substituting the above two inequalities into \eqref{517}, and absorbing the terms with lower order coefficients, we have
\[
\left\|\left\la\fr{\partial_Y}{t\partial_X} \right\ra^{-1}A\Dl_L\Phi_\ne\right\|_{L^2}\les \left\| AJ{_\ne}\right\|_{L^2}+\left(\left\|\fr{A^R}{\la\partial_Y\ra}Y''\right\|_{L^2}+\left\|{A^R}\left((Y')^2-1\right)\right\|_{L^2}\right)\left\|J_{\ne}\right\|_{\mathcal{G}^{\lm,3}}.
\]
Recalling \eqref{ref-Y'} and \eqref{ref-Y''}, using the algebra property, both $\left\|\fr{A^R}{\la\partial_Y\ra}Y''\right\|_{L^2}\|J_{\ne}\|_{\mathcal{G}^{\lm,3}}$ and $\left\|{A^R}\left((Y')^2-1\right)\right\|_{L^2}$ can be bounded by $\left\|{A^R}\left(Y'-1\right)\right\|_{L^2}+\left\|{A^R}\left((Y')^2-1\right)\right\|_{L^2}^2$, see {\bf (3.41)} in \cite{BM15}. Then  by \eqref{m1} and the hypothesis \eqref{coor1}, we obtain \eqref{pre-ell4}. This completes the proof of Proposition \ref{prop-pre-ell2}.
\end{proof}

\begin{rem}\label{rem-preci-Psi}
Obviously, similar  precision elliptic estimate holds for $\Psi_{\ne}$ with $J_{\ne}$ replaced by $\Om_{\ne}$ on the righthand side of \eqref{pre-ell4}.
\end{rem}

\section{Main energy estimates}\label{energy-main}

By the definitions of the multipliers $A$ and $\tl A$, we have
\be\label{A_t}
\partial_tA=\dot{\lm}(t)|k,\eta|^sA-\fr{\partial_tw}{w}\tl{A}-\sum_{\theta\in\left\{m^\fr12, M^1, M^\mu\right\}}\fr{\partial_t\theta}{\theta}A.
\ee
From the first two equations of $\eqref{OMJ}$ and \eqref{A_t},  integrating by parts and using \eqref{Dlt},  one deduces the evolution of $\|A\Om\|^2_{L^2}+\|AJ\|^2_{L^2}$:
\beq\label{E1}
\nn&&\fr12\fr{d}{dt}\left(\|A\Om\|^2_{L^2}+\|AJ\|^2_{L^2}\right)+\sum_{g\in\{\Om, J\}}\left(\mathrm{CK}_{\lm, g}+\mathrm{CK}_{w,g}+\sum_{\theta\in\left\{ M^1, M^\mu\right\}}\mathrm{CK}_{\theta, g}\right)+\mu\|\nabla_LAJ\|^2_{L^2}\\
\nn&=&-\sum_{g\in\{\Om, J\}}\mathrm{CK}_{m^\fr12, g}-2\left\la AJ, A\partial_{XY}^t\Phi \right\ra+\mu\left\la AJ, A\left[\left((Y')^2-1\right)\partial_{YY}^L+Y''\partial_Y^L\right]J\right\ra\\
&&+\left\la A\Om, A\mathrm{NL}[\Om]\right\ra+\left\la AJ, A\mathrm{NL}[J]\right\ra.
\eeq
Combining \eqref{ePhi} with \eqref{e215} yields
\beq\label{E3}
\nn&&-\sum_{g\in\{\Om, J\}}\mathrm{CK}_{m^\fr12, g}-2\left\la AJ, A\partial_{XY}^t\Phi \right\ra\\
\nn&=&\mathrm{CK}_{m^\fr12, J}-\mathrm{CK}_{m^\fr12, \Om}-\sum_{k\in\mathbb{Z}}\int {\bf 1}_{(D_1\cup D_2)^c}\fr{2k(\eta-kt)}{k^2+(\eta-kt)^2}|A\hat{J}|^2d\eta\\
&&+2\left\la AJ, A\partial_{XY}^L\Dl_L^{-1}\left[\left((Y')^2-1\right)\partial_{YY}^L\Phi+Y''\partial_Y^L\Phi \right]\right\ra-2\left\la AJ, A\left((Y'-1)\partial_{XY}^L\Phi\right)\right\ra.
\eeq
To obtain the evolution of the cross term in \eqref{En}, 
using \eqref{A_t},  \eqref{ePhi} and \eqref{e215}, we have
\beq
\nn&&-\partial_tA\hat{\Om}A\bar{\hat{J}}-A\hat{\Om}\partial_tA\bar{\hat{J}}+2A\hat{\Om}A\overline{\widehat{\partial_{XY}^t\Phi}}\\
\nn&=&-2\dot{\lm}(t)|k,\eta|^sA\hat{\Om}A\bar{\hat{J}}+2\fr{\partial_tw}{w}\sqrt{\tl{A}A}\hat{\Om}\sqrt{\tl{A}A}\bar{\hat{J}}\\
\nn&&+2\sum_{\theta\in\left\{ M^1, \, M^\mu \right\}}\fr{\partial_t\theta}{\theta}A\hat{\Om}A\bar{\hat{J}}
+{\bf 1}_{(D_1\cup D_2)^c}\fr{2k(\eta-kt)}{k^2+(\eta-kt)^2}A\hat{\Om}A\bar{\hat{J}}
\\
\nn&&-2A\hat{\Om}A\overline{\mathcal{F}\left[\partial_{XY}^L\Dl_L^{-1}\left(\big((Y')^2-1\big)\partial_{YY}^L\Phi+Y''\partial_Y^L\Phi\right)\right]}+2A\hat{\Om}A\overline{\mathcal{F}\left[(Y'-1)\partial_{XY}^L\Phi\right]}.
\eeq
Then the analog of  \eqref{inner-OmJ} follows
\beq
\nn\partial_t\left(A\hat{\Om} A\bar{\hat{J}} \right)&=&i\al k\left(|A\hat{J}|^2-|A\hat{\Om}|^2\right)-{\bf 1}_{(D_1\cup D_2)^c}\fr{2k(\eta-kt)}{k^2+(\eta-kt)^2}A\hat{\Om}A\bar{\hat{J}}\\
\nn&&+2\dot{\lm}(t)|k,\eta|^sA\hat{\Om}A\bar{\hat{J}}-2\fr{\partial_tw}{w}\sqrt{\tl{A}A}\hat{\Om}\sqrt{\tl{A}A}\bar{\hat{J}}-2\sum_{\theta\in\left\{ M^1, \, M^\mu \right\}}\fr{\partial_t\theta}{\theta}A\hat{\Om}A\bar{\hat{J}}\\
\nn&&+2A\hat{\Om}A\overline{\mathcal{F}\left[\partial_{XY}^L\Dl_L^{-1}\left(\big((Y')^2-1\big)\partial_{YY}^L\Phi+Y''\partial_Y^L\Phi\right)\right]}\\
\nn&&-2A\hat{\Om}A\overline{\mathcal{F}\left[(Y'-1)\partial_{XY}^L\Phi\right]}+\mu A\hat{\Om}A\overline{\widehat{\Dl_tJ}}+A\widehat{\mathrm{NL}[\Om]}A\bar{\hat{J}}+A\hat{\Om}A\overline{\widehat{\mathrm{NL}[J]}}.
\eeq
This enables us to get the evolution of the cross term in \eqref{En}. Indeed, similar to \eqref{inner}, we arrive at
\beq\label{E4}
\nn \fr{1}{\al }\fr{d}{dt}\left\la\fr{\partial_t(m^\fr12)}{m^{\fr12}}\partial_X^{-1}A{\Om}_{\ne} , AJ_{\ne} \right\ra&=&\mathrm{CK}_{m^\fr12, J}-\mathrm{CK}_{m^\fr12, \Om}+\fr{2}{\al }\dot{\lm}(t)\left\la\fr{\partial_t(m^\fr12)}{m^{\fr12}}\partial_X^{-1}|\nabla|^\fr{s}{2}A\Om_{\ne}, |\nabla|^\fr{s}{2}AJ_{\ne}\right\ra\\
\nn&&-\fr{2}{\al }\left\la\fr{\partial_t(m^\fr12)}{m^{\fr12}}\partial_{X}^{-1}\sqrt{\fr{\partial_tw}{w}}\sqrt{\tl{A}A}\Om_{\ne}, \sqrt{\fr{\partial_tw}{w}}\sqrt{\tl{A}A}J_\ne\right\ra\\
\nn&&-\fr{2}{\al }\sum_{\theta\in\left\{ M^1, \, M^\mu \right\}}\left\la\fr{\partial_t(m^\fr12)}{m^{\fr12}}\partial_X^{-1}\sqrt{\fr{\partial_t\theta}{\theta}}A\Om_{\ne}, \sqrt{\fr{\partial_t\theta}{\theta}}AJ_{\ne}\right\ra\\
\nn&&+\fr{2}{\al }\left\la\fr{\partial_t(m^\fr12)}{m^{\fr12}}\partial_X^{-1}A\Om_{\ne}, A\partial_{XY}^L\Dl_L^{-1}\left[\left((Y')^2-1\right)\partial_{YY}^L\Phi_{\ne}+Y''\partial_Y^L\Phi_{\ne}\right]\right\ra\\
\nn&&-\fr{2}{\al }\left\la\fr{\partial_t(m^\fr12)}{m^{\fr12}}\partial_X^{-1}A\Om_{\ne}, A\left((Y'-1)\partial_{XY}^L\Phi_{\ne}\right)\right\ra+\fr{\mu}{\al }\left\la\fr{\partial_t(m^\fr12)}{m^{\fr12}} \partial_X^{-1}A\Om_{\ne}, A\Dl_tJ_{\ne}\right\ra\\
\nn&&+\fr{1}{\al }\left\la\fr{\partial_t(m^\fr12)}{m^{\fr12}}\partial_X^{-1}A\mathrm{NL}[\Om]_{\ne}, AJ_{\ne}\right\ra+\fr{1}{\al }\left\la\fr{\partial_t(m^\fr12)}{m^{\fr12}}\partial_X^{-1}A\Om_{\ne}, A\mathrm{NL}[J]_{\ne}\right\ra\\
&&+\fr{1}{\al }\left\la\partial_t\left(\fr{\partial_t(m)^\fr12}{m^\fr12}\right)\partial_X^{-1}A\Om_{\ne}, AJ_{\ne}\right\ra.
\eeq
Collecting \eqref{E1},  \eqref{E3} and \eqref{E4},
and using the fact that
\beq
\nn&&-\fr{\mu}{\al }\left\la\fr{\partial_t(m^\fr12)}{m^{\fr12}} \partial_X^{-1}A\Om_{\ne}, A\Dl_tJ_{\ne}\right\ra\\
\nn&=&-\fr{\mu}{\al }\left\la\fr{\partial_t(m^\fr12)}{m^{\fr12}} \partial_X^{-1}A\Om_{\ne}, A\Dl_LJ_{\ne}\right\ra-\fr{\mu}{\al }\left\la\fr{\partial_t(m^\fr12)}{m^{\fr12}} \partial_X^{-1}A\Om_{\ne}, A\left[\left((Y')^2-1\right)\partial_{YY}^L+Y''\partial_Y^L\right]J_{\ne}\right\ra,
\eeq
we are led to
\beq
\nn&&\fr12\fr{d}{dt}{E}(t)+\sum_{g\in\{\Om, J\}}\left(\mathrm{CK}_{\lm, g}+\mathrm{CK}_{w,g}+\sum_{\theta\in\left\{ M^1, M^\mu\right\}}\mathrm{CK}_{\theta, g}\right)+\mu\|\nabla_LAJ\|^2_{L^2}\\
&=&{\bf LE}+\mathbf{NL}+\mathbf{CNL}+\mathbf{DE}+\mathbf{CDE}+\mathbf{LSE}+\mathbf{CLSE},
\eeq
where
\beqno
{\bf LE}\nn&=&-\sum_{k\in\mathbb{Z}}\int {\bf 1}_{(D_1\cup D_2)^c}\fr{2k(\eta-kt)}{k^2+(\eta-kt)^2}|A\hat{J}|^2d\eta-\fr{\mu}{\al }\left\la\fr{\partial_t(m^\fr12)}{m^{\fr12}} \partial_X^{-1}A\Om_{\ne}, A\Dl_LJ_{\ne}\right\ra\\
\nn&&-\fr{1}{\al }\left\la\partial_t\left(\fr{\partial_t(m)^\fr12}{m^\fr12}\right)\partial_X^{-1}A\Om_{\ne}, AJ_{\ne}\right\ra+\fr{2}{\al }\left\la\fr{\partial_t(m^\fr12)}{m^{\fr12}}\partial_{X}^{-1}\sqrt{\fr{\partial_tw}{w}}\sqrt{\tl{A}A}\Om_{\ne}, \sqrt{\fr{\partial_tw}{w}}\sqrt{\tl{A}A}J_\ne\right\ra\\
&&+\fr{2}{\al }\sum_{\theta\in\left\{ M^1, \, M^\mu \right\}}\left\la\fr{\partial_t(m^\fr12)}{m^{\fr12}}\partial_X^{-1}\sqrt{\fr{\partial_t\theta}{\theta}}A\Om_{\ne}, \sqrt{\fr{\partial_t\theta}{\theta}}AJ_{\ne}\right\ra-\fr{2}{\al }\dot{\lm}(t)\left\la\fr{\partial_t(m^\fr12)}{m^{\fr12}}\partial_X^{-1}|\nabla|^\fr{s}{2}A\Om_{\ne}, |\nabla|^\fr{s}{2}AJ_{\ne}\right\ra,
\eeqno
and
\beqno
\mathbf{NL}&=&\left\la A\Om, A\mathrm{NL}[\Om]\right\ra+\left\la AJ, A\mathrm{NL}[J]\right\ra,\\
\mathbf{CNL}&=&-\fr{1}{\al }\left\la\fr{\partial_t(m^\fr12)}{m^{\fr12}}\partial_X^{-1}A\mathrm{NL}[\Om]_{\ne}, AJ_{\ne}\right\ra-\fr{1}{\al }\left\la\fr{\partial_t(m^\fr12)}{m^{\fr12}}\partial_X^{-1}A\Om_{\ne}, A\mathrm{NL}[J]_{\ne}\right\ra,\\
\mathbf{DE}&=&\mu\left\la AJ, A\left[\left((Y')^2-1\right)\partial_{YY}^L+Y''\partial_Y^L\right]J\right\ra,\\
\mathbf{CDE}&=&-\fr{\mu}{\al }\left\la\fr{\partial_t(m^\fr12)}{m^{\fr12}} \partial_X^{-1}A\Om_{\ne}, A\left[\left((Y')^2-1\right)\partial_{YY}^L+Y''\partial_Y^L\right]J_{\ne}\right\ra,\\
\mathbf{LSE}&=&2\left\la AJ, A\partial_{XY}^L\Dl_L^{-1}\left[\left((Y')^2-1\right)\partial_{YY}^L\Phi+Y''\partial_Y^L\Phi \right]\right\ra-2\left\la AJ, A\left((Y'-1)\partial_{XY}^L\Phi\right)\right\ra,\\
{\bf CLSE}\nn&=&-\fr{2}{\al }\left\la\fr{\partial_t(m^\fr12)}{m^{\fr12}}\partial_X^{-1}A\Om_{\ne}, A\partial_{XY}^L\Dl_L^{-1}\left[\left((Y')^2-1\right)\partial_{YY}^L\Phi_{\ne}+Y''\partial_Y^L\Phi_{\ne}\right]\right\ra\\
&&+\fr{2}{\al }\left\la\fr{\partial_t(m^\fr12)}{m^{\fr12}}\partial_X^{-1}A\Om_{\ne}, A\left((Y'-1)\partial_{XY}^L\Phi_{\ne}\right)\right\ra,
\eeqno
where `{\bf C}' stands for `cross terms'.
Recalling the definitions of $\mathrm{NL}[\Om]$ and $\mathrm{NL}[J]$ in \eqref{NL}, noting that $B^1_0=-Y'\partial_Y\Phi_0$,   integrating by parts, and using the facts
\beno
\la A\Om_{\ne}, B^1_0\partial_XAJ\ra+\la AJ_{\ne},  B^1_0\partial_XA\Om\ra=0,
\eeno
and
\beno
\left\la A\Om, Y'\nabla^\bot\Phi_{\ne}\cdot\nabla AJ\right\ra+\left\la AJ, Y'\nabla^\bot\Phi_{\ne}\cdot\nabla A\Om\right\ra=-\int\nabla\cdot\left(Y'\nabla^\bot\Phi_{\ne}\right)A\Om AJdXdY,
\eeno
we are led to
\beq\label{NL1}
\mathbf{NL}\nn&=&\fr12\sum_{g\in\{\Om, J\}}\int\nabla\cdot V|Ag|^2dXdY-\int\nabla\cdot\left(Y'\nabla^\bot\Phi_\ne\right)A\Om AJdXdY\\
&&+\sum_{g\in\{\Om, J\}}\left({\bf Com}[B_0]_g+{\bf Com}[B_{\ne}]_g-{\bf Com}[V]_g\right)+\mathbf{NLS},
\eeq
where
\beq
{\bf Com}[V]_g&=&\left\la Ag, A(V\cdot\nabla g)-V\cdot\nabla Ag\right\ra,\\
\label{com-J}{\bf Com}[B_{\ne}]_J&=&\left\la A\Om, A\left(Y'\nabla^\bot\Phi_{\ne}\cdot\nabla J\right)-Y'\nabla^\bot\Phi_{\ne}\cdot\nabla AJ\right\ra,\\
\label{com-Om}{\bf Com}[B_{\ne}]_{\Om}&=&\left\la AJ, A\left(Y'\nabla^\bot\Phi_{\ne}\cdot\nabla\Om\right)-Y'\nabla^\bot\Phi_{\ne}\cdot\nabla A\Om\right\ra,\\
{\bf Com}[B_0]_J&=&\left\la A\Om_{\ne}, A\left(B^1_0\partial_X J\right)-B^1_0\partial_X AJ\right\ra,\\
{\bf Com}[B_0]_{\Om}&=&\left\la AJ_{\ne}, A\left(B^1_0\partial_X\Om\right)-B^1_0\partial_XA\Om\right\ra,\\
\mathbf{NLS}\nn&=&-2\left\la AJ, A\left(\partial_{XY}^t\Phi(-\Om+2\partial_{XX}\Psi)\right)\right\ra\\
\label{NLS}&&+2\left\la AJ, A\left(\partial_{XY}^t\Psi(-J+2\partial_{XX}\Phi)\right)\right\ra.
\eeq

Next, we rewrite $\mathbf{CNL}$ more explicitly. Using again $B_0^1=-Y'\partial_Y\Phi_0$, we have
\beq
\nn&&-\fr{1}{\al }\left\la\fr{\partial_t(m^\fr12)}{m^{\fr12}}\partial_X^{-1}A\left(Y'\nabla^\bot\Phi_0\cdot\nabla J_{\ne}\right), AJ_{\ne}\right\ra\\
\nn&=&-\fr{1}{\al }\left\la\fr{\partial_t(m^\fr12)}{m^{\fr12}}\left[A\left(B^1_0 J_{\ne}\right)-B^1_0AJ_{\ne}\right], AJ_{\ne}\right\ra-\fr{1}{\al }\left\la\fr{\partial_t(m^\fr12)}{m^{\fr12}}\left(B^1_0AJ_{\ne}\right), AJ_{\ne}\right\ra,
\eeq
and
\beq
\nn&&-\fr{1}{\al }\left\la\fr{\partial_t(m^\fr12)}{m^{\fr12}}\partial_X^{-1}A\Om_{\ne}, A\left(Y'\nabla^\bot\Phi_0\cdot\nabla \Om_{\ne}\right)\right\ra\\
\nn&=&\fr{1}{\al }\left\la\fr{\partial_t(m^\fr12)}{m^{\fr12}}A\Om_{\ne}, \left[A\left(B^1_0 \Om_{\ne}\right)-B^1_0A\Om_{\ne}\right]\right\ra+\fr{1}{\al }\left\la\fr{\partial_t(m^\fr12)}{m^{\fr12}}A\Om_{\ne}, B^1_0A\Om_{\ne}\right\ra.
\eeq
Then
\be
\mathbf{CNL}=\mathbf{CNLT}+\sum_{g\in\{\Om, J\}}{\bf CCom}[B_0]_g+\mathbf{CComE}+\mathbf{CNLS},
\ee
where
\beq
\mathbf{CNLT}\nn&=&\fr{1}{\al}\left\la\fr{\partial_t(m^\fr12)}{m^{\fr12}}\partial_X^{-1}A\left(V\cdot\nabla\Om\right)_{\ne}, AJ_{\ne} \right\ra+\fr{1}{\al}\left\la\fr{\partial_t(m^\fr12)}{m^{\fr12}}\partial_X^{-1}A\Om_\ne, A\left(V\cdot\nabla J\right)_\ne \right\ra\\
\nn&&-\fr{1}{\al}\left\la\fr{\partial_t(m^\fr12)}{m^{\fr12}}\partial_X^{-1}A\left(Y'\nabla^\bot\Phi_{\ne}\cdot\nabla J\right)_{\ne}, AJ_{\ne} \right\ra\\
\label{CNLT}&&-\fr{1}{\al}\left\la\fr{\partial_t(m^\fr12)}{m^{\fr12}}\partial_X^{-1}A\Om_\ne, A\left(Y'\nabla^\bot\Phi_{\ne}\cdot\nabla\Om\right)_{\ne} \right\ra,\\
{\bf CCom}[B_0]_\Om&=&\fr{1}{\al }\left\la\fr{\partial_t(m^\fr12)}{m^{\fr12}}A\Om_{\ne}, \left[A\left(B^1_0 \Om_{\ne}\right)-B^1_0A\Om_{\ne}\right]\right\ra,\\
{\bf CCom}[B_0]_J&=&-\fr{1}{\al }\left\la\fr{\partial_t(m^\fr12)}{m^{\fr12}}\left[A\left(B^1_0 J_{\ne}\right)-B^1_0AJ_{\ne}\right], AJ_{\ne}\right\ra,\\
\mathbf{CComE}&=&\fr{1}{\al }\left\la\fr{\partial_t(m^\fr12)}{m^{\fr12}}A\Om_{\ne}, B^1_0A\Om_{\ne}\right\ra-\fr{1}{\al }\left\la\fr{\partial_t(m^\fr12)}{m^{\fr12}}\left(B^1_0AJ_{\ne}\right), AJ_{\ne}\right\ra,\\
\mathbf{CNLS}\nn&=&\fr{2}{\al}\left\la \fr{\partial_t(m^\fr12)}{m^{\fr12}}\partial_X^{-1}A\Om_{\ne}, A\left(\partial_{XY}^t\Phi(-\Om+2\partial_{XX}\Psi)\right)_{\ne}\right\ra\\
\label{CNLS}&&-\fr{2}{\al}\left\la \fr{\partial_t(m^\fr12)}{m^{\fr12}}\partial_X^{-1}A\Om_{\ne}, A\left(\partial_{XY}^t\Psi(-J+2\partial_{XX}\Phi)\right)_{\ne}\right\ra.
\eeq

\subsection{Linear errors} The linear errors $\mathbf{LE}$ can be bounded similar to \eqref{cro-dis}--\eqref{LSerr}:
\beq
\mathbf{LE}\nn&\les&\fr{\mu}{4}\left\|\nabla_LAJ\right\|^2_{L^2}+\fr{\mu}{4|\al|}\left\|\nabla_LAJ\right\|^2_{L^2}+\fr{5}{|\al|}\mathrm{CK}_{M^1,\Om}\\
&&+\fr{1}{|\al|}\sum_{\theta\in\{M^1, M^\mu\}}\sum_{g\in\{\Om, J\}}\mathrm{CK}_{\theta, g}+\fr{1}{2|\al|}\sum_{g\in\{\Om, J\}}\mathrm{CK}_{\lambda, g}+\fr{1}{2|\al|}\sum_{g\in\{\Om, J\}}\mathrm{CK}_{w, g}.
\eeq

\subsection{Dissipation errors}
In this section, we treat the dissipation error terms $\mathbf{DE}$ and $\mathbf{CDE}$. As mentioned in the introduction, the most challenging term arises when the coefficient $Y''$ is at high frequency. See \eqref{D-HL} below for the precise problematic term $\mathrm{DE}^{2;\mathrm{D}}_{2;\mathrm{HL}}$, where we need to control the derivative loss. 
\subsubsection{Dissipation errors  $\mathbf{DE}$}
Integrating by parts, we have
\beq\label{de-m}
\mathbf{DE}&=&-\mu\left\la \partial_Y^LAJ, A\left[\left((Y')^2-1\right)\partial_{Y}^LJ\right]\right\ra-\mu\left\la AJ, A\left(Y''\partial_Y^LJ\right)\right\ra=\mathrm{DE}_1+\mathrm{DE}_2.
\eeq
For $\mathrm{DE}_1$, using the product estimate \eqref{ap5}, the upper bound of $m_k(t,\eta)$ in \eqref{m1},  the algebra property of $A^R$ and the bootstrap hypotheses,  we have
\beq\label{e-DE1}
\int_1^t\mathrm{DE}_1dt'\nn&\les&\mu\|\partial_Y^LAJ\|_{L^2L^2}\left(\|(Y')^2-1\|_{L^\infty\mathcal{G}^{c\lm,1}}\|\partial_Y^LAJ\|_{L^2L^2}+\left\|A^R\left((Y')^2-1\right)\right\|_{L^\infty L^2}\|\partial_Y^LJ\|_{L^2\mathcal{G}^{c\lm,1}}\right)\\
&\les&\mu^\fr23\|\partial_Y^LAJ\|_{L^2L^2}^2\left(\left\|A^R\left(Y'-1\right)\right\|_{L^\infty L^2}+\left\|A^R\left(Y'-1\right)\right\|_{L^\infty L^2}^2\right)\les\mu^{-\fr13}\eps^3.
\eeq

Let us now focus on $\mathrm{DE}_2$. To this end, by \eqref{ref-Y''}, we further divide it into two parts:
\beno
\mathrm{DE}_2=-\mu\left\la AJ, A\left[\fr12\partial_Y(Y'-1)^2+\partial_Y(Y'-1)\right]\partial_Y^LJ\right\ra=\mathrm{DE}_2^{1}+\mathrm{DE}_2^{2}.
\eeno
First consider $\mathrm{DE}_2^{2}$, which can be spotted  by paraproduct decomposition
\beq
\mathrm{DE}^{2}_2\nn&=&-\mu\sum_{N\ge8}\int AJ A\left(\partial_Y(Y'-1)_{N}\partial_Y^LJ_{<\fr{N}{8}}\right)dXdY\\
\nn&&-\mu\sum_{N\in\mathbb{D}}\int AJ A\left(\partial_Y(Y'-1)_{<16N}\partial_Y^LJ_{N}\right)dXdY\\
&=&\mathrm{DE}^{2}_{2;\mathrm{HL}}+\mathrm{DE}^{2}_{2;\mathrm{LH}}.
\eeq
To bound $\mathrm{DE}^{2}_{2;\mathrm{HL}}$, we subdivide this integral according to whether or not $(k,\eta)$ and $(k,\xi)$ are resonant.
\beq\label{D-HL}
\nn \mathrm{DE}^{2}_{2;\mathrm{HL}}&=&-i\mu\sum_{N\ge8}\sum_{k\in\mathbb{Z}}\int_{\eta,\xi} (A\bar{\hat{J}})_k(\eta) \fr{A_k(\eta)}{A^R(\xi)}\xi \left(A^R\widehat{(Y'-1)}\right)(\xi)_{N}\widehat{\partial_Y^LJ}_k(\eta-\xi)_{<\fr{N}{8}}\\
\nn&&\times\left({\bf 1}_{t\in\mathbb{I}_{k,\eta}\cap\mathbb{I}_{k,\xi}}+{\bf 1}_{t\notin\mathbb{I}_{k,\eta}\cap\mathbb{I}_{k,\xi}}\right)d\xi d\eta\\
&=&\mathrm{DE}^{2;\mathrm{D}}_{2;\mathrm{HL}}+\mathrm{DE}^{2;*}_{2;\mathrm{HL}}.
\eeq 
Note that on the support of the integrand of  $\mathrm{DE}^{2}_{2;\mathrm{HL}}$, the frequency localizations \eqref{sptR1}--\eqref{sptR3} hold. As a result, one can use \eqref{AkR1} to deduce
\beq\label{e-ReD}
\nn \mathrm{DE}^{2;\mathrm{D}}_{2;\mathrm{HL}}&\les&\mu\sum_{N\ge8}\sum_{k\ne0}\int_{\eta,\xi} {\bf 1}_{t\in\mathbb{I}_{k,\eta}\cap\mathbb{I}_{k,\xi}}\left|\left(|\nabla_L|^{\fr12}A\hat{J}\right)_k(\eta)\right| \fr{|\xi|}{\left(|k|+|\eta-kt|\right)^\fr12} \left|\left(A^R\widehat{(Y'-1)}\right)(\xi)_{N}\right|\\
\nn&&\times e^{c\lm|k,\eta-\xi|^s}\left|\widehat{\partial_Y^LJ}_k(\eta-\xi)_{<\fr{N}{8}}\right|d\xi d\eta\\
\nn&\les&\mu\la t\ra\sum_{k\ne0}\int_{\eta,\xi} {\bf 1}_{t\in\mathbb{I}_{k,\eta}\cap\mathbb{I}_{k,\xi}}\left|\left(|\nabla_L|^\fr12A\hat{J}\right)_k(\eta)\right| \fr{|\xi|}{|k|}\sqrt{\fr{1+\left|t-\fr{\xi}{k}\right|}{1+\left|t-\fr{\eta}{k}\right|}} \fr{1}{\sqrt{1+\left|t-\fr{\xi}{k}\right|}}\left|\left(A^R\widehat{(Y'-1)}\right)(\xi)\right|\\
\nn&&\times e^{c\lm|k,\eta-\xi|^s}|k|^{\fr12}\left|\widehat{\nabla J}_k(\eta-\xi)\right|d\xi d\eta\\
\nn&\les&\mu\la t\ra^2\sum_{k\ne0}\int_{\eta,\xi} {\bf 1}_{t\in\mathbb{I}_{k,\eta}\cap\mathbb{I}_{k,\xi}}\left|\left(|\nabla_L|^{\fr12}A\hat{J}\right)_k(\eta)\right|  \sqrt{\fr{\partial_tw_R(\xi)}{w_R(\xi)}}\left|\left(A^R\widehat{(Y'-1)}\right)(\xi)\right|\\
\nn&&\times e^{c\lm|k,\eta-\xi|^s}\left|\widehat{\la\nabla\ra^2 J}_k(\eta-\xi)\right|d\xi d\eta\\
&\les&\mu\la t\ra^2\|J_{\ne}\|_{\mathcal{G}^{\lm,2}}\left\||\nabla_L|^{\fr12}AJ_{\ne}\right\|_{L^2}\left\|\sqrt{\fr{\partial_tw_R}{w_R}}A^R(Y'-1)\right\|_{L^2},
\eeq
where we have used the fact $t\approx\fr{|\xi|}{|k|}$ for $t\in\mathbb{I}_{k,\xi}$ and \eqref{pt_w}. By interpolation, it is easy to see that
\[
\left\||\nabla_L|^{\fr12}AJ_{\ne}\right\|_{L^2}\les\left\|\nabla_LAJ_{\ne}\right\|_{L^2}^{\fr12}\left\|AJ_{\ne}\right\|_{L^2}^{\fr12}.
\]
Combining this with \eqref{e-ReD}, the lossy estimate \eqref{loss3},  the upper bound of $m_k(t,\eta)$ in \eqref{m1} and the bootstrap hypotheses, we find that
\beq\label{e-ReD'}
\int_1^t\mathrm{DE}^{2;\mathrm{D}}_{2;\mathrm{HL}}dt'
\nn&\les&\left(\|\mathcal{A}F\|_{L^\infty L^2}+\mu^{-\fr13}\|A\Om\|_{L^\infty L^2}\right)\left\|\nabla_LAJ\right\|_{L^2L^2}^{\fr12}\left\|AJ_{\ne}\right\|_{L^2L^2}^{\fr12}\left\|\sqrt{\fr{\partial_tw_R}{w_R}}A^R(Y'-1)\right\|_{L^2L^2}\\
&\les&\eps^3\mu^{-1}.
\eeq

To bound $\mathrm{DE}^{2;*}_{2;\mathrm{HL}}$, we find that on the support of the integrand of $\mathrm{DE}^{2;*}_{2;\mathrm{HL}}$, there holds
\beno
\fr{{\bf 1}_{t\notin\mathbb{I}_{k,\eta}\cap\mathbb{I}_{k,\xi}}|\xi|}{|k|+|\eta-kt|}\les \la k, \eta-\xi\ra.
\eeno
This together with  \eqref{AkR2}  and \eqref{m1} implies that
\beq\label{e-Re*}
\nn \mathrm{DE}^{2;*}_{2;\mathrm{HL}}&\les&\mu\sum_{N\ge8}\sum_{k\in\mathbb{Z}}\int_{\eta,\xi} \left|\left(\nabla_LA\hat{J}\right)_k(\eta)\right|  \left|\left(A^R\widehat{(Y'-1)}\right)(\xi)_{N}\right|\\
\nn&&\times \la k, \eta-\xi\ra e^{c\lm|k,\eta-\xi|^s}\left|\widehat{\partial_Y^LJ}_k(\eta-\xi)_{<\fr{N}{8}}\right|d\xi d\eta\\
\nn&\les&\mu\|\nabla_LAJ\|_{L^2}\left\|A^R(Y'-1)\right\|_{L^2}\left\|\partial_Y^LJ\right\|_{\mathcal{G}^{\lm, 2}}\\
&\les&\mu^{-\fr13}\left(\mu\left\|\nabla_LAJ\right\|_{L^2}^2\right)\left\|A^R(Y'-1)\right\|_{L^2}.
\eeq

Finally, we  go to bound $\mathrm{DE}^{2}_{2;\mathrm{LH}}$:
\beno
 \mathrm{DE}^{2}_{2;\mathrm{LH}}=-i\mu\sum_{N\in\mathbb{D}}\sum_{k\in\mathbb{Z}}\int (A\bar{\hat{J}})_k(\eta) \fr{A_k(\eta)}{A_k(\xi)}(\eta-\xi)\widehat{(Y'-1)}(\eta-\xi)_{<16N}\left(A_k(\xi)\widehat{\partial_Y^LJ}_k(\xi)_{N}\right)d\xi d\eta.
\eeno
On the support of $\mathrm{DE}^{2}_{2;\mathrm{LH}}$, \eqref{spt<16N}--\eqref{AkAk} hold. Therefore,
\beq\label{e-Te}
\mathrm{DE}^{2}_{2;\mathrm{LH}}\nn&\les&\mu\sum_{k\in\mathbb{Z}}\int\left|(A{\hat{J}})_k(\eta)\right| e^{c\lm|\eta-\xi|^s}|\eta-\xi|\left|\widehat{(Y'-1)}(\eta-\xi)\right|\left|A\widehat{\partial_Y^LJ}_k(\xi)\right|d\xi d\eta\\
&\les&\mu\left(\|AJ_\ne\|_{L^2}+\|AJ_0\|_{L^2}\right)\left\|Y'-1\right\|_{\mathcal{G}^{c\lm, 2}}\left\|\partial_Y^LAJ\right\|_{L^2}.
\eeq
It follows from \eqref{e-Re*}, \eqref{e-Te}, the bootstrap hypotheses and \eqref{U0B0J0} that
\be\label{e-DE22}
\int_1^t\mathrm{DE}_{2;\mathrm{HL}}^{2;*}+\mathrm{DE}_{2;\mathrm{LH}}^{2}dt'\les\mu^{-\fr13}\eps^3.
\ee
The treatment of $\mathrm{DE}_2^1$ is analogous to \eqref{e-ReD'} and \eqref{e-DE22}. One subtle difference is that one should use the product estimate for $\sqrt{\fr{\partial_tw_R}{w_R}}A^R$ (see {\bf (3.39b)} of \cite{BM15}) to obtain
\be\label{pro-wA}
\left\|\sqrt{\fr{\partial_tw_R}{w_R}}A^R\left((Y'-1)^2\right)\right\|_{L^2}\les\|Y'-1\|_{\mathcal{G}^{\lm,2}}\left(\left\|\sqrt{\fr{\partial_tw_R}{w_R}}A^R(Y'-1)\right\|_{L^2}+\left\|\fr{|\partial_Y|^{\fr{s}{2}}}{\la t\ra^s}A^R(Y'-1)\right\|_{L^2}\right).
\ee
\subsubsection{Dissipation errors $\mathbf{CDE}$ from cross terms} Integrating by parts, similar to \eqref{de-m}, we have
\beq\label{de-c}
\nn\mathbf{CDE}&=&\fr{\mu}{\al }\left\la\fr{\partial_t(m^\fr12)}{m^{\fr12}} \partial_X^{-1}\partial_Y^LA\Om_{\ne}, A\left[\left((Y')^2-1\right)\partial_{Y}^LJ_{\ne}\right]\right\ra+\fr{\mu}{\al }\left\la\fr{\partial_t(m^\fr12)}{m^{\fr12}} \partial_X^{-1}A\Om_{\ne}, A\left(Y''\partial_Y^LJ_{\ne}\right)\right\ra\\
&=&\mathrm{CDE}_1+ \mathrm{CDE}_2.
\eeq
Recalling the definition of the multiplier $M_1$ in \eqref{M1}, thanks to \eqref{pt-m12} and \eqref{D1+D2}, and we find that
\be\label{635}
\left| {\bf 1}_{k\ne0}\fr{\partial_t(m^\fr12)}{m^{\fr12}}\right|\les\sqrt{\fr{\partial_t{M}^1}{M^1}},
\ee
and
\be\label{636}
\left| {\bf 1}_{k\ne0}\fr{\partial_t(m^\fr12)}{m^{\fr12}}\fr{|k,\eta-kt|}{|k|}\right|={\bf 1}_{k\ne0}\fr{|k|\left(t-\fr{\eta}{k}\right){\bf 1}_{D_1\cup D_2}}{\sqrt{k^2+(\eta-kt)^2}}\les\mu^{-\fr13}\sqrt{\fr{\partial_t{M}^1}{M^1}}.
\ee
Using \eqref{636}, similar to \eqref{e-DE1}, we arrive at
\beno
\mathrm{CDE}_1\les\fr{\mu^\fr23}{|\al|}\left\|\sqrt{\fr{\partial_t{M}^1}{M^1}}A\Om\right\|_{L^2}\left(\left\|(Y')^2-1\right\|_{\mathcal{G}^{c\lm,1}}\left\|\partial_Y^LAJ\right\|_{L^2}+\left\|A^R\left((Y')^2-1\right)\right\|_{L^2}\left\|\partial_Y^LJ\right\|_{\mathcal{G}^{c\lm,1}}\right).
\eeno
As for $\mathrm{CDE}_2$, in view of \eqref{pro-wA}, it is harmless to regard $Y''$ just as $\partial_Y(Y'-1)$. Then  by means of paraproduct decomposition, we use \eqref{636} when $Y''$ is at high frequency, and use \eqref{635} when $\partial_Y^LJ_\ne$ is at high frequency, similar to \eqref{e-ReD}, \eqref{e-Re*} and \eqref{e-Te}, we are led to
\beqno
\mathrm{CDE}_2\nn&\les& \mu^\fr23\la t\ra^2\|J_{\ne}\|_{\mathcal{G}^{\lm,2}}\left\|\sqrt{\fr{\partial_t{M}^1}{M^1}}A\Om\right\|_{L^2}\left\|\sqrt{\fr{\partial_tw_R}{w_R}}A^R(Y'-1)\right\|_{L^2}\\
\nn&&+\mu^\fr13\left\|\sqrt{\fr{\partial_t{M}^1}{M^1}}A\Om\right\|_{L^2}\left\|A^R(Y'-1)\right\|_{L^2}\left\|\partial_Y^LAJ\right\|_{L^2}\\
&&+\mu \left\|\sqrt{\fr{\partial_t{M}^1}{M^1}}A\Om\right\|_{L^2}\left\|Y'-1\right\|_{\mathcal{G}^{c\lm, 2}}\left\|\partial_Y^LAJ\right\|_{L^2}.
\eeqno
Using again \eqref{m1}, \eqref{loss3}, and the bootstrap hypotheses, we infer from the above two estimates that
\beq\label{e-CDE}
\nn\int_1^t{\bf CDE}dt'&\les&\mu^{-\fr13}\left(\|\mathcal{A}F\|_{L^\infty L^2}+\mu^{-\fr13}\|A\Om\|_{L^\infty L^2}\right)\left\|\sqrt{\fr{\partial_t{M}^1}{M^1}}A\Om\right\|_{L^2L^2}\left\|\sqrt{\fr{\partial_tw_R}{w_R}}A^R(Y'-1)\right\|_{L^2L^2}\\
&&+\mu^{-\fr16}\eps\left\|\sqrt{\fr{\partial_t{M}^1}{M^1}}A\Om\right\|_{L^2L^2}\left\|\partial_Y^LAJ\right\|_{L^2L^2}\les\mu^{-1}\eps^3.
\eeq

\subsection{Linear stretch errors}
To bound the linear stretch errors, we first establish the following lemma.
\begin{lem}\label{lem-nonres}
For $s\in(\fr12, 1)$, $t\ge1$, $l\ne0$ and $\xi\in\mathbb{R}$, there holds
\be\label{722}
\fr{{\bf1}_{t\notin\mathbb{I}_{l,\xi}}}{1+|t-\fr{\xi}{l}|}\les\left\la\fr{\xi}{lt}\right\ra^{-1}\min\left\{\fr{|l|^s}{t^s}, \fr{1}{t}+\fr{|\xi|^s}{t^{s+\fr12}}\right\}.
\ee
\end{lem}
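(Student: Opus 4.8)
The plan is to prove \eqref{722} by a case analysis organized around the sign of $\frac{\xi}{l}$ and the position of $t$ relative to the critical time $\frac{\xi}{l}$; throughout I would write $\tau=\frac{\xi}{l}$, and since every quantity in \eqref{722} is invariant under $(l,\xi)\mapsto(-l,-\xi)$ I would assume $l\ge1$, so that $|\xi|=l|\tau|$.

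First I would dispose of the \emph{non-critical} regimes: $\tau\le0$, or $\tau>0$ with $t\le\frac{\tau}{2}$ or $t\ge2\tau$. In each of these, elementary estimates give $1+|t-\tau|\gtrsim t+|\tau|$, and since $t\ge1$ one has $t+|\tau|\approx t\,\langle\tau/t\rangle$, whence
\[
\frac{{\bf1}_{t\notin\mathbb{I}_{l,\xi}}}{1+|t-\tau|}\ \lesssim\ \frac{1}{t+|\tau|}\ \approx\ \Big\langle\tfrac{\xi}{lt}\Big\rangle^{-1}\,\frac1t .
\]
It then suffices to observe $\frac1t\le\min\big\{\frac{l^s}{t^s},\,\frac1t+\frac{|\xi|^s}{t^{s+1/2}}\big\}$, which holds because $l\ge1$, $t\ge1$, $s\le1$ force $\frac1t\le\frac1{t^s}\le\frac{l^s}{t^s}$, while the second entry dominates $\frac1t$ trivially.

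The substantive case is the \emph{critical} window $\tau>0$, $\frac{\tau}{2}<t<2\tau$, $t\notin\mathbb{I}_{l,\xi}$. There $t\approx\tau$, so $\langle\tau/t\rangle\approx1$ and $|\xi|=l\tau\approx lt$; consequently $\frac{|\xi|^s}{t^{s+1/2}}\approx\frac{l^s}{\sqrt t}$, and since $s>\frac12$ and $t\ge1$ give $t^s\ge\sqrt t$, the minimum on the right of \eqref{722} is comparable to $\frac{l^s}{t^s}$. Thus the claim reduces to $1+|t-\tfrac{\xi}{l}|\gtrsim\frac{t^s}{l^s}$. If $l^2\ge|\xi|$ then, since $|\xi|\approx lt$ here, $l\gtrsim t$ and the bound is immediate from $\tfrac{t^s}{l^s}\lesssim1\le1+|t-\tau|$. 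Otherwise $l^2<|\xi|$, and I would invoke the construction of the resonant intervals (Section \ref{subsec-con-w}; cf. \cite{BM15}): the critical time $\tau$ lies in $\mathbb{I}_{l,\xi}$ at distance $\gtrsim\frac{|\xi|}{l^2}$ from the complement of $\mathbb{I}_{l,\xi}$, so $t\notin\mathbb{I}_{l,\xi}$ forces $|t-\tau|\ge\mathrm{dist}(\tau,\mathbb{R}\setminus\mathbb{I}_{l,\xi})\gtrsim\frac{|\xi|}{l^2}\approx\frac{t}{l}$; hence $1+|t-\tau|\gtrsim\max\{1,\tfrac tl\}\gtrsim\big(\tfrac tl\big)^s=\tfrac{t^s}{l^s}$ since $0<s\le1$.

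I expect the main obstacle to be the bookkeeping of powers in the critical window rather than any deep estimate: one must match the resonant-interval gap $\frac{t}{l}$ against the target $\big(\frac tl\big)^s$ and confirm that $\frac{|\xi|^s}{t^{s+1/2}}$ genuinely dominates $\frac{l^s}{t^s}$ there, both of which hinge precisely on $s\in(\frac12,1)$. The only external input is the standard lower bound $\mathrm{dist}(\xi/l,\mathbb{R}\setminus\mathbb{I}_{l,\xi})\gtrsim|\xi|/l^2$ for the resonant interval, available from the appendix construction of $w$.
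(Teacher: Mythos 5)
Your proposal follows essentially the same strategy as the paper's proof: elementary bounds dispatch the regimes where $|\xi/l|$ is well separated from $t$, while in the critical band $\frac{t}{2}\le|\xi|/|l|\le 2t$ one appeals to the separation of $t$ from $\xi/l$ forced by $t\notin\mathbb{I}_{l,\xi}$. Your bookkeeping of the minimum is in fact a mild streamlining: the paper establishes the bound with $\frac{1}{t}+\frac{|\xi|^s}{t^{s+1/2}}$ and, separately, with $\frac{|l|}{t}$, and then interpolates the latter to $\frac{|l|^s}{t^s}$ via a further split on $|l|\lessgtr t$; you instead observe that in the critical band $|\xi|\approx|l|t$ and $s>\frac12$ force the minimum to be $\approx|l|^s/t^s$, leaving only one inequality to check there.

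However, your critical-band argument has a genuine gap at the branch ``$l^2<|\xi|$''. You invoke the gap estimate $|t-\xi/l|\gtrsim|\xi|/l^2$ coming from the construction of $\mathbb{I}_{l,\xi}$, but that estimate presupposes $\mathbb{I}_{l,\xi}\neq\emptyset$, which does \emph{not} follow from $l^2<|\xi|$. The resonant interval is nonempty only when, in addition to $1\le|l|\le E(\sqrt{|\xi|})$ and $l\xi>0$, one also has $2\sqrt{|\xi|}\le t_{|l|,\xi}\approx|\xi|/|l|$, i.e., roughly $|l|\lesssim\sqrt{|\xi|}/2$. In the remaining range $\sqrt{|\xi|}/2\lesssim|l|<\sqrt{|\xi|}$ one has $l^2<|\xi|$ yet $\mathbb{I}_{l,\xi}=\emptyset$; then the hypothesis $t\notin\mathbb{I}_{l,\xi}$ is vacuous, $t$ may equal $\xi/l$, and the claimed lower bound on $|t-\xi/l|$ is simply false (take $l=1$, $\xi=4$, $t=4$). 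The repair is exactly the paper's: split the critical band on $\mathbb{I}_{l,\xi}\neq\emptyset$ versus $\mathbb{I}_{l,\xi}=\emptyset$ rather than on $l^2\lessgtr|\xi|$, and note that whenever $\mathbb{I}_{l,\xi}=\emptyset$ with $l\xi>0$ and $|\xi|>1$ one has $\sqrt{|\xi|}\lesssim|l|$, hence in the critical band $t\approx|\xi|/|l|\lesssim|l|$ and $t^s/l^s\lesssim 1\le 1+|t-\xi/l|$, with no distance estimate needed. With that correction your proof is sound.
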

\begin{proof}
The proof will be accomplished by considering the following three cases.\par
\noindent{\bf Case 1: $\fr{|\xi|}{|l|}\le\fr{t}{2}$}. Here we have $|\fr{\xi}{l}-t|\ge\fr{t}{2}$. Consequently,
\be\label{722a}
\fr{1}{1+|t-\fr{\xi}{l}|}\les\fr{1}{t}\les\left\la\fr{\xi}{lt}\right\ra^{-1}\fr{1}{t}.
\ee
{\bf Case 2: $\fr{|\xi|}{|l|}>2t$}. Now we have $|{\xi}-lt|\ge\fr{|\xi|}{2}$. Thus
\be\label{722b}
\fr{1}{1+|t-\fr{\xi}{l}|}=\fr{1}{1+|t-\fr{\xi}{l}|}\fr{|\xi|}{|lt|}\fr{|lt|}{|\xi|}=\fr{|\xi|}{|l|+|\xi-lt|}\fr{1}{t}\fr{|lt|}{|\xi|}\les \left\la\fr{\xi}{lt}\right\ra^{-1}\fr{1}{t}.
\ee
{\bf Case 3: $\fr{t}{2}\le\fr{|\xi|}{|l|}\le2t$}. If $\mathbb{I}_{l,\xi}\ne\emptyset$, then $t\notin\mathbb{I}_{l,\xi}$ implies that $|t-\fr{\xi}{l}|\gtr\fr{|\xi|}{|l|^2}$. Accordingly,
\be\label{648}
\fr{1}{1+|t-\fr{\xi}{l}|}\les\fr{|l|^2}{|\xi|}\les\left\la\fr{\xi}{lt}\right\ra^{-1}\fr{|l|}{t}.
\ee
On the other hand, the fact $\mathbb{I}_{l,\xi}\ne\emptyset$ implies that $|l|\les\sqrt{|\xi|}$. Combining this with the fact $t\le\fr{2|\xi|}{|l|}\le2|\xi|$, we infer from \eqref{648} that
\be\label{649}
\fr{1}{1+|t-\fr{\xi}{l}|}\les\left\la\fr{\xi}{lt}\right\ra^{-1}\fr{\sqrt{|\xi|}}{t}\left(\fr{2|\xi|}{t}\right)^{s-\fr12}\les\left\la\fr{\xi}{lt}\right\ra^{-1}\fr{|\xi|^s}{t^{s+\fr12}}.
\ee
If $\mathbb{I}_{l,\xi}=\emptyset$, then either $\sq{|\xi|}\les |l|$ or $l\xi<0$. For the case when $\sq{|\xi|}\les |l|$, we have $|l|\gtr\fr{|\xi|}{|l|}\gtr t$. Then
\be\label{650}
\fr{1}{1+|t-\fr{\xi}{l}|}\le1\les\left\la\fr{\xi}{lt}\right\ra^{-1}\fr{|l|}{t}.
\ee
On the other hand, under the restriction of this case, $\sq{|\xi|}\les |l|$ also implies that $t\les\fr{|\xi|}{|l|}\les\sqrt{|\xi|}$. Thus, instead of \eqref{650}, we have
\be\label{651}
\fr{1}{1+|t-\fr{\xi}{l}|}\le1\les\left(\fr{\sqrt{|\xi|}}{t}\right)^{2s}\les\left\la\fr{\xi}{lt}\right\ra^{-1}\fr{|\xi|^s}{t^{2s}}.
\ee
When $l\xi<0$, we find that
\be\label{652}
\fr{1}{1+|t-\fr{\xi}{l}|}=\fr{1}{1+t+|\fr{\xi}{l}|}\les\left\la\fr{\xi}{lt}\right\ra^{-1}\fr{1}{t}.
\ee

It follows from \eqref{722a}, \eqref{722b},\eqref{649}, \eqref{651} and \eqref{652} that
\be\label{e-nonres1}
\fr{{\bf1}_{t\notin\mathbb{I}_{l,\xi}}}{1+|t-\fr{\xi}{l}|}\les\left\la\fr{\xi}{lt}\right\ra^{-1} \left(\fr{1}{t}+\fr{|\xi|^s}{t^{s+\fr12}}\right).
\ee
Meanwhile, \eqref{722a}--\eqref{648}, \eqref{650} and \eqref{652}  show that
\be\label{e-nonres}
\fr{{\bf1}_{t\notin\mathbb{I}_{l,\xi}}}{1+|t-\fr{\xi}{l}|}\les\left\la\fr{\xi}{lt}\right\ra^{-1}\fr{|l|}{t}.
\ee
In addition, if $|l|\le t$, \eqref{e-nonres} implies that
\be\label{722'}
\fr{{\bf1}_{t\notin\mathbb{I}_{l,\xi}}}{1+|t-\fr{\xi}{l}|}\les\left\la\fr{\xi}{lt}\right\ra^{-1}\fr{|l|^s}{t^s}.
\ee
If $|l|>t$, one can see from \eqref{722a} and \eqref{722b} that \eqref{722'} still holds for the case when $\fr{|\xi|}{|l|}\le\fr{t}{2}$ or $\fr{|\xi|}{|l|}\ge2t$ since $t\ge1$. If $|l|>t$ and $\fr{t}{2}\le\fr{|\xi|}{|l|}\le2t$, then $1\les\left\la\fr{\xi}{lt}\right\ra^{-1}\fr{|l|^s}{t^s}$, and hence \eqref{722'} holds as well. Combining \eqref{e-nonres1} with \eqref{722'}  yields \eqref{722}. This completes the proof of Lemma \ref{lem-nonres}.
\end{proof}

Now we can treat $\mathbf{LSE}$ and $\mathbf{CLSE}$. To simplify the presentation, recalling the notations in \eqref{T12},  we write
\beno
\nn\mathbf{LSE}=2\left\la AJ, A\partial_{XY}^L\Dl_L^{-1}\left(\mathrm{T}^1+\mathrm{T^2} \right)\right\ra-2\left\la AJ, A\left((Y'-1)\partial_{XY}^L\Phi\right)\right\ra=\mathrm{LSE}_1+\mathrm{LSE}_2+\mathrm{LSE}_3,
\eeno
and
\beqno
\nn\mathbf{CLSE}&=&-\fr{2}{\al }\left\la\fr{\partial_t(m^\fr12)}{m^{\fr12}}\partial_X^{-1}A\Om_{\ne}, A\partial_{XY}^L\Dl_L^{-1}\left(\mathrm{T}^1+\mathrm{T2} \right)\right\ra+\fr{2}{\al }\left\la\fr{\partial_t(m^\fr12)}{m^{\fr12}}\partial_X^{-1}A\Om_{\ne}, A\left((Y'-1)\partial_{XY}^L\Phi_{\ne}\right)\right\ra\\
&=&\mathrm{CLSE}_1+\mathrm{CLSE}_2+\mathrm{CLSE}_3.
\eeqno
By virtue of Lemma \ref{lem-nonres}, recalling the definitions of $\mathcal{M}_1$ and $\mathcal{M}_2$ in \eqref{M12} (we still use the notation $\mathcal{M}_1$ if $s$ is replaced by $\fr{s}{2}+\fr14$ in \eqref{M12}), $\mathrm{LSE}_i, i=1, 2$, can be bounded as follows:
\beq\label{e-LSE12}
\mathrm{LSE}_i\nn&=&2\sum_{k\ne0}\int A\bar{\hat{J}}_k(\eta)A_k(\eta)\fr{k(\eta-kt)}{k^2+(\eta-kt)^2}\hat{\mathrm{T}}^i_k(\eta)d\eta\\
\nn&\les&\sum_{k\ne0}\int \left({\bf 1}_{t\in\mathbb{I}_{k,
\eta}}+{\bf 1}_{t\notin\mathbb{I}_{k,\eta}}\right)\left|A{\hat{J}}_k(\eta)\right|A_k(\eta)\fr{1}{1+\left|t-\fr{\eta}{k}\right|}\left|\hat{\mathrm{T}}^i_k(\eta)\right|d\eta\\
\nn&\les&\sum_{k\ne0}\int \left(\sqrt{\fr{\partial_tw_k(\eta)}{w_k(\eta)}}\left|\tl A{\hat{J}}_k(\eta)\right|\right)\left(\left\la \fr{\eta}{kt}\right\ra^{-1}\sqrt{\fr{\partial_tw_k(\eta)}{w_k(\eta)}}\tl A_k(\eta)\left|\hat{\mathrm{T}}^i_k(\eta)\right|\right)d\eta\\
\nn&&+\sum_{k\ne0}\int \left(\fr{|\eta|^\fr{s}{2}}{\la t\ra^{\fr{s}{2}+\fr14}}\left|A\hat{J}_k(\eta)\right|\right)\left(\left\la\fr{\eta}{kt}\right\ra^{-1}\fr{|\eta|^\fr{s}{2}}{\la t\ra^{\fr{s}{2}+\fr14}}A_k(\eta)\left|\hat{\mathrm{T}}^i_k(\eta)\right|\right)d\eta\\
\nn&&+\fr{1}{t}\sum_{k\ne0}\int \left|A\hat{J}_k(\eta)\right|\left(\left\la\fr{\eta}{kt}\right\ra^{-1}A_k(\eta)\left|\hat{\mathrm{T}}^i_k(\eta)\right|\right)d\eta\\
&\les&\left\|\sqrt{\fr{\partial_tw}{w}}\tl AJ\right\|_{L^2}\left\|\mathcal{M}_2\mathrm{T}^i\right\|_{L^2}+\fr{\left\||\nabla|^\fr{s}{2}AJ\right\|_{L^2}}{\la t\ra^{\fr{s}{2}+\fr14}}\left\|\mathcal{M}_1\mathrm{T}^i\right\|_{L^2}+\fr{\|AJ\|}{t}\left\|\left\la \fr{\partial_Y}{t\partial_X}\right\ra^{-1} A\mathrm{T}^i_{\ne}\right\|_{L^2}.
\eeq
It follows from \eqref{pre-ell2}, \eqref{pre-ell3}, \eqref{e-CCK} and the bootstrap hypotheses that
\beq\label{e-MT12}
\nn&&\int_1^t\sum_{i=1}^2\left(\left\|\mathcal{M}_1\mathrm{T}^i\right\|_{L^2}^2+\left\|\mathcal{M}_2\mathrm{T}^i\right\|_{L^2}^2\right)dt'\\
&\les&\eps^2\int_1^t\mathrm{CK}_{\lm,J}+\mathrm{CK}_{w,J}dt'+\mu^{-\fr23}\eps^2\int_1^t\mathrm{CK}^R_\lm+\mathrm{CK}^R_wdt'\les\mu^{-\fr23}\eps^4.
\eeq
On the other hand, by \eqref{pre-ell4}, \eqref{e-AT2}, \eqref{e-AT1} and the bootstrap hypotheses,
\beno
\sum_{i=1}^2\left\|\left\la\fr{\partial_Y}{t\partial_X}\right\ra^{-1}A\mathrm{T}^i_{\ne}\right\|_{L^2}\les\mu^{-\fr13}\eps\|AJ_{\ne}\|_{L^2}.
\eeno
Substituting the above two inequalities into \eqref{e-LSE12}, we find that
\beq
\sum_{i=1}^2\int_1^t\mathrm{LSE}_idt'\les\mu^{-\fr13}\eps^2\left(\int_1^t\mathrm{CK}_{\lm,J}+\mathrm{CK}_{w,J}dt'\right)^\fr12+\mu^{-\fr13}\eps^2\|AJ_{\ne}\|_{L^2L^2}\les\mu^{-\fr12}\eps^3.
\eeq
The errors $\mathrm{CLSE}_{i}$, $i=1, 2$ can be bounded in a similar manner. Indeed, in view of \eqref{635},  \eqref{722'} and \eqref{e-MT12}, we are led to
\beq\label{e-CLSEi}
\sum_{i=1}^2\int_1^t\mathrm{CLSE}_idt'\les\fr{1}{|\al|}\left\|\sqrt{\fr{\partial_tM^1}{M^1}}A\Om \right\|_{L^2L^2}\left(\left\|\mathcal{M}_1\mathrm{T}^i\right\|_{L^2L^2}+\left\|\mathcal{M}_2\mathrm{T}^i\right\|_{L^2L^2}\right)\les\mu^{-\fr13}\eps^3.
\eeq

To bound $\mathrm{LSE}_3$, by paraproduct decomposition, we write 
\beq
\mathrm{LSE}_3\nn&=&-2\sum_{N\in\mathbb{D}}\left\la AJ, A\left((Y'-1)_{<16N}\partial_{XY}^L\Phi_{N}\right)\right\ra-2\sum_{N\ge8}\left\la AJ, A\left((Y'-1)_N\partial_{XY}^L\Phi_{<\fr{N}{8}}\right)\right\ra\\
\nn&=&\mathrm{LSE}_3^{\mathrm{LH}}+\mathrm{LSE}_3^{\mathrm{HL}}.
\eeq
Like \eqref{e-CLSEi}, we use \eqref{722'} (together with \eqref{AkAk}) instead of \eqref{e-nonres1} to treat $\mathrm{LSE}_3^{\mathrm{LH}}$:
\beq
\mathrm{LSE}_3^{\mathrm{LH}}\nn&\les&\sum_{N\ge8}\sum_{k\ne0}\int_{\eta,\xi} \left|(A\hat{J})_k(\eta)\right|\fr{A_k(\eta)}{A_k(\xi)}\left|(\widehat{Y'-1})_{<16N}\right|\fr{{\bf1}_{t\in\mathbb{I}_{k,\xi}}+{\bf 1}_{t\notin\mathbb{I}_{k,\xi}}}{1+\left|t-\fr{\xi}{k} \right|}\left|A_k(\xi)\widehat{\Dl_L\Phi}_k(\xi)_N \right|d\xi d\eta\\
\nn&\les&\sum_{k\ne0}\int_{\eta,\xi}\left|(A\hat{J})_k(\eta)\right|e^{c\lm|\eta-\xi|^s}\left|(\widehat{Y'-1})\right| \left\la \fr{\xi}{kt}\right\ra^{-1}\sqrt{\fr{\partial_tw_k(\xi)}{w_k(\xi)}}\left|\tl A_k(\xi)\widehat{\Dl_L\Phi} _k(\xi)\right|d\xi d\eta\\
\nn&&+\sum_{k\ne0}\int_{\eta,\xi}\left|k^{\fr{s}{2}}(A\hat{J})_k(\eta)\right|e^{c\lm|\eta-\xi|^s}\left|(\widehat{Y'-1})\right| \left\la \fr{\xi}{kt}\right\ra^{-1}\fr{|k|^\fr{s}{2}}{\la t\ra^s}\left| A_k(\xi)\widehat{\Dl_L\Phi} _k(\xi)\right|d\xi d\eta\\
\nn&\les&\left\| |\partial_X|^{\fr{s}{2}} AJ_\ne\right\|_{L^2}\|Y'-1\|_{\mathcal{G}^{c\lm,1}}\left(\left\|\mathcal{M}_1\Dl_L\Phi\right\|_{L^2}+\left\|\mathcal{M}_2\Dl_L\Phi\right\|_{L^2}\right).
\eeq
In view of \eqref{pre-ell2}, \eqref{e-CCK} and the bootstrap hypotheses, similar to \eqref{e-MT12}, we have
\beq\label{e-MDlPhi}
\nn&&\int_1^t \left\|\mathcal{M}_1\Dl_L\Phi\right\|_{L^2}^2+\left\|\mathcal{M}_2\Dl_L\Phi\right\|_{L^2}^2dt'\\
&\les&\int_1^t\mathrm{CK}_{\lm,J}+\mathrm{CK}_{w,J}dt'+\mu^{-\fr23}\eps^2\int_1^t\mathrm{CK}^R_\lm+\mathrm{CK}^R_wdt'\les \left(1+\mu^{-\fr23}\eps^2\right)\eps^2.
\eeq
 The above inequalities then imply that (for $\eps\ll \mu^\fr13$)
\be\label{LSE-LH}
\int_1^t\mathrm{LSE}_3^{\mathrm{LH}}dt'\les\eps^2\|\nabla_LAJ\|_{L^2L^2}\les\mu^{-\fr12}\eps^3.
\ee
The treatment of $\mathrm{LSE}_3^{\mathrm{HL}}$ is analogous to \eqref{522} (the recovery of one derivative on the coefficient $Y'-1$ is not necessary):
\beq\label{LSE-HL}
\int_1^t\mathrm{LSE}_3^{\mathrm{HL}}dt'\nn&\les&\int_1^t\left\|A J_{\ne}\right\|_{L^2}\left\|A^R(Y'-1)\right\|_{L^2}\left\|\partial_{XY}^L\Phi\right\|_{\mathcal{G}^{c\lm, 1}}dt'\\
&&\les\mu^{-\fr13}\eps^2\int_1^t\fr{\|AJ_{\ne}\|_{L^2}}{\la t'\ra}dt'\les\mu^{-\fr12}\eps^3,
\eeq
where we have used the lossy estimate \eqref{loss2} and the upper bound of $m_k(t,\eta)$ in \eqref{m1}. As for $\mathrm{CLSE}_3$, just as what has been done in \eqref{e-CLSEi}, the extra derivative $|\partial_X|^{\fr{s}{2}}$ stems from \eqref{722'} can be absorbed by $\partial_X^{-1}$. Then using again \eqref{635}, following the treatment of $\mathrm{LSE}_3$ with $\left\|\sqrt{\fr{\partial_tM^1}{M^1}}A\Om \right\|_{L^2L^2}$ replacing  the role of $\|\nabla_LAJ\|_{L^2L^2}$ and $\|AJ_{\ne}\|_{L^2L^2}$ in \eqref{LSE-LH} and \eqref{LSE-HL}, respectively,  we arrive at
\beq
\int_1^t\mathrm{CLSE}_3dt'\nn&\les&\mu^{-\fr13}\eps^2 \left\| \sqrt{\fr{\partial_tM^1}{M^1}} A\Om\right\|_{L^2L^2}\les\mu^{-\fr13}\eps^3.
\eeq

\section{Estimates of  $\mathbf{Com}[B_0]_g$, $\mathbf{CCom}[B_0]_g$ and $\mathbf{CComE}$ with $g\in\{\Om, J\}$}\label{sec-zero-mode}
In this section, we deal with the interactions  between the of the zero mode $B_0^1$ and the non-zero modes $\partial_X\Om$ and $\partial_XJ$. To begin with, let us focus on ${\bf Com}[B_0]_g$, $g\in\{\Om, J\}$.
By paraproduct decomposition, 
\beq
\nn{\bf Com}[B_0]_{\Om}&=&\left\la AJ_\ne, A\left(B^1_0\partial_X\Om\right)-B^1_0\partial_XA\Om\right\ra\\
\nn&=&\sum_{N\ge8}\sum_{k\ne0}\int_{\eta,\xi} ik\left(\fr{A_k(\eta)}{A_k(\xi)}-1\right)\hat{B}^1_0(\eta-\xi)_{<\fr{N}{8}}A_k(\xi)\hat{\Om}_k(\xi)_NA\bar{\hat J}_k(\eta)d\xi d\eta\\
\nn&&+\sum_{N\ge 8}\sum_{k\ne0}\int_{\eta,\xi}ik\left({A_k(\eta)}-{A_k(\eta-\xi)}\right)\hat{B}^1_0(\xi)_N\hat{\Om}_k(\eta-\xi)_{<\fr{N}{8}}A\bar{\hat J}_k(\eta)d\xi d\eta\\
\nn&&+\sum_{N\in\mathbb{D}}\sum_{\fr{N}{8}\le N'\le8N}\sum_{k\ne0}\int_{\eta,\xi}ik\left({A_k(\eta)}-{A_k(\eta-\xi)}\right)\hat{B}^1_0(\xi)_N\hat{\Om}_k(\eta-\xi)_{N'}A\bar{\hat J}_k(\eta)d\xi d\eta\\
&=&{\bf Com}[B_0]_{\Om}^{\mathrm{LH}}+{\bf Com}[B_0]_{\Om}^{\mathrm{HL}}+{\bf Com}[B_0]_{\Om}^{\mathcal{R}}.
\eeq
We first treat ${\bf Com}[B_0]_{\Om}^{\mathrm{LH}}$, the main idea is to obtain a homogeneous derivative $\partial_Y$ from the commutator (instead of $\langle \partial_Y\rangle$) and to use the dissipation of $\partial_YB_0$, see \eqref{U0B0J0}. It requires a new estimate of $\fr{\mathcal{J}_k(\eta)}{\mathcal{J}_k(\xi)}-1$, see Lemma \ref{lem-com-J}. A direct calculation gives
\beq\label{com-Akk}
\fr{A_k(\eta)}{A_k(\xi)}-1\nn&=&\left(e^{\lm(|k,\eta|^s-|k,\xi|^s)}-1\right)+e^{\lm(|k,\eta|^s-|k,\xi|^s)}\left(\fr{\mathcal{J}_k(\eta)}{\mathcal{J}_k(\xi)}-1\right)\fr{\mathcal{M}_k(\eta)}{\mathcal{M}_k(\xi)}\fr{\la k,\eta\ra^\sigma}{\la k,\xi\ra^\sigma}\\
&&+e^{\lm(|k,\eta|^s-|k,\xi|^s)}\left(\fr{\mathcal{M}_k(\eta)}{\mathcal{M}_k(\xi)}-1\right)\fr{\la k,\eta\ra^\sigma}{\la k,\xi\ra^\sigma}+e^{\lm(|k,\eta|^s-|k,\xi|^s)}\left(\fr{\la k,\eta\ra^\sigma}{\la k,\xi\ra^\sigma}-1\right).
\eeq
On the support of the integrand of ${\bf Com}[B_0]_{\Om}^{\mathrm{LH}}$, the following frequency localization hold:
\beno
\fr{N}{2}\le |k,\xi|\le\fr{3N}{2},\quad |\eta-\xi|\le\fr{3N}{32}.
\eeno
Hence
\be\label{suppT0}
\big| |k,\eta|-|k,\xi| \big|\le|\eta-\xi|\le\fr{3}{16}|k,\xi|.
\ee
This enables us to  use the elementary inequality $|e^x-1|\le |x|e^{|x|}$, \eqref{ap1} and \eqref{ap3} to deduce that
\beq
\nn\left|e^{\lm(|k,\eta|^s-|k,\xi|^s)}-1\right|&\les&\lm \left| |k,\eta|^s-|k,\xi|^s\right|e^{\lm \big| |k,\eta|^s-|k,\xi|^s\big|}\\
\nn&\les&\lm\fr{|\eta-\xi|}{|k,\eta|^{1-s}+|k,\xi|^{1-s}}e^{c'\lm |\eta-\xi|^s},
\eeq
for some $c'\in(0,1)$.
Thanks to Lemmas \ref{lem-com-J} and \ref{lem-com-m}, we are led to 
\be
\nn\left|\fr{\mathcal{J}_k(\eta)}{\mathcal{J}_k(\xi)}-1\right|\fr{\mathcal{M}_k(\eta)}{\mathcal{M}_k(\xi)}\les|\eta-\xi|\la\eta-\xi\ra e^{20r|\eta-\xi|^\fr12}.
\ee
Next, using Lemmas \ref{lem-pr_eta} and \ref{lem-com-m}, we find that for $k\ne0$,
\beq
\nn\left|\fr{\mathcal{M}_k(\eta)}{\mathcal{M}_k(\xi)}-1\right|&=&\fr{\left|\left(M^1M^\mu m^\fr12\right)_k(\xi)-\left(M^1M^\mu m^\fr12\right)_k(\eta)\right|}{\left(M^1M^\mu m^\fr12\right)_k(\eta)}\\
\nn&\le&\fr{\left(M^1M^\mu\right)_k(\xi)}{\left(M^1M^\mu\right)_k(\eta)}\cdot\fr{\left|m^\fr12_k(\xi)-m^\fr12_k(\eta) \right|}{m^\fr12_k(\eta)}+\fr{\left|\left(M^1M^\mu\right)_k(\xi)-\left(M^1M^\mu\right)_k(\eta)\right|}{\left(M^1M^\mu\right)_k(\eta)}\\
\nn&\les&|\eta-\xi|\left|\fr{\partial_\eta m^\fr12_k(\zeta)}{m^\fr12_k(\zeta)}\right|\fr{m^\fr12_k(\zeta)}{m^\fr12_k(\eta)}+\fr{|\eta-\xi|}{|k|}\les|\eta-\xi|\fr{\la\eta-\xi\ra}{|k|},
\eeq
where $\zeta=\eta+\vartheta(\xi-\eta)$ for some $\vartheta\in(0,1)$. Moreover, using \eqref{suppT0} and the mean value theorem, it is easy to verify that
\be
\nn\left|\fr{\la k,\eta\ra^\sigma}{\la k,\xi\ra^\sigma}-1\right|\les\fr{|\eta-\xi|}{\la k,\xi\ra}.
\ee
Collecting the above estimates, and using \eqref{ap-5} and \eqref{ap-6}, one deduces that
\be
|k|\left|\fr{A_k(\eta)}{A_k(\xi)}-1\right|\les |k||\eta-\xi|e^{c|\eta-\xi|^s}, \quad \mathrm{for \ \ some\ \ } c\in(c',1).
\ee
Accordingly, in view of the bootstrap hypotheses and \eqref{U0B0J0},
\beq\label{ComOmLH}
\nn\int_1^t {\bf Com}[B_0]_{\Om}^{\mathrm{LH}}dt'&\les&\sum_{N\ge8}\sum_{k\ne0}\int_1^t\int_{\xi,\eta} |\eta-\xi|e^{c|\eta-\xi|^s}\left|\hat{B}^1_0(\eta-\xi)_{<\fr{N}{8}}\right| \left|A_k(\xi)\hat{\Om}_k(\xi)_N\right|\left|kA{\hat J}_k(\eta)\right|d\xi d\eta dt'\\
\nn&\les&\sum_{k\ne0}\int_1^t\int_{\xi,\eta} \left(e^{c|\eta-\xi|^s}\left|\partial_Y\hat{B}^1_0(\eta-\xi)\right|\right) \left|A\hat{\Om}_k(\xi)\right|\left|kA{\hat J}_k(\eta)\right|d\xi d\eta dt'\\
&\les&\mu^{-1}\left(\mu^\fr12\|\partial_YB^1_0\|_{L^2\mathcal{G}^{c\lm, 1}}\right)\left(\mu^\fr12\|\partial_XAJ\|_{L^2L^2}\right)\|A\Om\|_{L^\infty L^2}\les\mu^{-1}\eps^3.
\eeq

Now we turn to investigate ${\bf Com}[B_0]_{\Om}^{\mathrm{HL}}$.
On the support of the integrand of ${\bf Com}[B_0]_{\Om}^{\mathrm{HL}}$, the frequency localizations \eqref{sptR1}--\eqref{sptR3} hold. Then by virtue of Lemmas \ref{lem-wNR-ratio}, \ref{lem-J-ratio},\eqref{wRwNR} and  \eqref{ap3}, we find that
\beq
\fr{A_k(\eta)}{A_0(\xi)}\nn&=&\fr{e^{\lm|k,\eta|^s}\la k,\eta\ra^\sigma\mathcal{M}_k(\eta)}{e^{\lm|\xi|^s}\la \xi\ra^\sigma }\fr{\mathcal{J}_k(\eta)}{\mathcal{J}_0(\xi)}\left({\bf 1}_{t\in\mathbb{I}_{k,\eta}}+{\bf 1}_{t\notin\mathbb{I}_{k,\eta}}\right)\\
\nn&\les&e^{c\lm|k,\eta-\xi|^s}\left({\bf 1}_{t\in\mathbb{I}_{k,\eta}}\fr{w_{NR}(\xi)}{w_R(\eta)}e^{2r|\eta-\xi|^\fr12}+\fr{e^{2r|k|^\fr12}}{e^{2r\la\xi\ra^\fr12}}w_0(\xi)+e^{10r|k,\eta-\xi|^\fr12}\right)\\
\nn&\les&e^{c\lm|k,\eta-\xi|^s}e^{10r|k,\eta-\xi|^\fr12}\left(\fr{|\eta|}{|k|^2}\fr{{\bf 1}_{t\in\mathbb{I}_{k,\eta}}}{1+\left|\fr{\eta}{k}-t\right|}+1\right),
\eeq
for some $c\in(0,1)$. Note that \eqref{sptR3} implies that 
\[
|k|\left(\fr{|\eta|}{|k|^2}\fr{{\bf 1}_{t\in\mathbb{I}_{k,\eta}}}{1+\left|\fr{\eta}{k}-t\right|}+1\right)\les |\xi|.
\]
Combining the above two inequalities with \eqref{m1}, and using the bootstrap hypotheses and \eqref{U0B0J0}, we are led to 
\beq\label{ComOmHL}
\int_1^t{\bf Com}[B_0]_{\Om}^{\mathrm{HL}}dt'
\nn&\les&\sum_{N\ge8}\sum_{k\ne0}\int_1^t\int_{\eta,\xi}|\xi|\left|\hat{B}^1_0(\xi)_N\right|\left|A\hat{\Om}_k(\eta-\xi)_{<\fr{N}{8}}\right|\left|A{\hat J}_k(\eta)\right|d\xi d\eta dt'\\
\nn&&+\sum_{N\ge8}\sum_{k\ne0}\int_1^t\int_{\eta,\xi}|\xi|\left|A_0(\xi)\hat{B}^1_0(\xi)_N\right|e^{\lm|k,\eta-\xi|^s}\left|\hat{\Om}_k(\eta-\xi)_{<\fr{N}{8}}\right|\left|A{\hat J}_k(\eta)\right|d\xi d\eta dt'\\
\nn&\les&\|A J_\ne\|_{L^2L^2}\left(\left\|\partial_YB_0^1\right\|_{L^2H^1}\|A\Om\|_{L^\infty L^2}+\left\|A\partial_YB^1_0\right\|_{L^2L^2}\left\|\Om\right\|_{L^\infty\mathcal{G}^{\lm, 2}}\right)\\
&\les&\mu^{-\fr13}\|A J_\ne\|_{L^2L^2}\left\|A\partial_YB_0^1\right\|_{L^2L^2}\|A\Om\|_{L^\infty L^2}\les\mu^{-1}\eps^3.
\eeq

Next turn to ${\bf Com}[B_0]_{\Om}^{\mathcal{R}}$. On the support of the integrand of ${\bf Com}[B_0]_{\Om}^{\mathcal{R}}$, there hold
\beno
\fr{N}{2}\le |\xi|\le\fr{3N}{2},\quad \fr{N}{16}\le\fr{N'}{2}\le|k,\eta-\xi|\le\fr{3}{2}N'\le12N.
\eeno
This implies that
\beno
\fr{1}{24}\le\fr{|\xi|}{|k,\eta-\xi|}\le 24.
\eeno
Then we infer from \eqref{ap4} that
\beno
e^{\lm|k,\eta|^s}\les e^{c'\lm |\xi|^s}e^{c'\lm |k,\eta-\xi|^s},\quad\mathrm{for}\ \ \mathrm{some}\ \  c'\in(0,1).
\eeno
It follows from  the above two inequalities,  \eqref{J-ratio},  \eqref{com-m}, \eqref{ap-5} and \eqref{ap-6} that
\beq
\nn&&|k|\big(A_k(\eta-\xi)+A_k(\eta)\big)\\
\nn&\les& |\xi|A_k(\eta-\xi)+\left(e^{c'\lm |\xi|^s}\la\xi\ra^\sigma |\xi|\right) \left(e^{c'\lm|k,\eta-\xi|^s}\mathcal{M}_k(\eta-\xi)\mathcal{J}_k(\eta-\xi)\right)\fr{\mathcal{M}_k(\eta)}{\mathcal{M}_k(\eta-\xi)}\fr{\mathcal{J}_k(\eta)}{\mathcal{J}_k(\eta-\xi)}\\
\nn&\les&|\xi|A_k(\eta-\xi)+\left(e^{\lm |\xi|^s}\la\xi\ra^\sigma |\xi|\right) \left(e^{\lm|k,\eta-\xi|^s}\mathcal{M}_k(\eta-\xi)\mathcal{J}_k(\eta-\xi)\right).
\eeq
Thus, similar to \eqref{ComOmHL} (without resorting to \eqref{m1}), we obtain
\beq\label{ComOmR}
\int_1^t{\bf Com}[B_0]_{\Om}^{\mathcal{R}}dt'
\nn&\les&\sum_{N\in\mathbb{D}}\sum_{\fr{N}{8}\le N'\le8N}\sum_{k\ne0}\int_1^t\int_{\eta,\xi}|\xi|\left|\hat{B}^1_0(\xi)_N\right|\left|A\hat{\Om}_k(\eta-\xi)_{N'}\right|\left|A{\hat J}_k(\eta)\right|d\xi d\eta dt'\\
\nn&&+\sum_{N\in\mathbb{D}}\sum_{\fr{N}{8}\le N'\le8N}\sum_{k\ne0}\int_1^t\int_{\eta,\xi}\left(e^{\lm |\xi|^s}\la\xi\ra^\sigma |\xi|\left|\hat{B}^1_0(\xi)_N\right|\right)\\
\nn&&\qquad\qquad\qquad\qquad\times\left(e^{\lm|k,\eta-\xi|^s}\left|\mathcal{M}\mathcal{J}\hat{\Om}_k(\eta-\xi)_{N'}\right|\right)\left|A{\hat J}_k(\eta)\right|d\xi d\eta dt'\\
&\les&\|A J_\ne\|_{L^2L^2}\left\|A\partial_YB_0^1\right\|_{L^2L^2}\|A\Om\|_{L^\infty L^2} \les \mu^{-\fr23}\eps^3.
\eeq

The treatment of ${\bf Com}[B_0]_J$ is analogous and omitted. As for ${\bf CCom}[B_0]_\Om$ and ${\bf CCom}[B_0]_J$, thanks to \eqref{635}, similar to \eqref{ComOmLH}, \eqref{ComOmHL} and \eqref{ComOmR}, we arrive at
\beq
\nn&&\int_1^t{\bf CCom}[B_0]_\Om+{\bf CCom}[B_0]_Jdt'\\
\nn&\les&\fr{\mu^{-\fr13}}{|\al|}\left\|A\partial_YB^1_0 \right\|_{L^2L^2}\left(\left\|\sqrt{\fr{\partial_tM^1}{M^1}}A\Om\right\|_{L^2L^2}\|A\Om\|_{L^\infty L^2}+\left\|\sqrt{\fr{\partial_tM^1}{M^1}}AJ\right\|_{L^2L^2}\|AJ\|_{L^\infty L^2}\right)\\
&\les&\mu^{-\fr56}\eps^3.
\eeq

We are left to bound ${\bf CComE}$. To this end, using \eqref{pt-m12} and \eqref{D1+D2}, we find that
\beq\label{ptm-gain}
\fr{\partial_t(m)^\fr12}{m^\fr12}\nn&=&{\bf 1}_{D_1\cup D_2}\fr{k^2\left(t-\fr{\eta}{k}\right)}{k^2+(\eta-kt)^2}\les\mu^{-\fr13}\fr{k^2}{k^2+(\eta-kt)^2}\\
&\les&\mu^{-\fr13}\la\eta-\xi\ra\sqrt{\fr{\partial_tM^1_k(\eta)}{M^1_k(\eta)}}\sqrt{\fr{\partial_tM^1_k(\xi)}{M^1_k(\xi)}}.
\eeq
Combining this with the bootstrap hypotheses and \eqref{U0B0J0} yields
\beq
\int_1^t{\bf CComE}dt'\les\fr{\mu^{-\fr13}}{|\al|}\left\|B^1_0\right\|_{L^\infty H^2}\left(\left\|\sqrt{\fr{\partial_tM^1}{M^1}}A\Om\right\|_{L^2L^2}^2+\left\|\sqrt{\fr{\partial_tM^1}{M^1}}AJ\right\|_{L^2L^2}^2\right)\les \mu^{-\fr13}\eps^3.
\eeq

\section{Estimates of $\mathbf{Com}[V]_g$ and $\mathbf{Com}[B_{\ne}]_g$ with $g\in\{\Om, J\}$}\label{sec-NLT}
The commutators will be treated by paraproduct decomposition. For ${\bf Com}[V]_g$, $g\in\{\Om, J\}$, we write 
\be
{\bf Com}[V]_g=\fr{1}{2\pi}\sum_{N\ge8}T_N[V]_g+\fr{1}{2\pi}\sum_{N\ge8}R_N[V]_g+\fr{1}{2\pi}\mathcal{R}[V]_g,
\ee
with
\beq
T_N[V]_g\nn&=&2\pi\int Ag\left[A(V_{<\fr{N}{8}}\cdot\nabla g_N)-V_{<\fr{N}{8}}\cdot\nabla Ag_N\right]dXdY,\\
R_N[V]_g\nn&=&2\pi\int Ag\left[A(V_{N}\cdot\nabla g_{<\fr{N}{8}})-V_{N}\cdot\nabla Ag_{<\fr{N}{8}}\right]dXdY,\\
\mathcal{R}[V]_g\nn&=&2\pi\sum_{N\in\mathbb{D}}\sum_{\fr{N}{8}\le N'\le8N}\int Ag\left[A(V_{N}\cdot\nabla g_{N'})-V_{N}\cdot\nabla Ag_{N'}\right]dXdY.
\eeq
The other two commutators $\mathbf{Com}[B_{\ne}]_\Om$ and $\mathbf{Com}[B_{\ne}]_J$ can be decomposed in  the same manner.
\subsection{Transport}\label{sec-transport}
In this section, we deal with the contributions of the transport part of the nonlinearities ${\bf Com}[V]_g $, and ${\bf Com}[B_{\ne}]_g$ with $g\in\left\{\Om, J\right\}$. 
We would like to remark that the methods to treat the above four commutators are essentially the same. Let us just take $T_N[V]_g$ as an example and give the details below. To begin with, like \eqref{com-Akk}, we have
\beq
\fr{A_k(\eta)}{A_l(\xi)}-1\nn&=&\left(e^{\lm(|k,\eta|^s-|l,\xi|^s)}-1\right)+e^{\lm(|k,\eta|^s-|l,\xi|^s)}\left(\fr{\mathcal{J}_k(\eta)}{\mathcal{J}_l(\xi)}-1\right)\fr{\mathcal{M}_k(\eta)}{\mathcal{M}_l(\xi)}\fr{\la k,\eta\ra^\sigma}{\la l,\xi\ra^\sigma}\\
\nn&&+e^{\lm(|k,\eta|^s-|l,\xi|^s)}\left(\fr{\mathcal{M}_k(\eta)}{\mathcal{M}_l(\xi)}-1\right)\fr{\la k,\eta\ra^\sigma}{\la l,\xi\ra^\sigma}+e^{\lm(|k,\eta|^s-|l,\xi|^s)}\left(\fr{\la k,\eta\ra^\sigma}{\la l,\xi\ra^\sigma}-1\right).
\eeq
Then we write
\be
T_N[V]_g=T_{N,1}[V]_g+T_{N,2}[V]_g+T_{N,3}[V]_g+T_{N,4}[V]_g.
\ee
Note that on the support of the integrand of $T_N[V]_g$, the following frequency localization hold:
\beno
\fr{N}{2}\le|l,\xi|\le\fr{3}{2}N,\quad\mathrm{and}\quad |k-l,\eta-\xi|\le\fr{3N}{32}.
\eeno
Consequently,
\be\label{spt-T}
\big||k,\eta|-|l,\xi|\big|\le|k-l,\eta-\xi|\le\fr{3}{16}|l,\xi|.
\ee
Clearly, $T_{N,1}[V]_g$ and $T_{N,4}[V]_g$ can be treated in the same way as those corresponding terms in \cite{BM15}. We thus omit the details and conclude
\be\label{TN14}
\sum_{N\ge8}\left(T_{N,1}[V]_g+T_{N,4}[V]_g\right)
\les\left\|V\right\|_{\mathcal{G}^{\lm,2}}\left(\left\||\nabla|^\fr{s}{2}Ag\right\|_{L^2}^2+\left\|Ag\right\|_{L^2}^2\right).
\ee
As for $T_{N,2}[V]_g$, compared with the corresponding term in \cite{BM15}, the factor $\fr{\mathcal{M}_k(\eta)}{\mathcal{M}_l(\xi)}$ gives an extra  loss 
\be\label{M-ratio}
\fr{\mathcal{M}_k(\eta)}{\mathcal{M}_l(\xi)}\les\mu^{-\fr13},
\ee
due to \eqref{m1}. Combining  this with the proof in \cite{BM15}, one directly deduces that
\beq\label{TN2}
\sum_{N\ge8}T_{N,2}[V]_g\nn&\les&\mu^{-\fr13}\left\|V\right\|_{\mathcal{G}^{\lm,2}}\left(\left\||\nabla|^\fr{s}{2}Ag\right\|_{L^2}^2+\left\|Ag\right\|_{L^2}^2\right)+\mu^{-\fr13}t^2\left\|V_{\ne}\right\|_{\mathcal{G}^{\lm,2}}\left\|\sqrt{\fr{\partial_tw}{w}}\tl Ag\right\|_{L^2}^2\\
&&+\mu^{-\fr13}t^{2-2s}\left\|V\right\|_{\mathcal{G}^{\lm,2}}\left\||\nabla|^\fr{s}{2}Ag\right\|_{L^2}^2.
\eeq

We are left to bound $T_{N,3}[V]_g$. Since $M^1_k(t,\eta)$ and $M^\mu_k(t,\eta)$ are bounded from above and below by positive constants, one easily deduces that
\beq
\left|\fr{\mathcal{M}_k(t,\eta)}{\mathcal{M}_l(t,\xi)}-1\right|
\nn&\le&\fr{\left(M^1M^\mu\right)_l(t,\xi)\left|m^\fr12_l(t,\xi)-m^\fr12_k(t,\eta) \right|}{\left(M^1M^\mu m^\fr12\right)_k(t,\eta)}+\fr{\left|\left(M^1M^\mu\right)_l(t,\xi)-\left(M^1M^\mu\right)_k(t,\eta) \right|}{\left(M^1M^\mu \right)_k(t,\eta)}\\
\nn&\les&\left|\fr{m^\fr12_l(t,\xi)}{m^\fr12_k(t,\eta)}-1 \right|+\left|M^1_l(t,\xi)-M^1_k(t,\eta) \right|+\left|M^\mu_l(t,\xi)-M^\mu_k(t,\eta) \right|\\
&=&{\bf com}_{\sqrt{m}}+{\bf com}_1+{\bf com}_{\mu}.
\eeq
Then from Lemmas \ref{lem-com-sqm}, \ref{lem-com-mu},  Corollary \ref{coro-com-1}, \eqref{spt-T} and \eqref{ap3},  we are led to
\beq\label{TN3}
\sum_{N\ge8}T_{N,3}[V]_g
\nn&\les&\sum_{N\ge8}\sum_{k,l}\int_{\eta,\xi}|l,\xi|\left({\bf com}_{\sqrt{m}}+{\bf com}_1+{\bf com}_{\mu}\right)\left|A{\hat{g}}_k(\eta)\right|\left|A\hat{g}_l(\xi)_N\right|\\
\nn&&\times e^{c\lm |k-l,\eta-\xi|^s}|\hat{V}_{k-l}(\eta-\xi)_{<N/8}|  d\xi d\eta\\
\nn&\les&\mu^{-\fr13}\sum_{N\ge8}\sum_{k,l}\int_{\eta,\xi}\left|A{\hat{g}}_k(\eta)\right|\left|A\hat{g}_l(\xi)_N\right|\\
\nn&&\times e^{c\lm |k-l,\eta-\xi|^s}\la k-l,\eta-\xi\ra^3|\hat{V}_{k-l}(\eta-\xi)_{<\fr{N}{8}}|  d\xi d\eta \\
\nn&&+\mu^{-\fr13}t^{2-2s}\sum_{N\ge8}\sum_{k,l}\int_{\eta,\xi}\left||\eta|^\fr{s}{2}A{\hat{g}}_k(\eta)\right|\left||\xi|^\fr{s}{2}A\hat{g}_l(\xi)_N\right|\\
\nn&&\times e^{c\lm |k-l,\eta-\xi|^s}\la k-l,\eta-\xi\ra^2|\hat{V}_{k-l}(\eta-\xi)_{<\fr{N}{8}}|  d\xi d\eta \\
&\les&\mu^{-\fr13}\left\| V\right\|_{\mathcal{G}^{\lm,2}}\left(\left\|Ag\right\|_{L^2}^2+t^{2-2s}\left\||\nabla|^{\fr{s}{2}}Ag\right\|_{L^2}^2\right),
\eeq
where $c\in(0,1)$. Recalling the definition of $V$ in \eqref{OMJ}, using the algebra property \eqref{alge} of $\mathcal{G}^{\lm,2}$, the lossy estimate \eqref{loss1} and the upper bound of $m^\fr12_k(t,\eta)$ in \eqref{m1}, we have 
\beq\label{low-V}
\left\|V\right\|_{\mathcal{G}^{\lm,2}}\nn&\les&\left(1+\|Y'-1\|_{\mathcal{G}^{\lm,2}}\right)\|\Psi_{\ne}\|_{\mathcal{G}^{\lm, 3}}+\left\|\dot{Y}\right\|_{\mathcal{G}^{\lm, 2}}\les\fr{\|\Om\|_{G^{\lm,5}}}{\la t\ra^2}+\left\|\dot{Y}\right\|_{\mathcal{G}^{\lm, 2}}\\
&\les&\mu^{-\fr13}\la t\ra^{-2+\fr{K_D\mu^{-\fr23}\eps}{2}}\left(\|A\Om\|_{L^2}+\la t\ra^{2-\fr{K_D\mu^{-\fr23}\eps}{2}}\left\|\dot{Y}\right\|_{\mathcal{G}^{\lm, 2}}\right).
\eeq
In particular, 
\be\label{low-Vne}
\left\|V_{\ne}\right\|_{\mathcal{G}^{\lm,2}}\les\mu^{-\fr13}\la t\ra^{-2}\|A\Om\|_{L^2}.
\ee
Substituting \eqref{low-V} into \eqref{TN14}, \eqref{TN2} and  \eqref{TN3},  using \eqref{low-Vne} and the bootstrap hypotheses, one deduces that
\begin{align*}
\int_1^t\sum_{N\ge8}T_N[V]_gdt'\les\mu^{-\fr23}\eps\left(\|Ag\|_{L^\infty L^2}^2+\int_1^t\mathrm{CK}_{\lm,g}+\mathrm{CK}_{w,g}dt'\right)\les\mu^{-\fr23}\eps^3,
\end{align*}
as long as $\eps$ is so small that $2s-\fr{K_D\mu^{-\fr23}\eps}{2}\ge s+\fr12$.

It is worth pointing out that when dealing with $T_N[B_{\ne}]_\Om$ and $T_N[B_{\ne}]_J$, which are given by
\begin{align*}
T_N[B_{\ne}]_{\Om}=&2\pi\int AJ\left[A\left(Y'\nabla^\bot\Phi_{\ne}\right)_{<\fr{N}{8}}\cdot\nabla\Om_{N}-\left(Y'\nabla^\bot\Phi_{\ne}\right)_{<\fr{N}{8}}\cdot\nabla A\Om_N\right]dXdY,\\
T_N[B_{\ne}]_{J}=&2\pi\int A\Om\left[A\left(Y'\nabla^\bot\Phi_{\ne}\right)_{<\fr{N}{8}}\cdot\nabla J_N-\left(Y'\nabla^\bot\Phi_{\ne}\right)_{<\fr{N}{8}}\cdot\nabla AJ_N\right]dXdY,
\end{align*}
the low frequency estimate \eqref{low-V} should be replaced by the following
\be\label{low-Phi}
\left\|Y'\nabla^\bot\Phi_{\ne}\right\|_{\mathcal{G}^{\lm,2}}\les\mu^{-\fr13}\la t\ra^{-2}\|AJ\|_{L^2}.
\ee
Then similar to \eqref{TN14}, \eqref{TN2} and \eqref{TN3}, we conclude that
\beq
\nn&&\int_1^t\sum_{N\ge8}\left(T_N[B_{\ne}]_\Om+T_N[B_{\ne}]_J\right)dt'\\
\nn&\les&\mu^{-\fr23}\|AJ\|_{L^\infty L^2}\int_1^t\left(\la t'\ra^{-2}\|A\Om\|_{L^2}\|AJ\|_{L^2}+\left\|\sqrt{\fr{\partial_tw}{w}}\tl A\Om\right\|_{L^2}\left\|\sqrt{\fr{\partial_tw}{w}}\tl AJ\right\|_{L^2}\right)dt'\\
&&+\mu^{-\fr23}\|AJ\|_{L^\infty L^2}\int_1^t\left(\la t'\ra^{-2s}\left\||\nabla|^\fr{s}{2}A\Om\right\|_{L^2}\left\||\nabla|^\fr{s}{2}AJ\right\|_{L^2}\right)dt'\les\mu^{-\fr23}\eps^3.
\eeq

\subsection{Reaction }
Taking $R_N[V]_g$  for example, let us first divide it into four parts:
\beno
R_N[V]_g=R_N[V]_g^1+R_N[V]_g^{1,\eps}+R_N[V]_g^2+R_N[V]_g^3,
\eeno
where
\beq
R_N[V]_g^1\nn&=&2\pi\int AgA\left(\nabla^{\bot}\mathbb{P}_{\ne0}\Psi_{N}\cdot\nabla g_{<\fr{N}{8}}\right)dXdY,\\
R_N[V]_g^{1,\eps}\nn&=&2\pi\int AgA\left[\left((Y'-1)\nabla^{\bot}\mathbb{P}_{\ne0}\Psi\right)_{N}\cdot\nabla g_{<\fr{N}{8}}\right]dXdY,\\
R_N[V]_g^2\nn&=&2\pi\int AgA\left(\dot{Y}_{N}\partial_Y g_{<\fr{N}{8}}\right)dXdY,\\
R_N[V]_g^3\nn&=&-2\pi\int Ag\left(V_{N}\cdot\nabla Ag_{<\fr{N}{8}}\right)dXdY.
\eeq
Following the treatment of reaction in \cite{BM15} line by line, using  \eqref{M-ratio} and \eqref{m1}, we give the following estimates directly:
\beq\label{e-Rg1}
\nn\sum_{N\ge8}R_N[V]_g^1
&\les&\mu^{-\fr23}\left(\left\|\left\la \fr{\partial_Y}{t\partial_X}\right\ra^{-1}\sqrt{\fr{\partial_tw}{w}}\tl A\Dl_L\Psi_{\ne}\right\|_{L^2}+\left\|\left\la \fr{\partial_Y}{t\partial_X}\right\ra^{-1}\fr{|\nabla|^\fr{s}{2}}{\la t\ra^s}A\Dl_L\Psi_{\ne} \right\|_{L^2}\right)\\
&&\times\left(\left\|\sqrt{\fr{\partial_tw}{w}}\tl Ag\right\|_{L^2}+\fr{\left\||\nabla|^\fr{s}{2}Ag\right\|_{L^2}}{\la t\ra^s}\right)\left\|Ag\right\|_{L^2},
\eeq

\beqno
\sum_{N\ge8}R_N[V]_g^{1,\eps}\nn&\les&\|Y'-1\|_{\mathcal{G}^{\lm,2}}\cdot\Big[\mathrm{R.H.S.\ \ of}\  \eqref{e-Rg1}\Big]\\
&&+\mu^{-\fr13}\left(\left\|A^R(Y'-1)\right\|_{L^2}+\|Y'-1\|_{\mathcal{G}^{\lm,2}}\right)\|Ag\|_{L^2}^2\|\Psi_{\ne}\|_{\mathcal{G}^{\lm,2}},
\eeqno

\beqno
\nn\sum_{N\ge8}R_N[V]_g^2&\les&\mu^{-\fr13}\left\||\nabla|^{\fr{s}{2}}Ag\right\|_{L^2}\left\||\partial_Y|^{\fr{s}{2}}\fr{A}{\la \partial_Y\ra^s}\dot{Y}\right\|_{L^2}\|Ag\|_{L^2}\\
&&+\mu^{-\fr13}\left\|\sqrt{\fr{\partial_tw}{w}}\tl Ag\right\|_{L^2}\left(t^{1+s}\left\|\sqrt{\fr{\partial_tw}{w}}\fr{A}{\la\partial_Y\ra^s}\dot{Y}\right\|_{L^2}\right)\|Ag\|_{L^2},
\eeqno
and
\beno
\sum_{N\ge8}R_N[V]_g^3\les\left\| V\right\|_{H^3}\left\|Ag\right\|_{L^2}^2\les\mu^{-\fr13}\la t\ra^{-2+\fr{K_D\mu^{-\fr23}\eps}{2}}\left(\|A\Om\|_{L^2}+\la t\ra^{2-\fr{K_D\mu^{-\fr23}\eps}{2}}\left\|\dot{Y}\right\|_{\mathcal{G}^{\lm, 2}}\right)\left\|Ag\right\|_{L^2}^2.
\eeno
It follows from the above four estimates, \eqref{CKY1}, \eqref{loss1}, \eqref{pre-ell1}, \eqref{e-CCK} and the bootstrap hypotheses that 
\beq\label{e-RV}
\nn&&\int_1^t\sum_{N\ge8}R_N[V]_gdt'\\
\nn&\les&\mu^{-\fr23}\eps\left(\int_1^t\mathrm{CK}_{w,\Om}(t')+\mathrm{CK}_{\lm,\Om}(t')+\mu^{-\fr23}\eps^2\left(\mathrm{CK}_{\lm}^R(t')+\mathrm{CK}_{w}^R(t')\right)dt'\right)\\
\nn&&+\mu^{-\fr23}\eps\|A\Om\|_{L^\infty L^2}\|Ag\|_{L^\infty L^2}^2+\mu^{-\fr13}\eps\left(\int_1^t\mathrm{CK}_{w}^{Y,1}(t')+\mathrm{CK}_{\lm}^{Y,1}(t')dt'\right)+\mu^{-\fr13}\eps \|Ag\|_{L^\infty L^2}^2\\
&\les&\mu^{-\fr23}\eps^3.
\eeq

\begin{rem}
Compared with $R_N[V]_g$, the  treatment of $R_N[B_{\ne}]_{\Om}$ and $R_N[B_{\ne}]_J$, which are given by
\beqno
R_N[B_{\ne}]_{\Om}&=&2\pi\int AJ\left[A\left(Y'\nabla^\bot\Phi_{\ne}\right)_N\cdot\nabla\Om_{<\fr{N}{8}}-\left(Y'\nabla^\bot\Phi_{\ne}\right)_N\cdot\nabla A\Om_{<\fr{N}{8}}\right]dXdY,\\
R_N[B_{\ne}]_{J}&=&2\pi\int A\Om\left[A\left(Y'\nabla^\bot\Phi_{\ne}\right)_N\cdot\nabla J_{<\fr{N}{8}}-\left(Y'\nabla^\bot\Phi_{\ne}\right)_N\cdot\nabla AJ_{<\fr{N}{8}}\right]dXdY,
\eeqno
is slightly easier due to the absence of zero-mode in $Y'\nabla^\bot\Phi_{\ne}$. We just state the result here
\beq\label{e-RBOm}
\nn\int_1^t\sum_{N\ge8}R_N[B_{\ne}]_\Om dt'
\nn&\les&\mu^{-\fr23}\left(\left\|\left\la \fr{\partial_Y}{t\partial_X}\right\ra^{-1}\sqrt{\fr{\partial_tw}{w}}\tl A\Dl_L\Phi_{\ne}\right\|_{L^2L^2}+\left\|\left\la \fr{\partial_Y}{t\partial_X}\right\ra^{-1}\fr{|\nabla|^\fr{s}{2}}{\la t\ra^s}A\Dl_L\Phi_{\ne} \right\|_{L^2L^2}\right)\\
\nn&&\times\left(\left\|\sqrt{\fr{\partial_tw}{w}}\tl AJ\right\|_{L^2L^2}+\left\|\fr{|\nabla|^\fr{s}{2}AJ}{\la t'\ra^s}\right\|_{L^2L^2}\right)\left\|A\Om\right\|_{L^\infty L^2}\left(1+\|Y'-1\|_{L^\infty\mathcal{G}^{\lm,2}}\right)\\
\nn&&+\mu^{-\fr13}\left(\left\|A^R(Y'-1)\right\|_{L^\infty L^2}+\|Y'-1\|_{L^\infty\mathcal{G}^{\lm,2}}\right)\|A\Om\|_{L^\infty L^2}\|AJ\|_{L^\infty L^2}\|\Phi_{\ne}\|_{L^1\mathcal{G}^{\lm,2}}\\
&&+\mu^{-\fr13}\left\|Y'\nabla^\bot\Phi_{\ne}\right\|_{L^1H^3}\left\|A\Om\right\|_{L^\infty L^2}\|AJ\|_{L^\infty L^2}\les\mu^{-\fr23}\eps^3.
\eeq
As for  $\displaystyle\int_1^t\sum_{N\ge8}R_N[B_{\ne}]_Jdt'$, one can obtain the same estimate as above by simply swapping $\Om$ and $J$ in \eqref{e-RBOm}.
\end{rem}
\subsection{Remainder}\label{sec-rem}
The method used in \cite{BM15} to treat the remainder still works here. We omit the details and conclude that
\beno
\int_1^t\mathcal{R}[V]_g dt'\les\mu^{-\fr23}\int_1^t\la t'\ra^{-2+\fr{K_D\mu^{-\fr23}\eps}{2}}\left(\|A\Om\|_{L^2}+\la t'\ra^{2-\fr{K_D\mu^{-\fr23}\eps}{2}}\left\|\dot{Y}\right\|_{\mathcal{G}^{\lm, 2}}\right)\|Ag\|_{L^2}^2dt'\les\mu^{-\fr23}\eps^3,\quad g\in\{\Om, J\},
\eeno
and
\beno
\int_1^t\mathcal{R}[B_{\ne}]_\Om+\mathcal{R}[B_{\ne}]_Jdt'\les\mu^{-\fr23}\int_1^t\la t'\ra^{-2}\|AJ\|_{L^2}^2\|A\Om\|_{L^2}dt'\les\mu^{-\fr23}\eps^3.
\eeno

\section{Estimates of {\bf CNLT}}
Recalling \eqref{CNLT}, let us denote
\[
{\bf CNLT}=\mathrm{CNLT}_1+\mathrm{CNLT}_2+\mathrm{CNLT}_3+\mathrm{CNLT}_4,
\]
where
\beqno
\mathrm{CNLT}_1\nn&=&\fr{1}{\al}\left\la\fr{\partial_t(m^\fr12)}{m^{\fr12}}\partial_X^{-1}A\left(V\cdot\nabla\Om\right)_{\ne}, AJ_{\ne} \right\ra,\\
\mathrm{CNLT}_2\nn&=&\fr{1}{\al}\left\la\fr{\partial_t(m^\fr12)}{m^{\fr12}}\partial_X^{-1}A\Om_\ne, A\left(V\cdot\nabla J\right)_\ne \right\ra,\\
\mathrm{CNLT}_3&=&-\fr{1}{\al}\left\la\fr{\partial_t(m^\fr12)}{m^{\fr12}}\partial_X^{-1}A\left(Y'\nabla^\bot\Phi_{\ne}\cdot\nabla J\right)_{\ne}, AJ_{\ne} \right\ra,\\
\mathrm{CNLT}_4&=&-\fr{1}{\al}\left\la\fr{\partial_t(m^\fr12)}{m^{\fr12}}\partial_X^{-1}A\Om_\ne, A\left(Y'\nabla^\bot\Phi_{\ne}\cdot\nabla\Om\right)_{\ne} \right\ra.
\eeqno
In this section, we will not distinguish between $\Om$ and $J$, and only give the details of treatment of $\mathrm{CNLT}_1$. To this end, by paraproduct decomposition, we write
\be
\mathrm{CNLT}_1=\fr{1}{2\pi}\sum_{N\ge8} CT_N+\fr{1}{2\pi}\sum_{N\ge8}CR_N+\fr{1}{2\pi}C\mathcal{R},
\ee
with
\beqno
CT_N&=&\fr{2\pi}{\al }\left\la\fr{\partial_t(m^\fr12)}{m^{\fr12}}\partial_X^{-1}A\left(V_{<\fr{N}{8}}\cdot\nabla \Om_N\right)_{\ne}, AJ_{\ne}\right\ra,\\
CR_N&=&\fr{2\pi}{\al }\left\la\fr{\partial_t(m^\fr12)}{m^{\fr12}}\partial_X^{-1}A\left(V_N\cdot\nabla \Om_{<\fr{N}{8}}\right)_{\ne}, AJ_{\ne}\right\ra,\\
C\mathcal{R}&=&\fr{2\pi}{\al }\sum_{N\in\mathbb{D}}\sum_{\fr{N}{8}\le N'\le8N}\left\la\fr{\partial_t(m^\fr12)}{m^{\fr12}}\partial_X^{-1}A\left(V_N\cdot\nabla \Om_{N'}\right)_{\ne}, AJ_{\ne}\right\ra.
\eeqno
\subsection{Treatment of $CT_N$}\label{sec-CT}
Different from the estimates of $T_N[V]_\Om$ in Section \ref{sec-NLT}, instead of  using any possible gain from $\fr{A_k(\eta)}{A_l(\xi)}-1$, we will benefit from the multiplier $\fr{\partial_t(m^\fr12)}{m^{\fr12}}\partial_X^{-1}$. Let us start by dividing into two cases based on the relative size of $|l|$ and $|\xi|$:
\beq
\nn CT_N&=&\fr{1}{\al}\sum_{k\ne0}\int_{\eta,\xi}\left({\bf1}_{|l|>4|\xi|}+{\bf1}_{|l|\le4|\xi|}\right)\fr{\partial_t(m^\fr12_k)(\eta)}{km^\fr12_k(\eta)}\fr{A_k(\eta)}{A_l(\xi)}\\
\nn&&\quad\quad\quad\times\hat{V}_{k-l}(\eta-\xi)_{<\fr{N}{8}}\cdot(l,\xi)A_l(\xi)\hat{\Om}_l(\xi)_N (A\bar{\hat{J}})_k(\eta)d\xi d\eta\\
\nn&=&CT^X_N+CT^Y_N.
\eeq

We now treat $CT_N^X$. On the support of the integrand of $CT_N$, \eqref{spt-T} holds. Then in view of \eqref{m1} and \eqref{ap3}, one easily deduces that
\beq\label{switch-A}
\fr{A_k(\eta)}{A_l(\xi)}=e^{\lm(|k,\eta|^s-|l,\xi|^s)}\fr{\la k,\eta\ra^\sigma}{\la l,\xi\ra^\sigma}\fr{\mathcal{M}_k(\eta)}{\mathcal{M}_l(\xi)}\fr{\mathcal{J}_k(\eta)}{\mathcal{J}_l(\xi)}\les \mu^{-\fr13}e^{c\lm|k-l, \eta-\xi|^s}\fr{\mathcal{J}_k(\eta)}{\mathcal{J}_l(\xi)},
\eeq
for some $c\in(0,1)$. Furthermore, thanks to the cut-off ${\bf 1}_{|l|>4|\xi|}$, on the support of the integrand of $CT_N^X$, it is not difficulty to verify that
\beq\label{switch-J}
\fr{\mathcal{J}_k(\eta)}{\mathcal{J}_l(\xi)}\les e^{2r|k-l|^\fr12}.
\eeq
Noting that $0\le\fr{\partial_t(m^{\fr12})}{m^{\fr12}}\le\fr12$, it follows from \eqref{low-V},  \eqref{switch-A}, \eqref{switch-J}, \eqref{ap-5}, \eqref{ap-6} and the bootstrap hypotheses that
\beq
\nn \int_1^t\sum_{N\ge8}CT_N^Xdt'&\les&\fr{\mu^{-\fr13}}{|\al|}\sum_{N\ge8}\sum_{k\ne0}\int_1^t\int_{\eta,\xi}\fr{|l|}{|k|}e^{c\lm|k-l, \eta-\xi|^s} e^{2r|k-l|^\fr12}\left|V_{k-l}(\eta-\xi)_{<\fr{N}{8}}\right|\\
\nn&&\times\left|A_l(\xi)\hat{\Om}_l(\xi)_N\right|\left|(A\hat{J})_k(\eta)\right|d\xi d\eta dt'\\
&\les&\mu^{-\fr13}\left\|V\right\|_{L^1\mathcal{G}^{\lm, 2}}\left\|A\Om\right\|_{L^\infty L^2}\left\|AJ\right\|_{L^\infty L^2}\les\mu^{-\fr23}\eps^3.
\eeq

Next, we turn to bound $CT_N^Y$. We further split it as
\beq
CT_N^Y\nn&=&\fr{1}{\al}\sum_{k\ne0}\int_{\eta,\xi}{\bf1}_{|l|\le4|\xi|}\left(\chi^D+\chi^0+\chi^*\right)\fr{\partial_t(m^\fr12_k)(\eta)}{km^\fr12_k(\eta)}\fr{A_k(\eta)}{A_l(\xi)}\\
\nn&&\qquad\qquad\times\hat{V}_{k-l}(\eta-\xi)_{<\fr{N}{8}}\cdot(l,\xi)A_l(\xi)\hat{\Om}_l(\xi)_N(A\bar{\hat{J}})_k(\eta)d\xi d\eta\\
\nn&=&CT^{Y,D}_N+CT^{Y,0}_N+CT^{Y,*}_N,
\eeq
where $\chi^D={\bf1}_{t\in\mathbb{I}_{k,\eta}}{\bf1}_{k\ne l}$, $\chi^0={\bf1}_{t\in\mathbb{I}_{k,\eta}}{\bf1}_{k= l}$, and $\chi^*={\bf1}_{t\notin\mathbb{I}_{k,\eta}}$.

We first treat $CT_N^{Y,D}$. To begin with, we find that on the support of the integrand of $CT_N^{Y}$, the restrictions $|l|\le4|\xi|$ and $ |k-l,\eta-\xi|\le\fr{3}{16}|l,\xi|$ ensure that $|\eta|\approx|\xi|$. Then one deduces  from \eqref{wNR-ratio}, \eqref{pt_w}, \eqref{swi-ptw},  \eqref{wRwNR}  and  \eqref{ap1}  that
\beq\label{switch-J1}
{\bf 1}_{t\in\mathbb{I}_{k,\eta}}\fr{\mathcal{J}_k(\eta)}{\mathcal{J}_l(\xi)}\nn&\les& e^{2r|\eta-\xi|^\fr12}\fr{w_{NR}(\xi)}{w_{NR}(\eta)}\fr{w_{NR}(\eta)}{w_R(\eta)}{\bf 1}_{t\in\mathbb{I}_{k,\eta}}+e^{2r|k-l|^\fr12}{\bf 1}_{t\in\mathbb{I}_{k,\eta}}\\
\nn&\les&e^{3r|k-l, \eta-\xi|^\fr12}\fr{|\eta|}{k^2}\fr{{\bf 1}_{t\in\mathbb{I}_{k,\eta}}}{1+\left|t-\fr{\eta}{k}\right|}\\
&\les&\la \eta-\xi\ra e^{3r|k-l, \eta-\xi|^\fr12}\fr{|\eta|}{k^2}\sqrt{\fr{\partial_tw_k(\eta)}{w_k(\eta)}}\left(\sqrt{\fr{\partial_tw_l(\xi)}{w_l(\xi)}}+\fr{|\xi|^{\fr{s}{2}}}{\la t\ra^s}\right).
\eeq
Thus, if $|l|<|\xi|$, $A_l(\xi)$ can be bounded by $\tl A_l(\xi)$. Then by virtue of \eqref{low-Vne},
 \eqref{switch-A}, \eqref{switch-J1}, \eqref{ap-5}, \eqref{ap-6}, and  the fact ${\bf 1}_{t\in\mathbb{I}_{k,\eta}}\fr{|\eta|}{|k|}\approx t$, we arrive at
\beq\label{CTYD1}
\int_0^t\sum_{N\ge8}CT^{Y,D}_Ndt'\nn&\les&\fr{\mu^{-\fr13}}{|\al|}\sum_{N\ge8}\sum_{k\ne0,l\ne k}\int_1^t\int_{\eta,\xi}{\bf1}_{t\in\mathbb{I}_{k,\eta}}\left|\fr{\eta}{k}\right|^2\sqrt{\fr{\partial_tw_k(\eta)}{w_k(\eta)}}\left|(A\hat{J})_k(\eta)\right|\\
\nn&&\times \left(\sqrt{\fr{\partial_tw_l(\xi)}{w_l(\xi)}}+\fr{|\xi|^{\fr{s}{2}}}{\la t\ra^s}\right)\left|\tl A\hat{\Om}_l(\xi)_N\right|e^{\lm|k-l, \eta-\xi|^s}\left|\hat{V}_{k-l}(\eta-\xi)_{<\fr{N}{8}}\right|d\xi d\eta dt'\\
\nn&\les&\fr{\mu^{-\fr13}}{|\al|}\left\|t'^2V_{\ne}\right\|_{L^\infty\mathcal{G}^{\lm, 2}}\left\|\sqrt{\fr{\partial_tw}{w}}\tl AJ\right\|_{L^2L^2}\left(\left\|\sqrt{\fr{\partial_tw}{w}}\tl A\Om\right\|_{L^2L^2}+\left\|\fr{|\nabla|^\fr{s}{2}A\Om}{\la t'\ra^s}\right\|_{L^2L^2}\right)\\
&\les&\mu^{-\fr23}\eps^3.
\eeq
If $|l|\ge|\xi|$,  using the fact $|\eta|\approx|\xi|$, we obtain
\beno
{\bf 1}_{|l|\ge|\xi|}\fr{|\eta|}{k^2}|l,\xi|\les\fr{|l|^2}{|k|^2}\les\la k-l\ra^2.
\eeno
Combining this with \eqref{switch-A} and \eqref{switch-J1}, noting that $\fr{{\bf 1}_{t\in\mathbb{I}_{k,\eta}}}{1+\left|t-\fr{\eta}{k}\right|}\le1$, we have
\be
\int_1^t\sum_{N\ge8}CT^{Y,D}_Ndt'\les\fr{\mu^{-\fr13}}{|\al|}\left\|V_{\ne}\right\|_{L^1\mathcal{G}^{\lm, 2}}\left\| AJ\right\|_{L^\infty L^2}\left\| A\Om\right\|_{L^\infty L^2}\les\mu^{-\fr23}\eps^3.
\ee

As for $CT^{Y, 0}_N$, thanks to \eqref{J-ratio} and \eqref{com-m}, we find that \eqref{switch-A} reduces to
\be\label{switch-Akk}
\fr{A_k(\eta)}{A_k(\xi)}\les \la \eta-\xi\ra e^{c\lm|\eta-\xi|^s}e^{10r|\eta-\xi|^\fr12}.
\ee
This, together with \eqref{ptm-gain}, implies that
\beq
\int_1^t\sum_{N\ge8}CT^{Y,0}_Ndt'\nn&\les&\fr{\mu^{-\fr13}}{|\al|}\sum_{N\ge8}\sum_{k\ne0}\int_1^t\int_{\eta,\xi}\sqrt{\fr{\partial_tM^1_k(\eta)}{M^1_k(\eta)}}\left|(A\hat{J})_k(\eta)\right|\sqrt{\fr{\partial_tM^1_k(\xi)}{M^1_k(\xi)}}\left|(A\hat{\Om})_k(\xi)_N\right|\\
\nn&&\times {\bf1}_{t\in\mathbb{I}_{k,\eta}}\fr{|\eta|}{|k|}e^{\lm|\eta-\xi|^s}\left|\hat{\dot{Y}}(\eta-\xi)_{<\fr{N}{8}}\right|d\xi d\eta dt'\\
&\les&\fr{\mu^{-\fr13}}{|\al|}\left\|t'{\dot{Y}}\right\|_{L^\infty\mathcal{G}^{\lm, 1}}\left\|\sqrt{\fr{\partial_tM^1}{M^1}}AJ\right\|_{L^2L^2}\left\|\sqrt{\fr{\partial_tM^1}{M^1}}A\Om\right\|_{L^2L^2}\les\mu^{-\fr13}\eps^3.
\eeq

To bound $CT^{Y, *}_N$, note that
\be\label{gain1}
\left|\fr{\eta}{k}\right|\fr{{\bf1}_{k\ne0}{\bf1}_{t\notin\mathbb{I}_{k,\eta}}}{1+\left|\fr{\eta}{k}-t\right|}\les \sqrt{|\eta|}+1.
\ee
Then  using the fact $\left|\fr{\partial_t(m^\fr12)}{km^{\fr12}}\right|\les\fr{1}{|k|} \fr{1}{1+\left|\fr{\eta}{k}-t\right|}$, \eqref{J-ratio},  \eqref{switch-A},  \eqref{gain1} and the decay estimate \eqref{low-V}  of $V$ with low regularity, one deduces  that
\beq
\int_1^t\sum_{N\ge8}CT^{Y,*}_Ndt'\nn&\les&\fr{\mu^{-\fr13}}{|\al|}\sum_{N\ge8}\sum_{k\ne0}\int_1^t\int_{\eta,\xi}\left(|\eta|^\fr12+1\right)\left|(A\hat{J})_k(\eta)\right|\left|A_l(\xi)\hat{\Om}_l(\xi)_N\right|\\
\nn&&\times e^{\lm|k-l, \eta-\xi|^s}\left|\hat{V}_{k-l}(\eta-\xi)_{<\fr{N}{8}}\right|d\xi d\eta dt'\\
\nn&\les&\fr{\mu^{-\fr13}}{|\al|}\sum_{N\ge8}\sum_{k\ne0}\int_1^t\int_{\eta,\xi}|l,\xi|^\fr{s}{2}|k,\eta|^\fr{s}{2}\left|(A\hat{J})_k(\eta)\right|\left|A_l(\xi)\hat{\Om}_l(\xi)_N\right|\\
\nn&&\times e^{\lm|k-l, \eta-\xi|^s}\left|\hat{V}_{k-l}(\eta-\xi)_{<\fr{N}{8}}\right|d\xi d\eta dt'\\
&\les&\fr{\mu^{-\fr13}}{|\al|}\int_1^t\left\|V\right\|_{\mathcal{G}^{\lm, 2}}\left\||\nabla|^\fr{s}{2}AJ\right\|_{L^2}\left\||\nabla|^\fr{s}{2}A\Om\right\|_{L^2}dt'\les\mu^{-\fr23}\eps^3.
\eeq

\subsection{Treatment of $CR_N$ and $C\mathcal{R}$}\label{sec-CR}
One can see from \eqref{pt-m12} that on the Fourier side 
\be\label{up-pt_m}
\left|\mathcal{F}\left[\fr{\partial_t(m^{\fr12})}{m^\fr12}\partial^{-1}_X\right]\right|\les1.
\ee
Then the estimates of $R_N[V]_g$ and $\mathcal{R}[V]_g$ also apply to $CT_N$ and $C\mathcal{R}$, respectively. We state the results directly as follows:
\beqno
\int_1^t\sum_{N\ge8}CR_Ndt'&\les&\fr{\mu^{-\fr23}}{|\al|}\left(\left\|\left\la \fr{\partial_Y}{t\partial_X}\right\ra^{-1}\sqrt{\fr{\partial_tw}{w}}\tl A\Dl_L\Psi_{\ne}\right\|_{L^2L^2}+\left\|\left\la \fr{\partial_Y}{t\partial_X}\right\ra^{-1}\fr{|\nabla|^\fr{s}{2}}{\la t\ra^s}A\Dl_L\Psi_{\ne} \right\|_{L^2L^2}\right)\\
\nn&&\times\left(\left\|\sqrt{\fr{\partial_tw}{w}}\tl AJ\right\|_{L^2L^2}+\left\|\fr{|\nabla|^\fr{s}{2}AJ}{\la t'\ra^s}\right\|_{L^2L^2}\right)\left\|A\Om\right\|_{L^\infty L^2}\left(1+\|Y'-1\|_{L^\infty\mathcal{G}^{\lm,2}}\right)\\
&&+\fr{\mu^{-\fr13}}{|\al|}\left(\left\|A^R(Y'-1)\right\|_{L^\infty L^2}+\|Y'-1\|_{L^\infty \mathcal{G}^{\lm,2}}\right)\|A\Om\|_{L^\infty L^2}\|AJ\|_{L^\infty L^2}\|\Psi_{\ne}\|_{L^1\mathcal{G}^{\lm,2}}\\
&&+\fr{\mu^{-\fr13}}{|\al|}\left\|\sqrt{\fr{\partial_tw}{w}}\tl AJ\right\|_{L^2L^2}\left\|\la t'\ra^{1+s}\sqrt{\fr{\partial_tw}{w}}\fr{A}{\la\partial_Y\ra^s}\dot{Y}\right\|_{L^2L^2}\|A\Om\|_{L^\infty L^2}\\
&&+\fr{\mu^{-\fr13}}{|\al|}\left\|\fr{|\nabla|^{\fr{s}{2}}AJ}{\la t'\ra^s}\right\|_{L^2L^2}\left\|\la t'\ra^{s}|\partial_Y|^{\fr{s}{2}}\fr{A}{\la \partial_Y\ra^s}\dot{Y}\right\|_{L^2}\|A\Om\|_{L^\infty L^2}\les\mu^{-\fr23}\eps^3,
\eeqno
and
\beno
\int_1^tC\mathcal{R}dt'\les\fr{\mu^{-\fr23}}{|\al|}\int_1^t\la t'\ra^{-2+\fr{K_D\eps}{2}}\left(\|A\Om\|_{L^2}+\la t\ra^{2-\fr{K_D\eps}{2}}\left\|\dot{Y}\right\|_{\mathcal{G}^{\lm, 2}}\right)\|AJ\|_{L^2}\|A\Om\|_{L^2}dt'\les\mu^{-\fr23}\eps^3.
\eeno

\begin{rem}
Swapping $\Om$ with $J$ in Sections \ref{sec-CT} and \ref{sec-CR}, one gets the estimates of $\mathrm{CNLT}_2$. Compared with $\mathrm{CNLT}_1$ and $\mathrm{CNLT}_2$,   the treatments of $\mathrm{CNLT}_3$ and $\mathrm{CNLT}_4$ are very similar and slightly easier as long as $V$ is replaced by $Y'\nabla^\bot\Phi_\ne$. We omit them for the sake of brevity.
\end{rem}

\section{Estimates of ${\bf NLS}$ and ${\bf CNLS}$}\label{sec-eNLS}\label{sec-NLS}
Recalling \eqref{NLS} and \eqref{CNLS}, we write
\beno
{\bf NLS}=\mathrm{NLS}_1+\mathrm{NLS}_2,\quad\mathrm{and}\quad {\bf CNLS}=\mathrm{CNLS}_1+\mathrm{CNLS}_2,
\eeno
where
\beqno
\mathrm{NLS}_1&=&-2\left\la AJ, A\left(\partial_{XY}^t\Phi(-\Om+2\partial_{XX}\Psi)\right)\right\ra,\\
\mathrm{NLS}_2&=&2\left\la AJ, A\left(\partial_{XY}^t\Psi(-J+2\partial_{XX}\Phi)\right)\right\ra,\\
\mathrm{CNLS}_1&=&\fr{2}{\al}\left\la \fr{\partial_t(m^\fr12)}{m^{\fr12}}\partial_X^{-1}A\Om_{\ne}, A\left(\partial_{XY}^t\Phi(-\Om+2\partial_{XX}\Psi)\right)_{\ne}\right\ra,\\
\mathrm{CNLS}_2&=&-\fr{2}{\al}\left\la \fr{\partial_t(m^\fr12)}{m^{\fr12}}\partial_X^{-1}A\Om_{\ne}, A\left(\partial_{XY}^t\Psi(-J+2\partial_{XX}\Phi)\right)_{\ne}\right\ra.
\eeqno
Let us focus on the treatment of $\mathrm{NLS}_1$, which is decomposed with a paraproduct as follows
\beq
\mathrm{NLS}_1=\fr{1}{2\pi}\mathrm{NLS}_{1,\mathrm{HL}}+\fr{1}{2\pi}\mathrm{NLS}_{1,\mathrm{LH}}+\fr{1}{2\pi}\mathrm{NLS}_{1,\mathcal{R}},
\eeq
with
\beqno
\mathrm{NLS}_{1,\mathrm{HL}}&=&-4\pi\sum_{N\ge8}\left\la AJ, A\left(\partial_{XY}^t\Phi_N(-\Om_{<\fr{N}{8}}+2\partial_{XX}\Psi_{<\fr{N}{8}})\right)\right\ra,\\
\mathrm{NLS}_{1,\mathrm{LH}}&=&-4\pi\sum_{N\ge8}\left\la AJ, A\left(\partial_{XY}^t\Phi_{<\fr{N}{8}}(-\Om_{N}+2\partial_{XX}\Psi_{N})\right)\right\ra,\\
\mathrm{NLS}_{1,\mathcal{R}}&=&-4\pi\sum_{N\in\mathbb{D}}\sum_{\fr{N}{8}\le N'\le8N}\left\la AJ, A\left(\partial_{XY}^t\Phi_{N'}(-\Om_{N}+2\partial_{XX}\Psi_{N})\right)\right\ra.
\eeqno

\subsection{Treatment of $\mathrm{NLS}_{1,\mathrm{HL}}$} Noting that $\partial^t_{XY}\Phi=\partial_{XY}^L\Phi+(Y'-1)\partial_{XY}^L\Phi$, we split $\mathrm{NLS}_{1,\mathrm{HL}}$ into two parts:
\[
\mathrm{NLS}_{1,\mathrm{HL}}=\mathrm{NLS}_{1,\mathrm{HL}}^1+\mathrm{NLS}_{1,\mathrm{HL}}^{\eps},
\]
where
\beqno
\mathrm{NLS}_{1,\mathrm{HL}}^1&=&2\sum_{N\ge8}\sum_{(k,l)\in\mathbb{Z}\times\mathbb{Z^*}}\int_{\eta,\xi} A\bar{\hat{J}}_k(\eta)A_k(\eta)l(\xi-lt)\hat{\Phi}_l(\xi)_N\\
&&\quad\quad\quad\quad\quad\quad\quad\times\left(-\hat{\Om}_{k-l}(\eta-\xi)_{<\fr{N}{8}}+2\widehat{\partial_{XX}\Psi}_{k-l}(\eta-\xi)_{<\fr{N}{8}}\right) d\xi d\eta,\\
\mathrm{NLS}_{1,\mathrm{HL}}^\eps&=&-2\sum_{N\ge8}\sum_{(k,l)\in\mathbb{Z}\times\mathbb{Z^*}}\int_{\eta,\xi} A\bar{\hat{J}}_k(\eta)A_k(\eta)\mathcal{F}\left[(Y'-1)\partial^L_{XY}\Phi\right]_l(\xi)_N\\
&&\quad\quad\quad\quad\quad\quad\quad\times\left(-\hat{\Om}_{k-l}(\eta-\xi)_{<\fr{N}{8}}+2\widehat{\partial_{XX}\Psi}_{k-l}(\eta-\xi)_{<\fr{N}{8}}\right)d\xi d\eta .
\eeqno
\subsubsection{Main contribution} \label{sec-re-m} We first estimate the main contribution $\mathrm{NLS}_{1,\mathrm{HL}}^1$. Like \cite{BM15}, we divide $\mathrm{NLS}_{1,\mathrm{HL}}^1$ into four parts:
\beqno
\mathrm{NLS}_{1,\mathrm{HL}}^1&=&2\sum_{N\ge8}\sum_{(k,l)\in\mathbb{Z}\times\mathbb{Z^*}}\int_{\eta,\xi} \left({\bf 1}_{t\in \mathbb{I}_{k,\eta}\cap\mathbb{I}_{l,\xi}}+{\bf 1}_{t\in\mathbb{I}_{l,\xi}\backslash\mathbb{I}_{k,\eta} }+{\bf 1}_{t\in \mathbb{I}_{k,\eta}\backslash\mathbb{I}_{l,\xi}}+{\bf 1}_{t\notin \mathbb{I}_{k,\eta}\cup\mathbb{I}_{l,\xi}}\right)A\bar{\hat{J}}_k(\eta)\\
&&\quad\quad\quad\quad\quad\times A_k(\eta)l(\xi-lt)\hat{\Phi}_l(\xi)_N\left(-\hat{\Om}_{k-l}(\eta-\xi)_{<\fr{N}{8}}+2\widehat{\partial_{XX}\Psi}_{k-l}(\eta-\xi)_{<\fr{N}{8}}\right)d\xi d\eta \\
&=&\mathrm{NLS}_{1,\mathrm{HL}}^{1; \mathrm{R, R}}+\mathrm{NLS}_{1,\mathrm{HL}}^{1; \mathrm{NR, R}}+\mathrm{NLS}_{1,\mathrm{HL}}^{1; \mathrm{R, NR}}+\mathrm{NLS}_{1,\mathrm{HL}}^{1; \mathrm{NR, NR}}.
\eeqno
Note that for each frequency shell, \eqref{spt-T} holds.\par

{\em Treatment of $\mathrm{NLS}_{1,\mathrm{HL}}^{1; \mathrm{R, R}}$ and $\mathrm{NLS}_{1,\mathrm{HL}}^{1; \mathrm{NR, R}}$.}\par
\noindent For $t\in\mathbb{I}_{k,\eta}\cap\mathbb{I}_{l,\xi}$, if $k=l$, then \eqref{switch-Akk} holds. If $k\ne l$, we infer from  \eqref{wNR-ratio}, \eqref{switch-A}, \eqref{wRwNR} and (b) or (c) of Lemma \ref{1/3} that
\beq\label{swi-Akl1}
{\bf 1}_{k\ne l}{\bf 1}_{t\in\mathbb{I}_{k,\eta}\cap\mathbb{I}_{l,\xi}}\fr{A_k(\eta)}{A_l(\xi)}
\nn&\les&\mu^{-\fr13}e^{c\lm|k-l,\eta-\xi|^s}\left(e^{2r|\eta-\xi|^\fr12}\fr{w_R(\xi)}{w_{NR}(\xi)}\fr{w_{NR}(\xi)}{w_{NR}(\eta)}\fr{w_{NR}(\eta)}{w_{R}(\eta)}+e^{2r|k-l|^\fr12}\right){\bf 1}_{k\ne l}{\bf 1}_{t\in\mathbb{I}_{k,\eta}\cap\mathbb{I}_{l,\xi}}\\
\nn&\les&\mu^{-\fr13}e^{c\lm|k-l,\eta-\xi|^s}e^{3r|k-l,\eta-\xi|^\fr12}\fr{|\eta|}{k^2}\fr{{\bf 1}_{k\ne l}{\bf 1}_{t\in\mathbb{I}_{k,\eta}\cap\mathbb{I}_{l,\xi}}}{1+\left|\fr{\eta}{k}-t\right|}\\
&\les&\mu^{-\fr13}e^{c\lm|k-l,\eta-\xi|^s}e^{3r|k-l,\eta-\xi|^\fr12}\la\eta-\xi\ra.
\eeq
For $t\in\mathbb{I}_{l,\xi}\backslash\mathbb{I}_{k,\eta}$, different from the treatment of $R_N[V]^\Om$  in Section \ref{sec-NLT}, we do not need to make use of the gain of regularity from the exchange of   $\mathcal{J}_k(\eta)$ and $\mathcal{J}_l(\xi)$. From  \eqref{J-ratio}, \eqref{switch-A}, one easily deduces that 
\be\label{swi-Akl2}
{\bf 1}_{t\in\mathbb{I}_{l,\xi}\backslash\mathbb{I}_{k,\eta}}\fr{A_k(\eta)}{A_l(\xi)}
\les\mu^{-\fr13}e^{c\lm|k-l,\eta-\xi|^s}e^{10r|k-l,\eta-\xi|^\fr12}.
\ee
On the other hand, the fact  $t\in\mathbb{I}_{l,\xi}$  implies that $1\le|l|<\fr{\sqrt{|\xi|}}{2}$. Combining this with \eqref{spt-T}, we have
\[
|\eta-\xi|\le\fr{3}{16}|l,\xi|=\fr{3}{16}(|l|+|\xi|)\le\fr{3}{32}\sqrt{|\xi|}+\fr{3}{16}|\xi|\le\fr{9}{32}|\xi|.
\]
Consequently,
\be\label{eta=xi1}
\fr{23}{32}|\xi|\le|\eta|\le \fr{41}{32}|\xi|.
\ee
This enables us to use \eqref{pt_w} and \eqref{swi-ptw} to derive
\beq\label{dis-w}
{\bf 1}_{t\in\mathbb{I}_{l,\xi}}\fr{l(\xi-lt)}{l^2+(\xi-lt)^2}\les\fr{{\bf 1}_{t\in\mathbb{I}_{l,\xi}}}{1+\left|\fr{\xi}{l}-t\right|}
\les\sqrt{\fr{\partial_tw_l(\xi)}{w_l(\xi)}}\left(\sqrt{\fr{\partial_tw_k(\eta)}{w_k(\eta)}}+\fr{|\eta|^\fr{s}{2}}{\la t\ra^s}\right)\la\eta-\xi\ra.
\eeq
It follows from \eqref{switch-Akk}, \eqref{swi-Akl1}, \eqref{swi-Akl2} and \eqref{dis-w} that
\beq\label{10.6}
\nn&&\mathrm{NLS}_{1,\mathrm{HL}}^{1; \mathrm{R, R}}+\mathrm{NLS}_{1,\mathrm{HL}}^{1; \mathrm{NR, R}}\\
\nn&\les&\mu^{-\fr13}\sum_{N\ge8}\sum_{k,l}\int_{\eta,\xi} \left(\sqrt{\fr{\partial_tw_k(\eta)}{w_k(\eta)}}+\fr{|\eta|^\fr{s}{2}}{\la t\ra^s}\right)|A\hat{J}_k(\eta)|{\bf 1}_{t\in\mathbb{I}_{l,\xi}}\sqrt{\fr{\partial_tw_l(\xi)}{w_l(\xi)}}A_l(\xi)\left|\widehat{\Dl_L\Phi}_l(\xi)_N\right|\\
\nn&&\times e^{\lm|k-l,\eta-\xi|^s}\left(\left|\hat{\Om}_{k-l}(\eta-\xi)_{<N/8}\right|+\left|\widehat{\partial_{XX}\Psi}_{k-l}(\eta-\xi)_{<\fr{N}{8}}\right|\right)d\xi d\eta \\
\nn&\les&\mu^{-\fr13}\left(\left\|\sqrt{\fr{\partial_tw}{w}}\tl AJ\right\|_{L^2}+\fr{1}{\la t\ra^s}\left\||\nabla|^\fr{s}{2}AJ\right\|_{L^2}\right)\\
&&\times\left\|\left\la \fr{\partial_Y}{t\partial_X}\right\ra^{-1}\sqrt{\fr{\partial_tw}{w}}\tl A\Dl_L\Phi_{\ne} \right\|_{L^2}\left(\|\Om\|_{\mathcal{G}^{\lm,2}}+\|\partial_{XX}\Psi\|_{\mathcal{G}^{\lm,2}}\right),
\eeq
where we have used the fact 
\beqno
\nn|k|&\le& |l|+|k-l|\le\fr{{|\xi|}}{2}+|k-l|\le\fr{|\eta|}{2}+\fr{|\eta-\xi|}{2}+|k-l|\\
\nn&\le&\fr{|\eta|}{2}+\fr{3}{16}|l,\xi|\le\fr{|\eta|}{2}+\fr{3}{16}\cdot\fr32\cdot\fr{32}{23}|\eta|=\fr{41}{46} |\eta|,
\eeqno
due to \eqref{spt-T} and \eqref{eta=xi1} for $t\in\mathbb{I}_{l,\xi}$ to deduce that $A_k(\eta)\les\tl A_k(\eta)$ in the last inequality.  Then from \eqref{m1}, \eqref{loss1}, \eqref{pre-ell2}, \eqref{e-CCK}, \eqref{10.6} and the bootstrap hypotheses, we have
\beq\label{e-RRNRR}
\nn&&\int_1^t\mathrm{NLS}_{1,\mathrm{HL}}^{1; \mathrm{R, R}}+\mathrm{NLS}_{1,\mathrm{HL}}^{1; \mathrm{NR, R}}dt'\\
&\les&\mu^{-\fr23}\eps\left(\int_1^t\mathrm{CK}_{w,J}+\mathrm{CK}_{\lm,J}+\mu^{-\fr23}\eps^2\left(\mathrm{CK}_{w}^R+\mathrm{CK}_{\lm}^R\right)dt'\right)\les\mu^{-\fr23}\eps^3.
\eeq

{\em Treatment of $\mathrm{NLS}_{1,\mathrm{HL}}^{1; \mathrm{R, NR}}$ and $\mathrm{NLS}_{1,\mathrm{HL}}^{1; \mathrm{NR, NR}}$.}\par
\noindent For $\mathrm{NLS}_{1,\mathrm{HL}}^{1; \mathrm{R, NR}}$, we further split it into two parts:
\beqno
\mathrm{NLS}_{1,\mathrm{HL}}^{1; \mathrm{R, NR}}&=&2\sum_{N\ge8}\sum_{(k,l)\in\mathbb{Z}\times\mathbb{Z}_*}\int_{\eta,\xi}
\left({\bf 1}_{t\in\mathbb{I}_{k,\eta}\cap\mathbb{I}^c_{l,\xi}\cap\mathbb I_{k,\xi}}+{\bf 1}_{t\in\mathbb{I}_{k,\eta}\cap\mathbb{I}^c_{l,\xi}\cap\mathbb I_{k,\xi}^c}\right)A\bar{\hat{J}}_k(\eta)\\
&&\quad\quad\quad\times A_k(\eta)l(\xi-lt)\hat{\Phi}_l(\xi)_N\left(-\hat{\Om}_{k-l}(\eta-\xi)_{<\fr{N}{8}}+2\widehat{\partial_{XX}\Psi}_{k-l}(\eta-\xi)_{<\fr{N}{8}}\right)d\xi d\eta \\
&=&\mathrm{NLS}_{1,\mathrm{HL}}^{1; \mathrm{R, NR;D}}+\mathrm{NLS}_{1,\mathrm{HL}}^{1; \mathrm{R, NR;*}}.
\eeqno
Before proceeding any further, we claim that on the support of the integrand of $\mathrm{NLS}_{1,\mathrm{HL}}^{1; \mathrm{R, NR}}$, there holds $|\eta|\approx |\xi|$. As a matter of fact, combining the fact $t\in\mathbb{I}_{k,\eta}$ (which implies that $1\le|k|<\fr{\sqrt{|\eta|}}{2}$) with \eqref{spt-T} yields
\[
|l|\le |k|+|k-l|\le\fr{\sqrt{|\eta|}}{2}+\fr{3}{16}\left(|l|+|\xi|\right)\le\fr{1}{2}|k,\eta|+\fr{3}{16}\left(|l|+|\xi|\right).
\]
Thus, using again \eqref{spt-T}, we find that
\beqno
|l|\le\fr{8}{13}|k,\eta|+\fr{3}{13}|\xi|
\le\fr{8}{13}\cdot\fr{19}{16}|l,\xi|+\fr{3}{13}|\xi|
=\fr{19}{26}|l|+\fr{25}{26}|\xi|.
\eeqno
Consequently, $|l|\le \fr{25}{7}|\xi|$, and hence 
\be\label{eta=xi2}
|\xi|\approx |l,\xi|\approx |k,\eta|\approx |\eta|,
\ee
so the claim is true.

To deal with $\mathrm{NLS}_{1,\mathrm{HL}}^{1; \mathrm{R, NR;D}}$,  note first that \eqref{spt-T} and
 the restriction $t\in\mathbb{I}_{k,\xi}$ further imply that
\beno
|l|\le |k-l|+|k|\le\fr{3}{16}|l,\xi|+\fr{\sqrt{|\xi|}}{2}=\fr{3}{16}|l|+\fr{11}{16}|\xi|,
\eeno
and hence
\be\label{l<xi}
|l|\le \fr{11}{13}|\xi|.
\ee 
On the other hand, by virtue of \eqref{switch-A} and \eqref{switch-J1}, we have
\be\label{rnr-key}
\fr{{\bf 1}_{t\in\mathbb{I}_{k,\eta}\cap\mathbb{I}_{l,\xi}^c\cap\mathbb{I}_{k,\xi}}}{1+\left|\fr{\xi}{l}-t\right|}\fr{A_k(\eta)}{A_l(\xi)}\les \mu^{-\fr13}e^{c\lm|k-l,\eta-\xi|^s}e^{3r|k-l,\eta-\xi|^\fr12}\fr{{\bf 1}_{t\in\mathbb{I}_{k,\eta}\cap\mathbb{I}_{l,\xi}^c\cap\mathbb{I}_{k,\xi}}}{1+\left|\fr{\xi}{l}-t\right|}\fr{|\eta|}{|k|^2\left(1+\left|t-\fr{\eta}{k}\right|\right)}.
\ee
Next, we have to bound $\fr{{\bf 1}_{t\in\mathbb{I}_{k,\eta}\cap\mathbb{I}_{l,\xi}^c\cap\mathbb{I}_{k,\xi}}}{1+\left|\fr{\xi}{l}-t\right|}\fr{|\eta|}{|k|^2\left(1+\left|t-\fr{\eta}{k}\right|\right)}$.\par
\noindent{\bf Case 1: $\mathbb{I}_{l,\xi}\neq\emptyset$.} Now $t\in\mathbb{I}^c_{l,\xi}$ implies that 
\be\label{lowb}
\left|t-\fr{\xi}{l}\right|\ge \fr12\left(\fr{|\xi|}{|l|}-\fr{|\xi|}{|l|+1}\right)=\fr{|\xi|}{2|l|(|l|+1)}\ge\fr{|\xi|}{4|l|^2}.
\ee
Then we infer from \eqref{pt_w},\eqref{eta=xi2} and \eqref{lowb} that
\beq\label{720}
\fr{{\bf 1}_{t\in\mathbb{I}_{k,\eta}\cap\mathbb{I}_{l,\xi}^c\cap\mathbb{I}_{k,\xi}}}{1+\left|\fr{\xi}{l}-t\right|}\fr{|\eta|}{|k|^2\left(1+\left|t-\fr{\eta}{k}\right|\right)}\nn
\nn&\les&\la k-l\ra^2\fr{{\bf 1}_{t\in\mathbb{I}_{k,\eta}}}{\sqrt{1+|t-\fr{\eta}{k}|}}\fr{{\bf 1}_{t\in\mathbb{I}_{k,\xi}}}{\sqrt{1+|t-\fr{\xi}{k}|}}\fr{\sqrt{1+|t-\fr{\xi}{k}|}}{\sqrt{1+|t-\fr{\eta}{k}|}}\\
&\les&\la k-l,\eta-\xi\ra^3\sqrt{\fr{\partial_tw_k(\eta)}{w_k(\eta)}}\sqrt{\fr{\partial_tw_l(\xi)}{w_l(\xi)}}{\bf 1}_{t\in\mathbb{I}_{k,\xi}}.
\eeq
\noindent{\bf Case 2: $\mathbb{I}_{l,\xi}=\emptyset$.} In this case, either $\sqrt{|\xi|}\les|l|$ or $l\xi<0$ holds. If $\sqrt{|\xi|}\les|l|$,  then clearly \eqref{720} still holds. If $l\xi<0$, 
\beq\label{720'}
\fr{{\bf 1}_{t\in\mathbb{I}_{k,\eta}\cap\mathbb{I}_{l,\xi}^c\cap\mathbb{I}_{k,\xi}}}{1+\left|\fr{\xi}{l}-t\right|}\fr{|\eta|}{|k|^2\left(1+\left|t-\fr{\eta}{k}\right|\right)}
\nn&\les&\la k-l\ra\fr{{\bf 1}_{t\in\mathbb{I}_{k,\eta}}}{\sqrt{1+|t-\fr{\eta}{k}|}}\fr{{\bf 1}_{t\in\mathbb{I}_{k,\xi}}}{\sqrt{1+|t-\fr{\xi}{k}|}}\fr{\sqrt{1+|t-\fr{\xi}{k}|}}{\sqrt{1+|t-\fr{\eta}{k}|}}\\
&\les&\la k-l,\eta-\xi\ra^2\sqrt{\fr{\partial_tw_k(\eta)}{w_k(\eta)}}\sqrt{\fr{\partial_tw_l(\xi)}{w_l(\xi)}}{\bf 1}_{t\in\mathbb{I}_{k,\xi}}.
\eeq
On the other hand,
\be\label{720''}
{\bf 1}_{t\in\mathbb{I}_{k,\xi}}\approx{\bf 1}_{t\in\mathbb{I}_{k,\xi}}\left(1+\fr{|\xi|}{|kt|}\fr{|k|}{|l|}\right)\left\la\fr{|\xi|}{|lt|}\right\ra^{-1}\les\la k-l\ra\left\la\fr{|\xi|}{|lt|}\right\ra^{-1}.
\ee
It follows from \eqref{720}, \eqref{720'} and \eqref{720''} that
\be\label{RNR-key}
\fr{{\bf1}_{t\in\mathbb{I}_{k,\eta}\cap\mathbb{I}^c_{l,\xi}\cap\mathbb I_{k,\xi}}}{1+|\fr{\xi}{l}-t|}\fr{|\eta|}{|k|^2\left(1+\left|t-\fr{\eta}{k}\right|\right)}\les\sqrt{\fr{\partial_t{w_k(\eta)}}{w_k(\eta)}}\left(\left\la \fr{|\xi|}{|lt|} \right\ra^{-1}\sqrt{\fr{\partial_t{w_l(\xi)}}{w_l(\xi)}}\right)\la k-l,\eta-\xi\ra^4.
\ee
In view of \eqref{l<xi}, \eqref{rnr-key} and \eqref{RNR-key}, similar to \eqref{e-RRNRR}, we are led to
\beq
\nn\int_1^t\mathrm{NLS}_{1,\mathrm{HL}}^{1; \mathrm{R, NR;D}}dt'
&\les&\mu^{-\fr13}\int_1^t\left\|\sqrt{\fr{\partial_tw}{w}}\tl AJ\right\|_{L^2}\left\|\left\la \fr{\partial_Y}{t\partial_X}\right\ra^{-1}\sqrt{\fr{\partial_tw}{w}}\tl A\Dl_L\Phi_{\ne} \right\|_{L^2}\left(\left\| \Om\right\|_{\mathcal{G}^{\lm,2}}+\|\partial_{XX}\Psi\|_{\mathcal{G}^{\lm,2}}\right)dt'\\
&\les&\mu^{-\fr23}\eps^3.
\eeq

Next we go to bound $\mathrm{NLS}_{1,\mathrm{HL}}^{1; \mathrm{R, NR;*}}$ and $\mathrm{NLS}_{1,\mathrm{HL}}^{1; \mathrm{NR, NR}}$. Noting that \eqref{eta=xi2} holds on the support of the integrand of $\mathrm{NLS}_{1,\mathrm{HL}}^{1; \mathrm{R, NR;*}}$,  thanks to \eqref{J-ratio}, for $t\in\mathbb{I}_{k,\eta}\cap\mathbb{I}^c_{l,\xi}\cap\mathbb I_{k,\xi}^c$ or $t\notin \mathbb{I}_{k,\eta}\cup\mathbb{I}_{l,\xi}$, \eqref{switch-A} reduces to
\be\label{AkAl-ratio}
\fr{A_k(\eta)}{A_l(\xi)}\les\mu^{-\fr13}e^{c\lm|k-l,\eta-\xi|^s}e^{10r|k-l,\eta-\xi|^\fr12}.
\ee
To proceed, we claim that on the support of the integrand of $\mathrm{NLS}_{1,\mathrm{HL}}^{1; \mathrm{R, NR;*}}$ and $\mathrm{NLS}_{1,\mathrm{HL}}^{1; \mathrm{NR, NR}}$, there holds
\be\label{Reaction-key}
\fr{{\bf 1}_{l\ne0}{\bf1}_{t\notin\mathbb{I}_{l,\xi}}}{1+|t-\fr{\xi}{l}|}\les\left\la\fr{\xi}{lt}\right\ra^{-1}\left(\fr{1}{t}+\fr{|\eta|^{\fr{s}{2}}|\xi|^{\fr{s}{2}}}{t^{s+\fr12}}\right).
\ee
In fact, for $\mathrm{NLS}_{1,\mathrm{HL}}^{1; \mathrm{R, NR;*}}$, \eqref{Reaction-key} follows from \eqref{722} immediately owing to \eqref{eta=xi2}. For $\mathrm{NLS}_{1,\mathrm{HL}}^{1; \mathrm{NR, NR}}$, if $|l|\le2|\xi|$, using \eqref{spt-T}, one deduces that
\[
|\eta-\xi|\le\fr{3}{16}|l,\xi|\le\fr{9}{16}|\xi|.
\]
Consequently, $|\eta|\approx|\xi|$, and thus \eqref{Reaction-key} holds. If $|l|>2|\xi|$, then $\fr{|\xi|}{|l|}\le\fr{t}{2}$. Thus we can use \eqref{722a} to deduce \eqref{Reaction-key}. So the claim above is true. 
Then using \eqref{AkAl-ratio} and \eqref{Reaction-key}, we find that
\beq\label{need-ti1}
\nn&&\mathrm{NLS}_{1,\mathrm{HL}}^{1; \mathrm{R, NR;*}}+\mathrm{NLS}_{1,\mathrm{HL}}^{1; \mathrm{NR, NR}}\\
\nn&\les&\mu^{-\fr13}\sum_{(k,l)\in\mathbb{Z}\times\mathbb{Z^*}}\int_{\eta,\xi} \left|A\hat{J}_k(\eta)\right|\left\la\fr{\xi}{lt}\right\ra^{-1}\left(\fr{1}{t}+\fr{|\eta|^{\fr{s}{2}}|\xi|^{\fr{s}{2}}}{t^{s+\fr12}}\right)A_l(\xi)\left|\widehat{\Dl_L\Phi}_l(\xi)\right|\\
\nn&&\quad\times e^{\lm|k-l,\eta-\xi|^s}\left(\left|\hat{\Om}_{k-l}(\eta-\xi)\right|+\left|\widehat{\partial_{XX}\Psi}_{k-l}(\eta-\xi)\right|\right) d\xi d\eta\\
\nn&\les&\mu^{-\fr13}\fr{\left\| |\nabla|^{\fr{s}{2}}AJ\right\|_{L^2}}{t^{\fr{s}{2}+\fr14}}\left\|\left\la \fr{\partial_Y}{t\partial_X}\right\ra^{-1}\fr{|\nabla|^\fr{s}{2}}{\la t\ra^{\fr{s}{2}+\fr14}}A\Dl_L\Phi_{\ne} \right\|_{L^2} \left(\left\| \Om\right\|_{\mathcal{G}^{\lm,2}}+\left\|\partial_{XX}\Psi\right\|_{\mathcal{G}^{\lm,2}}\right)\\
&&+\mu^{-\fr13}\fr{\left\| AJ\right\|_{L^2}}{t}\left\|\left\la \fr{\partial_Y}{t\partial_X}\right\ra^{-1}A\Dl_L\Phi_{\ne} \right\|_{L^2} \left(\left\|\Om\right\|_{\mathcal{G}^{\lm,2}}+\left\|\partial_{XX}\Psi\right\|_{\mathcal{G}^{\lm,2}}\right).
\eeq
Combining this with \eqref{m1}, \eqref{pre-ell2}, Remark \ref{rem-s}, \eqref{e-CCK}, \eqref{pre-ell4} and the bootstrap hypotheses yields
\beq\label{e-RNRNRNR}
\nn&&\int_1^t \mathrm{NLS}_{1,\mathrm{HL}}^{1; \mathrm{R, NR;*}}+\mathrm{NLS}_{1,\mathrm{HL}}^{1; \mathrm{NR, NR}} dt'\\
\nn&\les&\mu^{-\fr23}\eps\left(\int_1^t\mathrm{CK}_{\lm,J}+\mathrm{CK}_{w,J}+\mu^{-\fr23}\eps^2\left(\mathrm{CK}_{\lm}^R+\mathrm{CK}_{w}^R\right)dt'\right)\\
&&+\mu^{-\fr23}\|A\Om\|_{L^\infty L^2}\|AJ\|_{L^\infty L^2}\|AJ_{\ne}\|_{L^2L^2}\les\mu^{-\fr56}\eps^3.
\eeq
\subsubsection{Correction} Now let us treat $\mathrm{NLS}_{1,\mathrm{HL}}^\eps$. To begin with, we expand $(Y'-1)\partial_{XY}^L\Phi$ with a paraproduct only in $Y$:
\beq
\mathrm{NLS}_{1,\mathrm{HL}}^\eps\nn&=&-\fr{1}{\pi}\sum_{M\ge8}\sum_{N\ge8}\sum_{(k,l)\in\mathbb{Z}\times\mathbb{Z^*}}\int_{\eta,\xi,\xi'} A\bar{\hat{J}}_k(\eta)A_k(\eta)\rho_N(l,\xi)\left[(\widehat{Y'-1})(\xi-\xi')\right]_{<\fr{M}{8}}\\
\nn&&\quad\quad\times\widehat{\partial^L_{XY}\Phi}_l(\xi')_M\left(-\hat{\Om}_{k-l}(\eta-\xi)_{<\fr{N}{8}}+2\widehat{\partial_{XX}\Psi}_{k-l}(\eta-\xi)_{<\fr{N}{8}}\right)d\xi'd\xi d\eta\\
\nn&&-\fr{1}{\pi}\sum_{M\ge8}\sum_{N\ge8}\sum_{(k,l)\in\mathbb{Z}\times\mathbb{Z^*}}\int_{\eta,\xi,\xi'} A\bar{\hat{J}}_k(\eta)A_k(\eta)\rho_N(l,\xi)\left[(\widehat{Y'-1})(\xi-\xi')\right]_{M}\\
\nn&&\quad\quad\times\widehat{\partial^L_{XY}\Phi}_l(\xi')_{<\fr{M}{8}}\left(-\hat{\Om}_{k-l}(\eta-\xi)_{<\fr{N}{8}}+2\widehat{\partial_{XX}\Psi}_{k-l}(\eta-\xi)_{<\fr{N}{8}}\right)d\xi'd\xi d\eta\\
\nn&&-\fr{1}{\pi}\sum_{M\in\mathbb{D}}\sum_{\fr{M}{8}\le M'\le 8M}\sum_{N\ge8}\sum_{(k,l)\in\mathbb{Z}\times\mathbb{Z^*}}\int_{\eta,\xi,\xi'} A\bar{\hat{J}}_k(\eta)A_k(\eta)\rho_N(l,\xi)\left[(\widehat{Y'-1})(\xi-\xi')\right]_{M'}\\
\nn&&\quad\quad\times\widehat{\partial^L_{XY}\Phi}_l(\xi')_{M}\left(-\hat{\Om}_{k-l}(\eta-\xi)_{<\fr{N}{8}}+2\widehat{\partial_{XX}\Psi}_{k-l}(\eta-\xi)_{<\fr{N}{8}}\right)d\xi'd\xi d\eta\\
\nn&=&\mathrm{NLS}_{1,\mathrm{HL}}^{\eps;\mathrm{LH}}+\mathrm{NLS}_{1,\mathrm{HL}}^{\eps;\mathrm{HL}}+\mathrm{NLS}_{1,\mathrm{HL}}^{\eps;\mathcal{R}}.
\eeq

For $\mathrm{NLS}_{1,\mathrm{HL}}^{\eps;\mathrm{LH}}$, $|l,\xi'|\approx|l,\xi|$ and $\widehat{\partial^L_{XY}\Phi}_l(\xi')_M$ is the true high frequency quantity. Then $\mathrm{NLS}_{1,\mathrm{HL}}^{\eps;\mathrm{LH}}$ can be treated as $\mathrm{NLS}_{1,\mathrm{HL}}$ with $(l,\xi')$ playing the role of $(l,\xi)$. We omit the details and conclude that the estimates in Section \ref{sec-re-m} are still valid, except with an additional factor $\|Y'-1\|_{\mathcal{G}^{\lm,2}}$.
 
The treatment of $\mathrm{NLS}_{1,\mathrm{HL}}^{\eps;\mathrm{HL}}$ is more subtle:  $l$ could be large relative to $|\xi-\xi'|$ and hence $\widehat{\partial^L_{XY}\Phi}_l(\xi')_{<\fr{M}{8}}$ is again a high frequency quantity. We thus consider two cases
\beq
\nn\mathrm{NLS}_{1,\mathrm{HL}}^{\eps;\mathrm{HL}}&=&-\fr{1}{\pi}\sum_{M\ge8}\sum_{N\ge8}\sum_{(k,l)\in\mathbb{Z}\times\mathbb{Z^*}}\int_{\eta,\xi,\xi'}\left({\bf 1}_{|\xi|\le4|l|}+{\bf 1}_{|\xi|>4|l|}\right) A\bar{\hat{J}}_k(\eta)A_k(\eta)\\
\nn&&\times \rho_N(l,\xi)\left[(\widehat{Y'-1})(\xi-\xi')\right]_{M}\widehat{\partial^L_{XY}\Phi}_l(\xi')_{<\fr{M}{8}}\\
\nn&&\times \left(-\hat{\Om}_{k-l}(\eta-\xi)_{<\fr{N}{8}}+2\widehat{\partial_{XX}\Psi}_{k-l}(\eta-\xi)_{<\fr{N}{8}}\right)d\xi'd\xi d\eta\\
\nn&=&\mathrm{NLS}_{1,\mathrm{HL}}^{\eps;\mathrm{HL};X}+\mathrm{NLS}_{1,\mathrm{HL}}^{\eps;\mathrm{HL};Y}.
\eeq

For $\mathrm{NLS}_{1,\mathrm{HL}}^{\eps;\mathrm{HL};X}$, on the support of the integrand (noting that $|\xi|\le4|l|$), there hold
\be\label{spt-X}
\big||k,\eta|-|l,\xi| \big|\le |k-l,\eta-\xi|\le\fr{3}{16}|l,\xi|\le\fr{15}{16}|l|,\quad\mathrm{and}\quad |\xi'|\le\fr{3}{16}|\xi-\xi'|.
\ee
This enables us to use \eqref{ap3} and \eqref{ap4} (considering two cases $|\xi|\le\fr{|l|}{4}$ and $\fr{|l|}{4}\le|\xi|\le4|l|$ separately) to deduce that, for some $c\in(0,1)$
\beno
e^{\lm|k,\eta|^s}\le e^{\lm|l,\xi'|^s+c\lm|\xi-\xi'|^s+c\lm|k-l,\eta-\xi|^s}.
\eeno
Furthermore, together with the restriction $|\xi|\le4|l|$, we infer from \eqref{spt-X} that
\beno
|l|\ge\fr{|\xi|}{4}\ge\fr{1}{4}\cdot\fr{13}{16}|\xi-\xi'|\ge\fr{13}{12}|\xi'|.
\eeno
This implies that on the support of the integrand $(l,\xi')$ is non-resonant.  
If the worst scenario $t\in \mathbb{I}_{k,\eta}\backslash\mathbb{I}_{l,\xi'}$ happen, the exchange from $\mathcal{J}_k(\eta)$ to $\mathcal{J}_l(\xi')$ may produce a bad factor $\fr{|\eta|}{k^2\left(1+\left|\fr{\eta}{k}-t\right|\right)}$ that is harmless. Indeed, using \eqref{spt-X} and the restriction $|\xi|\le4|l|$, we find that
\beno
\fr{|\eta|}{k^2\left(1+\left|\fr{\eta}{k}-t\right|\right)}\les\fr{|l|}{|k|}\les\la k-l\ra.
\eeno
On the other hand, noting that $\left|\widehat{\partial^L_{XY}\Phi}_l(\xi')_{<\fr{M}{8}}\right|\le \fr{{\bf1}_{l\ne0}}{1+\left|\fr{\xi'}{l}-t\right|}\left|\widehat{\Dl_{L}\Phi}_l(\xi')_{<\fr{M}{8}}\right|$, thanks to the fact $|\xi'|\le\fr{12}{13}|l|\le\fr{12}{13}|l|t$, similar to \eqref{722a}, we have
\be\label{X-domi}
\fr{{\bf1}_{l\ne0}}{1+\left|\fr{\xi'}{l}-t\right|}\les\left\la\fr{\xi'}{lt}\right\ra^{-1}\fr{1}{t}.
\ee
Therefore,
\beq
\mathrm{NLS}_{1,\mathrm{HL}}^{\eps;\mathrm{HL};X}\nn&\les&\fr{\mu^{-\fr13}}{t}\sum_{M\ge8}\sum_{N\ge8}\sum_{(k,l)\in\mathbb{Z}\times\mathbb{Z^*}}\int_{\eta,\xi,\xi'}{\bf 1}_{|\xi|\le4|l|} \left|A\hat{J}_k(\eta)\right|\rho_N(l,\xi)\\
\nn&&\times e^{\lm|\xi-\xi'|^s}\left|(\widehat{Y'-1})(\xi-\xi')_{M}\right|\left\la\fr{\xi'}{lt}\right\ra^{-1}A_l(\xi')\left|\widehat{\Dl_{L}\Phi}_l(\xi')_{<\fr{M}{8}}\right|\\
\nn&&\times e^{\lm|k-l,\eta-\xi|^s}\left|-\hat{\Om}_{k-l}(\eta-\xi)_{<\fr{N}{8}}+2\widehat{\partial_{XX}\Psi}_{k-l}(\eta-\xi)_{<\fr{N}{8}}\right|d\xi'd\xi d\eta\\
\nn&\les&\fr{\mu^{-\fr13}}{t}\|AJ\|_{L^2}\|Y'-1\|_{\mathcal{G}^{\lm,2}}\left\|\left\la \fr{\partial_Y}{t\partial_X}\right\ra^{-1}A\Dl_L\Phi_{\ne} \right\|_{L^2}\left(\left\| \Om\right\|_{\mathcal{G}^{\lm,2}}+\left\|\partial_{XX}\Psi\right\|_{\mathcal{G}^{\lm,2}}\right).
\eeq
Then similar to \eqref{e-RNRNRNR}, one deduces that
\be
\int_1^t\mathrm{NLS}_{1,\mathrm{HL}}^{\eps;\mathrm{HL};X}dt'\les\mu^{-\fr23}\|Y'-1\|_{L^\infty \mathcal{G}^{\lm,2}}\|A\Om\|_{L^\infty L^2}\|AJ\|_{L^\infty L^2}\|AJ_{\ne}\|_{L^2 L^2}\les\mu^{-\fr56}\eps^4.
\ee

For $\mathrm{NLS}_{1,\mathrm{HL}}^{\eps;\mathrm{HL};Y}$, on the support of the integrand,
\beno
\big||\xi-\xi'|-|l,\xi| \big|\le|l|+|\xi'|\le\fr{|\xi|}{4}+|\xi'|\le\fr{|\xi-\xi'|}{4}+\fr{5|\xi'|}{4}\le\fr{31}{64}|\xi-\xi'|.
\eeno
Combining this with \eqref{spt-T} (using \eqref{ap3} twice) yields
\be\label{e-exp}
e^{\lm|k,\eta|^s}\le e^{\lm|\xi-\xi'|^s+c\lm|l,\xi'|^s+c\lm|k-l,\eta-\xi|^s},\quad\mathrm{for \ \ some} \quad c\in(0,1).
\ee
In addition, on the support of the integrand, we have
\beno
\left(\fr{13}{16}\right)^2|\xi-\xi'|\le\fr{13}{16}|\xi|\le\fr{13}{16}|l,\xi|\le|k,\eta|\le\fr{19}{16}|l,\xi|\le\fr{5}{4}\cdot\fr{19}{16}|\xi|\le\fr{5}{4}\cdot\left(\fr{19}{16}\right)^2|\xi-\xi'|,
\eeno
and
\beno
|k|\le|k-l|+|l|\le\fr{3}{16}|l,\xi|+|l|\le\fr{3}{16}|\xi|+\fr{19}{16}|l|\le\fr{31}{64}|\xi|\le\fr{31}{64}\cdot\fr{19}{16}|\xi-\xi'|=\fr{589}{1024}|\xi-\xi'|.
\eeno
Nothing that $\left(\fr{13}{16}\right)^2>\fr{589}{1024}$,  the above two inequalities  imply $|\eta|\approx|\xi-\xi'|$. Then in view of \eqref{e-exp}, similar to \eqref{AkR1} and \eqref{AkR2}, we are led to
\beno
\fr{A_k(\eta)}{A^R(\xi-\xi')}\les e^{c\lm|l,\xi'|^s+c\lm|k-l,\eta-\xi|^s},
\eeno
where the constant $c\in(0,1)$ may be slightly larger than the one in \eqref{e-exp}. This means that all derivatives landing on $Y'-1$ and $\rho_N(l,\xi)\widehat{\partial^L_{XY}\Phi}_l(\xi')_{<\fr{M}{8}}$ is actually  a low frequency quantity.  Accordingly,
\beq
\nn\mathrm{NLS}_{1,\mathrm{HL}}^{\eps;\mathrm{HL};Y}&\les&\sum_{M\ge8}\sum_{N\ge8}\sum_{(k,l)\in\mathbb{Z}\times\mathbb{Z^*}}\int_{\eta,\xi,\xi'} \left|A\hat{J}_k(\eta)\right|\rho_N(l,\xi)\\
\nn&&\times A^R(\xi-\xi')\left|(\widehat{Y'-1})(\xi-\xi')_{M}\right|e^{c\lm|l,\xi'|^s}\left|\widehat{\partial^L_{XY}\Phi}_l(\xi')_{<\fr{M}{8}}\right|\\
\nn&&\times e^{c\lm|k-l,\eta-\xi|^s}\left|-\hat{\Om}_{k-l}(\eta-\xi)_{<\fr{N}{8}}+2\widehat{\partial_{XX}\Psi}_{k-l}(\eta-\xi)_{<\fr{N}{8}}\right|d\xi'd\xi d\eta\\
\nn&\les&\left\|AJ\right\|_{L^2}\left\|A^R(Y'-1)\right\|_{L^2}\left(t\|\Phi_\ne\|_{\mathcal{G}^{\lm,2}}\right)\left(\|\Om\|_{\mathcal{G}^{\lm,2}}+\left\|\partial_{XX}\Psi\right\|_{\mathcal{G}^{\lm,2}}\right),
\eeq
where we have used $\left|\widehat{\partial_{XY}^L\Phi}_l(\xi') \right|\les t|l,\xi'|^2\left|\hat{\Phi}_l(\xi') \right|$ in the last line above. Then by \eqref{m1}, \eqref{loss2} and the bootstrap hypotheses, 
\be\label{e-HLY}
\int_1^t\mathrm{NLS}_{1,\mathrm{HL}}^{\eps;\mathrm{HL};Y}dt'\les\mu^{-\fr23}\|A\Om\|_{L^\infty L^2}\|AJ\|_{L^\infty L^2}\left\|A^R(Y'-1)\right\|_{L^\infty L^2}\|AJ_{\ne}\|_{L^2 L^2}\les\mu^{-\fr56}\eps^4.
\ee

Now we turn to the remainder term $\mathrm{NLS}_{1,\mathrm{HL}}^{\eps;\mathcal{R}}$, which is also 
divided into two parts:
\beq
\mathrm{NLS}_{1,\mathrm{HL}}^{\eps;\mathcal{R}}\nn&=&-\fr{1}{\pi}\sum_{M\in\mathbb{D}}\sum_{\fr{M}{8}\le M'\le 8M}\sum_{N\ge8}\sum_{(k,l)\in\mathbb{Z}\times\mathbb{Z^*}}\int_{\eta,\xi,\xi'} A\bar{\hat{J}}_k(\eta)A_k(\eta)\\
\nn&&\times \rho_N(l,\xi)\left({\bf 1}_{|l|>100|\xi'|}+{\bf 1}_{|l|}\le100|\xi'|\right)\left[(\widehat{Y'-1})(\xi-\xi')\right]_{M'}\\
\nn&&\times\widehat{\partial^L_{XY}\Phi}_l(\xi')_{M}\left(-\hat{\Om}_{k-l}(\eta-\xi)_{<\fr{N}{8}}+2\widehat{\partial_{XX}\Psi}_{k-l}(\eta-\xi)_{<\fr{N}{8}}\right)d\xi'd\xi d\eta\\
\nn&=&\mathrm{NLS}_{1,\mathrm{HL}}^{\eps;\mathcal{R};X}+\mathrm{NLS}_{1,\mathrm{HL}}^{\eps;\mathcal{R};Y}.
\eeq
For $\mathrm{NLS}_{1,\mathrm{HL}}^{\eps;\mathcal{R};X}$, on the support of the integrand we have
\beno
\big| |l,\xi|-|l,\xi'|\big|\le |\xi-\xi'|\le24|\xi'|\le\fr{24}{100}|l,\xi'|.
\eeno
Furthermore, the restriction $|l|>100|\xi'|$ and \eqref{spt-T} imply
\beno
|\xi|\le|\xi-\xi'|+|\xi'|\le25|\xi'|\le\fr{|l|}{4},\quad |k-l|\le\fr{3}{16}|l,\xi|\le\fr{3}{16}\cdot\fr{5}{4}|l|=\fr{15}{64}|l|,
\eeno
and hence
\beno
|k|\ge \fr{49}{64} |l|,\quad          |\eta|\le|\xi|+|\eta-\xi|\le\fr{|l|}{4}+\fr{3}{16}|l,\xi|\le\fr{31}{64}|l|\le\fr{31}{49}|k|.
\eeno
Thus, both $(l,\xi')$ and $(k,\eta)$ are non-resonant. Then combining  \eqref{J-ratio} and \eqref{ap3} twice yields
\beno
\fr{A_k(\eta)}{A_l(\xi')}\les\mu^{-\fr13}e^{c\lm|\xi-\xi'|^s+c\lm|k-l,\eta-\xi|^s}.
\eeno
In addition, $|l|\ge100|\xi'|$ implies that \eqref{X-domi} still holds. Therefore, $\mathrm{NLS}_{1,\mathrm{HL}}^{\eps;\mathcal{R};X}$ can be bounded in the same manner of $\mathrm{NLS}_{1,\mathrm{HL}}^{\eps;\mathrm{HL};X}$, and we omit the details.

As for $\mathrm{NLS}_{1,\mathrm{HL}}^{\eps;\mathcal{R};Y}$, on the support of the integrand we have
\beno
|l,\xi'|\le101|\xi'|\le24\cdot 101|\xi-\xi'|\le24^2\cdot101|\xi'|\le24^2\cdot101|l,\xi'|,
\eeno
and hence $|l,\xi'|\approx|\xi'|\approx|\xi-\xi'|$. Then owing to \eqref{ap3} and \eqref{ap4}, for some $c\in(0,1)$,
\beno
e^{\lm|k,\eta|^s}\le e^{c\lm|l,\xi'|^s+c\lm|\xi-\xi'|^s+c\lm|k-l,\eta-\xi|^s}.
\eeno
On the other hand, $\mathcal{J}_k(\eta)$ can be bounded straightforwardly by using Lemma \ref{lem-gw}:
\beno
\mathcal{J}_k(\eta)\les e^{4r|k,\eta|^{\fr12}}\les e^{4r|k-l,\eta-\xi|^{\fr12}+4r|l,\xi'|^{\fr12}+4r|\xi-\xi'|^{\fr12}}.
\eeno
It follows from the above two estimates, \eqref{ap-5} and \eqref{ap-6} (noting that $\mathcal{M}_k(\eta)\les1$) that
\beq
\mathrm{NLS}_{1,\mathrm{HL}}^{\eps;\mathcal{R};Y}\nn&\les&\sum_{M\in\mathbb{D}}\sum_{\fr{M}{8}\le M'\le 8M}\sum_{N\ge8}\sum_{(k,l)\in\mathbb{Z}\times\mathbb{Z^*}}\int_{\eta,\xi,\xi'} \left|A\hat{J}_k(\eta)\right|\rho_N(l,\xi)\\
\nn&&\times e^{\lm|\xi-\xi'|^{s}}\left|(\widehat{Y'-1})(\xi-\xi')_{M'}\right|e^{c\lm|l,\xi'|^s+4r|l,\xi'|^\fr12}\left(t|l,\xi'|^2|\hat{\Phi}_l(\xi')_{M}|\right)\\
\nn&&\times e^{\lm|k-l,\eta-\xi|^s}\left|-\hat{\Om}_{k-l}(\eta-\xi)_{<\fr{N}{8}}+2\widehat{\partial_{XX}\Psi}_{k-l}(\eta-\xi)_{<\fr{N}{8}}\right|d\xi'd\xi d\eta\\
\nn&\les&\left\|AJ\right\|_{L^2}\|Y'-1\|_{\mathcal{G}^{\lm, 2}}\left(t\|\Phi_\ne\|_{\mathcal{G}^{\lm,2}}\right)\left(\|\Om\|_{\mathcal{G}^{\lm,2}}+\left\|\partial_{XX}\Psi\right\|_{\mathcal{G}^{\lm,2}}\right).
\eeq
Then similar to \eqref{e-HLY}, we have
\be\label{e-RY}
\int_1^t\mathrm{NLS}_{1,\mathrm{HL}}^{\eps;\mathcal{R};Y}dt'\les\mu^{-\fr56}\eps^4.
\ee
\subsection{ Treatment of $\mathrm{NLS}_{1,\mathrm{LH}}$} First of all, let us denote
\be
\mathrm{NLS}_{1,\mathrm{LH}}=\mathrm{NLS}_{1,\mathrm{LH}}^1+\mathrm{NLS}_{1,\mathrm{LH}}^2,
\ee
where
\beqno
\mathrm{NLS}_{1,\mathrm{LH}}^1&=&4\pi\sum_{N\ge8}\left\la AJ, A\left(\partial_{XY}^t\Phi_{<\fr{N}{8}}\Om_{N}\right)\right\ra,\\
\mathrm{NLS}_{1,\mathrm{LH}}^2&=&-8\pi\sum_{N\ge8}\left\la AJ, A\left(\partial_{XY}^t\Phi_{<\fr{N}{8}}\partial_{XX}\Psi_{N}\right)\right\ra.
\eeqno
We would like to remark that $\mathrm{NLS}_{1,\mathrm{LH}}^2$ should be regarded as an analogue of $\mathrm{NLS}_{1,\mathrm{HL}}^1$. Owing to the fact
\beno
\fr{{\bf 1}_{l\ne0}l^2}{l^2+(\xi-lt)^2}\les\fr{{\bf 1}_{l\ne0}}{\left(1+\left|\fr{\xi}{l}-t\right|\right)^2},
\eeno
the high frequency term $\partial_{XX}\Psi_{N}$ in $\mathrm{NLS}_{1,\mathrm{LH}}^2$ possesses better decay than the corresponding term $\partial_{XY}^L\Phi_{N}$ in $\mathrm{NLS}_{1,\mathrm{HL}}^1$. 

Let us focus on the estimates of $\mathrm{NLS}_{1,\mathrm{LH}}^1$ next. It is natural to divide it into the following two parts:
\beq
\nn \mathrm{NLS}_{1,\mathrm{LH}}^1&=&2\sum_{N\ge8}\sum_{\substack{(k,l)\in\mathbb{Z}^2, \\k\ne l}}\int_{\eta,\xi} \left({\bf 1}_{t\in\mathbb{I}_{k,\eta}\cap\mathbb{I}_{k,\xi}}+{\bf 1}_{t\notin\mathbb{I}_{k,\eta}\cap\mathbb{I}_{k,\xi}}\right)A\bar{\hat{J}}_k(\eta)A_k(\eta)\hat{\Om}_l(\xi)_N \\
\nn&&\quad\quad\quad\quad\quad\quad\times\mathcal{F}\left[Y'\partial^L_{XY}\Phi\right]_{k-l}(\eta-\xi)_{<\fr{N}{8}}d\xi d\eta\\
&=&\mathrm{NLS}_{1,\mathrm{LH}}^{1;\mathrm{D}}+\mathrm{NLS}_{1,\mathrm{LH}}^{1;*}.
\eeq
Similar to\eqref{switch-A} and \eqref{switch-J1}, we have
\beq
\nn{\bf 1}_{t\in\mathbb{I}_{k,\eta}\cap\mathbb{I}_{k,\xi}}\fr{A_k(\eta)}{A_l(\xi)}&\les&\mu^{-\fr13}e^{c\lm|k-l, \eta-\xi|^s}e^{3r|k-l, \eta-\xi|^\fr12}\fr{|\eta|}{k^2}\fr{{\bf 1}_{t\in\mathbb{I}_{k,\eta}}}{\sqrt{1+\left|\fr{\eta}{k}-t\right|}}\fr{{\bf 1}_{t\in\mathbb{I}_{k,\xi}}}{\sqrt{1+\left|\fr{\xi}{k}-t\right|}}\fr{\sqrt{1+\left|\fr{\xi}{k}-t\right|}}{\sqrt{1+\left|\fr{\eta}{k}-t\right|}}\\
\nn&\les&\mu^{-\fr13}e^{c\lm|k-l, \eta-\xi|^s} e^{3r|k-l, \eta-\xi|^\fr12}\fr{|\eta|}{k^2}\sqrt{\fr{\partial_tw_k(\eta)}{w_k(\eta)}}\sqrt{\fr{\partial_tw_l(\xi)}{w_l(\xi)}}\la \eta-\xi\ra^\fr12.
\eeq
On the other hand, it is easy to verify that $A_k(\eta)\les \tl A_k(\eta)$ and $A_l(\eta)\les \tl A_l(\eta)$ on the support of the integrand of $\mathrm{NLS}_{1,\mathrm{LH}}^{1;\mathrm{D}}$. Consequently,
\beqno
\nn \mathrm{NLS}_{1,\mathrm{LH}}^{1;\mathrm{D}}&\les&\mu^{-\fr13}\sum_{N\ge8}\sum_{\substack{(k,l)\in\mathbb{Z}^2,\\ k\ne l}}\int_{\eta,\xi}\fr{|\eta|}{k^2}\sqrt{\fr{\partial_tw_k(\eta)}{w_k(\eta)}}\left|\tl A\hat{J}_k(\eta)\right|\sqrt{\fr{\partial_tw_l(\xi)}{w_l(\xi)}}\tl A_l(\xi)|\hat{\Om}_l(\xi)_N|\\
\nn&&\times e^{\lm|k-l,\eta-\xi|^s}\left|\mathcal{F}\left[Y'\partial^L_{XY}\Phi\right]_{k-l}(\eta-\xi)_{<\fr{N}{8}}\right| d\xi d\eta \\
&\les&\mu^{-\fr13}t\left\|Y'\partial_{XY}^L\Phi\right\|_{\mathcal{G}^{\lm,2}}\left\|\sqrt{\fr{\partial_tw}{w}}\tl AJ\right\|_{L^2}\left\|\sqrt{\fr{\partial_tw}{w}}\tl A\Om\right\|_{L^2}.
\eeqno
The algebra property \eqref{alge} of $\mathcal{G}^{\lm,2}$ shows that
\be\label{prtXYPhi}
\left\|Y'\partial_{XY}^L\Phi\right\|_{\mathcal{G}^{\lm,2}}\les\left(1+\left\|Y'-1\right\|_{\mathcal{G}^{\lm,2}}\right)\left\|\partial_{XY}^L\Phi\right\|_{\mathcal{G}^{\lm,2}}\les t\left(1+\left\|Y'-1\right\|_{\mathcal{G}^{\lm,2}}\right)\left\|\Phi_\ne\right\|_{\mathcal{G}^{\lm,4}}.
\ee
Combining the above two estimates with \eqref{m1}, \eqref{loss2} and the bootstrap hypotheses yields
\be
\int_1^t\mathrm{NLS}_{1,\mathrm{LH}}^{1;\mathrm{D}}dt'\les\mu^{-\fr23}\|AJ\|_{L^\infty L^2}\left\|\sqrt{\fr{\partial_tw}{w}}\tl AJ\right\|_{L^2L^2}\left\|\sqrt{\fr{\partial_tw}{w}}\tl A\Om\right\|_{L^2L^2}\les\mu^{-\fr23}\eps^3.
\ee

Now we turn to bound $\mathrm{NLS}_{1,\mathrm{LH}}^{1;*}$. Note that
\beno
{\bf 1}_{t\notin\mathbb{I}_{k,\eta}\cap\mathbb{I}_{k,\xi}}={\bf 1}_{t\in\mathbb{I}_{k,\eta}^c}+{\bf 1}_{t\in\mathbb{I}_{k,\xi}^c\cap\mathbb{I}_{k,\eta} }=\left({\bf 1}_{t\in\mathbb{I}_{k,\eta}^c}+{\bf 1}_{t\in\mathbb{I}_{k,\xi}^c\cap\mathbb{I}_{k,\eta} }{\bf 1}_{|l|\le4|\xi|}\right)+{\bf 1}_{t\in\mathbb{I}_{k,\xi}^c\cap\mathbb{I}_{k,\eta} }{\bf 1}_{|l|>4|\xi|}.
\eeno
If $t\notin\mathbb{I}_{k,\eta}$ or ($t\in\mathbb{I}_{k,\eta}\backslash\mathbb{I}_{k,\xi}$ and $|l|\le4|\xi|$, which implies $|\eta|\approx|\xi|$), then \eqref{J-ratio} applies. If $t\in\mathbb{I}_{k,\eta}\backslash\mathbb{I}_{k,\xi}$ and $|l|>4|\xi|$, then \eqref{switch-J} holds. We thus conclude that \eqref{AkAl-ratio} holds for $t\notin\mathbb{I}_{k,\eta}\cap\mathbb{I}_{k,\xi}$. Accordingly, using again \eqref{m1}, \eqref{loss2} and the bootstrap hypotheses, we find that
\beq\label{need-ti2}
\nn \int_1^t\mathrm{NLS}_{1,\mathrm{LH}}^{1;*}dt'&\les&\mu^{-\fr13}\sum_{N\ge8}\sum_{\substack{(k,l)\in\mathbb{Z}^2,\\ k\ne l}}\int_1^t\int_{\eta,\xi} \left|A\hat{J}_k(\eta)\right| A_l(\xi)|\hat{\Om}_l(\xi)_N| \\
\nn&&\times e^{\lm|k-l,\eta-\xi|^s}\left|\mathcal{F}\left[Y'\partial^L_{XY}\Phi\right]_{k-l}(\eta-\xi)_{<\fr{N}{8}}\right|d\xi d\eta dt'\\
&\les&\mu^{-\fr13}\left(1+\left\|Y'-1\right\|_{L^\infty\mathcal{G}^{\lm,2}}\right)\left\|t'\Phi_\ne\right\|_{L^1\mathcal{G}^{\lm,4}}\left\|AJ\right\|_{L^\infty L^2}\left\| A\Om\right\|_{L^\infty L^2}\les\mu^{-\fr56}\eps^3.
\eeq
\subsection{Treatment of $\mathrm{NLS}_{1,\mathcal{R}}$}  To treat $\mathrm{NLS}_{1,\mathcal{R}}$, there is no need to split $\partial_{XY}^t\Phi$ into the main part $\partial_{XY}^L\Phi$  and the correction $(Y'-1)\partial_{XY}^L\Phi$. In fact,  the treatments of $\mathrm{NLS}_{1,\mathcal{R}}$, the remainder in Section \ref{sec-rem} and $\mathrm{NLS}_{1,\mathrm{HL}}^{\eps;\mathcal{R};Y}$ share a basic similarity. We omit the details and conclude that
\beq
\nn\int_1^t\mathrm{NLS}_{1,\mathcal{R}}dt'&\les&\|AJ\|_{L^\infty L^2}\left(\|\Om\|_{L^\infty\mathcal{G}^{\lm,2}}+\|\partial_{XX}\Psi\|_{L^\infty\mathcal{G}^{\lm,2}}\right)\left\|Y'\partial_{XY}^L\Phi\right\|_{L^1\mathcal{G}^{\lm,2}}\\
&\les&\mu^{-\fr23}\eps\|A\Om\|_{L^\infty L^2}\|AJ_{\ne}\|_{L^2L^2}\les\mu^{-\fr56}\eps^3,
\eeq
where we have used \eqref{m1}, \eqref{loss1}, \eqref{loss2} and  \eqref{prtXYPhi} above.
\begin{rem}
Given the same structure of $\mathrm{NLS}_1$ and $\mathrm{NLS}_2$, one can deal with $\mathrm{NLS}_2$ by following the way of treatment of $\mathrm{NLS}_1$ above. Unlike $\mathrm{NLS}_1$, even though the elliptic estimates (see \eqref{loss1} and Remark \ref{rem-preci-Psi}) hold for $\Psi_{\ne}$, the quantity  $\partial_{XY}^t\Psi$ without zero mode in $\mathrm{NLS}_2$  cannot provide any extra time integrability which is needed in \eqref{need-ti1} and \eqref{need-ti2}, for instance. Fortunately, the two $J$ in $\mathrm{NLS}_2$ are impossible at zero-mode simultaneously. Hence, it is vital to use  $\left\|AJ_{\ne}\right\|_{L^2}$ to compensate  the time integrability.

In addition, thanks to the presence of the multiplier $\fr{\partial_t(m^\fr12)}{m^{\fr12}}\partial_X^{-1}$, all the estimates for $\mathbf{NLS}$ apply to ${\bf CNLS}$. In fact,  apart from \eqref{up-pt_m}, we also have
\beno
\left|\mathcal{F}\left[\fr{\partial_t(m^{\fr12})}{m^\fr12}\partial^{-1}_X\right]\right|\les\sqrt{\fr{\partial_tM^1}{M^1}}.
\eeno
This enables us to  use $\left\| \sqrt{\fr{\partial_tM^1}{M^1}}A\Om\right\|_{L^2}$ to play the role of $\left\|AJ_{\ne}\right\|_{L^2}$ in $\mathbf{NLS}$.
\end{rem}

\section{Estimates of $b^1_0$ and $u_0^1$ in $L^2$}\label{sec-ub0} 
In this section, we will bound the $L^2$ norm of $b_0^1$ and $u_0^1$ in the original coordinate $(t, x, y)$. One can also regard it as the energy estimate of $B_0^1$ in weighted $L^2$ space, with solution-dependent weight $\frac{1}{\sqrt{Y'}}$. The reason for this is to eliminate the slow-decay dissipation error term $Y''\partial_YB_0^1$. More precisely, although, in the energy estimate of $B_0^1$, we do not need to worry about the derivative loss problem. However, the time integrability of the normal $L^2$ inner product $\left\langle Y''\partial_YB_0^1, B_0^1\right\rangle$ requires some weak decay of $B_0^1$, which requires more estimates that we want to avoid. So we estimate them in $(t, x, y)$ coordinates.

The hypotheses \eqref{coor1} ensures that $Y'\approx1$ for $\eps$ sufficiently small, which in turn implies that the $L^2$ norms of any given square-integrable function are equivalent under the invertible coordinate transform $y\rightarrow Y$. On this basis, taking the $L^2$ inner product of  the second equation of \eqref{ub01} with $b^1_0$, we have
\beq
\fr12\fr{d}{dt}\|b^1_0\|^2_{L^2}+\mu\|\partial_yb_0^1\|_{L^2}^2\nn&=&-\int_{\mathbb{R}}\left(\nabla^\bot\psi_\ne\cdot\nabla b^1_\ne\right)_0b^1_0dy+\int_{\mathbb{R}}\left(\nabla^\bot\phi_\ne\cdot\nabla u^1_\ne\right)_0b^1_0dy\\
\nn&\le&\left(\left\|\left(\nabla^\bot\psi_\ne\cdot\nabla b^1_\ne\right)_0\right\|_{L^2}+\left\|\left(\nabla^\bot\phi_\ne\cdot\nabla u^1_\ne\right)_0\right\|_{L^2}\right)\|b^1_0\|_{L^2}\\
\nn&\approx&\left(\left\|\left(Y'\nabla^\bot\Psi_\ne\cdot\nabla B^1_\ne\right)_0\right\|_{L^2}+\left\|\left(Y'\nabla^\bot\Phi_\ne\cdot\nabla U^1_\ne\right)_0\right\|_{L^2}\right)\|b^1_0\|_{L^2}\\
\nn&\les&\left(1+\|Y'-1\|_{L^\infty}\right)\left(\left\|\nabla^\bot\Psi_\ne\cdot\nabla B^1_\ne\right\|_{L^2}+\left\|\nabla^\bot\Phi_\ne\cdot\nabla U^1_\ne\right\|_{L^2}\right)\|b^1_0\|_{L^2}\\
\nn&\les&\left(\|\nabla B^1_\ne\|_{L^\infty}\|\nabla^\bot\Psi_\ne\|_{L^2}+\|\nabla U^1_\ne\|_{L^\infty}\|\nabla^\bot\Phi_\ne\|_{L^2}\right)\|b^1_0\|_{L^2}.
\eeq
Recalling that $B^1=-Y'\partial_Y^L\Phi$, and $U^1=-Y'\partial_Y^L\Psi$, thanks to \eqref{m1}, using the lossy estimates \eqref{loss1} and \eqref{loss2},  we find that
\beq\label{B0-NL}
\nn&&\|\nabla B^1_\ne\|_{L^\infty}\|\nabla^\bot\Psi_\ne\|_{L^2}+\|\nabla U^1_\ne\|_{L^\infty}\|\nabla^\bot\Phi_\ne\|_{L^2}\\
&\les&\left(1+\|Y'-1\|_{H^3}\right)\left(\fr{\|J_{\ne}\|_{H^6}}{\la t\ra}\cdot\fr{\|\Om\|_{H^3}}{\la t\ra^2}+\fr{\|\Om\|_{H^6}}{\la t\ra}\cdot\fr{\|J_{\ne}\|_{H^3}}{\la t\ra^2}\right)\les\fr{1}{\la t\ra}\|A\Om\|_{L^2}\|AJ_{\ne}\|_{L^2}.
\eeq
It follows from the above two inequalities and the bootstrap hypotheses that
\beq
\fr12\|b^1_0(t)\|^2_{L^2}+\mu\|\partial_yb_0^1\|_{L^2L^2}^2\nn&\le&\fr12\|b^1_0(1)\|^2_{L^2}+C\|A\Om\|_{L^\infty L^2}\|AJ_{\ne}\|_{L^2L^2}\|b^1_0\|_{L^\infty L^2}\\
\nn&\le&\fr12\|b^1_0(1)\|^2_{L^2}+C\mu^{-\fr16}\eps^3.
\eeq
Similar arguments applying to the first  equation of \eqref{ub01} yields
\beq
\fr12\|u_0^1(t)\|_{L^2}^2\nn&\le&\fr12\|u_0^1(1)\|_{L^2}^2+C\int_1^t\fr{\|\Om\|_{H^3}}{\la t'\ra^2}\fr{\|\Om\|_{H^6}}{\la t'\ra}+\fr{\|J\|_{H^3}}{\la t'\ra^2}\fr{\|J\|_{H^6}}{\la t'\ra}dt'\|u_0^1\|_{L^\infty L^2}\\
\nn&\les&\fr12\|u_0^1(1)\|_{L^2}^2+C\mu^{-\fr13}\left(\|A\Om\|_{L^\infty L^2}^2+\|AJ\|_{L^\infty L^2}^2\right)\|u_0^1\|_{L^\infty L^2}\\
\nn&\les&\fr12\|u_0^1(1)\|_{L^2}^2+C\mu^{-\fr13}\eps^3.
\eeq
The above two estimates are sufficient to improve \eqref{hy-ub0}.

\section{Low norm energy estimates of $F$}\label{sec-F}
In this section, we improve the bootstrap hypothesis \eqref{hy-F}.
From \eqref{e-F}, integrating by parts and using \eqref{chain}, we get the energy identity:
\beq
\nn&&\fr12\left\|\mathcal{A}F(t)\right\|_{L^2}^2+\mu\left\|\nabla_L\mathcal{A}F\right\|_{L^2L^2}^2-\int_1^t\dot{\lm}(t')\left\||\nabla|^{\fr{s}{2}}\mathcal{A}F\right\|^2_{L^2}dt'+\left\|\sqrt{\fr{\partial_tM^{\mu}}{M^\mu}}\mathcal{A}F\right\|^2_{L^2L^2}\\
\nn&=&\fr12\left\|\mathcal{A}F(1)\right\|_{L^2}^2+\al^2\int_1^t\left\la\partial_{XX}\mathcal{A}J,\mathcal{A}F\right\ra dt'-4\mu\int_1^t\left\la \partial_{XY}^L\mathcal{A}J,\mathcal{A}F\right\ra dt'\\
\nn&&-4\mu\int_1^t\left\la\mathcal{A}\left[(Y'-1)\partial^L_{XY}J\right],\mathcal{A}F\right\ra dt'-\mu\int_1^t\left\la \partial_Y^L\mathcal{A}F, \mathcal{A}\left[\left((Y')^2-1\right)\partial_{Y}^LF\right]\right\ra dt'\\
&&-\mu\int_1^t\left\la \mathcal{A}F, \mathcal{A}\left(Y''\partial_Y^LF\right)\right\ra dt'+\int_1^t\left\la\mathcal{A}\mathrm{NL}[F], \mathcal{A}F\right\ra dt'.
\eeq
The linear errors can be bounded as follows. Using \eqref{m1} and the enhanced dissipation of $J_{\ne}$ and $F_{\ne}$, one easily deduces that there exists a constant $C_*$ depending on $\al$, such that
\begin{align*}
\al^2\int_1^t\left\la\partial_{XX}\mathcal{A}J,\mathcal{A}F\right\ra dt'
\le&\al^2\|\partial_{XX}\mathcal{A}J\|_{L^2L^2}\|\mathcal{A}F\|_{L^2L^2}\les \mu^{-\fr13}\left\|AJ_{\ne}\right\|_{L^2L^2}\left\|\mathcal{A}F_{\ne}\right\|_{L^2L^2}\\
\le& \mu^{-\fr13}C_*\left(\mu^{-\fr16}\eps\right)\left(\mu^{-\fr16}C_0\eps\mu^{-\fr23}\right)=\fr{C_*}{C_0}\left(C_0\eps\mu^{-\fr23}\right)^2,
\end{align*}
which is consistent for $C_0$ sufficiently large. The dissipation of $J$ is involved in the other linear error:
\be
-4\mu\int_1^t\left\la \partial_{XY}^L\mathcal{A}J,\mathcal{A}F\right\ra dt'\les\left(\mu^{\fr12}\left\|\partial_Y^L{A}J\right\|_{L^2L^2}\right)\left(\mu^\fr16\|\mathcal{A}F_{\ne}\|_{L^2L^2}\right)\les C_0\eps^2\mu^{-\fr23}.
\ee
Similarly, in view of \eqref{m1} and the algebra property \eqref{alge}, we have
\beq
-4\mu\int_1^t\left\la\mathcal{A}\left[(Y'-1)\partial^L_{XY}J\right],\mathcal{A}F\right\ra dt'
\nn&\les&\mu\left\|(Y'-1)\partial_{XY}^LJ\right\|_{L^2\mathcal{G}^{\lm, \sigma-6}}\|\mathcal{A}F_{\ne}\|_{L^2L^2}\\
&\les&\mu^{\fr23}\left\|Y'-1\right\|_{L^\infty\mathcal{G}^{\lm,\sigma-6}}\left\|\partial_Y^L{A}J\right\|_{L^2L^2}\|\mathcal{A}F_{\ne}\|_{L^2L^2}\les C_0\eps^3\mu^{-\fr23}.
\eeq

Next, consider the dissipation errors. In fact, using  \eqref{ref-Y'}, \eqref{ref-Y''},  and the algebra property \eqref{alge} again, we are led to
\beq
\nn&&-\mu\int_1^t\left\la \partial_Y^L\mathcal{A}F, \mathcal{A}\left[\left((Y')^2-1\right)\partial_{Y}^LF\right]\right\ra dt'-\mu\int_1^t\left\la \mathcal{A}F, \mathcal{A}\left(Y''\partial_Y^LF\right)\right\ra dt'\\
\nn&\les&\mu\left\|\partial_Y^L\mathcal{A}F\right\|_{L^2L^2}\left(\|Y'-1\|^2_{L^\infty\mathcal{G}^{\lm,\sigma-6}}+\|Y'-1\|_{L^\infty\mathcal{G}^{\lm,\sigma-6}}\right)\left\|\partial_Y^LF\right\|_{L^2\mathcal{G}^{\lm,\sigma-6}}\\
\nn&&+\mu\left\|\mathcal{A}F\right\|_{L^2L^2}\left(\|Y'-1\|^2_{L^\infty\mathcal{G}^{\lm,\sigma-5}}+\|Y'-1\|_{L^\infty\mathcal{G}^{\lm,\sigma-5}}\right)\left\|\partial_Y^LF\right\|_{L^2\mathcal{G}^{\sigma-6}}\\
\nn&\les&\eps \left(\mu^{\fr12}\left\|\partial_Y^L\mathcal{A}F\right\|_{L^2L^2}+\mu^{\fr12}\|F_0\|_{L^2\mathcal{G}^{\lm,\sigma-6}}+\mu^\fr12\|\mathcal{A}F_{\ne}\|_{L^2L^2}\right)\left(\mu^{\fr12}\left\|\partial_Y^L\mathcal{A}F\right\|_{L^2L^2}\right).
\eeq
To bound $\|F_0\|_{\mathcal{G}^{\sigma-6}}$, recalling that $F_0=\mu Y'\partial_Y\left(Y'\partial_YJ_0\right)=\mu \left(Y'\partial_YY'\partial_YJ_0+(Y')^2\partial_{YY}J_0\right)$, we have
\beq\label{F0L2}
\|F_0\|_{L^2\mathcal{G}^{\lm,\sigma-6}}\nn&\les&\mu\left(\|Y'-1\|^2_{L^\infty \mathcal{G}^{\lm,\sigma-5}}+\|Y'-1\|_{L^\infty \mathcal{G}^{\lm,\sigma-5}}\right)\|\partial_YJ_0\|_{L^2\mathcal{G}^{\lm,\sigma-6}}\\
\nn&&+\mu\left(\|Y'-1\|^2_{L^\infty \mathcal{G}^{\lm,\sigma-6}}+\|Y'-1\|_{L^\infty \mathcal{G}^{\lm,\sigma-6}}+1\right)\|\partial_YJ_0\|_{L^2\mathcal{G}^{\lm,\sigma-5}}\\
&\les&\mu\|\partial_YJ_0\|_{L^2\mathcal{G}^{\lm,\sigma-5}}\les\mu^\fr12\eps.
\eeq
It follows that
\be
-\mu\int_1^t\left\la \partial_Y^L\mathcal{A}F, \mathcal{A}\left[\left((Y')^2-1\right)\partial_{Y}^LF\right]\right\ra dt'-\mu\int_1^t\left\la \mathcal{A}F, \mathcal{A}\left(Y''\partial_Y^LF\right)\right\ra dt'\les \eps\left(C_0\eps\mu^{-\fr23}\right)^2.
\ee

Now we turn to bound $\displaystyle\int_1^t\left\la\mathcal{A}\mathrm{NL}[F], \mathcal{A}F\right\ra dt'$. Recalling the definition of $\mathrm{NL}[F]$ in \eqref{NLF}, let us estimate $\displaystyle\int_1^t\left\la\mathcal{A}\mathrm{NL}[F], \mathcal{A}F\right\ra dt'$ term by term. We will frequently use the algebra property of $\mathcal{G}^{\lm,\tl\sigma}$ for all $\tl\sigma\ge0$ (see Lemma {\bf A.3} of \cite{BM15} for more details) without notice.\par
\noindent{\em Estimate of $\displaystyle-\int_1^t\la\mathcal{A}(V\cdot\nabla F), \mathcal{A}F\ra dt'$.} To this end, one deduces from the third equation of \eqref{OMJ} that
\beq
\nn\|V\cdot\nabla F\|_{\mathcal{G}^{\lm, \sigma-6}}&\les&\|Y'\nabla^\bot\Psi_\ne\cdot\nabla F\|_{\mathcal{G}^{\lm,\sigma-6}}+\|\dot{Y}\partial_YF\|_{\mathcal{G}^{\lm, \sigma-6}}\\
\nn&\les&\left(1+\|Y'-1\|_{\mathcal{G}^{\lm,\sigma-6}}\right)\|\nabla^\bot\Psi_\ne\|_{\mathcal{G}^{\lm,\sigma-6}}\|\nabla F\|_{\mathcal{G}^{\lm,\sigma-6}}+\|\dot{Y}\|_{\mathcal{G}^{\lm,\sigma-6}}\|\partial_YF\|_{\mathcal{G}^{\lm,\sigma-6}}.
\eeq
On the other hand, recalling that $F=\mu\left(\Dl_LJ+\left((Y')^2-1\right)\partial_{YY}^LJ+Y''\partial_Y^LJ\right)+(\al+B^1_0)\partial_X\Om$,  in view of \eqref{ref-Y'} and \eqref{ref-Y''} and using the fact $|k,\eta-kt|\le\la t\ra|k,\eta|$, we  have
\beq
\nn\|\nabla F\|_{\mathcal{G}^{\lm,\sigma-6}}&\les&\mu\la t\ra\left\|\nabla_LJ\right\|_{\mathcal{G}^{\lm,\sigma-4}}+\mu\la t\ra \left(\|Y'-1\|^2_{\mathcal{G}^{\lm,\sigma-5}}+\|Y'-1\|_{\mathcal{G}^{\lm,\sigma-5}}\right)\left\|\partial_Y^LJ\right\|_{\mathcal{G}^{\lm,\sigma-4}}\\
\nn&&+\mu \left(\|Y'-1\|^2_{\mathcal{G}^{\lm,\sigma-4}}+\|Y'-1\|_{\mathcal{G}^{\lm,\sigma-4}}\right)\left\|\partial_Y^LJ\right\|_{\mathcal{G}^{\lm,\sigma-5}}+\left(1+\|B^1_0\|_{\mathcal{G}^{\lm,\sigma-5}}\right)\|\Om\|_{\mathcal{G}^{\lm,\sigma-4}}\\
\nn&\les&\mu\la t\ra\|\nabla_LJ\|_{\mathcal{G}^{\lm,\sigma-4}}+\|\Om\|_{\mathcal{G}^{\lm,\sigma-4}}.
\eeq
Combining the above two inequalities with the lossy estimate \eqref{loss1}  gives
\[
\|V\cdot\nabla F\|_{\mathcal{G}^{\lm,\sigma-6}}
\les\mu\left(\fr{\left\|\Om\right\|_{\mathcal{G}^{\lm,\sigma-3}}}{\la t \ra}+\la t\ra\|\dot{Y}\|_{\mathcal{G}^{\lm,\sigma-6}}\right){\|\nabla_LJ\|_{\mathcal{G}^{\lm,\sigma-4}}}+\left(\fr{\left\|\Om\right\|_{\mathcal{G}^{\lm,\sigma-3}}}{\la t\ra^2}+\|\dot{Y}\|_{\mathcal{G}^{\lm,\sigma-6}}\right)\|\Om\|_{\mathcal{G}^{\lm,\sigma-4}}.
\]
Then using \eqref{m1} twice, and by the hypotheses, for sufficiently small $\eps$ such that $K_D\mu^{-\fr23}\eps<1$, there holds
\beq
\nn&&-\int_1^t\la\mathcal{A}(V\cdot\nabla F), \mathcal{A}F\ra dt'\\
\nn&\les&\|V\cdot\nabla F\|_{L^1\mathcal{G}^{\lm,\sigma-6}}\|\mathcal{A}F\|_{L^\infty L^2}\\
\nn&\les&\mu^{-\fr23}\left(\|A\Om\|_{L^\infty L^2}+\la t\ra^{2-\fr{K_D\mu^{-\fr23}\eps}{2}}\|\dot{Y}\|_{L^\infty \mathcal{G}^{\lm,\sigma-6}}\right)\left(\mu^\fr12\|\nabla_LAJ\|_{L^2L^2}+\|A\Om\|_{L^\infty L^2}\right)\|\mathcal{A}F\|_{L^\infty L^2}\\
\nn&\les&\mu^{-\fr23}\eps^2\left(C_0\eps\mu^{-\fr23}\right)\les \eps\left(C_0\eps\mu^{-\fr23}\right)^2.
\eeq
\noindent{\em Estimate of $\displaystyle\int_1^t\left\la \mathcal{A}\left(B^1_0\partial_{XX}J\right), \mathcal{A}F\right\ra dt'$.} Clearly, it follows from \eqref{m1}, \eqref{U0B0J0} and the bootstrap hypotheses that
\beq
\nn\int_1^t\left\la \mathcal{A}\left(B^1_0\partial_{XX}J\right), \mathcal{A}F\right\ra dt'&=&\al \int_0^1\left\la\mathcal{A} \left(B^1_0\partial_{XX}J\right), \mathcal{A}F_{\ne} \right\ra dt'
\les\|B^1_0J_\ne\|_{L^2\mathcal{G}^{\lm,\sigma-4}}\|\mathcal{A}F_\ne\|_{L^2L^2}\\
\nn&\les& \mu^{-\fr23}\|B^1_0\|_{L^\infty\mathcal{G}^{\lm,\sigma-4}}\left(\mu^\fr{1}{6}\|AJ_{\ne}\|_{L^2L^2}\right)\left(\mu^\fr{1}{6}\|\mathcal{A}F_{\ne}\|_{L^2L^2}\right)\les\eps\left(C_0\eps\mu^{-\fr23}\right)^2.
\eeq
\noindent{\em Estimate of $\displaystyle\mu\int_1^t\left\la \mathcal{A}\left[Y'\partial_Y\left(Y'\partial_YB^1_0\right)\partial_X\Om\right], \mathcal{A}F\right\ra dt'$.} Noting that $Y'\partial_Y\left(Y'\partial_YB^1_0\right)=(Y')^2\partial_{YY}B^1_0+Y''\partial_YB^1_0$, we thus infer from \eqref{m1}, \eqref{U0B0J0}, \eqref{ref-Y'}, \eqref{ref-Y''}  and the bootstrap hypotheses that
\beq
\nn&&\mu\int_1^t\left\la \mathcal{A}\left[Y'\partial_Y\left(Y'\partial_YB^1_0\right)\partial_X\Om\right], \mathcal{A}F\right\ra dt'\\
\nn&\les&\mu\left\|Y'\partial_Y\left(Y'\partial_YB^1_0\right)\right\|_{L^2\mathcal{G}^{\lm, \sigma-6}}\|\Om\|_{L^\infty
\mathcal{G}^{\lm, \sigma-9}}\|\mathcal{A}F_{\ne}\|_{L^2L^2}\\
\nn&\les&\mu\Bigg[\left(1+\|Y'-1\|^2_{L^\infty \mathcal{G}^{\lm,\sigma-6}}+\|Y'-1\|_{L^\infty \mathcal{G}^{\lm, \sigma-6}}\right)\|\partial_YB^1_0\|_{L^2\mathcal{G}^{\lm,\sigma-5}}\\
\nn&&+\left(\|Y'-1\|^2_{L^\infty \mathcal{G}^{\lm, \sigma-5}}+\|Y'-1\|_{L^\infty \mathcal{G}^{\lm, \sigma-5}}\right)\|\partial_YB^1_0\|_{L^2\mathcal{G}^{\lm,\sigma-6}}\Bigg]\|\Om\|_{L^\infty
\mathcal{G}^{\lm, \sigma-5}}\|\mathcal{A}F_{\ne}\|_{L^2L^2}\\
\nn&\les&\left(\mu^\fr12\|\partial_YB^1_0\|_{L^2\mathcal{G}^{\sigma-5}}\right)\|A\Om\|_{L^\infty
L^2}\left(\mu^\fr16\|\mathcal{A}F_{\ne}\|_{L^2L^2}\right)\les\eps^3\mu^{-\fr23}.
\eeq

\noindent {\em Estimate of $\displaystyle-\int_1^t\left\la\mathcal{A}\left(Y'\left(\nabla^\bot\Psi_{\ne}\cdot\nabla B^1_{\ne}\right)_0\partial_X\Om\right),\mathcal{A}F\right\ra dt'$.} Recalling that $B^1_{\ne}=-Y'\partial_Y^L\Phi_\ne$, using the lossy estimates \eqref{loss1} and \eqref{loss2} and the fact  $\la t\ra^{-1}\les m^{-\fr12}_k(t,\eta)$ implied in \eqref{m1}, similar to \eqref{B0-NL}, we are led to
\beq
\nn&&-\int_1^t\left\la\mathcal{A}\left(Y'\left(\nabla^\bot\Psi_{\ne}\cdot\nabla B^1_{\ne}\right)_0\partial_X\Om\right),\mathcal{A}F\right\ra dt'\\
\nn&\les&\int_1^t\left\|Y'\left(\nabla^\bot\Psi_{\ne}\cdot\nabla B^1_{\ne}\right)_0\partial_X\Om\right\|_{\mathcal{G}^{\lm,\sigma-6}}\|\mathcal{A}F_{\ne}\|_{L^2}dt'\\
\nn&\les&\left(1+\|Y'-1\|_{L^\infty \mathcal{G}^{\lm,\sigma-5}}\right)^2\int^t_1\fr{\|\Om\|_{\mathcal{G}^{\lm,\sigma-3}}}{\la t'\ra^2}\fr{\|J_{\ne}\|_{\mathcal{G}^{\lm, \sigma-2}}}{\la t'\ra}\|\Om\|_{\mathcal{G}^{\lm,\sigma-5}}\|\mathcal{A}F_\ne\|_{ L^2}dt'\\
&\les&\|A\Om\|_{L^\infty L^2}^2\|AJ_{\ne}\|_{L^2L^2}\|\mathcal{A}F_{\ne}\|_{L^2 L^2}\les\eps^4\mu^{-1}.
\eeq
Obviously,  $\displaystyle\int_1^t\left\la\mathcal{A}\left(Y'\left(\nabla^\bot\Phi_{\ne}\cdot\nabla U^1_{\ne}\right)_0\partial_X\Om\right),\mathcal{A}F\right\ra dt'$ can be treated in the same way.\par

 \noindent{\em Estimate of $\displaystyle-\int_1^t\left\la\mathcal{A}\left([B_0^1, Y'\nabla^\bot\Psi_\ne\cdot\nabla]\partial_X\Om\right), \mathcal{A}F\right\ra dt'$.} We do not use any possible gain from the commutator, and write
 $-[B_0^1, Y'\nabla^\bot\Psi_\ne\cdot\nabla]\partial_X\Om=Y'\nabla^\bot\Psi_\ne\cdot\nabla\left(B^1_0\partial_X\Om\right)-B^1_0Y'\nabla^\bot\Psi_\ne\cdot\nabla\partial_X\Om$. Then we infer from \eqref{m1},  \eqref{loss1}  and \eqref{F0L2} that
\beq
\nn&&\int_1^t\left\la \mathcal{A}\left(Y'\nabla^\bot\Psi_\ne\cdot\nabla\left(B^1_0\partial_X\Om\right)\right),\mathcal{A}F\right\ra dt'\\
\nn&\les&\int_1^t\left\|Y'\nabla^\bot\Psi_\ne\cdot\nabla\left(B^1_0\partial_X\Om\right)\right\|_{\mathcal{G}^{\lm,\sigma-6}}\|\mathcal{A}F\|_{L^2}dt'\\
\nn&\les&\left(1+\|Y'-1\|_{L^\infty \mathcal{G}^{\lm,\sigma-6}}\right)\|B^1_0\|_{L^\infty\mathcal{G}^{\lm,\sigma-5}}\int_1^t\fr{\|\Om\|_{\mathcal{G}^{\sigma-3}}}{\la t'\ra^2}\|\Om\|_{\mathcal{G}^{\lm,\sigma-4}}\|\mathcal{A}F\|_{L^2}dt'\\
&\les&\mu^{-\fr13}\|B^1_0\|_{L^\infty\mathcal{G}^{\lm,\sigma-5}}\|A\Om\|_{L^\infty L^2}^2\|\mathcal{A}F\|_{L^2 L^2}\les\eps^4\mu^{-\fr76},
\eeq
and $\displaystyle\int_0^t\left\la \mathcal{A}\left(B^1_0Y'\nabla^\bot\Psi_\ne\cdot\nabla\partial_X\Om\right), \mathcal{A}F\right\ra dt'$ can be treated similarly.

 \noindent{\em Estimate of $\displaystyle\int_1^t\left\la\mathcal{A}\left((\al+B^1_0)Y'\nabla^\bot\partial_X\Psi\cdot\nabla\Om\right), \mathcal{A}F\right\ra dt'$.} Similar to the above inequality, we have
\beq
\nn&&\int_1^t\left\la\mathcal{A}\left((\al+B^1_0)Y'\nabla^\bot\partial_X\Psi\cdot\nabla\Om\right), \mathcal{A}F\right\ra dt'\\
\nn&\les&\left(1+\|B^1_0\|_{L^\infty\mathcal{G}^{\sigma-6}}\right)\left(1+\|Y'-1\|_{L^\infty \mathcal{G}^{\sigma-6}}\right)\int_1^t\fr{\|\Om\|_{\mathcal{G}^{\lm,\sigma-2}}}{\la t'\ra^2}\|\Om\|_{\mathcal{G}^{\lm,\sigma-5}}\|\mathcal{A}F\|_{L^2}dt'\\
&\les&\mu^{-\fr13}\|A\Om\|_{L^\infty L^2}^2\|\mathcal{A}F\|_{L^2L^2}\les\eps^3\mu^{-\fr76}.
\eeq

\noindent{\em Estimate of $\displaystyle\int_1^t\left\la\mathcal{A}\left((\al+B^1_0)\partial_X\left(Y'\nabla^\bot\Phi\cdot\nabla J\right)\right),\mathcal{A}F\right\ra dt'$.} Note first that 
\[
Y'\nabla^\bot\Phi\cdot\nabla J=Y'\nabla^\bot\Phi_{\ne}\cdot\nabla J+B^1_0\partial_XJ,
\]
and the term with $\Phi_{\ne}$ involved can be treated as above. Then by \eqref{U0B0J0} and the hypotheses,
\begin{align*}
&\int_1^t\left\la\mathcal{A}\left((\al+B^1_0)\partial_X\left(Y'\nabla^\bot\Phi\cdot\nabla J\right)\right),\mathcal{A}F\right\ra dt'\\
&=\int_1^t\left\la\mathcal{A}\left((\al+B^1_0)\partial_X(Y'\nabla^\bot\Phi_\ne\cdot\nabla J)\right),\mathcal{A}F_{\ne}\right\ra dt'
+\int_1^t\left\la\mathcal{A}\left((\al+B^1_0)(B^1_0\partial_{XX} J)\right),\mathcal{A}F_{\ne}\right\ra dt'\\
&\les\|AJ\|_{L^\infty L^2}\|{A}J_{\ne}\|_{L^2L^2}\|\mathcal{A}F_{\ne}\|_{L^2L^2}+\mu^{-\fr13}\|B^1_0\|_{L^\infty\mathcal{G}^{\lm, \sigma-6}}\|AJ_\ne\|_{L^2L^2}\|\mathcal{A}F_\ne\|_{L^2L^2}
\les\eps^3\mu^{-\fr43}.
\end{align*}

\noindent {\em Estimate of $\displaystyle\mu\int_1^t \left\la\mathcal{A}\left(Y'\nabla^\bot\Om\cdot\nabla J\right),\mathcal{A}F \right\ra dt'$.} Splitting $J$ into $J_{\ne}$ and $J_0$, using \eqref{m1} and \eqref{F0L2}, we arrive at
\beq
\nn&&\mu\int_1^t \left\la\mathcal{A}\left(Y'\nabla^\bot\Om\cdot\nabla J\right),\mathcal{A}F \right\ra dt'\\
\nn&=&\mu\int_1^t\left\la\mathcal{A}\left(Y'\nabla^\bot\Om\cdot\nabla J_\ne\right), \mathcal{A}F\right\ra dt'+\mu\int_1^t\left\la\mathcal{A}\left(Y'\partial_X\Om\partial_Y J_0\right), \mathcal{A}F\right\ra dt'\\
\nn&\les&\mu\left(1+\|Y'-1\|_{L^\infty\mathcal{G}^{\lm, \sigma-6}}\right)\|\Om\|_{L^\infty\mathcal{G}^{\lm, \sigma-5}}\|J_{\ne}\|_{L^2\mathcal{G}^{\lm, \sigma-5}}\|\mathcal{A}F\|_{L^2L^2}\\
\nn&&+\mu\left(1+\|Y'-1\|_{L^\infty\mathcal{G}^{\lm,\sigma-6}}\right)\|\Om\|_{L^\infty\mathcal{G}^{\lm,\sigma-5}}\|\partial_YJ_{0}\|_{L^2\mathcal{G}^{\sigma-6}}\|\mathcal{A}F\|_{L^2L^2}\\
\nn&\les&\mu^{\fr16}\|A\Om\|_{L^\infty L^2}\left(\mu^{\fr16}\|AJ_{\ne}\|_{L^2L^2}+\mu^\fr12\|A\partial_YJ_0\|_{L^2L^2}\right)\|\mathcal{A}F\|_{L^2L^2}\les\eps^3\mu^{-\fr23}.
\eeq

\noindent {\em Estimate of  $\displaystyle -2\mu\int_1^t\left\la\mathcal{A}\left(Y'\nabla^\bot\partial_X\Psi\cdot\nabla\partial_XJ\right), \mathcal{A}F \right\ra dt'$.} It follows from \eqref{m1}, \eqref{loss1}, \eqref{F0L2} and the bootstrap hypotheses that
\beq
\nn&&-2\mu\int_1^t\left\la\mathcal{A}\left(Y'\nabla^\bot\partial_X\Psi\cdot\nabla\partial_XJ\right), \mathcal{A}F \right\ra dt'\\
\nn&\les&\mu\left(1+\|Y'-1\|_{L^\infty\mathcal{G}^{\lm, \sigma-6}}\right)\int_1^t\fr{\|\Om\|_{\mathcal{G}^{\lm,\sigma-2}}}{\la t'\ra^2} \|J_\ne\|_{\mathcal{G}^{\lm,\sigma-4}}\|\mathcal{A}F\|_{ L^2}dt'\\
\nn&\les&\mu\|A\Om\|_{L^\infty L^2} \|AJ_{\ne}\|_{L^2 L^2}\|\mathcal{A}F\|_{L^2 L^2}\les\eps^3.
\eeq

\noindent {\em Estimate of $\displaystyle-2\mu\int_1^t\left\la\mathcal{A}\left(Y'\nabla^\bot(Y'\partial_Y^L\Psi)\cdot\nabla(Y'\partial_Y^LJ)\right), \mathcal{A}F\right\ra dt'$.} Recalling that $U^1_0=-Y'\partial_Y\Psi_0$, splitting $\Psi$ into $\Psi_{\ne}$ and $\Psi_0$, thanks to the fact $|\eta-kt|\les\la t\ra|k,\eta|$, and using \eqref{m1}, \eqref{U0B0J0}, \eqref{loss1}, \eqref{F0L2} and the bootstrap hypotheses gives
\beq
\nn&&-2\mu\int_1^t\left\la\mathcal{A}\left(Y'\nabla^\bot(Y'\partial_Y^L\Psi)\cdot\nabla(Y'\partial_Y^LJ)\right), \mathcal{A}F\right\ra dt'\\
\nn&=&-2\mu\int_1^t\left\la\mathcal{A}\left(Y'\nabla^\bot(Y'\partial_Y^L\Psi_\ne)\cdot\nabla(Y'\partial_Y^LJ)\right),\mathcal{A}F\right\ra dt'-2\mu\int_1^t\left\la\mathcal{A}\left((Y')^2\partial_YU^1_0\partial_X\partial_Y^LJ\right), \mathcal{A}F_{\ne} \right\ra dt'\\
\nn&\les&\mu\left(1+\|Y'-1\|_{L^\infty\mathcal{G}^{\lm, \sigma-5}}\right)^3\int_1^t\fr{\|\Om\|_{\mathcal{G}^{\lm,\sigma-2}}}{\la t'\ra}\|\partial_Y^LJ\|_{\mathcal{G}^{\lm,\sigma-5}}\|\mathcal{A}F\|_{L^2}dt'\\
\nn&&+\mu\left(1+\|Y'-1\|_{L^\infty\mathcal{G}^{\lm,\sigma-6}}+\|Y'-1\|^2_{L^\infty\mathcal{G}^{\sigma-6}}\right)\|U^1_0\|_{L^\infty\mathcal{G}^{\sigma-5}}\|\partial_Y^LJ\|_{L^2\mathcal{G}^{\sigma-5}}\|\mathcal{A}F_{\ne}\|_{L^2L^2}\\
\nn&\les&\mu^\fr23\left(\|A\Om\|_{L^\infty L^2}+\|U^1_0\|_{L^\infty\mathcal{G}^{\lm, \sigma-5}}\right)\|\partial_Y^LAJ\|_{L^2L^2}\|\mathcal{A}F\|_{L^2 L^2}
\les\eps^3\mu^{-\fr23}.
\eeq

\noindent {\em Estimate of $\displaystyle\mu\int_1^t\left\la\mathcal{A}\left(\Dl_t\left(Y'\nabla^\bot\Phi_\ne\cdot\nabla\Om\right)\right),\mathcal{A}F\right\ra dt'$.} In view of \eqref{Dlt}, integrating by parts, and using the reformulations of $(Y')^2-1$ and $Y''$ in \eqref{ref-Y'} and \eqref{ref-Y''}, the lossy estimate \eqref{loss2}, the upper bound of $m_k(t,\eta)$ in \eqref{m1}, and the boundedness of $\|F_0\|_{L^2\mathcal{G}^{\lm,\sigma-10}}$ in \eqref{F0L2}, we find that
\beq\label{e-NL-3}
\nn&&\mu\int_1^t\left\la\mathcal{A}\left(\Dl_t\left(Y'\nabla^\bot\Phi_\ne\cdot\nabla\Om\right)\right),\mathcal{A}F\right\ra dt'\\
\nn&=&-\mu\int_1^t\left\la\mathcal{A}\left(\nabla_L\left(Y'\nabla^\bot\Phi_\ne\cdot\nabla\Om\right)\right), \nabla_L\mathcal{A}F\right\ra dt'\\
\nn&&-\mu\int_1^t\left\la\mathcal{A}\left(\left((Y')^2-1\right)\partial_{Y}^L\left(Y'\nabla^\bot\Phi_\ne\cdot\nabla\Om\right)\right), \partial_{Y}^L\mathcal{A}F\right\ra dt'\\
\nn&&-\mu\int_1^t\left\la\mathcal{A}\left(Y''\partial_{Y}^L\left(Y'\nabla^\bot\Phi_\ne\cdot\nabla\Om\right)\right),\mathcal{A}F\right\ra dt'\\
\nn&\les&\mu\left\|\la t'\ra\|Y'\nabla^\bot\Phi_\ne\cdot\nabla\Om\|_{\mathcal{G}^{\lm,\sigma-5}}\right\|_{L^2}\|\nabla_L\mathcal{A}F\|_{L^2L^2}\\
\nn&&+\mu\left(\|Y'-1\|^2_{L^\infty \mathcal{G}^{\lm, \sigma-6}}+\|Y'-1\|_{L^\infty \mathcal{G}^{\lm,\sigma-6}}\right)\left\|\la t'\ra\|Y'\nabla^\bot\Phi_\ne\cdot\nabla\Om\|_{\mathcal{G}^{\lm,\sigma-5}}\right\|_{L^2}\|\partial_Y^L\mathcal{A}F\|_{L^2L^2}\\
\nn&&+\mu\left(\|Y'-1\|^2_{L^\infty \mathcal{G}^{\lm,\sigma-5}}+\|Y'-1\|_{L^\infty \mathcal{G}^{\lm,\sigma-5}}\right)\left\|\la t'\ra\|Y'\nabla^\bot\Phi_\ne\cdot\nabla\Om\|_{\mathcal{G}^{\lm,\sigma-5}}\right\|_{L^2}\|\mathcal{A}F\|_{L^2L^2}\\
\nn&\les&\mu\left(\|Y'-1\|^2_{L^\infty \mathcal{G}^{\lm,\sigma-5}}+\|Y'-1\|_{L^\infty \mathcal{G}^{\lm,\sigma-5}}+1\right)\left(1+\|Y'-1\|_{L^\infty\mathcal{G}^{\lm,\sigma-5}}\right)\\
\nn&&\times\|m^{-\fr12}J_{\ne}\|_{L^2 \mathcal{G}^{\lm,\sigma-2}}\|\Om\|_{L^\infty\mathcal{G}^{\lm,\sigma-4}}\left(\|\nabla_L\mathcal{A}F\|_{L^2L^2}+\|\mathcal{A}F\|_{L^2L^2}\right)\\
&\les&\mu^{\fr23}\|AJ_{\ne}\|_{L^2L^2}\|A\Om\|_{L^\infty L^2}\left(\|\nabla_L\mathcal{A}F\|_{L^2L^2}+\|\mathcal{A}F\|_{L^2L^2}\right)\les\eps^3\mu^{-\fr23}.
\eeq

\noindent{\em Estimate of $\displaystyle 2\mu\int_1^t\left\la\mathcal{A}\Dl_t\left(\partial_{XY}^t\Psi(-J+2\partial_{XX}\Phi)\right), \mathcal{A}F\right\ra dt'$.}
Splitting $J$ into $J_{\ne}$ and $J_{0}$, similar to \eqref{e-NL-3}, one deduces that
\beq
\nn&&2\mu\int_1^t\left\la\mathcal{A}\Dl_t\left(\partial_{XY}^t\Psi(-J+2\partial_{XX}\Phi)\right), \mathcal{A}F\right\ra dt'\\
\nn&\les&\mu\left\|\la t'\ra\|\partial_{XY}^t\Psi\left(-J+2\partial_{XX}\Phi\right)\|_{\mathcal{G}^{\lm, \sigma-5}}\right\|_{L^2}\left(\|\nabla_L\mathcal{A}F\|_{L^2L^2}+\|\mathcal{A}F\|_{L^2L^2}\right)\\
\nn&\les&\mu\|\Om\|_{L^\infty\mathcal{G}^{\lm, \sigma-1}}\left(\|J\|_{L^2\mathcal{G}^{\lm,\sigma-5}}+\|J_\ne\|_{L^2\mathcal{G}^{\lm,\sigma-1}}\right)\left(\|\nabla_L\mathcal{A}F\|_{L^2L^2}+\|\mathcal{A}F\|_{L^2L^2}\right)\\
\nn&\les&\mu^{-\fr13}\|A\Om\|_{L^\infty L^2}\left(\mu^\fr16\|AJ_\ne\|_{L^2L^2}+\mu^\fr12\|J_0\|_{L^2\mathcal{G}^{\lm,\sigma-5}}\right)\left(\mu^\fr12\|\nabla_L\mathcal{A}F\|_{L^2L^2}+\mu^\fr16\|\mathcal{A}F\|_{L^2L^2}\right)\les\eps^3\mu^{-1},
\eeq
where we have used \eqref{U0B0J0} in the last line above. The treatment of  the analogous nonlinear term $2\mu\int_1^t\left\la\mathcal{A}\left(\Dl_t\left(\partial_{XY}^t\Phi\left(-\Om+2\partial_{XX}\Psi\right)\right)\right),\mathcal{A}F\right\ra dt'$ is slightly simpler since $\partial_{XY}^t\Phi$ contributes a $J_{\ne}$ in low norm (via the lossy estimate \eqref{loss2}) which is $L^2$  integrable in time.

Collecting the above estimates and choosing $C_0$ sufficiently large, we conclude that \eqref{hy-F} can be improved.

\section{Coordinate system controls}\label{sec-coor}
The coordinate system \eqref{nl-coor} we choose in this paper is the same as that in \cite{BM15}. Up to the nonlinear terms involving the magnetic  potential $\Phi$, the system \eqref{sys-coor} of $(Y'-1, \dot{Y}, H)$   possesses the same structure of $(v'-1, [\partial_tv], \bar{h})$ in \cite{BM15}. Accordingly,  the desired bounds for $(Y'-1, \dot{Y}, H)$ can be obtained by following the strategy of estimating $(v'-1, [\partial_tv], \bar{h})$ in \cite{BM15}. Nevertheless, given that $m_k(t,\eta)$ is not uniform bounded in $\mu$ from above, and that the multiplier $A$ applying to the non-zero modes of the unknowns contains $m^{-\fr12}$, we sketch the estimates of $(Y'-1, \dot{Y}, H)$ below to indicate the possible losses caused by the usage of $m^{-\fr12}$.  
\subsection{ Improvement of \eqref{coor1}}
Note that $Y'-1$ in this paper solves the same equation as that of $v'-1$ in \cite{BM15}, and the multiplier $m^{-\fr12}$ is not involved at all, one can improve \eqref{coor1} in the same way as the estimates of $v'-1$ in \cite{BM15}, we omit the details to avoid unnecessary repetition. It is worth mentioning that the linear coupling between $Y'-1$ and  $H$ in the first equation of \eqref{sys-coor} explains the appearance of the large constant $K_Y$ in \eqref{coor1}.
\subsection{Improvement of \eqref{coor2}.}
From the third equation of \eqref{sys-coor}, we infer that
\beq\label{evo-H}
\nn&&\fr{d}{dt}\left(\la t\ra^{2+2s}\left\|\fr{A}{\la \partial_Y\ra^s}H \right\|^2_{L^2}\right)+\mathrm{CK}_{\lm}^{Y,2}+\mathrm{CK}_{w}^{Y,2}\\
\nn&=&-(2-2s)t\la t\ra^{2s}\left\|\fr{A}{\la\partial_Y\ra^s}H\right\|_{L^2}^2-2\la t\ra^{2+2s}\int\fr{A}{\la\partial_Y\ra^s}H\fr{A}{\la\partial_Y\ra^s}\left(\dot{Y}\partial_YH\right)dY\\
\nn&&+2t^{-1}\la t\ra^{2+2s}\int\fr{A}{\la\partial_Y\ra^s}H\fr{A}{\la\partial_Y\ra^s}\left(Y'\nabla^\bot\Psi_{\ne}\cdot\nabla\Om_{\ne}\right)_0dY\\
\nn&&-2t^{-1}\la t\ra^{2+2s}\int\fr{A}{\la\partial_Y\ra^s}H\fr{A}{\la\partial_Y\ra^s}\left(Y'\nabla^\bot\Phi_{\ne}\cdot\nabla J_{\ne}\right)_0dY\\
&=&-\mathrm{CK}^{Y,2}_{\mathrm{L}}+\mathcal{T}^H+\mathrm{F}_1+\mathrm{F}_2.
\eeq
In addition to the presence of the multiplier $m^{-\fr12}$ in $A$ mentioned above, comparing with the evolution of  $\la t\ra^{2+2s}\left\|\fr{A}{\la \partial_Y\ra^s}\bar{h} \right\|^2_{L^2}$ in \cite{BM15} (see ({\bf 8.10}) of \cite{BM15}), we have an extra term $\mathrm{F}_2$.  Even though the enhanced dissipation of $J_{\ne}$ in $\mathrm{F}_2$ is available herein, we prefer to use the boundedness of $\|AJ\|_{L^2}$ to avoid the extra loss in $\mu$. Then essentially  $\mathrm{F}_2$ can be treated in a manner similar to the treatment of $\mathrm{F}_1$ in \cite{BM15}.  Since the higher order contribution of $\mathrm{F}_2$ results in an additional power of $\eps$ at worst, let us focus on the leading order contribution of $\mathrm{F}_2$:
\beq
\nn\mathrm{F}_2^0&=&-2t^{-1}\la t\ra^{2+2s}\int\fr{A}{\la\partial_Y\ra^s}H\fr{A}{\la\partial_Y\ra^s}\left(\nabla^\bot\Phi_{\ne}\cdot\nabla J_{\ne}\right)_0dY\\
\nn&=&-\fr{1}{\pi}\sum_{M\ge8}\sum_{k\ne0}t^{-1}\la t\ra^{2+2s}\int_{\eta,\xi}\fr{A_0(\eta)}{\la\eta\ra^s}\bar{\hat{H}}(\eta)\fr{A_0(\eta)}{\la\eta\ra^s}\widehat{\nabla^\bot\Phi}_{-k}(\eta-\xi)_{<\fr{M}{8}}\cdot\widehat{\nabla J}_{k}(\xi)_Md\xi d\eta\\
\nn&&-\fr{1}{\pi}\sum_{M\ge8}\sum_{k\ne0}t^{-1}\la t\ra^{2+2s}\int_{\eta,\xi}\fr{A_0(\eta)}{\la\eta\ra^s}\bar{\hat{H}}(\eta)\fr{A_0(\eta)}{\la\eta\ra^s}\widehat{\nabla^\bot\Phi}_{k}(\xi)_{M}\cdot\widehat{\nabla J}_{-k}(\eta-\xi)_{<\fr{M}{8}}d\xi d\eta\\
\nn&&-\fr{1}{\pi}\sum_{M\in\mathbb{D}}\sum_{\fr{M}{8}\le M'\le8M}\sum_{k\ne0}t^{-1}\la t\ra^{2+2s}\int_{\eta,\xi}\fr{A_0(\eta)}{\la\eta\ra^s}\bar{\hat{H}}(\eta)\fr{A_0(\eta)}{\la\eta\ra^s}\widehat{\nabla^\bot\Phi}_{k}(\xi)_{M}\cdot\widehat{\nabla J}_{-k}(\eta-\xi)_{M'}d\xi d\eta\\
\nn&=&\mathrm{F}_{2,\mathrm{LH}}^0+\mathrm{F}_{2,\mathrm{HL}}^0+\mathrm{F}_{2,\mathcal{R}}^0,
\eeq
where the paraproduct is performed only in $Y$ variable. 

Consider $\mathrm{F}_{2,\mathrm{HL}}^0$ first. As in \cite{BM15}, subdivide further as follows:
\beq
\mathrm{F}_{2,\mathrm{HL}}^0\nn&=&-\fr{1}{\pi}\sum_{M\ge8}\sum_{k\ne0}t^{-1}\la t\ra^{2+2s}\int_{\eta,\xi}\left[{\bf 1}_{t\notin\mathbb{I}_{k,\xi}}\left({\bf 1}_{10|\eta|\ge t}+{\bf 1}_{10|\eta|<t}\right)+{\bf 1}_{t\in\mathbb{I}_{k,\xi}}\right]\fr{A_0(\eta)}{\la\eta\ra^s}\bar{\hat{H}}(\eta)\fr{A_0(\eta)}{\la\eta\ra^s}\\
\nn&&\times\widehat{\nabla^\bot\Phi}_{k}(\xi)_{M}\cdot\widehat{\nabla J}_{-k}(\eta-\xi)_{<\fr{M}{8}}d\xi d\eta\\
\nn&=&\mathrm{F}_{2,\mathrm{HL}}^{0;\mathrm{NR},\mathrm{S}}+\mathrm{F}_{2,\mathrm{HL}}^{0;\mathrm{NR},\mathrm{L}}+\mathrm{F}_{2,\mathrm{HL}}^{0, \mathrm{R}}.
\eeq
Note that in view of \eqref{wNR-ratio}, on the support of the integrand of $\mathrm{F}_{2,\mathrm{HL}}^{0;\mathrm{NR},\mathrm{S}}$ and $\mathrm{F}_{2,\mathrm{HL}}^{0;\mathrm{NR},\mathrm{L}}$,  there holds
\be\label{A0k}
{\bf 1}_{t\notin\mathbb{I}_{k,\xi}}\fr{A_0(\eta)}{A_k(\xi)}\les\mu^{-\fr13}e^{c\lm|\eta-\xi|^s},\quad\mathrm{for\ \ some} \quad c\in(0,1).
\ee
Then for the short time case, combining \eqref{A0k} with the fact $|k,\xi|\les|k,\xi|^{1-\fr{s}{2}}|k|^\fr{s}{2}|\eta|^{\fr{s}{2}}$ yields
\beq
\mathrm{F}_{2,\mathrm{HL}}^{0;\mathrm{NR},\mathrm{S}}\nn&\les& \mu^{-\fr13}\sum_{M\ge8}\sum_{k\ne0}t^{1+s}\int_{\xi,\eta}\fr{|\eta|^{\fr{s}{2}}A_0(\eta)}{\la\eta\ra^s}|\hat{H}(\eta)|{\bf 1}_{t\notin\mathbb{I}_{k,\xi}}|k,\xi|^{1-\fr{s}{2}}\left|A\widehat{\Phi}_{k}(\xi)_{M}\right|\\
\nn&&\times |k|^{\fr{s}{2}}e^{c\lm |\eta-\xi|^s}\left|\widehat{\nabla J}_{-k}(\eta-\xi)_{<\fr{M}{8}}\right|d\xi d\eta\\
\nn&\les&\mu^{-\fr13}t^{1+s}\|J\|_{\mathcal{G}^{\lm,2}}\left\||\partial_Y|^\fr{s}{2}\fr{A}{\la \partial_Y\ra^s}H\right\|_{L^2}\left\||\nabla|^{1-\fr{s}{2}}\chi^{\mathrm{NR}}A\Phi_{\ne}\right\|_{L^2},
\eeq
where $\widehat{(\chi^{\mathrm{NR}}g)}_k(\eta)={\bf 1}_{t\notin\mathbb{I}_{k,\eta}}\hat{g}_k(\eta)$.
Thanks to ({\bf 6.11a}) of \cite{BM15}, we have
\[
\la t\ra^s\left\||\nabla|^{1-\fr{s}{2}}\chi^{\mathrm{NR}}A\Phi_{\ne}\right\|_{L^2}\les\left\| \left\la\fr{\partial_Y}{t\partial_X}\right\ra^{-1}\fr{|\nabla|^\fr{s}{2}}{\la t\ra^s}A\Dl_L\Phi_{\ne} \right\|_{L^2}=\|\mathcal{M}_1\Dl_L\Phi\|_{L^2}.
\]
It follows the above two inequalities and \eqref{m1} that
\be\label{e-0S}
\mathrm{F}_{2,\mathrm{HL}}^{0;\mathrm{NR},\mathrm{S}} \les\mu^{-\fr23}\|AJ\|_{ L^2}\left(t\left\||\partial_Y|^\fr{s}{2}\fr{A}{\la \partial_Y\ra^s}H\right\|_{L^2}\right)\|\mathcal{M}_1\Dl_L\Phi\|_{L^2}.
\ee
As for the long time case, on the support of the integrand, it is easy to verify that $t\les|\xi-kt|$ and $\left\la\fr{\xi}{kt}\right\ra^{-1}\approx1$. These, together with \eqref{A0k} and the fact $|k,\xi|\les|k,\xi|^{\fr{s}{2}}|k|^{\fr{3s}{2}}|\eta|^{\fr{3s}{2}}$ imply that
\beq\label{e-0L}
\mathrm{F}_{2,\mathrm{HL}}^{0;\mathrm{NR},\mathrm{L}}\nn&\les& \mu^{-\fr13}\sum_{M\ge8}\sum_{k\ne0}t^{3s-1}\int_{\xi,\eta}\left\la\fr{\xi}{tk}\right\ra^{-1}\fr{|k,\xi|^{\fr{s}{2}}}{\la t\ra^s}\left(k^2+(\xi-kt)^2\right)\left|A\widehat{\Phi}_{k}(\xi)_{M}\right|\\
\nn&&\times |k|^{\fr{3s}{2}}e^{c\lm |\eta-\xi|^s}\left|\widehat{\nabla J}_{-k}(\eta-\xi)_{<\fr{M}{8}}\right|\fr{|\eta|^{\fr{s}{2}}A_0(\eta)}{\la\eta\ra^s}|\hat{H}(\eta)|d\xi d\eta\\
&\les&\mu^{-\fr23}\|AJ\|_{L^2}\left(t^{3s-1}\left\||\partial_Y|^\fr{s}{2}\fr{A}{\la \partial_Y\ra^s}H\right\|_{L^2}\right)\left\|\mathcal{M}_1 \Dl_L\Phi \right\|_{L^2}.
\eeq
Next turn to consider $\mathrm{F}_{2,\mathrm{HL}}^{0, \mathrm{R}}$. On the support of the integrand of $\mathrm{F}_{2,\mathrm{HL}}^{0, \mathrm{R}}$, in view of \eqref{wNR-ratio}, \eqref{wRwNR}, \eqref{ap2}, \eqref{ap3},   and  \eqref{ap-5}, we find that there exists a positive constant $c\in(0,1)$, such that
\beq\label{A0kR}
{\bf 1}_{t\in\mathbb{I}_{k,\xi}}\fr{A_0(\eta)}{A_k(\xi)}
\nn&\les&\mu^{-\fr13}e^{\lm\big||\eta|^s-|\xi|^s\big|}\left(e^{2r|\eta-
\xi|^{\fr12}}\fr{w_{NR}(\xi)}{w_{NR}(\eta)}+w_{NR}(\xi)\right)\fr{w_R(\xi)}{w_{NR}(\xi)}{\bf 1}_{t\in\mathbb{I}_{k,\xi}}\\
&\les&\mu^{-\fr13}e^{c\lm |\eta-\xi|^s}\fr{|k|^2}{|\xi|}\left(1+\left|t-\fr{\xi}{k}\right|\right){\bf 1}_{t\in\mathbb{I}_{k,\xi}}
=\mu^{-\fr13}e^{c\lm |\eta-\xi|^s}\fr{|k|^2+|\xi-kt|^2}{|\xi|}\fr{{\bf 1}_{t\in\mathbb{I}_{k,\xi}}}{1+\left|t-\fr{\xi}{k}\right|}.
\eeq
Combining this and the fact $|\eta|\approx|\xi|\approx |k|t$ with \eqref{pt_w} and \eqref{swi-ptw},  we are led to
\beq\label{e-0R}
\mathrm{F}_{2,\mathrm{HL}}^{0;\mathrm{R}}\nn&\les& \mu^{-\fr13}\sum_{M\ge8}\sum_{k\ne0}t^{1+s}\int_{\xi,\eta}\left(\sqrt{\fr{\partial_tw_0(\eta)}{w_0(\eta)}}+\fr{|\eta|^\fr{s}{2}}{\la t\ra^s}\right)\fr{A_0(\eta)}{\la\eta\ra^s}|\hat{H}(\eta)|\\
\nn&&\times \left\la \fr{\xi}{tk}\right\ra^{-1}\sqrt{\fr{\partial_tw_k(\xi)}{w_k(\xi)}}\left|\tl A\widehat{\Dl_L\Phi}_{k}(\xi)_{M}\right|
 e^{c\lm |\eta-\xi|^s}\left|\widehat{\nabla J}_{-k}(\eta-\xi)_{<\fr{M}{8}}\right|d\xi d\eta\\
&\les&\mu^{-\fr23}\|AJ\|_{L^2}\left(t^{1+s}\left\|\left(\sqrt{\fr{\partial_tw}{w}}+\fr{|\partial_Y|^\fr{s}{2}}{\la t\ra^s}\right)\fr{A}{\la \partial_Y\ra^s}H\right\|_{L^2}\right)\left\|\mathcal{M}_2\Dl_L\Phi\right\|_{L^2}.
\eeq
Consequently,  thanks to \eqref{pre-ell2}, \eqref{e-CCK} and the bootstrap hypotheses,  we infer from \eqref{e-0S}, \eqref{e-0L} and  \eqref{e-0R} that 
\beq\label{e-0HL}
\nn&&\int_1^t\mathrm{F}_{2,\mathrm{HL}}^{0;\mathrm{NR},\mathrm{S}}+\mathrm{F}_{2,\mathrm{HL}}^{0;\mathrm{NR},\mathrm{L}}+\mathrm{F}_{2,\mathrm{HL}}^{0;\mathrm{R}} dt'\\
\nn&\les&\mu^{-\fr23}\eps\int_1^t\left(\mathrm{CK}_{\lm}^{Y,2}(t')+\mathrm{CK}_{w}^{Y,2}(t')\right)+\Big[\mathrm{CK}_{\lm,J}(t')+\mathrm{CK}_{w,J}(t')\\
&&+\mu^{-\fr23}\eps^2\left(\mathrm{CK}_{\lm}^R(t')+\mathrm{CK}_{w}^R(t')\right)\Big]dt'
\les\mu^{-\fr23}\eps^3.
\eeq

To bound $\mathrm{F}_{2,\mathrm{LH}}^0$, note that on the support of the integrand $\fr{|k,\xi|}{\la\eta\ra^s}\les|k||\xi|^{1-s}\les |k||\eta|^{\fr{s}{2}}|\xi|^{\fr{s}{2}}$ holds due to $s>\fr12$ and $1\les|\eta|\approx|\xi|$. Then using \eqref{A0k}, \eqref{A0kR} (actually \eqref{A0kR} implies ${\bf 1}_{t\in\mathbb{I}_{k,\xi}}\fr{A_0(\eta)}{A_k(\xi)}\les \mu^{-\fr13}e^{c\lm |\eta-\xi|^s}$), the lossy estimate \eqref{loss2},  the upper bound of $m_k(t,\eta)$ in \eqref{m1}, and the bootstrap hypotheses, one deduces that
\beq\label{e-0LH}
\int_1^t\mathrm{F}_{2,\mathrm{LH}}^{0}dt'\nn&\les& \mu^{-\fr13}\sum_{M\ge8}\sum_{k\ne0}\int_1^tt'^{1+2s}\int_{\xi,\eta}\fr{|\eta|^{\fr{s}{2}}A_0(\eta)}{\la\eta\ra^s}|\hat{H}(\eta)|e^{c\lm |\eta-\xi|^s}|k|\left|\nabla^\bot\widehat{\Phi}_{-k}(\eta-\xi)_{<\fr{M}{8}}\right|\\
\nn&&\times |\xi|^\fr{s}{2}\left|A\hat{ J}_{k}(\xi)_{M}\right|d\xi d\eta dt'\\
\nn&\les&\mu^{-\fr23}\|AJ\|_{L^\infty L^2}\int_1^t\left(t'^{1+s-\tl q}\left\||\partial_Y|^\fr{s}{2}\fr{A}{\la \partial_Y\ra^s}H\right\|_{L^2}\right)\left(t'^{\tl q+s-2}\left\||\nabla|^{\fr{s}{2}}AJ\right\|_{L^2}\right)dt'\\
&\les&\mu^{-\fr23}\eps\int_1^t\mathrm{CK}_{\lm}^{Y,2}(t')+\mathrm{CK}_{\lm,J}(t')dt'\les\mu^{-\fr23}\eps^3,
\eeq
for $s$ close to $\fr12$, say, $\fr12<s<\fr23$, such that $\fr12<\tl q<1-\fr{s}{2}$.

As for the remainder $\mathrm{F}_{2,\mathcal{R}}^0$, one can see from \eqref{ap4} that there are enough regularity gaps both for $\Phi$ and $J$ on the support of the integrand. We state the result straightly:
\beq\label{e-0Re}
\mathrm{F}_{2,\mathcal{R}}^0\nn&\les& \left(\mu^{-\fr23}\|AJ\|_{L^2}\right)^\fr12t^{\fr12+s}\left\|\fr{A}{\la \partial_Y\ra^s}H\right\|_{L^2}\cdot \left(\mu^{-\fr23}\|AJ\|_{L^2}\right)^\fr12t^{s-\fr32}\|AJ\|_{L^2}\\
&\les& (\mu^{-\fr23}\eps)t^{1+2s}\left\|\fr{A}{\la \partial_Y\ra^s}H\right\|_{L^2}^2+(\mu^{-\fr23}\eps)t^{2s-3}\eps^2.
\eeq
The first term is absorbed by $\mathrm{CK}_{\mathrm{L}}^{Y,2}$ in \eqref{evo-H} provided $\mu^{-\fr23}\eps$ is sufficiently small, and the latter is time integrable since $s<1$.

The above estimates for $\mathrm{F}_2$ apply to $\mathrm{F}_1$ with $J$ and $\Phi$ replaced by $\Om$ and $\Psi$, respectively. Finally, it is worth pointing out that all the quantities in $\mathcal{T}^H$ are $X$ independent and the multiplier $m_k(t,\eta)$ is not involved, in view of \eqref{coor3}, we can adopt the estimate for $\mathcal{T}^H$ in \cite{BM15} as follows:
\be\label{e-0T}
\mathcal{T}^H\les \eps\mathrm{CK}_{\lm}^{Y,1}+ \eps\mathrm{CK}_{\lm}^{Y,2}+\eps\la t\ra^{2s+\fr{K_D\mu^{-\fr23}\eps}{2}}\left\|\fr{A}{\la\partial_Y\ra^s}H\right\|_{L^2}^2.
\ee
The last two terms can be absorbed by $\mathrm{CK}^{Y,2}_{\lm}$ and $\mathrm{CK}^{Y,2}_{\mathrm{L}}$, respectively. 

Integrating \eqref{evo-H} with respect to the time variable over the interval $[0,t]$, using \eqref{CKY1} and \eqref{e-0HL}--\eqref{e-0T}, we complete the improvement of \eqref{coor2} provided $\mu^{-\fr23}\eps\ll1$.

\subsection{ Improvement of \eqref{coor3}} We sketch the estimates of $\la t\ra^{4-K_D\mu^{-\fr23}\eps}\|\dot{Y}(t)\|^2_{\mathcal{G}^{\lm(t),\sigma-6}}$ to show the effect of the multiplier $m^{-\fr12}$  explicitly. From the second equation of \eqref{sys-coor}, we have
\beq\label{evo-dotY}
\nn&&\fr{d}{dt}\left(\la t\ra^{4-K_D\mu^{-\fr23}\eps}\left\|\dot{Y}\right\|_{\mathcal{G}^{\lm(t),\sigma-6}}^2\right)+2\la t\ra^{4-K_D\mu^{-\fr23}\eps}\left(-\dot{\lm}(t)\right)\left\||\partial_Y|^{\fr{s}{2}}\dot{Y}\right\|_{\mathcal{G}^{\lm(t),\sigma-6}}^2\\
\nn&&+\left(\fr{4\la t\ra^{4-K_D\mu^{-\fr23}\eps}}{t}-(4-K_D\mu^{-\fr23}\eps)t\la t\ra^{2-K_D\mu^{-\fr23}\eps}\right)\left\|\dot{Y}\right\|_{\mathcal{G}^{\lm(t),\sigma-6}}^2\\
\nn&=&-2\la t\ra^{4-K_D\mu^{-\fr23}\eps}\int A^S\dot{Y}A^S\left(\dot{Y}\partial_Y\dot{Y}\right)dY-\fr{2\la t\ra^{4-K_D\mu^{-\fr23}\eps}}{t}\int A^S\dot{Y}A^S\left(Y'\nabla^\bot\Psi_{\ne}\cdot\nabla U^1_{\ne}\right)_0dY\\
\nn&&+\fr{2\la t\ra^{4-K_D\mu^{-\fr23}\eps}}{t}\int A^S\dot{Y}A^S\left(Y'\nabla^\bot\Phi_{\ne}\cdot\nabla B^1_{\ne}\right)_0dY\\
&=&\mathrm{Q}_1+\mathrm{Q}_2+\mathrm{Q}_3,
\eeq
where $A^S(t,\eta)=e^{\lm(t)|\eta|^s}\la\eta\ra^{\sigma-6}$. $\mathrm{Q}_1$ has been treated in \cite{BM15}:
\beno
\mathrm{Q}_1\le\fr{K_D\eps}{2}t\la t\ra^{2-K_D\mu^{-\fr23}\eps-s}\left\|\dot{Y}\right\|_{\mathcal{G}^{\lm(t),\sigma-6}}^2,
\eeno
where $K_D$ is the maximum of the constant appearing in this term and the others below.

The treatments of $\mathrm{Q}_2$ and $\mathrm{Q}_3$ can be obtained by slightly modifying the estimates of $\mathrm{V}_3$ in \cite{BM15}. In fact,  recalling that $B^1=-Y'\partial_Y^L\Phi$, then by virtue of the fact $|\eta-kt|\le\la t\ra |k,\eta|$, and  the lossy estimate \eqref{loss2}, the upper bound of $m_k(t,\eta)$ and the bootstrap hypotheses,
\beno
\left\|\nabla B^1_{\ne}\right\|_{\mathcal{G}^{\lm(t),\sigma-6}}\les\min\left\{\mu^{-\fr13}, \la t\ra\right\}\left(1+\|Y'-1\|_{\mathcal{G}^{\lm(t),\sigma-5}}\right)\fr{\|AJ\|_{L^2}}{\la t\ra}\les\fr{\min\left\{\mu^{-\fr13}, \la t\ra\right\}\eps}{\la t\ra}.
\eeno
Therefore, using  the lossy estimate \eqref{loss2} and the upper bound of $m_k(t,\eta)$ again yields
\beqno
\mathrm{Q}_3\nn&\les&\fr{\la t\ra^{4-K_D\mu^{-\fr23}\eps}}{t}\left\|\dot{Y} \right\|_{\mathcal{G}^{\lm,\sigma-6}}\|\nabla^\bot\Phi_{\ne}\|_{\mathcal{G}^{\lm,\sigma-6}}\|\nabla B^1_{\ne}\|_{\mathcal{G}^{\lm,\sigma-6}}\left(1+\|Y'-1\|_{\mathcal{G}^{\lm,\sigma-6}}\right)\\
&\les&\la t\ra^{-K_D\mu^{-\fr23}\eps}\mu^{-\fr23}\eps^2\left\|\dot{Y} \right\|_{\mathcal{G}^{\lm,\sigma-6}}\le\fr{K_D\mu^{-\fr23}\eps}{4}t\la t\ra^{2-K_D\mu^{-\fr23}\eps}\left\|\dot{Y} \right\|_{\mathcal{G}^{\lm,\sigma-6}}^2+C\mu^{-\fr23}\eps^3\la t\ra^{-3-K_D\mu^{-\fr23}\eps}.
\eeqno
Similarly,
\beno
\mathrm{Q}_2\le\fr{K_D\mu^{-\fr23}\eps}{4}t\la t\ra^{2-K_D\mu^{-\fr23}\eps}\left\|\dot{Y} \right\|_{\mathcal{G}^{\lm,\sigma-6}}^2+C\mu^{-\fr23}\eps^3\la t\ra^{-3-K_D\mu^{-\fr23}\eps}.
\eeno
Substituting the estimates for $\mathrm{Q}_1, \mathrm{Q}_2$ and $\mathrm{Q}_3$ above into \eqref{evo-dotY}, absorbing all the three terms involving $K_D$,  integrating with respect to the time variable over $[1, t]$, and choosing $\eps$ so small that $\mu^{-\fr23}\eps\ll1$, we complete the improvement of \eqref{coor3}.

\begin{appendix}

\section{Construction of $w_{k}(t,\eta)$}\label{subsec-con-w}
 To begin with, let us recall the definitions of {\em critical intervals} and {\em resonant intervals} originated in \cite{BM15}. Denote by $E(\sqrt{|\eta|})$ the the integer part of $\sqrt{|\eta|}$. For $(k,\eta)\in\mathbb{Z}\times\mathbb{R}$, define the {\em critical intervals}
\be\label{cri_int}
{\rm I}_{k, \eta}:=\begin{cases}
[t_{|k|,\eta}, t_{|k|-1,\eta}],\quad \mathrm{if}\ \ |\eta|>1, k\eta>0,\ \mathrm{and}\ 1\le |k|\le E(\sqrt{|\eta|}),\\
\emptyset, \quad\quad\quad\quad\ \ \ \ \ \mathrm{otherwise},
\end{cases}
\ee
with 
\beno
t_{k,\eta}=\fr12\left(\fr{|\eta|}{|k|}+\fr{|\eta|}{|k|+1}\right)=\fr{(2|k|+1)|\eta|}{2|k|\left(|k|+1\right)}, \quad t_{0,\eta}=2|\eta|.
\eeno
The {\em resonant intervals} are defined by
\[
{\mathbb{I}}_{k, \eta}:=\begin{cases}
{\rm I}_{k,\eta},\ \ \mathrm{if}\ \ 2\sqrt{|\eta|}\le t_{|k|,\eta},\\
\emptyset, \ \ \ \ \ \mathrm{otherwise}.
\end{cases}
\]
The following well-separation property of critical times is useful in this paper.
\begin{lem}[From \cite{BM15}]\label{1/3}
Let $\xi, \eta$ be such that there exists some $C\ge1$ with $\fr{1}{C}|\xi|\le|\eta|\le C|\xi|$ and let $k, n$ be such that $t\in\mathrm{I}_{k,\eta}\cap\mathrm{I}_{n,\xi}$. Then $k\approx n$, and at least one of the following holds:
\begin{enumerate}
\item[(a)] k=n;
\item[(b)] $\left|t-\fr{\eta}{k}\right|\ge\fr{1}{10C}\fr{|\eta|}{k^2}$ and $\left|t-\fr{\xi}{n}\right|\ge\fr{1}{10C}\fr{|\xi|}{n^2}$;
\item[(c)] $|\eta-\xi|\gt_C\fr{|\eta|}{|n|}$.
\end{enumerate}
\end{lem}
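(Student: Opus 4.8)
The final statement is the well-separation property of critical times proved in \cite{BM15}; the plan is to recall its (entirely elementary) proof. First we would dispose of signs: a nonempty critical interval ${\rm I}_{k,\eta}$ forces $k\eta>0$, hence $\mathrm{sign}(k)=\mathrm{sign}(\eta)$ and $\mathrm{sign}(n)=\mathrm{sign}(\xi)$; if $\eta\xi<0$ then $|\eta-\xi|=|\eta|+|\xi|\gt_C|\eta|\ge|\eta|/|n|$, so (c) holds, and otherwise---flipping all four signs if necessary, which alters none of the data---we may take $\eta,\xi,k,n>0$. Next we would establish $k\approx n$: from the formula $t_{j,\eta}=\fr{(2j+1)|\eta|}{2j(j+1)}$ (with $t_{0,\eta}=2|\eta|$) one reads off, for every admissible index, the bounds $\fr{|\eta|}{2k}\le t_{k,\eta}\le t\le t_{k-1,\eta}\le\fr{2|\eta|}{k}$ and likewise $\fr{|\xi|}{2n}\le t\le\fr{2|\xi|}{n}$, which combined with $|\eta|\approx_C|\xi|$ give $n\le 4Ck$ and $k\le 4Cn$.

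If $k=n$ we are in case (a); assume $k\ne n$. The plan is a dichotomy in the size of $|\eta-\xi|$: we would produce $c_C>0$, depending only on $C$, so that $|\eta-\xi|<c_C|\eta|/n$ forces (b), while $|\eta-\xi|\ge c_C|\eta|/n$ is exactly case (c). By the symmetry of (b) under $(k,\eta)\leftrightarrow(n,\xi)$ it suffices to prove its first inequality. The point is that $t\in{\rm I}_{n,\xi}$ not only places $t$ near the critical time $\xi/n$ but pushes it away from every other $\xi/m$: from $\xi/(n+1)<t_{n,\xi}$ and $\xi/(n-1)>t_{n-1,\xi}$ one obtains, for all integers $m\ge1$ with $m\ne n$,
\[
\Bigl|t-\fr{\xi}{m}\Bigr|\ \ge\ \min\Bigl(t_{n,\xi}-\fr{\xi}{n+1},\ \fr{\xi}{n-1}-t_{n-1,\xi}\Bigr)\ =\ \fr{|\xi|}{2n(n+1)}\ \gt\ \fr{|\xi|}{n^2}.
\]
Taking $m=k$ here and then passing from $\xi/k$ to $\eta/k$ at the cost $|\eta/k-\xi/k|=|\eta-\xi|/k\le c_C|\eta|/(nk)\les_C c_C|\xi|/n^2$ (using $k\approx_C n$ and $|\eta|\approx_C|\xi|$) gives
\[
\Bigl|t-\fr{\eta}{k}\Bigr|\ \ge\ \Bigl|t-\fr{\xi}{k}\Bigr|-\fr{|\eta-\xi|}{k}\ \gt_C\ \fr{|\eta|}{k^2},
\]
once $c_C$ is small enough; propagating the numerical factors and powers of $C$ through this chain (with $|\eta|\le C|\xi|$, $n\le 4Ck$) yields the stated $\ge\fr{1}{10C}\fr{|\eta|}{k^2}$ up to a harmless adjustment of the constant. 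The mirror estimate, using $t\in{\rm I}_{k,\eta}$ to keep $t$ away from $\eta/n$, gives $|t-\xi/n|\ge\fr{1}{10C}\fr{|\xi|}{n^2}$, so (b) holds.

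The hard part here is bookkeeping rather than mathematics: one must give minor separate attention to the boundary indices $k=1$ or $n=1$ (there ${\rm I}_{k,\eta}=[t_{1,\eta},2|\eta|]$, and when $n=1$ there is no index $m\le n-1$), and one must keep the powers of $C$ in check so that the threshold $c_C$ and the constant in (b) come out as asserted. As this is precisely the corresponding lemma of \cite{BM15}, one may, if preferred, simply cite it.
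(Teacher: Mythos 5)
The paper cites this result from \cite{BM15} without giving its own proof, so there is no in-paper argument to compare against; the question is whether your proposal actually yields the statement. Your overall strategy -- dispose of opposite signs via (c), establish $k\approx_C n$ from the crude interval bounds $\tfrac{|\eta|}{2k}\le t\le\tfrac{2|\eta|}{k}$, then run a dichotomy on $|\eta-\xi|$ and, in the small case, bound $|t-\eta/k|$ from below by the separation of $t\in\mathrm{I}_{n,\xi}$ from $\xi/k$ -- is the right skeleton and matches what one would expect from \cite{BM15}. The separation estimate $|t-\xi/m|\ge\tfrac{|\xi|}{2n(n+1)}$ for $m\ne n$ is correctly computed (including the $n=1$ boundary case), and the symmetry reduction for the two halves of (b) is legitimate.

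The gap is in the claim that ``propagating the numerical factors... with $n\le 4Ck$ yields the stated $\ge\tfrac{1}{10C}\tfrac{|\eta|}{k^2}$.'' Trace the constants: your argument gives, after absorbing the $|\eta-\xi|/k$ term, $|t-\eta/k|\gtrsim\tfrac{\xi}{n(n+1)}\ge\tfrac{1}{C}\cdot\tfrac{\eta}{4n^2}$, and you then need to convert $\tfrac{1}{n^2}$ into $\tfrac{1}{k^2}$. With only $k\ge\tfrac{n}{4C}$ in hand, this costs a factor $\tfrac{k^2}{n^2}\ge\tfrac{1}{16C^2}$, so the best you extract is of order $\tfrac{\eta}{C^3 k^2}$, strictly worse than $\tfrac{\eta}{10Ck^2}$ for every $C\ge1$. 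Worse, the two inequalities in (b) pull against each other: the first wants $k$ not much smaller than $n$, the second (by the same argument with roles swapped) wants $n$ not much smaller than $k$. With $k/n$ only constrained to $[\tfrac{1}{4C},4C]$, you cannot get both at the strength $\tfrac{1}{10C}$ once $C$ is even moderately large, no matter how small you take the threshold $c_C$.

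The missing ingredient -- which I believe is what closes the gap in the original \cite{BM15} argument -- is a \emph{sharp} control on $|k-n|$ when $|\eta-\xi|$ is small. Concretely: for $k>n$, the overlap $\mathrm{I}_{k,\eta}\cap\mathrm{I}_{n,\xi}\ne\emptyset$ forces $t_{n,\xi}\le t_{k-1,\eta}$, and if $k\ge n+2$ this inequality unwinds to $\eta-\xi\ge\tfrac{2(n+1)\xi}{n(2n+3)}\gtrsim\tfrac{\eta}{Cn}$, i.e., (c) already holds. So whenever (c) fails one in fact has $|k-n|\le 1$, not merely $k\approx_C n$. With $k=n\pm 1$, your separation estimate gives $|t-\eta/k|\ge\tfrac{\xi}{2n(n+1)}(1-O(c_C))$ and $|t-\xi/n|\ge\tfrac{\eta}{2k(k\mp 1)}(1-O(c_C))$, and comparing these against $\tfrac{\eta}{10Ck^2}$ and $\tfrac{\xi}{10Cn^2}$ respectively is now a genuine bookkeeping exercise that does produce the stated $\tfrac{1}{10C}$ for a suitable threshold $c_C$ depending only on $C$. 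You should therefore replace the step ``$n\le 4Ck$, propagate constants'' with ``$|k-n|\ge 2$ already implies (c); else $|k-n|=1$'' and rerun the estimate -- the rest of your outline is sound.
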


Now we give a brief review of the constructions of $w_{NR}(t,\eta)$ and $w_R(t,\eta)$ in \cite{BM15}. Let $\kappa$ and $\mathrm{C}$ be two positive constants (See Proposition 3.1 of \cite{BM15}) satisfying $\kappa<\fr12$ and $\fr32<1+2\mathrm{C}\kappa<10$. For  $|\eta|\le1$, set
\be
w_{NR}(t,\eta)=w_{R}(t,\eta)=1.
\ee
For $\eta>1$, $w_{NR}(t,\eta)$  will be constructed backward in time. First, define
\be\label{def-w0}
w_{NR}(t,\eta)=w_{R}(t,\eta)=1, \quad\mathrm{if}\quad t>t_{0,\eta}=2\eta.
\ee
Next, for $l\in\left\{ 1, 2, 3, \cdots, E(\sqrt{\eta})\right\}$, define
\be\label{def-w}
\begin{cases}
w_{NR}(t,\eta)=\left(\fr{l^2}{\eta}\left[1+b_{l,\eta}\left|t-\fr{\eta}{l}\right|\right]\right)^{\mathrm{C}\kappa}w_{NR}(t_{l-1},\eta),\quad\mathrm{if}\quad t\in\mathrm{I}^\mathrm{R}_{l,\eta}=\left[\fr{\eta}{l}, t_{l-1,\eta}\right],\\[3mm]
w_{NR}(t,\eta)=\left(1+a_{l,\eta}\left|t-\fr{\eta}{l}\right|\right)^{-1-\mathrm{C}\kappa}w_{NR}\left(\fr{\eta}{l},\eta\right),\ \ \ \quad\mathrm{if}\quad t\in\mathrm{I}^\mathrm{L}_{l,\eta}=\left[ t_{l,\eta},\fr{\eta}{l}\right],
\end{cases}
\ee
where 
\be\label{def-ab}
b_{l,\eta}=
\begin{cases}
\fr{2(l+1)}{l}\left(1-\fr{l^2}{\eta}\right), \quad\mathrm{for}\quad l\ge2,\\
1-\fr{1}{\eta},\quad\qquad\ \ \ \ \ \mathrm{for}\quad l=1,
\end{cases}
\quad a_{l,\eta}=\fr{2(l+1)}{l}\left(1-\fr{l^2}{\eta}\right).
\ee
It is worth pointing out that the choice of $b_{l,\eta}$ and $a_{l,\eta}$ ensures that
\be\label{eq-b}
\fr{l^2}{\eta}\left(1+b_{l,\eta}\left|t_{l-1,\eta}-\fr{\eta}{l}\right|\right)=1,
\ee
and
\be\label{eq-a}
\fr{l^2}{\eta}\left(1+a_{l,\eta}\left|t_{l,\eta}-\fr{\eta}{l}\right|\right)=1.
\ee
Finally, define
\be\label{wNR-st}
w_{NR}(t,\eta)=w_{NR}(t_{E(\sqrt{\eta}),\eta},\eta),\quad \mathrm{if}\quad t\in\left[0,t_{E(\sqrt{\eta}),\eta}\right].
\ee
If $\eta<-1$, set  $w_{NR}(t,\eta)=w_{NR}(t,-\eta)$.  The construction of $w_{NR}(t,\eta)$ is completed. As for $w_{R}(t,\eta)$, on each interval $\mathrm{I}_{l,\eta}$, define
\be\label{wRwNR}
\begin{cases}
w_{R}(t,\eta)=\fr{l^2}{\eta}\left(1+b_{l,\eta}\left|t-\fr{\eta}{l}\right|\right)w_{NR}(t,\eta),\ \quad\mathrm{if}\quad t\in\mathrm{I}^\mathrm{R}_{l,\eta}=\left[\fr{\eta}{l}, t_{l-1,\eta}\right],\\[3mm]
w_{R}(t,\eta)=\fr{l^2}{\eta}\left(1+a_{l,\eta}\left|t-\fr{\eta}{l}\right|\right)w_{NR}\left(t,\eta\right), \quad\mathrm{if}\quad t\in\mathrm{I}^\mathrm{L}_{l,\eta}=\left[ t_{l,\eta},\fr{\eta}{l}\right].
\end{cases}
\ee
Moreover, like \eqref{wNR-st}, set 
\be
w_{R}(t,\eta)=w_{R}(t_{E(\sqrt{|\eta|}),\eta},\eta),\quad \mathrm{if}\quad t\in\left[0,t_{E(\sqrt{|\eta|}),\eta}\right].
\ee
Note that, thanks to the choice of $b_{l,\eta}$ and $a_{l,\eta}$, there holds $w_R(t_{l,\eta},\eta)=w_{NR}(t_{l,\eta},\eta)$. Then
\be\label{w=w}
w_{NR}(t,\eta)=w_{R}(t,\eta), \quad\mathrm{if}\quad t\in\left[0,t_{E(\sqrt{|\eta|}),\eta}\right]\cup\left[2|\eta|,+\infty\right).
\ee
\begin{rem}
One can see from \eqref{eq-b}, \eqref{eq-a}, \eqref{wRwNR} and \eqref{w=w} that
\be\label{R<NR}
w_{R}(t,\eta)\le w_{NR}(t,\eta),\quad\mathrm{for\ \ all}\quad t\ge0.
\ee
\end{rem}

Now the weight $w_k(t,\eta)$ can be defined as follows:
\be\label{w_keta}
w_k(t,\eta)=
\begin{cases}
w_{R}(t,\eta), \ \ \quad\mathrm{if}\quad t\in\mathrm{I}_{k,\eta},\\
w_{NR}(t,\eta),\quad\mathrm{if}\quad t\notin\mathrm{I}_{k,\eta}.
\end{cases}
\ee
In other words, $w_k(t,\eta)=w_{NR}(t,\eta)$  unless $|\eta|>1$, $1\le|k|\le E(\sqrt{|\eta|})$, $k\eta>0$, and $t\in\mathrm{I}_{k,\eta}$. In particular, $w_0(t,\eta)=w_{NR}(t,\eta)$.

\section{Properties of the multipliers $w$, $\mathcal{J}$, $m$, $M^1$ and $M^\mu$}
In this section, we give some useful properties of the multipliers $w$, $\mathcal{J}$, $m$, $M^1$ and $M^\mu$. The first five lemmas are directly borrowed from \cite{BM15}.
\begin{lem}[Growth  of $w$, from \cite{BM15}]\label{lem-gw}
 Assume that $|\eta|>1$, and $r=4(1+\mathrm{C}\kappa)$, where $\mathrm{C}$ and $\kappa$ are the positive constants appearing in \eqref{def-w}. Then we have
\be
\fr{1}{w_k(0,\eta)}=\fr{1}{w_k(t_{E(\sqrt{|\eta|}),\eta},\eta)}\sim\fr{1}{|\eta|^\fr{r}{8}}e^{\fr{r}{2}\sqrt{|\eta|}},
\ee
where $\sim$ is in the sense of asymptotic expansion as $|\eta|\rightarrow\infty$.
\end{lem}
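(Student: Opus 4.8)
\textbf{Plan for the proof of Lemma \ref{lem-gw}.}
The statement concerns the value of $w_k(0,\eta)$ for $|\eta|>1$. By the very construction recalled in Section \ref{subsec-con-w}, for $t\in[0,t_{E(\sqrt{|\eta|}),\eta}]$ we have $w_k(t,\eta)=w_{NR}(t,\eta)=w_{NR}(t_{E(\sqrt{|\eta|}),\eta},\eta)$ (see \eqref{wNR-st} and \eqref{w_keta}); in particular $w_k(0,\eta)=w_{NR}(t_{E(\sqrt{|\eta|}),\eta},\eta)$, so the two expressions in the displayed equation are literally equal and the content of the lemma is the asymptotic evaluation of this single quantity. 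The plan is therefore to track the telescoping product that defines $w_{NR}$ as one descends from $t_{0,\eta}=2\eta$ (where $w_{NR}=1$ by \eqref{def-w0}) through the intervals $\mathrm{I}_{l,\eta}^{\mathrm R}$ and $\mathrm{I}_{l,\eta}^{\mathrm L}$ for $l=1,\dots,E(\sqrt{\eta})$, and to show that the resulting product behaves like $|\eta|^{r/8}e^{-\frac{r}{2}\sqrt{|\eta|}}$ up to bounded multiplicative factors, where $r=4(1+\mathrm C\kappa)$.

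First I would record the one-step ratios. On $\mathrm{I}^{\mathrm R}_{l,\eta}=[\eta/l,t_{l-1,\eta}]$ the definition \eqref{def-w} gives $w_{NR}(\eta/l,\eta)=\bigl(\tfrac{l^2}{\eta}\bigr)^{\mathrm C\kappa}w_{NR}(t_{l-1,\eta},\eta)$ after using \eqref{eq-b} to evaluate the bracket at the left endpoint (at $t=\eta/l$ the factor $\tfrac{l^2}{\eta}[1+b_{l,\eta}|t-\eta/l|]$ equals $\tfrac{l^2}{\eta}$), while at the right endpoint the same bracket equals $1$ by \eqref{eq-b}, consistent with continuity. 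On $\mathrm{I}^{\mathrm L}_{l,\eta}=[t_{l,\eta},\eta/l]$ the definition \eqref{def-w} together with \eqref{eq-a} gives $w_{NR}(t_{l,\eta},\eta)=\bigl(\tfrac{l^2}{\eta}\bigr)^{1+\mathrm C\kappa}\,\bigl(1+a_{l,\eta}|t_{l,\eta}-\eta/l|\bigr)^{-1-\mathrm C\kappa}\cdot\bigl(\tfrac{\eta}{l^2}\bigr)^{1+\mathrm C\kappa}\cdots$; more cleanly, $\bigl(1+a_{l,\eta}|t_{l,\eta}-\eta/l|\bigr)^{-1-\mathrm C\kappa}=\bigl(\tfrac{l^2}{\eta}\bigr)^{1+\mathrm C\kappa}$ by \eqref{eq-a}, so $w_{NR}(t_{l,\eta},\eta)=\bigl(\tfrac{l^2}{\eta}\bigr)^{1+\mathrm C\kappa}w_{NR}(\eta/l,\eta)$. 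Combining the two steps, $w_{NR}(t_{l,\eta},\eta)=\bigl(\tfrac{l^2}{\eta}\bigr)^{1+2\mathrm C\kappa}w_{NR}(t_{l-1,\eta},\eta)$, and iterating from $l=1$ (with $w_{NR}(t_{0,\eta},\eta)=1$) down to $l=E(\sqrt\eta)$ yields
\[
w_{NR}(t_{E(\sqrt\eta),\eta},\eta)=\prod_{l=1}^{E(\sqrt\eta)}\left(\frac{l^2}{\eta}\right)^{1+2\mathrm C\kappa}
=\left(\frac{(E(\sqrt\eta)!)^2}{\eta^{E(\sqrt\eta)}}\right)^{1+2\mathrm C\kappa}.
\]
The remaining step is purely asymptotic analysis of this explicit product: I would take logarithms, write $E(\sqrt\eta)=\sqrt\eta+O(1)$, apply Stirling's formula $\log(N!)=N\log N-N+\tfrac12\log N+O(1)$ with $N=E(\sqrt\eta)$, and compute $2\log(N!)-N\log\eta = 2N\log N-2N+\log N - 2N\log N+O(1)= -2N+\log N+O(1) = -2\sqrt\eta+\tfrac12\log\eta+O(1)$. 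Multiplying by $1+2\mathrm C\kappa = r/4$ gives $\log w_{NR}(t_{E(\sqrt\eta),\eta},\eta) = -\tfrac{r}{2}\sqrt\eta+\tfrac{r}{8}\log\eta+O(1)$, i.e. $w_{NR}(t_{E(\sqrt\eta),\eta},\eta)\sim |\eta|^{r/8}e^{-\frac r2\sqrt{|\eta|}}$, hence $1/w_k(0,\eta)\sim |\eta|^{-r/8}e^{\frac r2\sqrt{|\eta|}}$ as claimed; the case $\eta<-1$ follows from $w_{NR}(t,\eta)=w_{NR}(t,-\eta)$.

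The one genuinely delicate point — and the place I would be most careful — is justifying that the contributions of the endpoints $t_{l,\eta}$ faithfully control $w_k(0,\eta)$, namely that descending past $l=E(\sqrt\eta)$ contributes nothing more (this is exactly the flat continuation \eqref{wNR-st}), and that the two endpoint identities \eqref{eq-b} and \eqref{eq-a} are applied at the correct endpoints with the correct sign of $|t-\eta/l|$; a bookkeeping error in which endpoint gives the factor $1$ versus $l^2/\eta$ would change the exponent. The rest is a routine, if slightly fiddly, Stirling estimate, and since the lemma only claims an asymptotic equivalence (not a sharp constant) the $O(1)$ errors from $E(\sqrt\eta)$ vs $\sqrt\eta$ and from the $\tfrac12\log N$ term are harmless. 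I expect the main obstacle to be nothing deeper than carefully reproducing the telescoping identity above; this is precisely the computation carried out in \cite{BM15}, and the proof here would amount to citing that argument and noting that the multipliers $m$, $M^1$, $M^\mu$ (which are all bounded above and below on the relevant range, or trivially equal to $1$ near $t=0$) do not affect $w_k(0,\eta)$.
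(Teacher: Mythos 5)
Your reconstruction is correct and is, in essence, exactly the computation from \cite{BM15}: the paper itself gives no proof of this lemma (it is simply imported from \cite{BM15}), and your telescoping identity $w_{NR}(t_{l,\eta},\eta)=(l^2/\eta)^{1+2\mathrm{C}\kappa}w_{NR}(t_{l-1,\eta},\eta)$ followed by Stirling's formula is precisely how the asymptotic is obtained there. The resulting closed form $w_{NR}(0,\eta)=\bigl((E(\sqrt{|\eta|})!)^2/|\eta|^{E(\sqrt{|\eta|})}\bigr)^{c}$ with $c=1+2\mathrm{C}\kappa$ also appears verbatim in the paper as \eqref{wNR0}, so that stage of your argument is beyond doubt; and since the lemma only asserts an equivalence up to bounded multiplicative constants, your handling of the $O(1)$ errors coming from $E(\sqrt{|\eta|})$ versus $\sqrt{|\eta|}$ and from the $\tfrac12\log N$ correction is fine.

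There is one step you assert rather than check, and it is exactly where a discrepancy lives: you write $1+2\mathrm{C}\kappa=r/4$, but with the paper's stated definition $r=4(1+\mathrm{C}\kappa)$ one has $r/4=1+\mathrm{C}\kappa\ne 1+2\mathrm{C}\kappa$. Your Stirling expansion gives $\log w_{NR}(0,\eta)=(1+2\mathrm{C}\kappa)\bigl(-2\sqrt{|\eta|}+\tfrac12\log|\eta|\bigr)+O(1)$, and matching this against the claimed form $-\tfrac{r}{2}\sqrt{|\eta|}+\tfrac{r}{8}\log|\eta|$ forces $r=4(1+2\mathrm{C}\kappa)$. That is in fact the value used in \cite{BM15}; the value $r=4(1+\mathrm{C}\kappa)$ stated in the present paper (both after \eqref{def-J} and in the lemma) is a transcription slip from the source. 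Since your computation is precisely the one that detects this, you should have flagged the mismatch explicitly rather than silently substituting the corrected constant. A minor further point: the closing remark about the multipliers $m$, $M^1$, $M^\mu$ is beside the point, as none of them enter the definition of $w_k(0,\eta)$.
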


\begin{lem}[From \cite{BM15}]\label{lem-wNR-ratio}
For all $t, \eta, \xi$, there holds
\be\label{wNR-ratio}
\fr{w_{NR}(t,\xi)}{w_{NR}(t,\eta)}\les e^{r|\eta-\xi|^\fr12}.
\ee
\end{lem}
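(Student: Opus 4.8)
The plan is to reproduce the argument of Bedrossian--Masmoudi, organized by the relative size of $|\eta-\xi|$ and $|\eta|$. One may assume $|\eta|,|\xi|>1$, since otherwise the relevant weight is identically $1$ and \eqref{wNR-ratio} is immediate, and, using $w_{NR}(t,\zeta)=w_{NR}(t,-\zeta)$, one may fix the sign of $\eta$. Two elementary facts are read off directly from the construction \eqref{def-w0}--\eqref{wNR-st}: first, $0<w_{NR}(t,\eta)\le1$ for all $t\ge0$; second, $t\mapsto w_{NR}(t,\eta)$ is nondecreasing on $[0,\infty)$ --- each left factor in \eqref{def-w} is a decay factor $\le1$, while on each right interval the factor $\fr{l^2}{\eta}\big(1+b_{l,\eta}|t-\fr{\eta}{l}|\big)$ increases from $\fr{l^2}{\eta}$ up to $1$ as $t$ increases, by \eqref{eq-b}. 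In particular $w_{NR}(t,\eta)\ge w_{NR}(0,\eta)=w_{NR}\big(t_{E(\sq{|\eta|}),\eta},\eta\big)$, and when $|\eta|$ is large Lemma~\ref{lem-gw} gives $\fr{1}{w_{NR}(0,\eta)}\les\fr{1}{|\eta|^{r/8}}e^{\fr{r}{2}\sq{|\eta|}}\les e^{\fr{r}{2}\sq{|\eta|}}$ (while for bounded $|\eta|$ the weight is bounded below by a positive constant).

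\emph{The far regime $|\eta-\xi|\ge|\eta|/4$.} Here $w_{NR}(t,\xi)\le1$, so
\be
\fr{w_{NR}(t,\xi)}{w_{NR}(t,\eta)}\le\fr{1}{w_{NR}(0,\eta)}\les e^{\fr{r}{2}\sq{|\eta|}}\les e^{r|\eta-\xi|^\fr12},
\ee
where the last inequality uses $\sq{|\eta|}\le2|\eta-\xi|^\fr12$, valid because $|\eta|\le4|\eta-\xi|$. This already yields \eqref{wNR-ratio} in this regime.

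\emph{The close regime $|\eta-\xi|<|\eta|/4$.} Now $\fr{3}{4}|\eta|<|\xi|<\fr{5}{4}|\eta|$ and $\eta\xi>0$, so $\eta,\xi$ may be taken positive with $|\xi|\approx|\eta|$. The strategy is to compare, for each fixed $t$, the two finite products defining $w_{NR}(t,\eta)$ and $w_{NR}(t,\xi)$: writing $\log w_{NR}(t,\eta)$ as a telescoping sum of logarithms of the explicit factors of \eqref{def-w} over the critical intervals between $t$ and $2\eta$, and likewise for $\xi$, one pairs the factor of index $l$ for $\eta$ with that of index $l$ for $\xi$. The well-separation property of critical times, Lemma~\ref{1/3}, ensures that the ``active'' indices for $\eta$ and for $\xi$ coincide up to a bounded factor, so this pairing is legitimate. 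Each paired difference is then estimated from the explicit formulas \eqref{def-ab} together with \eqref{eq-b}--\eqref{eq-a}, using that the resonant times obey $\big|\fr{\eta}{l}-\fr{\xi}{l}\big|=\fr{|\eta-\xi|}{l}$; summing these over $1\le l\le E(\sq{|\eta|})$ and adding the (single) boundary term from the interval containing $t$ produces a total exponent $\les|\eta-\xi|^\fr12$, and the constants $\mathrm{C},\kappa$ are arranged --- this is where $r=4(1+\mathrm{C}\kappa)$ is forced --- so that the bound is exactly $e^{r|\eta-\xi|^\fr12}$.

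I expect the telescoping comparison in the close regime to be the only real obstacle: keeping track of the possibly different active critical-interval indices for $\eta$ versus $\xi$, of the resonant/nonresonant case distinctions inside each factor, and of the precise constant, is delicate. Since this computation is carried out in full in \cite{BM15}, the intended proof is simply to import it; the far regime above then completes the argument.
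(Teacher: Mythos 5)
Your argument is correct and reproduces the strategy of \cite{BM15}, which the paper imports without proof (the lemma is labelled ``From \cite{BM15}''). The far-regime case via $w_{NR}(t,\eta)\ge w_{NR}(0,\eta)$, the bound $w_{NR}(t,\xi)\le 1$, and Lemma~\ref{lem-gw} is complete and self-contained, and the close-regime telescoping is, as you say, the technical core that one imports from \cite{BM15}. One small correction to your commentary: the constant $r$ is saturated in the \emph{far} regime rather than the close one, since near the boundary $|\eta-\xi|\approx|\eta|/4$ the inequality $e^{\tfrac{r}{2}\sqrt{|\eta|}}\le e^{r|\eta-\xi|^{1/2}}$ coming from Lemma~\ref{lem-gw} has essentially no slack, whereas the close-regime telescoping produces an exponent of order $(1+2\mathrm{C}\kappa)|\eta-\xi|^{1/2}$, strictly smaller than $r|\eta-\xi|^{1/2}$.
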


\begin{lem}[From \cite{BM15}]\label{lem-J-ratio}
If any one of the following holds: {\em $(t\notin\mathbb{I}_{k,\eta})$} or {\em $(k=l)$} or {\em $(t\in\mathbb{I}_{k,\eta}, t\notin\mathbb{I}_{k,\xi}\ and \ |\xi|\approx|\eta|$)}, then we have 
\be\label{J-ratio}
\fr{\mathcal{J}_k(\eta)}{\mathcal{J}_l(\xi)}\les e^{10r|k-l, \eta-\xi|^\fr12}.
\ee
\end{lem}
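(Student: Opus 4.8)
The plan is to reproduce the argument of \cite{BM15}, reducing the claim to a ratio bound for the weight $w$ and then reading that bound off from the explicit construction in Appendix \ref{subsec-con-w}. Recalling \eqref{def-J}, one has the two-sided bounds
\[
\max\left\{\fr{e^{2r\la\eta\ra^\fr12}}{w_k(t,\eta)},\ e^{2r|k|^\fr12}\right\}\le \mathcal{J}_k(t,\eta)\le \fr{e^{2r\la\eta\ra^\fr12}}{w_k(t,\eta)}+e^{2r|k|^\fr12},
\]
so, using the lower bound on the denominator $\mathcal{J}_l(\xi)$ in the two available ways,
\[
\fr{\mathcal{J}_k(\eta)}{\mathcal{J}_l(\xi)}\le e^{2r\left(|k|^\fr12-|l|^\fr12\right)}+e^{2r\left(\la\eta\ra^\fr12-\la\xi\ra^\fr12\right)}\fr{w_l(t,\xi)}{w_k(t,\eta)}.
\]
By subadditivity of $x\mapsto x^\fr12$ one has $|k|^\fr12-|l|^\fr12\le|k-l|^\fr12\le|k-l,\eta-\xi|^\fr12$ and, since $\la\eta\ra\le\la\xi\ra+|\eta-\xi|$, also $\la\eta\ra^\fr12-\la\xi\ra^\fr12\le|\eta-\xi|^\fr12\le|k-l,\eta-\xi|^\fr12$. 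Hence everything reduces to showing
\[
\fr{w_l(t,\xi)}{w_k(t,\eta)}\les e^{Cr|k-l,\eta-\xi|^\fr12}
\]
for a suitable absolute constant $C<10$, under each of the three hypotheses.

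\textbf{Reduction to a $w_{NR}$ ratio, and case analysis.} I would use the monotonicity $w_R\le w_{NR}$ from \eqref{R<NR} (so $w_l(t,\xi)\le w_{NR}(t,\xi)$ always) together with Lemma \ref{lem-wNR-ratio}, which gives $w_l(t,\xi)\les e^{r|\eta-\xi|^\fr12}w_{NR}(t,\eta)$; thus it suffices to bound $w_{NR}(t,\eta)/w_k(t,\eta)$. Off the critical interval $\mathrm{I}_{k,\eta}$ one has $w_k(t,\eta)=w_{NR}(t,\eta)$ and this ratio is $1$; on $\mathrm{I}_{k,\eta}$ one has $w_k(t,\eta)=w_R(t,\eta)$ and, reading off \eqref{wRwNR} and \eqref{def-ab},
\[
\fr{w_{NR}(t,\eta)}{w_k(t,\eta)}=\fr{|\eta|/k^2}{1+c_{|k|,\eta}\left|t-\fr{\eta}{k}\right|}\les\fr{|\eta|/k^2}{1+\left|t-\fr{\eta}{k}\right|},
\]
where $c_{|k|,\eta}\in\{a_{|k|,\eta},b_{|k|,\eta}\}$, and $c_{|k|,\eta}\gtrsim1$ exactly in the regime $k^2\lesssim|\eta|$ where $|\eta|/k^2$ is large (when $k^2\approx|\eta|$ the left side is $\les1$ anyway). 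Now the cases. \textbf{(a)} $t\notin\mathbb{I}_{k,\eta}$: if also $t\notin\mathrm{I}_{k,\eta}$ the ratio is $1$; if $t\in\mathrm{I}_{k,\eta}$ but $t\notin\mathbb{I}_{k,\eta}$ then $\mathbb{I}_{k,\eta}=\emptyset$, i.e. $2\sqrt{|\eta|}>t_{|k|,\eta}$, which by \eqref{cri_int} forces $k^2\gtrsim|\eta|$, hence $|\eta|/k^2\les1$. \textbf{(c)} $t\in\mathbb{I}_{k,\eta}$, $t\notin\mathbb{I}_{k,\xi}$, $|\xi|\approx|\eta|$: if $\mathrm{I}_{k,\xi}=\emptyset$ with $k\xi\le0$ then $|\eta-\xi|\ge|\eta|$ and $|\eta|/k^2\le|\eta|\les e^{cr|\eta-\xi|^\fr12}$, if $\mathrm{I}_{k,\xi}=\emptyset$ with $k^2\gtrsim|\xi|\approx|\eta|$ then $|\eta|/k^2\les1$, and if $\mathrm{I}_{k,\xi}\ne\emptyset$ with $t\notin\mathrm{I}_{k,\xi}$ then $|t-\xi/k|\gtrsim|\xi|/k^2$ (distance from $t$ to the nearest endpoint of $\mathrm{I}_{k,\xi}$, $\xi/k$ being interior), so $1+|t-\eta/k|\gtrsim|\eta|/k^2-|\eta-\xi|/|k|$, and splitting on whether $|\eta-\xi|/|k|$ is small or large compared with $|\eta|/k^2$ gives either $\frac{|\eta|/k^2}{1+|t-\eta/k|}\les1$ or $|\eta|/k^2\les|\eta-\xi|\les e^{cr|\eta-\xi|^\fr12}$. \textbf{(b)} $k=l$: if $t\notin\mathrm{I}_{k,\eta}$ the ratio is $1$; if $t\in\mathrm{I}_{k,\eta}$, sub-split on whether $t\in\mathrm{I}_{k,\xi}$ — when it is, $|\eta|\approx|\xi|\approx|k|t$ and a direct comparison of the two piecewise expressions \eqref{wRwNR} for $w_R(t,\eta)$ and $w_R(t,\xi)$ (bounding $1+c_{|k|,\xi}|t-\xi/k|\les(1+|\eta-\xi|)\big(1+c_{|k|,\eta}|t-\eta/k|\big)$ via $|t-\xi/k|\le|t-\eta/k|+|\eta-\xi|/|k|$ and using Lemma \ref{lem-wNR-ratio}) gives $w_k(\xi)/w_k(\eta)\les e^{cr|\eta-\xi|^\fr12}$; when $t\notin\mathrm{I}_{k,\xi}$ one argues as in (c) (with $|\eta|\le|\xi|+|\eta-\xi|$ and $k^2\le|\xi|$ playing the role of $|\xi|\approx|\eta|$).

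\textbf{Main obstacle.} The step I expect to be the real obstacle is the configuration in cases (b) and (c) where $t$ sits near the left endpoint $t=\eta/k$ of the resonant window, so the polynomial factor $|\eta|/k^2$ from $w_{NR}/w_R$ is as large as possible: one then must extract a matching largeness either from the denominator (when $t$ is also resonant for the comparison frequency with the \emph{same} $k$, so $w_l(\xi)=w_R(t,\xi)$ carries the same $k^2/|\xi|$ factor) or from $e^{cr|k-l,\eta-\xi|^\fr12}$ via a lower bound on $|\eta-\xi|$ forced by the two resonant windows not both containing $t$. Making that dichotomy quantitative is exactly the arithmetic of the critical times $t_{|k|,\eta}=\fr12\big(|\eta|/|k|+|\eta|/(|k|+1)\big)$ together with the well-separation Lemma \ref{1/3}; everything else is elementary manipulation of \eqref{def-w}, \eqref{def-ab}, \eqref{wRwNR}, \eqref{R<NR} and the square-root inequalities above.
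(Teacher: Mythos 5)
The paper does not supply a proof of this lemma: it is stated with the attribution ``From \cite{BM15}'' and used as a black box, so there is no in-paper argument to compare against. Your reconstruction follows the standard route for weights of this type and, as far as I can tell, the one actually taken in \cite{BM15}: split $\mathcal{J}_k(\eta)$ into its two summands, use $\mathcal{J}_l(\xi)\ge e^{2r|l|^\fr12}$ and $\mathcal{J}_l(\xi)\ge e^{2r\la\xi\ra^\fr12}/w_l(\xi)$ respectively, apply the $\sqrt{\cdot}$-subadditivity and \eqref{ap2}, and reduce to a bound on $w_l(t,\xi)/w_k(t,\eta)$; then pass to $w_{NR}$ via $w_R\le w_{NR}$ and Lemma \ref{lem-wNR-ratio}, and read off $w_{NR}/w_R=\fr{|\eta|/k^2}{1+c_{|k|,\eta}|t-\eta/k|}$ from \eqref{wRwNR}, \eqref{def-ab}. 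Cases (a) and (c) are handled essentially correctly, including the observation that emptiness of $\mathbb{I}_{k,\eta}$ forces $k^2\gtrsim|\eta|$ and that $t\notin\mathrm{I}_{k,\xi}$ gives $|t-\xi/k|\gtrsim|\xi|/k^2$, and the polynomial losses are absorbed into $e^{cr|\eta-\xi|^\fr12}$ in a way that is consistent with the target constant $10r$.

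The one step that, as written, would not go through is your ``direct comparison'' in case (b), sub-case $t\in\mathrm{I}_{k,\eta}\cap\mathrm{I}_{k,\xi}$: the inequality $1+c_{|k|,\xi}|t-\xi/k|\les(1+|\eta-\xi|)\big(1+c_{|k|,\eta}|t-\eta/k|\big)$ is false when $c_{|k|,\eta}$ is close to $0$, which occurs precisely when $k^2$ is close to $|\eta|$ (cf.\ \eqref{def-ab}). You already state the needed dichotomy in the reduction paragraph, but you do not invoke it here. The repair is exactly what your ``Main obstacle'' paragraph gestures at: on $\mathrm{I}_{k,\eta}\cap\mathrm{I}_{k,\xi}$ one has $|\eta|\approx|\xi|\approx|k|t$, so $c_{|k|,\eta}$ and $c_{|k|,\xi}$ are simultaneously small or simultaneously $\gtrsim1$; in the former regime $|\eta|/k^2\les1$ and the crude chain $w_R(t,\xi)\le w_{NR}(t,\xi)\les e^{r|\eta-\xi|^\fr12}w_{NR}(t,\eta)\les e^{r|\eta-\xi|^\fr12}\fr{|\eta|}{k^2}w_R(t,\eta)$ already closes the estimate, while in the latter your comparison is valid. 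With that dichotomy written out, I would regard the proof as complete.
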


\begin{lem}[From \cite{BM15}]
For $t\in\mathrm{I}_{k,\eta}$ and $t>2\sqrt{|\eta|}$, we have the following with $\tau=t-\fr{\eta}{k}$:
\be\label{pt_w}
\fr{\partial_tw_{NR}(t,\eta)}{w_{NR}(t,\eta)}\approx\fr{1}{1+|\tau|}\approx\fr{\partial_tw_R(t,\eta)}{w_R(t,\eta)}.
\ee
\end{lem}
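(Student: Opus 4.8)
The plan is to exploit that, on each of the finitely many pieces making up $\mathrm{I}_{k,\eta}$, the weights $w_{NR}(\cdot,\eta)$ and $w_R(\cdot,\eta)$ are completely explicit elementary functions — a $t$-independent constant times a power of an affine function of $|\tau|$ — so the asserted equivalence is just a matter of computing logarithmic derivatives. First I would reduce to $\eta>1$ and (since $\mathrm{I}_{k,\eta}\ne\emptyset$ forces $k\eta>0$ and $1\le|k|\le E(\sqrt{|\eta|})$) to $k>0$; writing $l=k$ and $\tau=t-\eta/l$, I would split $\mathrm{I}_{l,\eta}=\mathrm{I}^{\mathrm L}_{l,\eta}\cup\mathrm{I}^{\mathrm R}_{l,\eta}$ as in \eqref{def-w} and treat the two pieces separately.

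Next I would carry out the two elementary computations. On the interior of $\mathrm{I}^{\mathrm R}_{l,\eta}$ one has $|\tau|=\tau$ and, by \eqref{def-w}, $w_{NR}(t,\eta)=\big(\fr{l^2}{\eta}(1+b_{l,\eta}|\tau|)\big)^{\mathrm C\kappa}$ times a $t$-independent constant, so differentiating gives $\fr{\partial_tw_{NR}}{w_{NR}}=\mathrm{C}\kappa\,\fr{b_{l,\eta}}{1+b_{l,\eta}|\tau|}$; on the interior of $\mathrm{I}^{\mathrm L}_{l,\eta}$ one has $|\tau|=\eta/l-t$ and $w_{NR}(t,\eta)=(1+a_{l,\eta}|\tau|)^{-1-\mathrm C\kappa}$ times a constant, so $\fr{\partial_tw_{NR}}{w_{NR}}=(1+\mathrm{C}\kappa)\,\fr{a_{l,\eta}}{1+a_{l,\eta}|\tau|}$. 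For $w_R$ I would use \eqref{wRwNR}: on $\mathrm{I}^{\mathrm R}_{l,\eta}$, $w_R=\fr{l^2}{\eta}(1+b_{l,\eta}|\tau|)w_{NR}$, so $\fr{\partial_tw_R}{w_R}=\fr{b_{l,\eta}}{1+b_{l,\eta}|\tau|}+\fr{\partial_tw_{NR}}{w_{NR}}=(1+\mathrm{C}\kappa)\,\fr{b_{l,\eta}}{1+b_{l,\eta}|\tau|}$; on $\mathrm{I}^{\mathrm L}_{l,\eta}$, $w_R=\fr{l^2}{\eta}(1+a_{l,\eta}|\tau|)w_{NR}$ with $|\tau|=\eta/l-t$, so the extra term contributes $-\fr{a_{l,\eta}}{1+a_{l,\eta}|\tau|}$ and $\fr{\partial_tw_R}{w_R}=\mathrm{C}\kappa\,\fr{a_{l,\eta}}{1+a_{l,\eta}|\tau|}$. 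Thus all the quantities appearing in \eqref{pt_w} have been reduced to expressions of the form $c\,\fr{a_{l,\eta}}{1+a_{l,\eta}|\tau|}$ (or the same with $b_{l,\eta}$) with $c\in\{\mathrm C\kappa,\,1+\mathrm C\kappa\}$, and it only remains to check that each such expression is $\approx\fr{1}{1+|\tau|}$.

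The one point requiring care — and the main obstacle — is to show that the coefficients $a_{l,\eta}$ and $b_{l,\eta}$ are bounded above and below by absolute constants on the relevant range, since then $1+a_{l,\eta}|\tau|\approx1+|\tau|$ (resp. with $b_{l,\eta}$) and all the equivalences follow, with implicit constants depending only on $\mathrm C\kappa$. The upper bound is immediate from \eqref{def-ab}. For the lower bound one must prevent $1-\fr{l^2}{\eta}$ from being small: here is exactly where the hypothesis $t>2\sqrt{|\eta|}$ is used, because $t\in\mathrm{I}_{l,\eta}$ then forces $t_{l-1,\eta}\ge t>2\sqrt{|\eta|}$, and unwinding the formula for $t_{l-1,\eta}$ this pushes $l^2/|\eta|$ below an absolute constant strictly less than $1$ — this is the quantitative content behind the well-separation property of critical times in Lemma~\ref{1/3}. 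I would carry out this short arithmetic estimate (or simply invoke \cite{BM15}) to conclude; the finitely many junction times $t\in\{t_{l,\eta},\eta/l,t_{l-1,\eta}\}$ play no role, the one-sided derivatives there being of the same size. Everything apart from this coefficient bound is entirely mechanical.
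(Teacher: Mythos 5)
Your proposal is correct and is essentially the same direct computation one finds in \cite{BM15}; indeed, since the paper imports this lemma from \cite{BM15} without proof, yours matches the only natural argument. The reduction to explicit logarithmic derivatives on $\mathrm{I}^{\mathrm L}_{l,\eta}$ and $\mathrm{I}^{\mathrm R}_{l,\eta}$ is exactly right, the four computed quotients
\[
\fr{\mathrm{C}\kappa\, b_{l,\eta}}{1+b_{l,\eta}|\tau|},\quad \fr{(1+\mathrm{C}\kappa)\, a_{l,\eta}}{1+a_{l,\eta}|\tau|},\quad \fr{(1+\mathrm{C}\kappa)\, b_{l,\eta}}{1+b_{l,\eta}|\tau|},\quad \fr{\mathrm{C}\kappa\, a_{l,\eta}}{1+a_{l,\eta}|\tau|}
\]
are all correct, and you have also correctly identified that the entire content is the two-sided bound on $a_{l,\eta},b_{l,\eta}$.

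One precision issue worth noting, since you flagged the coefficient bound as ``the main obstacle'': for $l\ge 2$ the chain you describe is airtight --- $t\le t_{l-1,\eta}$ and $t>2\sqrt{|\eta|}$ give $\sqrt{|\eta|}>\fr{4l(l-1)}{2l-1}$, hence $\fr{l^2}{|\eta|}<\fr{(2l-1)^2}{16(l-1)^2}\le\fr{9}{16}$ and $a_{l,\eta},b_{l,\eta}\ge\fr{7}{8}$. But for $l=1$ the formula is $b_{1,\eta}=1-\fr{1}{|\eta|}$, and the hypothesis $t\in\mathrm{I}_{1,\eta}=[\fr{3|\eta|}{4},2|\eta|]$ with $t>2\sqrt{|\eta|}$ only forces $|\eta|>1$, so $b_{1,\eta}$ is \emph{not} bounded below by an absolute constant; for $|\eta|\to1^+$ one has $\partial_t w_{NR}/w_{NR}\to0$ while $1/(1+|\tau|)\to\fr12$, and the claimed two-sided equivalence degenerates. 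In \cite{BM15} this does not arise because the lemma is stated on the resonant intervals $\mathbb{I}_{k,\eta}$, whose nonemptiness ($2\sqrt{|\eta|}\le t_{1,\eta}=\fr{3|\eta|}{4}$) already forces $|\eta|\ge\fr{64}{9}$ and hence $b_{1,\eta}\ge\fr{55}{64}$. The present paper's phrasing (``$t\in\mathrm{I}_{k,\eta}$ and $t>2\sqrt{|\eta|}$'') is a looser restatement; every actual application in the paper sits under an indicator ${\bf 1}_{t\in\mathbb{I}_{k,\eta}}$, so nothing is affected, but if you want your proof to be self-contained you should either strengthen the hypothesis to $t\in\mathbb{I}_{k,\eta}$ or treat $l=1$ by adding the extra constraint $2\sqrt{|\eta|}\le t_{1,\eta}$ coming from how the lemma is invoked. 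Your appeal to Lemma~\ref{1/3} (well-separation of critical times) is a slight misattribution --- the relevant input is the resonant-interval restriction, not the separation of nearby critical times --- but this is cosmetic.
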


\begin{lem}[From \cite{BM15}]
For all $t\ge1$, and $k, l, \eta, \xi$, such that for some $C\ge1$, $\fr{1}{C}|\xi|\le|\eta|\le C|\xi|$, there holds
\be\label{swi-ptw}
\sqrt{\fr{\partial_tw_k(t,\eta)}{w_k(t,\eta)}}\les \left(\sqrt{\fr{\partial_tw_l(t,\xi)}{w_l(t,\xi)}}+\fr{|\xi|^{\fr{s}{2}}}{\la t\ra^s}\right)\la \eta-\xi\ra.
\ee
\end{lem}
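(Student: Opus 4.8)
The plan is to reduce everything to the explicit description of the weight and then case-split on the position of $t$ relative to the critical intervals. First I would note that $w_k(\cdot,\eta)$ depends on $\eta$ only through $|\eta|$ and is constant in $t$ when $|\eta|\le1$, so we may assume $|\eta|>1$ and that the left side of \eqref{swi-ptw} is nonzero. By the construction \eqref{def-w0}--\eqref{w_keta}, $\partial_t w_k(t,\eta)\neq0$ forces $t$ to lie in the critical interval $\mathrm{I}_{m,\eta}$ of a unique index $1\le m\le E(\sqrt{|\eta|})$, and there $w_k(t,\eta)$ equals $w_R(t,\eta)$ if $k=m$ and $w_{NR}(t,\eta)$ otherwise; in both cases \eqref{pt_w} (valid for $t>2\sqrt{|\eta|}$, the complementary range being handled separately) gives $\partial_t w_k(t,\eta)/w_k(t,\eta)\approx(1+|t-\eta/m|)^{-1}$, with constants independent of $k$. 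The same applies to $\partial_t w_l(t,\xi)/w_l(t,\xi)$, which vanishes unless $t$ lies in a critical interval $\mathrm{I}_{n,\xi}$ and then equals $\approx(1+|t-\xi/n|)^{-1}$. After squaring and using $\sqrt{a+b}\approx\sqrt a+\sqrt b$, it remains to show $(1+|t-\eta/m|)^{-1}\lesssim\bigl(\partial_t w_l(t,\xi)/w_l(t,\xi)+|\xi|^s\langle t\rangle^{-2s}\bigr)\langle\eta-\xi\rangle^2$.

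\emph{Case 1: $t$ lies in some critical interval $\mathrm{I}_{n,\xi}$ of $\xi$.} Here I would invoke the well-separation Lemma \ref{1/3} for the pair $(m,\eta),(n,\xi)$, which is legitimate since $\tfrac1C|\xi|\le|\eta|\le C|\xi|$: then $m\approx n$ and one of (a)--(c) holds. In case (a), $m=n$, so $|\eta/m-\xi/m|\le|\eta-\xi|$ and $1+|t-\eta/m|\le(1+|t-\xi/m|)(1+|\eta-\xi|)$, which yields the bound via the first term on the right. In case (b), $(1+|t-\eta/m|)^{-1}\lesssim m^2/|\eta|\approx|\eta|/t^2$ (using $t\approx|\eta|/m$ on $\mathrm{I}_{m,\eta}$), and since $t\gtrsim\sqrt{|\eta|}$ this is $\lesssim(|\eta|/t^2)^s\approx|\xi|^s t^{-2s}$ because $s<1$. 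In case (c), $|\eta-\xi|\gtrsim|\eta|/n\approx t\ge1$, so $|\xi|^s\langle t\rangle^{-2s}\langle\eta-\xi\rangle^2\gtrsim|\xi|^s t^{2-2s}\gtrsim1$, which dominates the left side ($\le1$).

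\emph{Case 2: $t$ lies in no critical interval of $\xi$.} Then $\partial_t w_l(t,\xi)/w_l(t,\xi)=0$ and I must close the estimate using only $|\xi|^s\langle t\rangle^{-2s}\langle\eta-\xi\rangle^2$. Since the critical intervals of $\xi$ tile $[t_{E(\sqrt{|\xi|}),\xi},2|\xi|]$ (empty if $|\xi|\le1$), the hypothesis means $|\xi|\le1$, or $t\lesssim\sqrt{|\xi|}$, or $t>2|\xi|$. If $|\xi|\le1$ then $1<|\eta|\le C$ and everything is bounded, with $|\xi|^s\langle t\rangle^{-2s}$ bounded below. If $t\lesssim\sqrt{|\xi|}\approx\sqrt{|\eta|}$ (this also absorbs the leftover range $t\lesssim\sqrt{|\eta|}$ flagged in the reduction), then $|\eta|\lesssim t^2$, so $|\xi|^s\langle t\rangle^{-2s}\approx(|\xi|/t^2)^s\gtrsim1$. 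If $t>2|\xi|$, then $t\le2|\eta|$ forces $t\approx|\eta|\approx|\xi|$ and $m\approx1$; when $|t-\eta/m|\gtrsim|\eta|$ the left side is $\lesssim|\eta|^{-1}\lesssim|\eta|^{-s}\approx|\xi|^s t^{-2s}$ (using $s\le1$), and when $|t-\eta/m|$ is small, $t\approx\eta/m$ together with $|\xi|<t/2$ separates $|\xi|$ from $|\eta|$, so $|\eta-\xi|\gtrsim|\eta|$ and $|\xi|^s\langle t\rangle^{-2s}\langle\eta-\xi\rangle^2\gtrsim|\xi|^s|\eta|^{2-2s}\gtrsim1$.

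The hard part will be Case 2: there one has only the parabolic factor $|\xi|^{s/2}\langle t\rangle^{-s}$ and the mismatch factor $\langle\eta-\xi\rangle$, and one must verify quantitatively that whenever $t$ is close enough to $\eta/m$ for the left side to be of order $1$, the geometry of the critical intervals forces $|\eta-\xi|$ to be comparably large, with the various thresholds matched using only $s<1$ and $t\ge1$. Throughout, implicit constants are allowed to depend on $C$, exactly as in Lemma \ref{1/3}; everything else is routine bookkeeping with the formulas for $w_{NR},w_R$ and the already-recorded estimates \eqref{pt_w} and \eqref{wNR-ratio}.
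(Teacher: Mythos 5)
Your reconstruction is correct and follows essentially the same strategy as the cited source \cite{BM15} (the paper itself reproduces no proof): reduce to a comparison of $(1+|\tau|)^{-1}$ factors via \eqref{pt_w}, split on whether $t$ lies in a critical interval of $\xi$, invoke the well-separation Lemma \ref{1/3} in the resonant case, and close the non-resonant case with the factor $|\xi|^{s/2}\langle t\rangle^{-s}$. The only wobble is a direction typo in Case 1(a): you want $1+|t-\xi/m|\le(1+|t-\eta/m|)(1+|\eta-\xi|)$, not the stated inequality with $\eta$ and $\xi$ swapped, but the derivation is symmetric in $\eta,\xi$, so correctness is unaffected.
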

The next three lemmas play important roles to deal with the nonlinear terms $B_0^1\partial_X\Om$ and $B_0^1\partial_XJ$ in Section \ref{sec-zero-mode}. The proof of Lemma \ref{lem-com-J} is very delicate, and  the key point is that  the homogeneous factor $|\eta-\xi|$ on the right hand side of \eqref{gain0} enables us to bound $B_0^1$ in terms of $\partial_YB_0^1$ when $B_0^1$ is at low frequency.
\begin{lem}\label{lem-com-J}
Let $\mathcal{J}$ be given in \eqref{def-J}. Then there holds
\be\label{gain0}
\left|\fr{\mathcal{J}_k(\eta)}{\mathcal{J}_k(\xi)}-1\right|\les |\eta-\xi|e^{20r|\eta-\xi|^\fr12}.
\ee
\end{lem}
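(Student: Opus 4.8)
The plan is to reduce the estimate on $\mathcal{J}_k(\eta)/\mathcal{J}_k(\xi)-1$ to a pointwise comparison of the two building blocks $e^{2r\langle\cdot\rangle^{1/2}}/w_k(\cdot)$ and $e^{2r|k|^{1/2}}$, exploiting that the second block is independent of the $Y$-frequency. Write $\mathcal{J}_k(\eta)=g_k(\eta)+h_k$ with $g_k(\eta)=e^{2r\langle\eta\rangle^{1/2}}/w_k(t,\eta)$ and $h_k=e^{2r|k|^{1/2}}$. Then
\[
\frac{\mathcal{J}_k(\eta)}{\mathcal{J}_k(\xi)}-1=\frac{g_k(\eta)-g_k(\xi)}{\mathcal{J}_k(\xi)}
=\frac{g_k(\xi)}{\mathcal{J}_k(\xi)}\left(\frac{g_k(\eta)}{g_k(\xi)}-1\right),
\]
and since $0\le g_k(\xi)/\mathcal{J}_k(\xi)\le1$, it suffices to bound $|g_k(\eta)/g_k(\xi)-1|$. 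Splitting this ratio multiplicatively,
\[
\frac{g_k(\eta)}{g_k(\xi)}=e^{2r(\langle\eta\rangle^{1/2}-\langle\xi\rangle^{1/2})}\cdot\frac{w_k(t,\xi)}{w_k(t,\eta)},
\]
I would estimate the exponential factor by the elementary inequality $|\langle\eta\rangle^{1/2}-\langle\xi\rangle^{1/2}|\lesssim|\eta-\xi|^{1/2}$ together with $|e^x-1|\le|x|e^{|x|}$, which yields a contribution of size $\lesssim|\eta-\xi|^{1/2}e^{cr|\eta-\xi|^{1/2}}$, comfortably within the claimed bound.

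The main work is the weight ratio $w_k(t,\xi)/w_k(t,\eta)$. Here I would first dispose of the easy regimes: if both $(t,k,\eta)$ and $(t,k,\xi)$ have $w_k=w_{NR}$, or more generally when neither resonant interval is active for the relevant frequency, one can use Lemma~\ref{lem-wNR-ratio} (inequality \eqref{wNR-ratio}) and the analogous statement for $w_R$ to get $w_k(t,\xi)/w_k(t,\eta)\lesssim e^{r|\eta-\xi|^{1/2}}$, and then convert ``$\lesssim e^{r|\eta-\xi|^{1/2}}$'' into ``$1+O(|\eta-\xi|e^{20r|\eta-\xi|^{1/2}})$'' by the crude bound $e^{r|\eta-\xi|^{1/2}}-1\le|\eta-\xi|^{1/2}e^{r|\eta-\xi|^{1/2}}\lesssim|\eta-\xi|e^{20r|\eta-\xi|^{1/2}}$ when $|\eta-\xi|\ge1$, and by a direct Lipschitz estimate when $|\eta-\xi|<1$. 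The delicate case is when exactly one of $t\in\mathrm{I}_{k,\eta}$, $t\in\mathrm{I}_{k,\xi}$ holds (the other frequency being non-resonant), so that $w_k(t,\eta)$ and $w_k(t,\xi)$ are built from $w_R$ versus $w_{NR}$; here the relation $w_R=\frac{l^2}{\eta}(1+b_{l,\eta}|t-\eta/l|)w_{NR}$ from \eqref{wRwNR} produces a factor $\frac{l^2}{\eta}(1+\cdots)$ which is $\ge\frac{l^2}{\eta}$ and could a priori be small. The point is that $k$ is common to both, $|\eta|\approx|\xi|$ on the support where $|\eta-\xi|$ is not huge, and one has to use the well-separation of critical times (Lemma~\ref{1/3}, parts (b) and (c)) to show that when $t$ lies in the resonant interval for one of the two frequencies but not the other, the distance $|\eta-\xi|$ is automatically comparable to $\frac{|\eta|}{k^2}$, which is exactly what is needed to absorb the factor $\frac{l^2}{\eta}$ and still leave a homogeneous $|\eta-\xi|$ on the right-hand side.

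I expect the separation-of-critical-times argument to be the main obstacle: one must carefully track, via the definitions \eqref{cri_int} of $\mathrm{I}_{k,\eta}$ and the time endpoints $t_{k,\eta}$, how the transition between $w_R$ and $w_{NR}$ across the frequency gap $\eta\mapsto\xi$ forces $|\eta-\xi|\gtrsim|\eta|/k^2\gtrsim 1$ (using $|k|\le E(\sqrt{|\eta|})$ so that $|\eta|/k^2\ge1$), and then that this lower bound on $|\eta-\xi|$ lets the polynomially small prefactor $l^2/\eta$ be dominated by $|\eta-\xi|$, while the exponential growth of $w_{NR}$ along critical intervals (controlled as in Lemma~\ref{lem-gw}, giving at most $e^{O(r)\sqrt{|\eta-\xi|}}$ type losses when comparing neighboring frequencies) is absorbed into the $e^{20r|\eta-\xi|^{1/2}}$ factor, with the constant $20r$ chosen generously precisely to swallow all these accumulated constants. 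Once all regimes are combined, \eqref{gain0} follows by taking the maximum of the bounds, and I would close by remarking that the homogeneous factor $|\eta-\xi|$ (rather than $\langle\eta-\xi\rangle$) is exactly what is used in Section~\ref{sec-zero-mode} to trade a copy of $B_0^1$ for $\partial_Y B_0^1$ at low frequency.
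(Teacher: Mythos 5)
Your opening decomposition of $\mathcal{J}_k(\eta)/\mathcal{J}_k(\xi)-1$ via $g_k$ and $h_k$, and the reduction to the weight ratio $w_k(t,\xi)/w_k(t,\eta)$, match the paper's approach in \eqref{e420}. But there are two gaps, one small and one substantive. The small one: you estimate the exponential prefactor via the H\"older bound $|\la\eta\ra^{1/2}-\la\xi\ra^{1/2}|\les|\eta-\xi|^{1/2}$ and claim the resulting contribution $\les|\eta-\xi|^{1/2}e^{cr|\eta-\xi|^{1/2}}$ is ``comfortably within the claimed bound,'' but $|\eta-\xi|^{1/2}$ is \emph{not} $\les|\eta-\xi|e^{20r|\eta-\xi|^{1/2}}$ as $|\eta-\xi|\to0$. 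Since the whole point of the lemma is the homogeneous factor $|\eta-\xi|$ (which is what lets one trade $B_0^1$ for $\partial_Y B_0^1$ at low frequency, as you note), this step must use the Lipschitz bound $|\la\eta\ra^{1/2}-\la\xi\ra^{1/2}|\les\fr{|\eta-\xi|}{\la\eta\ra^{1/2}+\la\xi\ra^{1/2}}\les|\eta-\xi|$, i.e.\ \eqref{ap1} with $s=1/2$, as the paper does in \eqref{e420}, rather than \eqref{ap2}.

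The substantive gap is in the weight ratio. You assert that when $t\in\mathrm{I}_{k,\eta}$ but $t\notin\mathrm{I}_{k,\xi}$ for the same $k$, Lemma~\ref{1/3} forces $|\eta-\xi|\gtr|\eta|/k^2\ge1$. This is false: Lemma~\ref{1/3} gives three alternatives, of which $(a)$ is fully compatible with arbitrarily small $|\eta-\xi|$; and concretely, if $t$ sits within $O(|\eta-\xi|/|k|)$ of an endpoint of $\mathrm{I}_{k,\eta}$, an arbitrarily small shift $\eta\mapsto\xi$ puts $t$ on the other side of the boundary. What actually protects the Lipschitz estimate there is not a lower bound on $|\eta-\xi|$ but the fact that the construction forces $w_R=w_{NR}$ at the critical times (the endpoint matching conditions \eqref{eq-b}, \eqref{eq-a}, \eqref{w=w}), so $\eta\mapsto w_k(t,\eta)$ is continuous and Lipschitz across the resonant/non-resonant boundary. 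Beyond this misattribution, you leave the proof of the Lipschitz bound on $w_k(\xi)/w_k(\eta)$ for $|\eta-\xi|<1$ essentially as a black box (``by a direct Lipschitz estimate''), but that \emph{is} the lemma. The paper proves $\left|w_k(t,\xi)/w_k(t,\eta)-1\right|\les|\eta-\xi|$ for $|\eta-\xi|\le1/100$ through a sevenfold case analysis on the explicit piecewise formulas \eqref{def-w}--\eqref{wRwNR}, exploiting that the indices $E(\sqrt{|\eta|})$, $E(\sqrt{|\xi|})$ and the active interval labels can differ by at most one, and applying the mean-value theorem to each rational/power factor. That case analysis carries the load of the proof and is absent from your proposal.
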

\begin{proof}
If $|\xi|\ge |\eta|$, thanks to \eqref{J-ratio}, we find that
\beno
\left|\fr{\mathcal{J}_k(\eta)}{\mathcal{J}_k(\xi)}-1\right|=\fr{\mathcal{J}_k(\eta)}{\mathcal{J}_k(\xi)}\left|\fr{\mathcal{J}_k(\xi)}{\mathcal{J}_k(\eta)}-1\right|\les e^{10r|\eta-\xi|^\fr12}\left|\fr{\mathcal{J}_k(\xi)}{\mathcal{J}_k(\eta)}-1\right|.
\eeno
Exchanging the place of $\eta$ and $\xi$, we may assume without loss of generality that $|\xi|<|\eta|$ and prove instead of \eqref{gain0} that
\be\label{gain1'}
\left|\fr{\mathcal{J}_k(\eta)}{\mathcal{J}_k(\xi)}-1\right|\les |\eta-\xi|e^{10r|\eta-\xi|^\fr12}.
\ee
If $|\eta-\xi|\ge\fr{1}{100}$, \eqref{gain1'} is a consequence of \eqref{J-ratio}. In the following,  we assume that  $|\xi|<|\eta|$ and $|\eta-\xi|\le\fr{1}{100}$. Note that by virtue of the elementary inequalities $|e^x-1|\le|x|e^{|x|}$, \eqref{ap1} and \eqref{ap2}, there holds
\beq\label{e420}
\fr{\mathcal{J}_k(\eta)}{\mathcal{J}_k(\xi)}-1
\nn&=&\fr{\fr{e^{2r\la\eta\ra^\fr12}-e^{2r\la\xi\ra^\fr12}}{w_k(\eta)}}{\fr{e^{2r\la\xi\ra^\fr12}}{w_k(\xi)}+e^{2r|k|^\fr12}}+\fr{e^{2r\la\xi\ra^\fr12}\left(\fr{1}{w_k(\eta)}-\fr{1}{w_k(\xi)}\right)}{\fr{e^{2r\la\xi\ra^\fr12}}{w_k(\xi)}+e^{2r|k|^\fr12}}\\
&\les&\fr{|\eta-\xi|}{\la\eta\ra^\fr12+\la\xi\ra^\fr12}e^{2r|\eta-\xi|^\fr12}\fr{w_k(\xi)}{w_k(\eta)}+\left|\fr{w_k(\xi)}{w_k(\eta)}-1\right|.
\eeq
It suffices to focus on the estimates of $\left|\fr{w_k(\xi)}{w_k(\eta)}-1\right|$.

If  $|\eta|\le1$, and $|\xi|\le1$,  then $w_k(\eta)=w_k(\xi)=1$, hence \eqref{gain1'} follows immediately.

If  $|\eta|>1\ge|\xi|$, we only need to consider the case $\eta \xi>0$, otherwise $|\eta-\xi|=|\eta|+|\xi|>1$, then \eqref{gain1'} is a consequence of \eqref{J-ratio}. Assume, W. O. L. G., that $\eta>1\ge\xi>0$. To bound the second term on the right-hand side of \eqref{e420}, we infer from $|\eta-\xi|\le\fr{1}{100}$ and $|\xi|\le1$ that $|\eta|\le |\eta-\xi|+|\xi|<2$. This implies that $E(\sqrt{\eta})=1$. Consequently, 
if $k=1$ and $t\in \mathrm{I}_{1,\eta}=\left[\fr34\eta,2\eta\right]$, in view of \eqref{def-w} and \eqref{wRwNR},
\be
w_k(t,\eta)=w_{R}(t,\eta)
=\begin{cases}
\left(\fr{1}{\eta}\left[1+(1-\fr{1}{\eta})(t-\eta)\right]\right)^{\mathrm{C}\kappa+1},&\mathrm{if}\ \ \eta\le t\le2\eta;\\[2mm]
\left(1+4(1-\fr{1}{\eta})(\eta-t)\right)^{-\mathrm{C}\kappa}\left(\fr{1}{\eta}\right)^{\mathrm{C}\kappa+1},&\mathrm{if}\ \ \fr34\eta\le t\le\eta.
\end{cases}
\ee
For $\eta\le t\le2\eta$, using the fact $0<\xi\le1$, we have
\be\label{e429}
\fr{1}{\eta}\left[1+(1-\fr{1}{\eta})(t-\eta)\right]=1+\fr{1}{\eta}(1-\fr{1}{\eta})(t-2\eta),
\ee
and
\be\label{e430}
\fr{1}{\eta}(1-\fr{1}{\eta})|t-2\eta|\les\fr{\eta-1}{\eta}\le\fr{\eta-\xi}{\eta}.
\ee
Then  by using the mean value theorem, there holds
\beq
\nn \left|\fr{1}{w_{k}(\eta)}-1\right|\les \fr{\left|\left(\fr{1}{\eta}\left[1+(1-\fr{1}{\eta})(t-\eta)\right]\right)^{\mathrm{C}\kappa+1}-1\right|}{\left(\fr{1}{\eta}\left[1+(1-\fr{1}{\eta})(t-\eta)\right]\right)^{\mathrm{C}\kappa+1}}\les \eta^{\mathrm{C}\kappa+1}\fr{\eta-\xi}{\eta}\les|\eta-\xi|.
\eeq
Similarly, for $\fr34\eta\le t\le\eta$, $4(1-\fr{1}{\eta})|\eta-t|\le\eta-1\le\eta-\xi $, thus
\beq
\nn \left|\fr{1}{w_{k}(\eta)}-1\right|&\les& {\left|\left(1+4(1-\fr{1}{\eta})(\eta-t)\right)^{\mathrm{C}\kappa}{\eta}^{\mathrm{C}\kappa+1}-1\right|}\\
\nn&\les& {\left|\left(1+4(1-\fr{1}{\eta})(\eta-t)\right)^{\mathrm{C}\kappa}-1\right|{\eta}^{\mathrm{C}\kappa+1}+\left|{\eta}^{\mathrm{C}\kappa+1}-1\right|}\\
&\les& |\eta-\xi|.
\eeq
If $k\ne1$ or $t\notin \mathrm{I}_{1,\eta}$, $w_k(t,\eta)=w_{NR}(t,\eta)$, and  $\left|\fr{w_k(\xi)}{w_k(\eta)}-1\right|=\left|\fr{1}{w_{NR}(\eta)}-1\right|$ can be treated similarly as above.

We are left to treat the trickiest case when $|\eta|>1$ and $|\xi|>1$. Again, we only need to investigate the case $\eta\xi>0$.  Assume W. L. O.G., that $\eta>\xi>1$, and $\eta-\xi<\fr{1}{100}$.\par
\noindent{\bf Case 1: $t\in\mathrm{I}_{n,\eta}\cap\mathrm{I}_{n,\xi}$ for some $1\le n\le\min\left\{E(\sqrt{\eta}),E(\sqrt{\xi})\right\}$.}\par
 {\em Case 1.1: $n=k$.} Assume first $t\in\mathrm{I}^\mathrm{R}_{k,\eta}\cap\mathrm{I}^\mathrm{R}_{k,\xi}$. 
Now we infer from \eqref{def-w}, \eqref{wRwNR} and \eqref{w_keta} that
\beq\label{421}
\fr{w_k(\xi)}{w_k(\eta)}-1
\nn&=&\left(\fr{\eta}{\xi}\right)^{c(k-1)+\mathrm{C}\kappa+1}\left(\fr{1+b_{k,\xi}\left|t-\fr{\xi}{k}\right|}{1+b_{k,\eta}\left|t-\fr{\eta}{k}\right|}\right)^{\mathrm{C}\kappa+1}-1\\
&=&\left(\fr{1+b_{k,\xi}\left|t-\fr{\xi}{k}\right|}{1+b_{k,\eta}\left|t-\fr{\eta}{k}\right|}\right)^{\mathrm{C}\kappa+1}\left[\left(\fr{\eta}{\xi}\right)^{c(k-1)+\mathrm{C}\kappa+1}-1\right]+\left[\left(\fr{1+b_{k,\xi}\left|t-\fr{\xi}{k}\right|}{1+b_{k,\eta}\left|t-\fr{\eta}{k}\right|}\right)^{\mathrm{C}\kappa+1}-1\right],
\eeq
where $c=1+2\mathrm{C}\kappa$. The second term on the  right hand of \eqref{421} can be treated as follows:
\beq
\nn\left|\left(\fr{1+b_{k,\xi}\left|t-\fr{\xi}{k}\right|}{1+b_{k,\eta}\left|t-\fr{\eta}{k}\right|}\right)^{\mathrm{C}\kappa+1}-1\right|&\le&\left|\left(1+\fr{b_{k,\xi}\big|t-\fr{\xi}{k}\big|-b_{k,\eta}\big|t-\fr{\eta}{k}\big|}{1+b_{k,\eta}\left|t-\fr{\eta}{k}\right|}\right)^{\mathrm{C}\kappa+1}-1\right|\\
\nn&\le& b_{k,\xi}\left|\fr{\eta-\xi}{k}\right|+|b_{k,\eta}-b_{k,\xi}|\left|t-\fr{\eta}{k}\right|\\
\nn&\les&\left|\fr{\eta-\xi}{k}\right|+k^2\left|\fr{1}{\eta}-\fr{1}{\xi}\right|\fr{|\eta|}{k^2}
\les\fr{|\eta-\xi|}{|k|}+\fr{|\eta-\xi|}{|\xi|}.
\eeq
Combining this with the elementary inequalities $1+x\le e^x$, and $|e^x-1|\le|x|e^{|x|}$, and using the fact $1\le k\le\sqrt{\xi}$, we find that
\beq\label{433}
\nn&&\left(\fr{1+b_{k,\xi}\left|t-\fr{\xi}{k}\right|}{1+b_{k,\eta}\left|t-\fr{\eta}{k}\right|}\right)^{\mathrm{C}\kappa+1}\left[\left(\fr{\eta}{\xi}\right)^{c(k-1)+\mathrm{C}\kappa+1}-1\right]\\
\nn&\les& \left(1+\fr{\eta-\xi}{\xi}\right)^{c(k-1)+\mathrm{C}\kappa+1}-1\les e^{\fr{|\eta-\xi|}{|\xi|}\left(c(k-1)+\mathrm{C}\kappa+1\right)}-1\\
&\les&\fr{|\eta-\xi|}{|\xi|}\left(c(k-1)+\mathrm{C}\kappa+1\right)e^{\fr{|\eta-\xi|}{|\xi|}\left(c(k-1)+\mathrm{C}\kappa+1\right)}\les\fr{|\eta-\xi|}{\sqrt{|\xi|}}.
\eeq

The case when $t\in\mathrm{I}^\mathrm{L}_{k,\eta}\cap\mathrm{I}^\mathrm{L}_{k,\xi}$ can be treated in the same way. Next let us consider the case when $t\in\mathrm{I}^\mathrm{L}_{k,\eta}\cap\mathrm{I}^\mathrm{R}_{k,\xi}$, and write
\be\label{wNReta}
w_{NR}(\eta)=\left(\fr{1^2}{\eta}\right)^c\left(\fr{2^2}{\eta}\right)^c\cdots\left(\fr{(k-1)^2}{\eta}\right)^c\left(\fr{k^2}{\eta}\right)^{\mathrm{C}\kappa}\left(1+a_{k,\eta}\left|t-\fr{\eta}{k}\right|\right)^{-1-\mathrm{C}\kappa},
\ee
and
\be\label{wNRxi}
w_{NR}(\xi)=\left(\fr{1^2}{\xi}\right)^c\left(\fr{2^2}{\xi}\right)^c\cdots\left(\fr{(k-1)^2}{\xi}\right)^c\left(\fr{k^2}{\xi}\left[1+b_{k,\xi}\left|t-\fr{\xi}{k}\right|\right]\right)^{\mathrm{C}\kappa}.
\ee
Thus,
\beq\label{434}
\fr{w_k(\xi)}{w_k(\eta)}-1=\fr{w_R(\xi)}{w_R(\eta)}-1\nn&=&\fr{w_R(\xi)}{w_{NR}(\xi)}\fr{w_{NR}(\xi)}{w_{NR}(\eta)}\fr{w_{NR}(\eta)}{w_R(\eta)}-1\\
\nn&=&\left(\fr{\eta}{\xi}\right)^{c(k-1)+\mathrm{C}\kappa+1}\left(1+b_{k,\xi}\left|t-\fr{\xi}{k}\right|\right)^{\mathrm{C}\kappa+1}\left(1+a_{k,\eta}\left|t-\fr{\eta}{k}\right|\right)^{\mathrm{C}\kappa}-1\\
\nn&=&\left(1+b_{k,\xi}\left|t-\fr{\xi}{k}\right|\right)^{\mathrm{C}\kappa+1}\left(1+a_{k,\eta}\left|t-\fr{\eta}{k}\right|\right)^{\mathrm{C}\kappa}\left[\left(\fr{\eta}{\xi}\right)^{c(k-1)+\mathrm{C}\kappa+1}-1\right]\\
&&+\left[\left(1+b_{k,\xi}\left|t-\fr{\xi}{k}\right|\right)^{\mathrm{C}\kappa+1}\left(1+a_{k,\eta}\left|t-\fr{\eta}{k}\right|\right)^{\mathrm{C}\kappa}-1\right].
\eeq
Note that $t\in\mathrm{I}^\mathrm{L}_{k,\eta}\cap\mathrm{I}^\mathrm{R}_{k,\xi}$ implies $\max\left\{\left|t-\fr{\xi}{k}\right|,\left|t-\fr{\eta}{k}\right|\right\}\les\fr{|\eta-\xi|}{|k|}$. Then
\beq\label{435}
\nn&&\left(1+b_{k,\xi}\left|t-\fr{\xi}{k}\right|\right)^{\mathrm{C}\kappa+1}\left(1+a_{k,\eta}\left|t-\fr{\eta}{k}\right|\right)^{\mathrm{C}\kappa}-1\\
\nn&=&\left(1+b_{k,\xi}\left|t-\fr{\xi}{k}\right|\right)^{\mathrm{C}\kappa+1}\left[\left(1+a_{k,\eta}\left|t-\fr{\eta}{k}\right|\right)^{\mathrm{C}\kappa}-1\right]+\left[\left(1+b_{k,\eta}\left|t-\fr{\eta}{k}\right|\right)^{\mathrm{C}\kappa}-1\right]\\
&\les&\fr{|\eta-\xi|}{|k|}.
\eeq
The treatment of the first term on the right-hand side of \eqref{434}  is similar to \eqref{433} and is thus omitted.

{\em Case 1.2: $n\ne k$}. Now $\displaystyle\fr{w_k(\xi)}{w_k(\eta)}-1=\fr{w_{NR}(\xi)}{w_{NR}(\eta)}-1$, which can be treated in the same manner as the case when $n=k$.

\noindent{\bf Case 2: $t\in \mathrm{I}_{n,\eta}\cap \mathrm{I}_{l,\xi}, n\ne l$ for some $1\le n\le E(\sqrt{\eta})$, and $1\le l\le E(\sqrt{\xi})$.} Recalling the definition of critical intervals \eqref{cri_int}, under the restrictions $\eta>\xi>0$ and $\eta-\xi<\fr{1}{100}$, it is not difficult to verify that the condition $t\in \mathrm{I}_{n,\eta}\cap \mathrm{I}_{l,\xi}, n\ne l$ actually implies  $l=n-1$. 
 Furthermore, either  $t\in \mathrm{I}_{n-1,\xi}\cap \mathrm{I}^\mathrm{L}_{n,\eta}$ or $t\in \mathrm{I}^\mathrm{R}_{n-1,\xi}\cap \mathrm{I}_{n,\eta}$ can not happen due to $\eta-\xi<\fr{1}{100}$. Therefore, we only need to focus on the case when $t\in \mathrm{I}_{n-1,\xi}^\mathrm{L}\cap\mathrm{I}_{n,\eta}^\mathrm{R}$.
 
In fact, for $t\in \mathrm{I}_{n-1,\xi}^\mathrm{L}\cap\mathrm{I}_{n,\eta}^\mathrm{R}$, 
similar to \eqref{wNReta} and \eqref{wNRxi}, $w_{NR}(t,\xi)$ and $w_{NR}(t,\eta)$ can be given explicitly as follows
\beno 
w_{NR}(t,\xi)=\left(1+a_{n-1,\xi}\left|t-\fr{\xi}{n-1}\right|\right)^{-1-\mathrm{C}\kappa}\left(\fr{(n-1)^2}{\xi}\right)^{\mathrm{C}\kappa}\left(\fr{(n-2)^2}{\xi}\right)^c\cdots\left(\fr{2^2}{\xi}\right)^c\left(\fr{1^2}{\xi}\right)^c,
\eeno
and
\beno
w_{NR}(t,\eta)=\left(\fr{n^2}{\eta}\left[1+b_{n,\eta}\left|t-\fr{\eta}{n}\right|\right]\right)^{\mathrm{C}\kappa}\left(\fr{(n-1)^2}{\eta}\right)^{c}\left(\fr{(n-2)^2}{\eta}\right)^c\cdots\left(\fr{2^2}{\eta}\right)^c\left(\fr{1^2}{\eta}\right)^c.
\eeno
Then (recalling that $c=1+2\mathrm{C}\kappa$)
\be\label{436}
\fr{w_{NR}(t,\xi)}{w_{NR}(t,\eta)}
=\left(\fr{(n-1)^2}{\xi}\left[1+a_{n-1,\xi}\left|t-\fr{\xi}{n-1}\right|\right]\right)^{-1-\mathrm{C}\kappa}\left(\fr{n^2}{\eta}\left[1+b_{n,\eta}\left|t-\fr{\eta}{n}\right|\right]\right)^{-\mathrm{C}\kappa}\left(\fr{\eta}{\xi}\right)^{(n-1)c}.
\ee
From \eqref{eq-a} and \eqref{eq-b}, we find that
\[
\fr{(n-1)^2}{\xi}\left[1+a_{n-1,\xi}\left|t-\fr{\xi}{n-1}\right|\right]=1+\fr{(n-1)^2}{\xi}a_{n-1,\xi}\left(t_{n-1,\xi}-t\right),
\]
and
\[
\fr{n^2}{\eta}\left[1+b_{n,\eta}\left|t-\fr{\eta}{n}\right|\right]
=1+\fr{n^2}{\eta}b_{n,\eta}\left(t-t_{n-1,\eta}\right).
\]
Note that
\[
\max\left\{\left|t_{n-1,\xi}-t\right|,\left|t-t_{n-1,\eta}\right|\right\}\le t_{n-1,\eta}-t_{n-1,\xi}=\fr{(2n-1)}{2(n-1)n}(\eta-\xi).
\]
Then
\be\label{438}
\max\left\{\fr{(n-1)^2}{\xi}a_{n-1,\xi}\left|t_{n-1,\xi}-t\right|,\fr{n^2}{\eta}b_{n,\eta}\left|t-t_{n-1,\eta}\right|\right\}\les \fr{|\eta-\xi|}{\sqrt{\xi}}.
\ee
Consequently,
\be\label{439}
\min\left\{1+\fr{(n-1)^2}{\xi}a_{n-1,\xi}\left(t_{n-1,\xi}-t\right), 1+\fr{n^2}{\eta}b_{n,\eta}\left(t-t_{n-1,\eta}\right)\right\}\ge\fr12.
\ee
To simplify the presentation, let us denote 
\[
G_1=1+\fr{(n-1)^2}{\xi}a_{n-1,\xi}\left(t_{n-1,\xi}-t\right), \quad G_2=1+\fr{n^2}{\eta}b_{n,\eta}\left(t-t_{n-1,\eta}\right).
\]
If $n=k$, by virtue of \eqref{436} and \eqref{wRwNR}, one deduces that
 \beq
 \nn\fr{w_k(\xi)}{w_k(\eta)}-1&=& \fr{w_{NR}(\xi)}{w_{R}(\eta)}-1= \fr{w_{NR}(\xi)}{w_{NR}(\eta)}\fr{w_{NR}(\eta)}{w_{R}(\eta)}-1=(G_1G_2)^{-1-\mathrm{C}\kappa}\left(\fr{\eta}{\xi}\right)^{(n-1)c}-1\\
\nn &=&(G_1G_2)^{-1-\mathrm{C}\kappa}\left[\left(\fr{\eta}{\xi}\right)^{(n-1)c}-1\right]+(G_1G_2)^{-1-\mathrm{C}\kappa}\left[1-(G_1G_2)^{1+\mathrm{C}\kappa}\right].
 \eeq
Thanks to \eqref{438} and \eqref{439}, we have 
\[
 (G_1G_2)^{-1-\mathrm{C}\kappa}\left|\left(\fr{\eta}{\xi}\right)^{(n-1)c}-1\right|\les\left(1+\fr{\eta-\xi}{\xi}\right)^{(n-1)c}-1\les\fr{|\eta-\xi|}{\sqrt{\xi}},
 \]
and
 \[
(G_1G_2)^{-1-\mathrm{C}\kappa}\left[1-(G_1G_2)^{1+\mathrm{C}\kappa}\right]
 \les\left|1-G_1^{1+\mathrm{C}\kappa}\right|+\left|1-G_2^{1+\mathrm{C}\kappa}\right|\les \fr{|\eta-\xi|}{\sqrt{\xi}}.
 \]
 If $n-1=k$ or ($n\ne k,  n-1\ne k$), $\displaystyle\fr{w_k(\xi)}{w_k(\eta)}-1$ can be bounded similarly as above, and we omit the details for brevity.

\noindent{\bf Case 3: $t\le \min\left\{t_{E(\sqrt{|\xi|}), \xi}, t_{E(\sqrt{|\eta|}), \eta}\right\}$.} Now the restriction on $t$ implies that, for all $k, \eta, \xi$, there hold
\beno
w_k(t, \eta)=w_{NR}(t, \eta)=w_{NR}(0,\eta),\quad\mathrm{and}\quad w_l(t, \xi)=w_{NR}(t, \eta)=w_{NR}(0,\xi).
\eeno
To bound $\left|\fr{w_{NR}(0,\xi)}{w_{NR}(0,\eta)}-1 \right|$, it is worth pointing out  that  the assumptions $\eta>\xi>1$, and $\eta-\xi<\fr{1}{100}$ imply that 
\be\label{Exieta}
E(\sqrt{\xi})\le E(\sqrt{\eta})\le E(\sqrt{\xi})+1.
\ee
By the definition of $w_{NR}(0,\cdot)$, see \eqref{def-w0}, \eqref{def-w} and \eqref{wNR-st} in the appendix, we have
\be\label{wNR0}
w_{NR}(0,\eta)=\left(\fr{\left(E(\sqrt{|\eta|})!\right)^2}{|\eta|^{E(\sqrt{|\eta|})}}\right)^c,\quad w_{NR}(0,\xi)=\left(\fr{\left(E(\sqrt{|\xi|})!\right)^2}{|\xi|^{E(\sqrt{|\xi|})}}\right)^c.
\ee
Accordingly, for the case $E(\sqrt{\eta})=E(\sqrt{\xi})$,  similar to \eqref{433}, one deduces that
\be\label{252}
\left| \fr{w_{NR}(0,\xi)}{w_{NR}(0,\eta)}-1\right|
=\left(1+\fr{\eta-\xi}{\xi}\right)^{cE(\sqrt{\xi})}-1\le e^{\fr{cE(\sqrt{|\xi|})}{|\xi|}|\eta-\xi|}-1\les\fr{|\eta-\xi|}{\sqrt{|\xi|}}.
\ee
If $E(\sqrt{\eta})=E(\sqrt{\xi})+1$, then $\sqrt{|\xi|}<E(\sqrt{|\xi|})+1=E(\sqrt{|\eta|})\le\sqrt{|\eta|}$. Then Using again \eqref{wNR0}, similar to \eqref{252}, we have
\beq\label{e465}
\left| \fr{w_{NR}(0,\xi)}{w_{NR}(0,\eta)}-1\right|\nn&=&\left|\left(\fr{|\eta|}{|\xi|}\right)^{cE(\sqrt{|\xi|})}\left(\fr{|\eta|}{E(\sqrt{|\eta|})^2}\right)^c-1\right|\\
\nn&\le&\left(\fr{|\eta|}{E(\sqrt{|\eta|})^2}\right)^c\left|\left(\fr{|\eta|}{|\xi|}\right)^{cE(\sqrt{|\xi|})}-1\right|+\left|\left(\fr{|\eta|}{E(\sqrt{|\eta|})^2}\right)^c-1\right|\\
&\les&\left|\left(\fr{|\eta|}{|\xi|}\right)^{cE(\sqrt{|\xi|})}-1\right|+\left[\left(\fr{|\eta|}{|\xi|}\right)^c-1\right]\les\fr{|\eta-\xi|}{\sqrt{|\xi|}}+\fr{|\eta-\xi|}{|\xi|}.
\eeq
{\bf Case 4: $t_{E(\sqrt{|\xi|}), \xi}\le t\le t_{E(\sqrt{|\eta|}), \eta}$.} Here we have
$
w_k(t,\eta)=w_{NR}(t_{E(\sqrt{\eta}),\eta},\eta)=\left(\fr{(E(\sqrt{\eta})!)^2)}{\eta^{E(\sqrt{\eta})}}\right)^c,
$
and in the mean while there exists an integer $n\le E(\sqrt{\xi})$ such that $t\in \mathrm{I}_{n,\xi}$. Therefore, there holds
\[
t_{n,\xi}\le t\le t_{E(\sqrt{|\eta|}), \eta}, \quad\mathrm{i. e.}\quad \fr{\xi}{n+\fr{1}{2+\fr{1}{n}}}\le\fr{\eta}{E(\sqrt{\eta})+\fr{1}{2+\fr{1}{E(\sqrt{\eta})}}}.
\]
Consequently, in view of the facts $\eta-\xi\le\fr{1}{100}$ and $1<\xi<\eta$, the above inequality gives
\beno
E(\sqrt{\eta})-n<\fr{\eta-\xi}{\xi}\left(n+\fr{1}{2+\fr{1}{n}}\right)+\fr{1}{2+\fr{1}{n}}<\fr{\eta-\xi}{{\xi}}\left(\sqrt{\xi}+\fr12\right)+\fr12<1.
\eeno
Combining this with \eqref{Exieta} yields
\[
E(\sqrt{\xi})=n=E(\sqrt{\eta}),\quad\mathrm{and}\quad t\in\mathrm{I}_{E(\sqrt{\xi}),\xi}.
\]
Furthermore,  under the conditions $t_{E(\sqrt{|\xi|}), \xi}\le t\le t_{E(\sqrt{|\eta|}), \eta}$, $\eta-\xi\le\fr{1}{100}$ and $1<\xi<\eta$, $t\in\mathrm{I}^\mathrm{R}_{E(\sqrt{\xi}),\xi}$ cannot happen. 
Thus we only need to focus on the case when $t\in\mathrm{I}^\mathrm{L}_{E(\sqrt{\xi}),\xi}$. In fact, similar to \eqref{wNReta}, we have
\beno 
w_{NR}(t,\xi)=\left(1+a_{E(\sqrt{\eta}),\,\xi}\left|t-\fr{\xi}{E(\sqrt{\xi})}\right|\right)^{-1-\mathrm{C}\kappa}\left(\fr{E(\sqrt{\xi})^2}{\xi}\right)^{\mathrm{C}\kappa}\left(\fr{(E(\sqrt{\xi})-1)^2}{\xi}\right)^c\cdots\left(\fr{2^2}{\xi}\right)^c\left(\fr{1^2}{\xi}\right)^c,
\eeno
for $t\in\mathrm{I}^\mathrm{L}_{E(\sqrt{\xi}),\xi}$.
On the other hand, noting that 
\[
\left|t_{E(\sqrt{\xi}), \xi}-t\right|\le t_{E(\sqrt{\eta}), \eta}-t_{E(\sqrt{\xi}), \xi}=\fr{2E(\sqrt{\eta})+1}{2(E(\sqrt{\eta})+1)E(\sqrt{\eta})}(\eta-\xi)\les\fr{\eta-\xi}{\sqrt{\xi}},
\]
we find that the inequalities involving $a_{n-1,\xi}$ in \eqref{438} and \eqref{439} hold with $n-1$ replaced by $E(\sqrt{\xi})$. 
Then $\left|\fr{w_k(t,\xi)}{w_k(t,\eta)}-1\right|$  can be treated in the same manner as that in {\bf Case2}. We omit the details and conclude that
\be\label{con-4}
\left|\fr{w_k(t,\xi)}{w_k(t,\eta)}-1\right|\les\fr{|\eta-\xi|}{\sqrt{\xi}}.
\ee
\noindent{\bf Case 5: $t_{E(\sqrt{|\eta|}), \eta}\le t\le t_{E(\sqrt{|\xi|}), \xi}$.} Here we have
\be\label{e466}
w_k(t,\xi)=w_{NR}(t_{E(\sqrt{\xi}),\xi},\xi)=\left(\fr{(E(\sqrt{\xi})!)^2)}{\xi^{E(\sqrt{\xi})}}\right)^c.
\ee
On the other hand, recalling \eqref{Exieta}, we must have $E(\sqrt{\eta})=E(\sqrt{\xi})+1$, otherwise $t_{E(\sqrt{|\eta|}), \eta}<t_{E(\sqrt{|\xi|}), \xi}$ cannot happen under the condition $\eta>\xi>1$. Consequently, $t\in\mathrm{I}_{E(\sqrt{\eta}),\eta}$. If $t\in \mathrm{I}^\mathrm{L}_{E(\sqrt{\eta}),\eta}$,   similar to \eqref{wNReta}, using the fact $E(\sqrt{\eta})=E(\sqrt{\xi})+1$, we have
\be\label{e446}
w_{NR}(t,\eta)=\left(\fr{E(\sqrt{\eta})^2}{\eta}\left[1+a_{E(\sqrt{\eta}),\eta}\left|t-\fr{\eta}{E(\sqrt{\eta})}\right|\right]\right)^{-1-\mathrm{C}\kappa}\left(\fr{E(\sqrt{\eta})^2}{\eta}\right)^{c}\left(\fr{(E(\sqrt{\xi})!)^2)}{\eta^{E(\sqrt{\xi})}}\right)^c.
\ee
Taking $l=E(\sqrt{\eta})$ in \eqref{eq-a}  gives
\be\label{e447}
\fr{E(\sqrt{\eta})^2}{\eta}\left[1+a_{E(\sqrt{\eta}),\eta}\left|t-\fr{\eta}{E(\sqrt{\eta})}\right|\right]=1+\fr{E(\sqrt{\eta})^2}{\eta}a_{E(\sqrt{\eta}),\eta}\left(t_{E(\sqrt{\eta}),\eta}-t\right).
\ee
Recalling the definition of $a_{E(\sqrt{\eta}),\eta}$ in \eqref{def-ab}, and nothing that the restriction $t\in \mathrm{I}^\mathrm{L}_{E(\sqrt{\eta}),\eta}$ implies that
\beno
\left|t_{E(\sqrt{\eta}),\eta}-t \right|\le\fr{\eta}{E(\sqrt{\eta})}-\fr{\eta}{E(\sqrt{\eta})+\fr{1}{2+\fr{1}{E(\sqrt{\eta})}}}=\fr{\eta}{2\left(E(\sqrt{\eta})+1\right)E(\sqrt{\eta})},
\eeno
we are led to
\be\label{e471}
\fr{E(\sqrt{\eta})^2}{\eta}a_{E(\sqrt{\eta}),\eta}\left|t-t_{E(\sqrt{\eta})-1,\eta}\right|
\le\fr{E(\sqrt{\eta})^2}{\eta}\fr{a_{E(\sqrt{\eta}),\eta}\eta}{2\left(E(\sqrt{\eta})+1\right)E(\sqrt{\eta})}=\fr{\eta-E(\sqrt{\eta})^2}{\eta}\le\fr{\eta-\xi}{\eta},
\ee
where we have used the fact $\sqrt{\xi}<E(\sqrt{\xi})+1=E(\sqrt{\eta})$ in the last inequality. From \eqref{e466}--\eqref{e471}, similar to   \eqref{435} and \eqref{e465}, we find that \eqref{con-4} holds.
If $t\in \mathrm{I}^\mathrm{R}_{E(\sqrt{\eta}),\eta}$,  the treatment $\left|\fr{w_k(\xi)}{w_k(\eta)}-1\right|$ of  is analogous (actually easier) and omitted.\par
\noindent{\bf Case 6: $2\xi\le t\le 2\eta$.}
Note that $\eta-\xi<\fr{1}{100}$ and $\eta>\xi>1$ imply that 
\be\label{xieta}
\fr{99}{100}\eta<\xi<\eta. 
\ee
Consequently, for $t\in[2\xi, 2\eta]$, we actually have $t\in \mathrm{I}^\mathrm{R}_{1,\eta}$, and hence 
\beno
w_{NR}(t,\eta)=\left(\fr{1}{\eta}\left[1+(1-\fr{1}{\eta})(t-{\eta})\right]\right)^{\mathrm{C}\kappa},\quad
\mathrm{and}\quad
w_{R}(t,\eta)=\left(\fr{1}{\eta}\left[1+(1-\fr{1}{\eta})(t-{\eta})\right]\right)^{\mathrm{C}\kappa+1}.
\eeno
Furthermore, we infer from \eqref{xieta} that
\[
\fr{1}{\eta}\left[1+(1-\fr{1}{\eta})(t-{\eta})\right]\ge\fr{1}{\eta}\left[1+(1-\fr{1}{\eta})(2\xi-{\eta})\right]>\fr{49}{50}.
\]
On the other hand, it is easy to see that $\fr{1}{\eta}(1-\fr{1}{\eta})|t-2\eta|\le\fr{2(\eta-\xi)}{\eta}$ due to the fact $2\xi\le t\le2\eta$. Therefore, if $k=1$, thanks to \eqref{e429}, we find that
\beq
\nn\left|\fr{w_k(\xi)}{w_k(\eta)}-1\right|=\left|\fr{1}{w_R(t,\eta)}-1\right|&=&\fr{\left|\left(1-\fr{1}{\eta}(1-\fr{1}{\eta})(2\eta-t)\right)^{\mathrm{C}\kappa+1}-1\right|}{\left(\fr{1}{\eta}\left[1+(1-\fr{1}{\eta})(t-\eta)\right]\right)^{\mathrm{C}\kappa+1}}\\
\nn&\les &\left[1-\left(1-\fr{1}{\eta}(1-\fr{1}{\eta})(2\eta-t)\right)^{\mathrm{C}\kappa+1}\right]\les \fr{|\eta-\xi|}{|\eta|}.
\eeq
The case $k\ne1$ can be treated similarly.

\noindent{\bf Case 7: $t>2\eta$.} Now $w_k(t, \eta)=w_k(t,\xi)=1$, there is nothing to prove.

The proof of Lemma \ref{lem-com-J} is completed.
\end{proof}

\begin{lem}\label{lem-pr_eta}
For $k\ne0$, there holds
\be\label{pr_eta-m}
\left|\fr{\partial_\eta M^1_k(\eta)}{M^1_k(\eta)}\right|+\left|\fr{\partial_\eta M^\mu_k(\eta)}{M^\mu_k(\eta)}\right|+\left|\fr{\partial_\eta m_k(\eta)}{m_k(\eta)}\right|\les \fr{1}{|k|}.
\ee
\end{lem}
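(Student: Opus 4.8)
### Proof strategy for Lemma \ref{lem-pr_eta}

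The plan is to differentiate each of the three multipliers directly in $\eta$ (for fixed $k\neq 0$ and fixed $t$) and to estimate the logarithmic derivative by splitting $\mathbb{R}$ into the intervals on which each multiplier is defined by an explicit formula. For $M^1$, recall from \eqref{M1} that $M^1_k(t,\eta)=\exp\!\left(\int_0^t\frac{k^2}{k^2+(\eta-ks)^2}\,ds\right)$, so $\frac{\partial_\eta M^1_k(\eta)}{M^1_k(\eta)}=\int_0^t\partial_\eta\!\left(\frac{k^2}{k^2+(\eta-ks)^2}\right)ds=-\int_0^t\frac{2k^2(\eta-ks)}{(k^2+(\eta-ks)^2)^2}\,ds$. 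After the substitution $\tau=\eta-ks$ (so $ds=-d\tau/k$), this becomes $\frac{1}{k}\int\frac{2k^2\tau}{(k^2+\tau^2)^2}\,d\tau$, whose antiderivative is $-\frac{k}{k^2+\tau^2}$; evaluated between the two endpoints this is bounded in absolute value by $\frac{2}{|k|}$ since $\left|\frac{k}{k^2+\tau^2}\right|\le\frac{1}{|k|}$. The same computation works for $M^\mu$ using the form $M^\mu_k(t,\eta)=\exp\!\left(\int_0^t\frac{\mu^{1/3}}{1+\mu^{2/3}(\frac{\eta}{k}-s)^2}\,ds\right)$ from \eqref{Mmu}: differentiating under the integral and substituting $\sigma=\mu^{1/3}(\frac{\eta}{k}-s)$ reduces the logarithmic derivative to $\frac{1}{k}$ times $\int\frac{-2\sigma}{(1+\sigma^2)^2}\,d\sigma=\frac{1}{1+\sigma^2}$ evaluated at the endpoints, which is bounded by $\frac{2}{|k|}$.

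For $m_k(t,\eta)$ the argument is by cases according to the four branches of the definition on page listing $m$. On the branches where $m_k(t,\eta)$ is a constant ($1$, $1+(2\mu^{-1/3})^2$, or $\frac{k^2+(2k\mu^{-1/3})^2}{k^2+\eta^2}$), the $\eta$-derivative is either zero or, in the last case, $\partial_\eta m_k = -\frac{2\eta}{k^2+\eta^2}\,m_k$, so $\left|\frac{\partial_\eta m_k}{m_k}\right|=\frac{2|\eta|}{k^2+\eta^2}\le\frac{1}{|k|}$. On the genuinely time-dependent branches, $m_k(t,\eta)$ equals $\frac{k^2+(\eta-kt)^2}{k^2+\eta^2}$ (for $-2\mu^{-1/3}<\eta/k<0$) or $\frac{k^2+(\eta-kt)^2}{k^2}$ (for $\eta/k>0$). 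In the first of these, $\frac{\partial_\eta m_k}{m_k}=\frac{2(\eta-kt)}{k^2+(\eta-kt)^2}-\frac{2\eta}{k^2+\eta^2}$, and each term is bounded by $\frac{1}{|k|}$ via the elementary inequality $\frac{|a|}{k^2+a^2}\le\frac{1}{2|k|}$; in the second, $\frac{\partial_\eta m_k}{m_k}=\frac{2(\eta-kt)}{k^2+(\eta-kt)^2}$, again bounded by $\frac{1}{|k|}$. Since on each branch the bound $\les \frac{1}{|k|}$ holds, the claim follows; one should also note that $m$ is only piecewise defined but is continuous across branch points, so the one-sided derivatives match (or the branch points form a measure-zero set, which suffices for the way \eqref{pr_eta-m} is used, e.g. in the mean-value-theorem step of Section \ref{sec-zero-mode}).

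Summing the three contributions gives \eqref{pr_eta-m}. I do not anticipate a serious obstacle here: the only mildly delicate point is bookkeeping the branches of $m$ and checking that the logarithmic derivative stays $O(1/|k|)$ uniformly in $t$ and $\mu$ on every branch, but each case reduces to the single elementary estimate $\frac{|a|}{k^2+a^2}\le\frac{1}{2|k|}$ applied with $a=\eta$ or $a=\eta-kt$. The substitution trick for $M^1$ and $M^\mu$ is what makes those two terms clean — it converts the $t$-integral defining the logarithmic derivative into an exact antiderivative with a uniformly bounded primitive, so no growth in $t$ or loss in $\mu$ appears.
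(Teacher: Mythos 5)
Your argument is correct. For the multiplier $m$ it coincides with what the paper does: differentiate $\log m_k$ branch by branch and apply the elementary estimate $\frac{|a|}{k^2+a^2}\le\frac{1}{2|k|}$ with $a\in\{\eta,\,\eta-kt\}$; you also explicitly handle the trivial branches and the continuity across branch points, which the paper passes over in silence. For $M^1$ and $M^\mu$ the paper simply cites \cite{BVW18}, whereas you give a self-contained proof: exponentiate the defining ODE, differentiate under the integral in $\eta$, and observe that because the symbol depends only on $\eta-ks$ (resp.\ $\eta/k-s$) the $\eta$-derivative of the integrand is $-\frac{1}{k}$ times its $s$-derivative, so the time integral collapses to a difference of uniformly bounded boundary terms. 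That is a useful unpacking of the cited estimate, and it makes transparent why these logarithmic derivatives are $O(1/|k|)$ uniformly in $t$ and $\mu$.

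One small slip worth correcting: carrying the substitution $\sigma=\mu^{1/3}(\eta/k-s)$ through the $M^\mu$ computation carefully produces an extra factor of $\mu^{1/3}$, i.e.\ the logarithmic derivative equals $\frac{\mu^{1/3}}{k}\left[\frac{1}{1+\sigma^2}\right]$ evaluated at the endpoints rather than $\frac{1}{k}\left[\frac{1}{1+\sigma^2}\right]$; as written, your intermediate integrand has dropped this factor. Since $\mu\le 1$ this only sharpens the bound and does not affect the conclusion, but the display should carry it.
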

\begin{proof}
The estimates for $M^1$ and $M^\mu$ can be found in \cite{BVW18}.
For the multiplier $m$, if $k\ne 0, -2\mu^{-\frac{1}{3}}<\frac{\eta}{k} < 0$,  and $0<t<\frac{\eta}{k}+2\mu^{-\frac{1}{3}}$:
\beq
\left| \frac{\partial_{\eta}m_k(\eta)}{m_k(\eta)}\right|\nn&=&
\left| \frac{2\eta}{k^2+\eta^2}-\frac{2\left( \eta-kt\right) }{ k^2+\left( \eta-kt\right) ^2}\right| \\
\nn&\leq & \frac{2|\eta|}{k^2+\eta^2} + \frac{2| \eta-kt|}{k^2+\left( \eta-kt\right) ^2} \lesssim \frac{1}{\left| k\right| }. 
\eeq
If $k\ne 0$, $-2\mu^{-\frac{1}{3}}<\frac{\eta}{k} < 0$, and $t>\frac{\eta}{k}+2\mu^{-\frac{1}{3}}$:
\begin{align*}
\begin{split}
\left|\frac{\partial_{\eta}m_k(\eta)}{m_k(\eta)}\right|=\frac{2|\eta|}{k^2+\eta^2} \lesssim\frac{1}{\left| k\right| }. 
\end{split}
\end{align*}
If $k\ne 0$, $\frac{\eta}{k} > 0,\frac{\eta}{k}<t<\frac{\eta}{k}+2\mu^{-\frac{1}{3}}$:
\begin{align*}
\begin{split}
\left| \frac{\partial_{\eta}m_k(\eta)}{m_k(\eta)}\right|=\frac{2|\eta-kt| }{ k^2+\left( \eta-kt\right) ^2}  \lesssim \frac{1}{\left| k\right| }. 
\end{split}
\end{align*}
Thus, \eqref{pr_eta-m} holds. This completes the proof of Lemma \ref{lem-pr_eta}.
\end{proof}
The following lemma is a 2D version of L{\footnotesize EMMA} A.1 in \cite{BGM17}, the proof is thus omitted. 
\begin{lem}\label{lem-com-m}
The Fourier multiplier $m$ satisfies
\be\label{com-m}
 m_k(t, \eta)\les \la\eta-\xi\ra^{2} m_k(t,\xi).
\ee
\end{lem}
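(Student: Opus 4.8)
\textbf{Proof plan for Lemma \ref{lem-com-m}.} The claim is that $m_k(t,\eta)\lesssim\langle\eta-\xi\rangle^2\,m_k(t,\xi)$, i.e.\ $m$ is ``almost'' Lipschitz-stable under shifts of the $\eta$-variable, with loss quantified by two powers of the frequency difference. The strategy is to reduce everything to the already-established one-variable bound $1\le m_k(t,\cdot)\lesssim\min\{\mu^{-2/3},\langle t\rangle^2\}$ in \eqref{m1} together with the derivative bound $|\partial_\eta m_k(\eta)/m_k(\eta)|\lesssim 1/|k|$ from Lemma \ref{lem-pr_eta}, exactly as in Lemma A.1 of \cite{BGM17}. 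First I would observe that since $m_k(t,\eta)\equiv1$ for $k=0$ the statement is trivial there, so assume $k\ne0$ throughout.

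The main step is a case split on the size of $|\eta-\xi|$ relative to $|k|$. \textbf{Case 1: $|\eta-\xi|\le|k|$ (the ``local'' regime).} Here I would integrate the logarithmic derivative: by the fundamental theorem of calculus and Lemma \ref{lem-pr_eta},
\[
\log\frac{m_k(t,\eta)}{m_k(t,\xi)}=\int_\xi^\eta\frac{\partial_\zeta m_k(t,\zeta)}{m_k(t,\zeta)}\,d\zeta,\qquad
\left|\log\frac{m_k(t,\eta)}{m_k(t,\xi)}\right|\le\frac{|\eta-\xi|}{|k|}\le1,
\]
so $m_k(t,\eta)\le e\,m_k(t,\xi)\le\langle\eta-\xi\rangle^2 m_k(t,\xi)$. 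One subtlety: $m_k(t,\cdot)$ is only piecewise smooth (the definition branches at $t=\eta/k$, $t=\eta/k+2\mu^{-1/3}$, etc.), but $m$ is continuous across these junctions and is $C^1$ on each piece, so the integral identity is still valid; I would note that Lemma \ref{lem-pr_eta} already accounts for each branch. \textbf{Case 2: $|\eta-\xi|>|k|$ (the ``nonlocal'' regime).} Here I cannot integrate cheaply, so instead I use the crude two-sided bound \eqref{m1}: $m_k(t,\eta)\lesssim\langle t\rangle^2$ while $m_k(t,\xi)\ge1$. It then suffices to absorb $\langle t\rangle^2$ into $\langle\eta-\xi\rangle^2$. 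This is where the factor $\langle\eta-\xi\rangle^2$ (rather than $\langle\eta-\xi\rangle$) is essential: I would argue that $m_k(t,\eta)$ is nontrivial (i.e.\ bounded below away from $1$ only up to a controllable amount, or genuinely large) only when $t\lesssim|\eta/k|+\mu^{-1/3}$, and in the regime where $m_k(t,\eta)$ is actually comparable to $\langle t\rangle^2$ one has $\langle t\rangle\lesssim\langle\eta/k\rangle\lesssim\langle\eta\rangle$ while $|k|\le|\eta-\xi|$ forces $|\eta|\lesssim|\xi|+|\eta-\xi|$; combining, and using $m_k(t,\xi)\ge1$, gives $m_k(t,\eta)\lesssim\langle\eta\rangle^2/\langle k\rangle^2\cdot\text{(something bounded)}\lesssim\langle\eta-\xi\rangle^2\,m_k(t,\xi)$ once one also handles the sub-case where $\xi$ itself sits in a window making $m_k(t,\xi)$ large (in which case $m_k(t,\eta)/m_k(t,\xi)$ is again $O(\langle\eta-\xi\rangle^2)$ directly from the explicit formulas, since both numerator and denominator are of the form $\frac{k^2+(\cdot-kt)^2}{k^2+(\cdot)^2}$ or $\frac{k^2+(\cdot-kt)^2}{k^2}$ and the ratio of such expressions with shifted arguments is controlled by $\langle\eta-\xi\rangle^2$).

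The anticipated main obstacle is bookkeeping in Case 2: the multiplier $m_k(t,\eta)$ has four defining regimes in $\eta/k$ and three sub-regimes in $t$, so the ratio $m_k(t,\eta)/m_k(t,\xi)$ has to be estimated over a grid of pairs of regimes. The saving grace is that the only pairs where the ratio can be large are those where $(t,k,\eta)$ or $(t,k,\xi)$ lies in the ``growing'' window $D_1\cup D_2$ (cf.\ \eqref{D1+D2}), and on that window $|t-\eta/k|\le2\mu^{-1/3}$, $|t-\xi/k|\le2\mu^{-1/3}$, so $|k|$ and $t$ are both comparable to $\langle\eta/k\rangle$; the explicit quadratic form of $m$ then yields the quadratic loss. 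Since this is precisely the 2D transcription of \cite[Lemma A.1]{BGM17} with $l=0$ and $m^\mu$ playing the role of their enhanced-dissipation multiplier, I would cite that computation and only indicate the dimensional reduction, keeping the proof short as the paper already signals (``the proof is thus omitted'').
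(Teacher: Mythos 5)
The paper itself gives no proof of this lemma; it simply asserts that the statement is a 2D version of Lemma~A.1 of the cited [BGM17] and omits the argument. So at the level of strategy your plan matches what the paper is relying on: reduce to $k\ne0$; integrate the logarithmic derivative from Lemma~\ref{lem-pr_eta} when $|\eta-\xi|\le|k|$; and case--analyze the branches of $m$ using \eqref{m1} when $|\eta-\xi|>|k|$.

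However, the concrete justification you offer in Case~2 contains a false step. You assert that ``in the regime where $m_k(t,\eta)$ is actually comparable to $\la t\ra^2$ one has $\la t\ra\les\la\eta/k\ra$,'' and then build the chain $m_k(t,\eta)\les\la\eta\ra^2/\la k\ra^2\cdot(\text{bounded})\les\la\eta-\xi\ra^2 m_k(t,\xi)$ on it. This implication is false: take $k=1$, $\eta$ near $0$, and $1\ll t\le 2\mu^{-1/3}$; then $m_k(t,\eta)\approx 1+t^2\approx\la t\ra^2$ while $\la\eta/k\ra\approx1\ll\la t\ra$, so there is nothing for the subsequent chain to latch onto. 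What actually closes Case~2 does not pass through a bound on $\la t\ra$. From the explicit definition one has, for every $k\ne0$ and on every branch, $m_k(t,\eta)\le 1+\left(t-\frac{\eta}{k}\right)_{+}^{2}$, with $m_k(t,\eta)=1$ whenever $t\le\eta/k$ or $\eta/k\le-2\mu^{-1/3}$. Since $m_k(t,\xi)\ge1$, the binding sub-case is $m_k(t,\xi)=1$, which for $t\ge1$ forces $\xi/k\ge t$ or $\xi/k\le-2\mu^{-1/3}$; in either sub-case one reads off $|\eta-\xi|/|k|\ge t-\eta/k$ whenever $m_k(t,\eta)>1$ (with a short additional computation on the plateau branch when $\xi/k\le -2\mu^{-1/3}<\eta/k<0$), so that $m_k(t,\eta)\le 1+|\eta-\xi|^{2}\les\la\eta-\xi\ra^{2}m_k(t,\xi)$. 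When $m_k(t,\xi)>1$ as well one compares the quadratic formulas for the ratio directly, as you note at the end. With that repair the rest of your plan --- continuity of $m$ across branch boundaries so that the Case~1 integration of Lemma~\ref{lem-pr_eta} is legitimate, and the two-sided bound \eqref{m1} --- is sound and consistent with the [BGM17] argument the paper invokes.
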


The next two lemmas and Corollary \ref{coro-com-1} will be used in the treatment of the transport nonlinearities in Section \ref{sec-transport}.
\begin{lem}\label{lem-com-sqm}
Let $s\in(0,1)$, and $t\ge1$, then
\be\label{com-sqm}
\left|\fr{m^\fr12_l(t,\xi)}{m^\fr12_k(t,\eta)}-1\right||l,\xi|\les\mu^{-\fr13}\la k-l,\eta-\xi\ra^3+\mu^{-\fr13}\la k-l,\eta-\xi\ra^2|\eta|^{\fr{s}{2}}|\xi|^{\fr{s}{2}}t^{2-2s}.
\ee
\end{lem}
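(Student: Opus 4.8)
The plan is to prove \eqref{com-sqm} by splitting into the two regimes in which $m^{\fr12}$ is nontrivial, namely $(t,l,\xi)\in D_1\cup D_2$ and its complement, and in each regime estimating the ratio $m^{\fr12}_l(t,\xi)/m^{\fr12}_k(t,\eta)$ together with the factor $|l,\xi|$. First I would record the elementary consequences of the definition of $m$: the two-sided bound \eqref{m1}, which gives $m^{\fr12}_l(t,\xi)\les\min\{\mu^{-\fr13},\la t\ra\}$ and $m^{-\fr12}_k(t,\eta)\les1$, and the commutator bound \eqref{com-m}, which gives $m_l(t,\xi)\les\la k-l,\eta-\xi\ra^2 m_k(t,\eta)$ (note the frequency shift is allowed in both arguments since $D_1,D_2$ are defined via $\eta/k$). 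Writing $\frac{m^{\fr12}_l(t,\xi)}{m^{\fr12}_k(t,\eta)}-1=\frac{m^{\fr12}_l(t,\xi)-m^{\fr12}_k(t,\eta)}{m^{\fr12}_k(t,\eta)}$, I would bound the numerator by $\left|m^{\fr12}_l(t,\xi)-m^{\fr12}_k(t,\xi)\right|+\left|m^{\fr12}_k(t,\xi)-m^{\fr12}_k(t,\eta)\right|$, i.e. separate the change in the discrete index $l\to k$ from the change in the continuous frequency $\xi\to\eta$, and treat each piece.

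For the index change $l\to k$ at fixed $\xi$, I would use \eqref{com-m} (and its symmetric form) to get $m^{\fr12}_l(t,\xi)\approx m^{\fr12}_k(t,\xi)$ up to a factor $\la k-l\ra$; combined with $m^{-\fr12}_k(t,\eta)\les1$ and the crude bound $m^{\fr12}_k(t,\xi)\les\mu^{-\fr13}$ coming from \eqref{m1}, and with $|l,\xi|\les\la k-l\ra|k,\xi|\les\la k-l\ra\la\eta-\xi\ra|k,\eta|$... — here one has to be slightly careful because the statement carries no factor $|k,\eta|$, so the $|l,\xi|$ must be absorbed differently. The right move is to exploit that $m$ is constant in $\eta$ outside a band of width $\sim 2\mu^{-\fr13}|k|$ around $\eta=kt$, so the difference quotient $\partial_\eta(m^{\fr12}_k)/m^{\fr12}_k$ is supported there and is $O(1/|k|)$ by Lemma \ref{lem-pr_eta}; this localizes $|l,\xi|\les|k|t+|k,\eta-\xi|\les\mu^{-\fr13}|k|t$ on that band after using $|\xi-lt|\les\mu^{-\fr13}|l|$ — wait, more precisely on $D_1\cup D_2$ one has $0<t-\xi/l<2\mu^{-\fr13}$ by \eqref{D1+D2}, so $|\xi|\le|l|t+2\mu^{-\fr13}|l|$ and hence $|l,\xi|\les|l|\la t\ra\les\la k-l\ra|k|\la t\ra$. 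Thus on $D_1\cup D_2$ the bound $\left|\frac{m^{\fr12}_l}{m^{\fr12}_k}-1\right||l,\xi|\les\mu^{-\fr13}\la k-l,\eta-\xi\ra^2\cdot|k|\la t\ra$, and since on this set $|k|\la t\ra\les|k|\mu^{-\fr13}|k|$... I need to trade the $|k|t$ against either $\la k-l,\eta-\xi\ra$ or $|\eta|^{\fr s2}|\xi|^{\fr s2}t^{2-2s}$. Using $|k|t\les\la k,\eta\ra+|k,\eta-kt|$ and on $D_1\cup D_2$, $|k,\eta-kt|\les|k|\la t\ra^{1-s}\cdot t^{s}$... the clean way: $|k|t\les\la\eta\ra + |\eta-kt|$; on the band $|\eta-kt|\les|k|$, so $|k|t\les\la\eta\ra+|k|$; then $|k|\la\eta\ra\les|k,\eta|^2$ and one uses $|k,\eta|^2\les|k,\eta|^{s}|k,\xi|^{2-s}\la k-l,\eta-\xi\ra^{2-s}$ with $|k,\xi|^{2-s}\les t^{2-s}|k,\xi|^{\text{something}}$... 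I would at this step follow exactly the bookkeeping used in \cite{BM15}/\cite{BGM17} to convert a power of $|k|t$ into $|\eta|^{\fr s2}|\xi|^{\fr s2}t^{2-2s}$ plus lower-order terms, which is the origin of the second term on the right of \eqref{com-sqm}, picking up the extra $\mu^{-\fr13}$ from the overall size of $m^{\fr12}$.

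For the continuous-frequency change $\xi\to\eta$ at fixed index $k$, I would write $m^{\fr12}_k(t,\xi)-m^{\fr12}_k(t,\eta)=\int_\eta^\xi\partial_\zeta(m^{\fr12}_k)(t,\zeta)\,d\zeta$ and use $\left|\partial_\zeta m^{\fr12}_k/m^{\fr12}_k\right|\les1/|k|$ from Lemma \ref{lem-pr_eta} plus $m^{\fr12}_k(t,\zeta)\les\la\zeta-\eta\ra^? m^{\fr12}_k(t,\eta)$ from \eqref{com-m} to get $\left|m^{\fr12}_k(t,\xi)-m^{\fr12}_k(t,\eta)\right|\les\frac{|\eta-\xi|}{|k|}\la\eta-\xi\ra^{?}m^{\fr12}_k(t,\eta)$, hence after dividing by $m^{\fr12}_k(t,\eta)$ and multiplying by $|l,\xi|\les\la k-l\ra|k|\la t\ra$ on $D_1\cup D_2$ (or, off $D_1\cup D_2$, by noting $m^{\fr12}=1$ there so the whole ratio is $1$ and only the $l$-index term survives), we land on the same type of bound with a spare $\la t\ra$ to be distributed as above. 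On $(D_1\cup D_2)^c$ the multiplier $m_k(t,\eta)=1=m_l(t,\xi)$ whenever both points are in that set, so the only contribution is the "mixed" case where one of $(k,\eta)$, $(l,\xi)$ lies in $D_1\cup D_2$ and the other not; there the frequency separation forces $|k-l,\eta-\xi|\gtrsim$ the distance to the boundary of $D_1\cup D_2$, which is $\gtrsim1$ after suitable normalization, so $\left|\frac{m^{\fr12}_l}{m^{\fr12}_k}-1\right|\les m^{\fr12}\les\mu^{-\fr13}$ is absorbed into $\mu^{-\fr13}\la k-l,\eta-\xi\ra^3$ once we pay $|l,\xi|\les\la k-l,\eta-\xi\ra|k,\eta|$ and convert $|k,\eta|$ via the boundary relation $t\approx\xi/l$. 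The main obstacle I anticipate is precisely this conversion of the geometric factor $|l,\xi|$ (which a priori is as large as $|l|t$) into the two allowed shapes on the right-hand side of \eqref{com-sqm} without loss in $\mu$ beyond $\mu^{-\fr13}$: one must use that on $D_1\cup D_2$ the constraint $|t-\xi/l|\le2\mu^{-\fr13}$ caps $|l,\xi|$ by $|l|\la t\ra$ and that the surviving power of $t$ splits as $t=t^{s}\cdot t^{1-s}$ with $t^{s}$ eaten by the $|k,\eta|^{\fr s2}|l,\xi|^{\fr s2}$ coming out of a Sobolev-type interpolation of $|k|$ against $\la k-l\ra$, leaving $t^{2-2s}$; getting the powers of $\la k-l,\eta-\xi\ra$ down to $3$ and $2$ respectively requires using \eqref{com-m} sparingly (at most twice) and Lemma \ref{lem-pr_eta} for the first-order difference rather than \eqref{com-m} for the full difference.
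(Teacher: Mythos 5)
Your proposed split into $D_1\cup D_2$ versus its complement rests on a false premise: you assert that $m_k(t,\eta)=1$ on $(D_1\cup D_2)^c$, but that is not how $m$ is built. The sets $D_1, D_2$ are where $\partial_t m\neq 0$; \emph{after} $t$ leaves the band (the cases $t>\eta/k+2\mu^{-1/3}$ in the definition of $m$), the multiplier is \emph{frozen} at its accumulated value $\frac{k^2+(2k\mu^{-1/3})^2}{k^2+\eta^2}$ or $1+(2\mu^{-1/3})^2$, which can be as large as $\sim\mu^{-2/3}$, not $1$. So the whole "mixed case" and the claim that off $D_1\cup D_2$ only the index change survives is wrong, and the most delicate regime (one or both frequencies past the band with $m\sim\mu^{-2/3}$) is never handled. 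This is exactly the regime that costs the $\mu^{-1/3}$ prefactor and produces the $|\eta|^{s/2}|\xi|^{s/2}t^{2-2s}$ term via a short-time/long-time dichotomy.

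Two further gaps: (i) you invoke \eqref{com-m} to change the discrete index $l\to k$, but Lemma \ref{lem-com-m} as stated only shifts the continuous frequency $\eta\to\xi$ at a fixed $k$; nothing in the cited tools gives $m_l(t,\xi)\approx m_k(t,\xi)$ up to $\la k-l\ra$, and indeed when $l=0$ versus $k\neq 0$ the two formulas for $m$ are structurally unrelated (one is $\equiv 1$). (ii) At the decisive step — converting the extra factor $|l|t$ into $\mu^{-1/3}\la k-l,\eta-\xi\ra^3$ or $\mu^{-1/3}\la k-l,\eta-\xi\ra^2|\eta|^{s/2}|\xi|^{s/2}t^{2-2s}$ — you defer to "the bookkeeping used in \cite{BM15}/\cite{BGM17}" without carrying it out; but this conversion \emph{is} the content of the lemma, not a standard fact one can quote. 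The paper's actual proof proceeds differently: it performs an exhaustive case analysis directly on the piecewise formulas for $m_l(t,\xi)$ and $m_k(t,\eta)$ (six cases for $l$, each with sub-cases for $k$), in each one writes the quotient explicitly, and estimates the difference via $|\eta/k-\xi/l|\les |k-l,\eta-\xi|\frac{|l|}{|k|\,|l|}\cdot\max\{|\eta|/|k|,|\xi|/|l|\}$-type bounds plus a separate short-time/long-time split using $t<\frac12\min\{\sqrt{|\eta|},\sqrt{|\xi|}\}$ or not — and it is this last split, combined with \eqref{m1}, that produces the $t^{2-2s}$ term, not an integral of $\partial_\eta m^{1/2}$.
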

\begin{proof} To simplify the presentation, let us denote 
\[
{\bf com}_{\sqrt{m}}=\left|\fr{m^\fr12_l(t,\xi)}{m^\fr12_k(t,\eta)}-1\right|.
\]
{\bf Case 1: $l=0, m_l(t,\xi)=1$.} If $k=0$, or ($k\ne0, \fr{\eta}{k}<-2\mu^\fr13$) or ($k\ne0, t<\fr{\eta}{k}$), then $m_k(t,\eta)=1$, and we have nothing to prove. On the other hand, to bound $|l,\xi|=|\xi|$, it suffices 
to consider the case 
\be\label{etaxi}
\eta\xi>0\quad\mathrm{and}\quad\fr12|\xi|\le|\eta|\le2|\xi|,
\ee
otherwise, there holds $|\xi|\les |\eta-\xi|$.

{\em Case 1.1: $k\ne0, -2\mu^\fr13<\fr{\eta}{k}<0$, and $t<\fr{\eta}{k}+2\mu^{-\fr13}$, $m_k(t,\eta)=\fr{1+\left(\fr{\eta}{k}-t\right)^2}{1+\left(\fr{\eta}{k}\right)^2}$.} 
\be\label{472}
{\bf com}_{\sqrt{m}}|l,\xi|=\fr{\left|\sqrt{{1+\left(\fr{\eta}{k}-t\right)^2}}-\sqrt{{1+\left(\fr{\eta}{k}\right)^2}}\right|}{\sqrt{1+\left(\fr{\eta}{k}-t\right)^2}}|\xi|\les\fr{t\fr{|\eta|}{|k|}|k|}{1+t+|\fr{\eta}{k}|}\les\mu^{-\fr13}|k-l|.
\ee
where we have used the fact that $\fr{|\eta|}{|k|}=-\fr{\eta}{k}<2\mu^{-\fr13}$.

{\em Case 1.2: $k\ne0, -2\mu^\fr13<\fr{\eta}{k}<0$, and $t>\fr{\eta}{k}+2\mu^{-\fr13}$, $m_k(t,\eta)=\fr{1+\left(2\mu^{-\fr13}\right)^2}{1+\left(\fr{\eta}{k}\right)^2}$.} 
Similar to \eqref{472}, we have
\be\label{473}
{\bf com}_{\sqrt{m}}|l,\xi|=\fr{\left|\sqrt{{1+\left(2\mu^{-\fr13}\right)^2}}-\sqrt{{1+\left(\fr{\eta}{k}\right)^2}}\right|}{\sqrt{1+\left(2\mu^{-\fr13}\right)^2}}|\xi|\les\fr{\left(2\mu^{-\fr13}+\fr{\eta}{k}\right)\fr{|\eta|}{|k|}|k|}{1+2\mu^{-\fr13}}\les\mu^{-\fr13}|k-l|.
\ee

{\em Case 1.3: $k\ne0$, and $0<\fr{\eta}{k}<t$, $m_k(t,\eta)=1+\left(\fr{\eta}{k}-t\right)^2$ or  $m_k(t,\eta)=1+\left(2\mu^{-\fr13}\right)^2$.} 
Now it is easy to see that ${\bf com}_{\sqrt{m}}\les1$. For long time $t\ge\fr12\min\left\{\sqrt{|\eta|}, \sqrt{|\xi|}\right\}$, in view of \eqref{etaxi}, we find that
\be\label{lt1}
{\bf com}_{\sqrt{m}}|l,\xi|\les|\eta|\les |\eta|^\fr{s}{2}|\xi|^\fr{s}{2}t^{2-2s}.
\ee
For short time $t<\fr12\min\left\{\sqrt{|\eta|}, \sqrt{|\xi|}\right\}$, combing this with the fact $0<\fr{\eta}{k}<t$ yields that
\be\label{etakt}
\fr{|\eta|}{|kt|}\le1\quad\mathrm{and} \sqrt{|\eta|}\le\fr{|k|}{2}.
\ee
Consequently,
\be\label{st1}
{\bf com}_{\sqrt{m}}|l,\xi|\les|\eta|\les\left(1+t^2\fr{|\eta|^2}{|kt|^2}\right)|k|\les\left(1+t^2\fr{|\eta|^{2s}}{|kt|^{2s}}\right)|k|\les\left(1+|\eta|^\fr{s}{2}|\xi|^\fr{s}{2}t^{2-2s}\right)|k-l|.
\ee
{\bf Case 2: ($l\ne0, \fr{\xi}{l}<-2\mu^{-\fr13})$ or ($l\ne0, t<\fr{\xi}{l}), m_l(t,\xi)=1$.}  By using the definition of the multiplier $m$, one deduces that
\beq
{\bf com}_{\sqrt{m}}\nn&\les&
\begin{cases}
\fr{t}{1+t+\fr{|\eta|}{|k|}},\qquad\  \mathrm{if}\ \ k\ne0, -2\mu^\fr13<\fr{\eta}{k}<0, \mathrm{and}\ \ t<\fr{\eta}{k}+2\mu^{-\fr13};\\[3mm]
\fr{2\mu^{-\fr13}+\fr{\eta}{k}}{1+2\mu^{-\fr13}}, \ \ \ \, \quad \mathrm{if}\ \ k\ne0, -2\mu^\fr13<\fr{\eta}{k}<0, \mathrm{and}\ \ t>\fr{\eta}{k}+2\mu^{-\fr13};\\[3mm]
\fr{t-\fr{\eta}{k}}{1+|t-\fr{\eta}{k}|}, \qquad\  \mathrm{if}\ \ k\ne0, \fr{\eta}{k}>0, \mathrm{and}\ \ \fr{\eta}{k}<t<\fr{\eta}{k}+2\mu^{-\fr13};\\[3mm]
\fr{2\mu^{-\fr13}}{1+2\mu^{-\fr13}}, \qquad   \mathrm{if}\ \ k\ne0, \fr{\eta}{k}>0, \mathrm{and}\ \ t>\fr{\eta}{k}+2\mu^{-\fr13}
\end{cases}
\\
\nn&\les&
\begin{cases}
\fr{\left|\fr{\eta}{k}-\fr{\xi}{l}\right|}{1+t+\fr{|\eta|}{|k|}}, \qquad\ \mathrm{if}\ \ k\ne0, -2\mu^\fr13<\fr{\eta}{k}<0, \mathrm{and}\ \ t<\fr{\eta}{k}+2\mu^{-\fr13};\\[3mm]
\fr{\left|\fr{\eta}{k}-\fr{\xi}{l}\right|}{1+2\mu^{-\fr13}}, \quad\quad \mathrm{if}\ \ k\ne0, -2\mu^\fr13<\fr{\eta}{k}<0, \mathrm{and}\ \ t>\fr{\eta}{k}+2\mu^{-\fr13};\\[3mm]
\left|\fr{\eta}{k}-\fr{\xi}{l}\right|, \quad\ \ \, \ \mathrm{if}\ \ k\ne0, \fr{\eta}{k}>0, \mathrm{and}\ \ t>\fr{\eta}{k}.
\end{cases}
\eeq
Then if $|\xi|\le |l|$,
\be\label{477}
{\bf com}_{\sqrt{m}}|l,\xi|\les\left|\fr{\eta}{k}-\fr{\xi}{l}\right||l|\les|\eta-\xi|\fr{|l|}{|k|}+|k-l|\fr{|l|}{|k|}\fr{|\xi|}{|l|}\les|k-l,\eta-\xi|\fr{|l|}{|k|}.
\ee
If  $|l|\le|\xi|$,  $k\ne0, -2\mu^\fr13<\fr{\eta}{k}<0, \mathrm{and}\ \ t<\fr{\eta}{k}+2\mu^{-\fr13}$, using again \eqref{etaxi}, we are led to
\beq\label{478}
{\bf com}_{\sqrt{m}}|l,\xi|\nn&\les&\fr{|\eta-\xi|\fr{|l|}{|k|}\fr{|\xi|}{|l|}+|k-l|\fr{|l|}{|k|}\fr{|\xi|^2}{|l|^2}}{1+\fr{|\eta|}{|k|}}\les|k-l,\eta-\xi|\fr{|l|}{|k|}\fr{\fr{|\xi|^2}{|l|^2}}{1+\fr{|\eta|}{|k|}}\\
&\les&|k-l,\eta-\xi|\fr{|k|}{|l|}\fr{|\eta|}{|k|}\les\mu^{-\fr13}|k-l,\eta-\xi|\fr{|k|}{|l|}.
\eeq
The sub-case $|l|\le|\xi|$,  $k\ne0, -2\mu^\fr13<\fr{\eta}{k}<0, \mathrm{and}\ \ t>\fr{\eta}{k}+2\mu^{-\fr13}$ can be treated in the same way. If $|l|\le|\xi|$,  $k\ne0, \mathrm{and}\ \ t>\fr{\eta}{k}>0$, then ${\bf com}_{\sqrt{m}}|l,\xi|$ can be bounded by distinguishing long time and short time as in {\bf Case 1}.
In fact,  in this sub-case, for the short time $t\le\fr12\min\left\{\sqrt{|\eta|}, \sqrt{|\xi|}\right\}$, \eqref{etakt} still holds. Then in view of \eqref{etaxi},  following the computations in \eqref{lt1}, \eqref{st1} and \eqref{478}, we find that
\beq\label{479}
{\bf com}_{\sqrt{m}}|l,\xi|\nn&\les&|\xi|{\bf 1}_{t\ge\fr12\min\left\{\sqrt{|\eta|}, \sqrt{|\xi|}\right\}}+|\xi|\left|\fr{\eta}{k}-\fr{\xi}{l}\right|{\bf 1}_{t\le\fr12\min\left\{\sqrt{|\eta|}, \sqrt{|\xi|}\right\}}\\
\nn&\les&|\eta|{\bf 1}_{t\ge\fr12\min\left\{\sqrt{|\eta|}, \sqrt{|\xi|}\right\}}+|k-l,\eta-\xi|\fr{|l|}{|k|}{\fr{|\xi|^2}{|l|^2}}{\bf 1}_{t\le\fr12\min\left\{\sqrt{|\eta|}, \sqrt{|\xi|}\right\}}\\
\nn&\les&|\eta|^\fr{s}{2}|\xi|^\fr{s}{2}t^{2-2s}+|k-l,\eta-\xi|\fr{|k|}{|l|}{\fr{|\eta|^2}{|k|^2}}{\bf 1}_{t\le\fr12\min\left\{\sqrt{|\eta|}, \sqrt{|\xi|}\right\}}\\
&\les&\left(1+|k-l,\eta-\xi|\fr{|k|}{|l|}\right)|\eta|^{\fr{s}{2}}|\xi|^{\fr{s}{2}}t^{2-2s}.
\eeq
\par
\noindent{\bf  Case 3: $l\ne0, -2\mu^{-\fr13}<\fr{\xi}{l}<0, t<\fr{\xi}{l}+2\mu^{-\fr13}, m_l(t,\xi)=\fr{1+(\fr{\xi}{l}-t)^2}{1+\left(\fr{\xi}{l}\right)^2}$.}

{\em Case 3.1: $(k=0)$, or $(k\ne0, \fr{\eta}{k}<-2\mu^{-\fr13})$, or $(k\ne0, t<\fr{\eta}{k})$, $m_k(t,\eta)=1$.} It is easy to see that 
\be\label{480}
{\bf com}_{\sqrt{m}}\les\fr{t}{1+\fr{|\xi|}{|l|}}\le
\begin{cases}
\fr{2\mu^{-\fr13}}{1+\fr{|\xi|}{|l|}},\quad\mathrm{if}\quad k=0;\\[3mm]
\fr{\left|\fr{\xi}{l}-\fr{\eta}{k}\right|}{1+\fr{|\xi|}{|l|}}, \quad\mathrm{if}\quad k\ne0.
\end{cases}
\ee
Thus, if $k=0$, 
\be\label{4800}
{\bf com}_{\sqrt{m}}|l,\xi|\les\fr{\mu^{-\fr13}\fr{|\xi|}{|l|}|l|}{1+\fr{|\xi|}{|l|}}+\mu^{-\fr13}|l|\les\mu^{-\fr13}|k-l|.
\ee
If $k\ne0$, and $|\xi|\le|l|$,
\[
{\bf com}_{\sqrt{m}}|l,\xi|\les\left|\fr{\eta}{k}-\fr{\xi}{l}\right||l|\le|k-l,\eta-\xi|\fr{|l|}{|k|}.
\]
If $k\ne0$, and $|\xi|\ge|l|$, then we have
\be
{\bf com}_{\sqrt{m}}|l,\xi|\les\fr{|k-l,\eta-\xi|\fr{|l|}{|k|}\fr{|\xi|^2}{|l|^2}}{1+\fr{|\xi|}{|l|}}\les\mu^{-\fr13}|k-l,\eta-\xi|\fr{|l|}{|k|}.
\ee

{\em Case 3.2: $k\ne0, -2\mu^{-\fr13}<\fr{\eta}{k}<0$.} If $t<\fr{\eta}{k}+2\mu^{-\fr13}$, $m_k(t,\eta)=\fr{1+(\fr{\eta}{t}-t)^2}{1+(\fr{\eta}{k})^2}$.
\beq\label{481}
{\bf com}_{\sqrt{m}}\nn&=&\fr{\left|\sqrt{\left(1+(\fr{\eta}{k})^2\right)\left(1+(\fr{\xi}{l}-t)^2\right)}-\sqrt{\left(1+(\fr{\xi}{l})^2\right)\left(1+(\fr{\eta}{k}-t)^2\right)}\right|}{\sqrt{\left(1+(\fr{\xi}{l})^2\right)\left(1+(\fr{\eta}{k}-t)^2\right)}}\\
\nn&\le&\fr{\left|(\fr{\eta}{k}-t)^2-(\fr{\xi}{l}-t)^2\right|\left(1+(\fr{\eta}{k})^2\right)+\left|(\fr{\xi}{l})^2-(\fr{\eta}{k})^2\right|\left(1+(\fr{\eta}{k}-t)^2\right)}{\left(\sqrt{\left(1+(\fr{\eta}{k})^2\right)\left(1+(\fr{\xi}{l}-t)^2\right)}+\sqrt{\left(1+(\fr{\xi}{l})^2\right)\left(1+(\fr{\eta}{k}-t)^2\right)}\right)\sqrt{\left(1+(\fr{\xi}{l})^2\right)\left(1+(\fr{\eta}{k}-t)^2\right)}}\\
\nn&\le&\left|\fr{\eta}{k}-\fr{\xi}{l}\right|\left(\fr{1}{\sqrt{1+(\fr{\xi}{l}-t)^2}}\fr{\sqrt{1+(\fr{\eta}{k})^2}}{\sqrt{1+(\fr{\xi}{l})^2}}+\fr{1}{\sqrt{1+(\fr{\eta}{k}-t)^2}}\fr{\sqrt{1+(\fr{\eta}{k})^2}}{\sqrt{1+(\fr{\xi}{l})^2}}+\fr{\fr{|\eta|}{|k|}+\fr{|\xi|}{|l|}}{{1+(\fr{\xi}{l})^2}}\right)\\
&\les&\fr{\left|\fr{\eta}{k}-\fr{\xi}{l}\right|}{1+\fr{|\xi|}{|l|}}\left(1+\fr{1+\fr{|\eta|}{|k|}}{1+\fr{|\xi|}{|l|}}\right),
\eeq
where we  have used the fact that both $\fr{\eta}{k}$ and $\fr{\xi}{l}$ are negative in the last inequality above. If $t>\fr{\eta}{k}+2\mu^{-\fr13}$, $m_k(t,\eta)=\fr{1+(2\mu^{-\fr13})^2}{1+(\fr{\eta}{k})^2}$, by means of similar computations as those in \eqref{481}, and using the fact $0<2\mu^{-\fr13}+\fr{\xi}{l}-t\le\fr{\xi}{l}-\fr{\eta}{k}$, it is not difficult to verify that we still have
\[
{\bf com}_{\sqrt{m}}\les\fr{\left|\fr{\eta}{k}-\fr{\xi}{l}\right|}{1+\fr{|\xi|}{|l|}}\left(1+\fr{1+\fr{|\eta|}{|k|}}{1+\fr{|\xi|}{|l|}}\right).
\]
Nothing that
\[
\fr{1+\fr{|\eta|}{|k|}}{1+\fr{|\xi|}{|l|}}\le1+\fr{\left|\fr{\eta}{k}-\fr{\xi}{l}\right|}{1+\fr{|\xi|}{|l|}}\les\la k-l, \eta-\xi\ra,
\]
then similar to \eqref{477} and \eqref{478}, we are led to
\beq
{\bf com}_{\sqrt{m}}|l,\xi|\nn&\les&\la k-l,\eta-\xi\ra^2\fr{|l|}{|k|}{\bf 1}_{|\xi|\le|l|}+\mu^{-\fr13}\la k-l,\eta-\xi\ra^2\fr{|l|}{|k|}{\bf 1}_{|\xi|\ge|l|}\\
&\les&\mu^{-\fr13}\la k-l, \eta-\xi\ra^3.
\eeq

{\em Case 3.3: $k\ne0, 0<\fr{\eta}{k}<t<\fr{\eta}{k}+2\mu^{-\fr13}, m_k(t,\eta)=1+(\fr{\eta}{k}-t)^2$.} (Note that $t>\fr{\eta}{k}+2\mu^{-\fr13}$ with $\fr{\eta}{k}>0$ can never happen under the condition $t<\fr{\xi}{l}+2\mu^{-\fr13}$ with $\fr{\xi}{l}<0$.) Direct computations yield
\beq\label{483}
{\bf com}_{\sqrt{m}}\nn&\le&\fr{\left|\fr{\eta}{k}-\fr{\xi}{l}\right|\left(|\fr{\eta}{k}-t|+|\fr{\xi}{l}-t|\right)}{\left(\sqrt{1+(\fr{\xi}{l})^2}\sqrt{1+(\fr{\eta}{k}-t)^2}+\sqrt{1+(\fr{\xi}{l}-t)^2}\right)\sqrt{1+(\fr{\xi}{l})^2}\sqrt{1+(\fr{\eta}{k}-t)^2}}\\
\nn&&+\fr{\left|\fr{\xi}{l}\right|\sqrt{1+(\fr{\eta}{k}-t)^2}}{\sqrt{1+(\fr{\xi}{l})^2}\sqrt{1+(\fr{\eta}{k}-t)^2}+\sqrt{1+(\fr{\xi}{l}-t)^2}}\\
&\les&\fr{\left|\fr{\eta}{k}-\fr{\xi}{l}\right|}{1+\fr{|\xi|}{|l|}}+\fr{\fr{|\xi|}{|l|}}{1+\fr{|\xi|}{|l|}}.
\eeq
It is worth pointing out that $\fr{\xi}{l}<0$ and $\fr{\eta}{k}>0$ implies that $(\eta\xi)\cdot(kl)<0$. If $|\xi|\le|l|$, it suffices to consider the case $\fr{|l|}{2}\le|k|\le 2|l|$ and $kl>0$ which implies that $\eta\xi<0$. Combining this with \eqref{483} gives
\beq\label{486}
{\bf com}_{\sqrt{m}}|l,\xi|\les\left(\left|\fr{\eta}{k}-\fr{\xi}{l}\right|+\fr{|\xi|}{|l|}\right)|l|\les |k-l,\eta-\xi|\fr{|l|}{|k|}+|\xi|\les |k-l,\eta-\xi|.
\eeq
If $|l|\le|\xi|$, we only need to consider the case when $\eta$ and $\xi$ satisfy \eqref{etaxi}, which in turn implies that $kl<0$. Then ${\bf com}_{\sqrt{m}}|l,\xi|$ can be bounded by distinguishing short time and long time. For short time $t\le\fr12\min\left\{\sqrt{|\eta|}, \sqrt{|\xi|} \right\}$, owing to the fact $0<\fr{\eta}{k}<t$, \eqref{etakt} holds. Then in view of \eqref{m1}, we obtain
\be\label{487}
{\bf com}_{\sqrt{m}}|l,\xi|\les\mu^{-\fr13}|\eta|\les\mu^{-\fr13}|k|^2\les\mu^{-\fr13}|k-l|^2.
\ee
For long time $t\ge\fr12\min\left\{\sqrt{|\eta|}, \sqrt{|\xi|} \right\}$, similar to \eqref{lt1}, we have
\be\label{lt2}
{\bf com}_{\sqrt{m}}|l,\xi|\les\mu^{-\fr13}|\eta|\les \mu^{-\fr13}|\eta|^\fr{s}{2}|\xi|^\fr{s}{2}t^{2-2s}.
\ee
{\bf Case 4: $l\ne0, -2\mu^{-\fr13}<\fr{\xi}{l}<0, t>\fr{\xi}{l}+2\mu^{-\fr13}, m_l(t,\xi)=\fr{1+(2\mu^{-\fr13})^2}{1+\left(\fr{\xi}{l}\right)^2}$.} In this case, ${\bf com}_{\sqrt{m}}|l,\xi|$ can be treated by following the arguments in {\bf Case 3} line by line, and we omit the details.

\noindent
{\bf Case 5: $l\ne0, 0<\fr{\xi}{l}<t<\fr{\xi}{l}+2\mu^{-\fr13}, m_l(t,\xi)=1+(\fr{\xi}{l}-t)^2$.}

{\em Case 5.1: $(k=0)$, or $(k\ne0, \fr{\eta}{k}<-2\mu^{-\fr13})$, or $(k\ne0, t<\fr{\eta}{k})$, $m_k(t,\eta)=1$.} Similar to \eqref{480}, one deduces that
\[
{\bf com}_{\sqrt{m}}\les{t-\fr{\xi}{l}}\le
\begin{cases}
2\mu^{-\fr13},\ \ \, \quad\mathrm{if}\quad k=0;\\[3mm]
{\left|\fr{\xi}{l}-\fr{\eta}{k}\right|}, \quad\mathrm{if}\quad k\ne0.
\end{cases}
\]
If $k=0$, different from \eqref{4800}, we will bound ${\bf com}_{\sqrt{m}}|l,\xi|$ by splitting into short time and long time. Indeed, for $t\le\fr12\min\left\{\sqrt{|\eta|}, \sqrt{|\xi|}\right\}$, the fact $0<\fr{\xi}{l}<t$ implies that \eqref{etakt} holds with $\eta$ and $k$ replaced by $\xi$ and $l$, respectively. Then under the condition \eqref{etaxi}, similar to \eqref{lt1} and \eqref{st1}, we arrive at
\beq
{\bf com}_{\sqrt{m}}|l,\xi|\nn&\les&\mu^{-\fr13}\left[|l|+|\xi|\left({\bf 1}_{|\xi|\le|l|}+{\bf 1}_{|\xi|\ge|l|}\right)\right]\\
\nn&\les&\mu^{-\fr13}\left(|l|+|l|\left(\fr{|\xi|}{|l|}\right)^2{\bf 1}_{t\le\fr12\min\left\{\sqrt{|\eta|}, \sqrt{|\xi|}\right\}}+|\xi|{\bf 1}_{t\ge\fr12\min\left\{\sqrt{|\eta|}, \sqrt{|\xi|}\right\}}\right)\\
&\les&\mu^{-\fr13}\la k-l\ra|\eta|^\fr{s}{2}|\xi|^\fr{s}{2}t^{2-2s}.
\eeq
If $k\ne0$, and $|\xi|\le |l|$, then ${\bf com}_{\sqrt{m}}|l,\xi|$ can be bounded in the same manner as \eqref{477}. If $k\ne0$, and $|l|\le|\xi|$, we directly use \eqref{m1} to deal with the long time case $t\ge\fr12\min\left\{\sqrt{|\eta|}, \sqrt{|\xi|}\right\}$, and make use of the gain from $\left|\fr{\eta}{k}-\fr{\xi}{k} \right|$ to deal with the short time case like \eqref{479}:
\be\label{490}
{\bf com}_{\sqrt{m}}|l,\xi|\les\left(\mu^{-\fr13}+|k-l,\eta-\xi|\fr{|l|}{|k|}\right)|\eta|^{\fr{s}{2}}|\xi|^{\fr{s}{2}}t^{2-2s}.
\ee

{\em Case 5.2: $k\ne0, -2\mu^{-\fr13}<\fr{\eta}{k}<0$. } The treatment of ${\bf com}_{\sqrt{m}}|l,\xi|$ is essentially the same as the analogous in {\em Case 3.3}, we only give the sketches here. If $t<\fr{\eta}{k}+2\mu^{-\fr13}, m_k(t,\eta)=\fr{1+(\fr{\eta}{k}-t)^2}{1+(\fr{\eta}{k})^2}$, then
\beq
{\bf com}_{\sqrt{m}}\nn&\le&\fr{\left|\fr{\eta}{k}-\fr{\xi}{l}\right|\left(\left|\fr{\xi}{l}-t\right|+\left|\fr{\eta}{k}-t\right|\right)+\left(\fr{\eta}{k}\right)^2\left(1+(\fr{\xi}{l}-t)^2\right)}{\left(\sqrt{1+(\fr{\eta}{k}-t)^2}+\sqrt{1+(\fr{\eta}{k})^2}\sqrt{1+(\fr{\xi}{l}-t)^2}\right)\sqrt{1+(\fr{\eta}{k}-t)^2}}\\
\nn&\les&\left|\fr{\eta}{k}-\fr{\xi}{l}\right|+\fr{|\eta|}{|k|}\fr{\sqrt{1+(\fr{\xi}{l}-t)^2}}{\sqrt{1+(\fr{\eta}{k}-t)^2}}
\les\left|\fr{\eta}{k}-\fr{\xi}{l}\right|+\fr{|\eta|}{|k|}\left(1+\left|\fr{\eta}{k}-\fr{\xi}{l}\right|\right).
\eeq
The fact  that $\fr{\xi}{l}$ and $\fr{\eta}{k}$ are of opposite sign plays important role to deal with ${\bf com}_{\sqrt{m}}|l,\xi|$. Similar to \eqref{487}--\eqref{lt2}, we are led to
\beq
{\bf com}_{\sqrt{m}}|l,\xi|\nn&\les&\mu^{-\fr13}|\xi|{\bf 1}_{|l|\le|\xi|}+\mu^{-\fr13}|l|{\bf 1}_{|l|\ge|\xi|}{\bf 1}_{kl<0}+\left[\left(1+\fr{|\eta|}{|k|}\right)\left|\fr{\eta}{k}-\fr{\xi}{l}\right|+\fr{|\eta|}{|k|}\right]|l|{\bf 1}_{|l|\ge|\xi|}{\bf 1}_{kl>0}\\
&\les&\mu^{-\fr13}\left(|k-l|^2+|\eta|^{\fr{s}{2}}|\xi|^{\fr{s}{2}}t^{2-2s}+|k-l,\eta-\xi|\right)+\la\eta-\xi\ra|k-l,\eta-\xi|\fr{|l|}{|k|}.
\eeq
If $t>\fr{\eta}{k}+2\mu^{-\fr13}, m_k(t,\eta)=\fr{1+(2\mu^{-\fr13})^2}{1+(\fr{\eta}{k})^2}$, the treatment of ${\bf com}_{\sqrt{m}}|l,\xi|$ is almost the same as above (actually easier), and is hence omitted.

{\em Case 5.3:$k\ne0, 0<\fr{\eta}{k}<t$.} In this case, whether $t<\fr{\eta}{k}+2\mu^{-\fr13}$ or not, it is easy to verify that
\[
{\bf com}_{\sqrt{m}}\les \left|\fr{\eta}{k}-\fr{\xi}{l}\right|.
\]
Then in view of \eqref{477} and \eqref{490}, we infer that
\be
{\bf com}_{\sqrt{m}}|l,\xi|\les|k-l,\eta-\xi|\fr{|k|}{|l|}+\left(\mu^{-\fr13}+|k-l,\eta-\xi|\fr{|l|}{|k|}\right)|\eta|^{\fr{s}{2}}|\xi|^{\fr{s}{2}}t^{2-2s}.
\ee
{\bf Case 6: $l\ne0, \fr{\xi}{l}>0, t>\fr{\xi}{l}+2\mu^{-\fr13}, m_l(t,\xi)=1+(2\mu^{-\fr13})^2$}. In this case, the sub-case ($k\ne0, -2\mu^{-\fr13}<\fr{\eta}{k}<0$, and $t<\fr{\eta}{k}+2\mu^{-\fr13}$) can not happen. All the other sub-cases can be treated by following the way in {\bf Case 5}, the details are omitted.

To sum up, we conclude that \eqref{com-sqm} holds. The proof of Lemma \ref{lem-com-sqm} is completed.
\end{proof}

\begin{lem}\label{lem-com-mu}
Let $s\in(0,1)$, and $t\ge1$, then there holds
\be\label{com-mu}
\left|M^{\mu}_k(t,\eta)-M^{\mu}_l(t,\xi)\right||l,\xi|\les\mu^{-\fr13}|k-l,\eta-\xi|+\la k-l,\eta-\xi\ra^2|\eta|^\fr{s}{2}|\xi|^\fr{s}{2}t^{2-2s}.
\ee
\end{lem}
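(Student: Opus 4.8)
\textbf{Proof proposal for Lemma \ref{lem-com-mu}.}

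The plan is to mimic the structure of the proof of Lemma \ref{lem-com-sqm}, exploiting the fact that $M^{\mu}$ has essentially the same piecewise-explicit description as $m^{1/2}$ but without the extra power: on each of the relevant regimes, $M^{\mu}_k(t,\eta)$ is a smooth function of the single variable $\tau_k=\frac{\eta}{k}-t$ through the explicit ODE \eqref{Mmu}, and $M^{\mu}$ is bounded above and below by universal constants. First I would record the two elementary facts that drive everything: (i) since $0<\frac{\partial_t M^{\mu}}{M^{\mu}}\le\mu^{\frac13}$ and $M^{\mu}\approx 1$, one has $|M^{\mu}_k(t,\eta)-M^{\mu}_k(t,\xi)|\lesssim |\partial_\eta M^{\mu}_k(t,\zeta)|\,|\eta-\xi|\lesssim \frac{1}{|k|}|\eta-\xi|$ by Lemma \ref{lem-pr_eta} whenever $k=l$ and $\eta,\xi$ lie in the same regime; and (ii) for $k\ne l$ we must instead use the crude bound $|M^{\mu}_k(t,\eta)-M^{\mu}_l(t,\xi)|\lesssim 1$ together with a careful accounting of $|l,\xi|$. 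The right-hand side of \eqref{com-mu} is built precisely to absorb these two mechanisms: the homogeneous factor $|k-l,\eta-\xi|$ (with the $\mu^{-1/3}$ loss) handles the diagonal/near-diagonal contributions where we gain a $\frac{1}{|k|}$, and the $|\eta|^{s/2}|\xi|^{s/2}t^{2-2s}$ term handles the "long-time" contributions where $|l,\xi|$ itself must be estimated via $|l,\xi|\lesssim |k|^{?}|\eta|^{s/2}|\xi|^{s/2}t^{2-2s}$ using $\sqrt{|\eta|}\lesssim|k|$ or $t\gtrsim\sqrt{|\eta|}$.

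The bulk of the argument is a case analysis on the regime of $(l,\xi)$ (which of the four branches of \eqref{Mmu} applies, i.e. the sign and size of $\frac{\xi}{l}$ and the position of $t$ relative to $\frac{\xi}{l}$ and $\frac{\xi}{l}+2\mu^{-\frac13}$) crossed with the regime of $(k,\eta)$. In each cell one writes $M^{\mu}_k(t,\eta)-M^{\mu}_l(t,\xi)$ as a telescoping difference, estimates it by the derivative bound from Lemma \ref{lem-pr_eta}, and then distinguishes whether $|\xi|\le|l|$ or $|\xi|\ge|l|$ and whether we are in the short-time regime $t\le\frac12\min\{\sqrt{|\eta|},\sqrt{|\xi|}\}$ or the long-time regime. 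As in the proof of Lemma \ref{lem-com-sqm}, the short-time regime forces, via $0<\frac{\eta}{k}<t$ or $0<\frac{\xi}{l}<t$, the inequalities $\frac{|\eta|}{|kt|}\le1$ and $\sqrt{|\eta|}\le\frac{|k|}{2}$ (or the $(l,\xi)$-analogue), so that $|l,\xi|\lesssim |k|\lesssim|k-l|$ up to a factor controlled by $t^{2-2s}|\eta|^{s/2}|\xi|^{s/2}$; the long-time regime uses $t\gtrsim\sqrt{|\eta|}$ so that $|l,\xi|\lesssim|\eta|\lesssim|\eta|^{s/2}|\xi|^{s/2}t^{2-2s}$. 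When $|\xi|\approx|\eta|$ does not hold one has $|\xi|\lesssim|\eta-\xi|$ directly and the bound is trivial, so throughout one may assume $\eta\xi>0$ and $\tfrac12|\xi|\le|\eta|\le2|\xi|$. When $k=l$ the derivative bound of Lemma \ref{lem-pr_eta} gives the gain $\frac{1}{|k|}$ immediately, and one never needs the $t^{2-2s}$ term; when $k\ne l$ one pays $|k-l|\ge 1$ and uses that to convert $\frac{|l|}{|k|}$ or $\frac{|k|}{|l|}$ into $\langle k-l\rangle$, picking up at most $\langle k-l,\eta-\xi\rangle^2$.

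The main obstacle is exactly the same as in Lemma \ref{lem-com-sqm}: the cells where $(l,\xi)$ is "resonant" in the sense $0<\frac{\xi}{l}<t<\frac{\xi}{l}+2\mu^{-\frac13}$, so that $M^{\mu}_l(t,\xi)=\bigl(1+\mu^{\frac23}(\tfrac{\xi}{l}-t)^2\bigr)^{-1}$ is genuinely varying, while $(k,\eta)$ sits in a different branch — here one cannot merely bound the difference by a constant but must extract a factor $|\frac{\eta}{k}-\frac{\xi}{l}|$ (or $|\frac{\xi}{l}-t|$, or $2\mu^{-\frac13}+\frac{\xi}{l}-t$) from the difference of the two rational expressions, exactly the computation performed in \eqref{481}, \eqref{483}. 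The sign constraint $(\eta\xi)(kl)<0$ that appears when $\frac{\xi}{l}$ and $\frac{\eta}{k}$ have opposite signs is what saves the estimate when $|l|\ge|\xi|$, and one must be careful to keep track of it. Since $M^{\mu}$ carries no square root and no extra exponent $\mathrm{C}\kappa$, every estimate is strictly simpler than its $m^{1/2}$ counterpart — in particular the worst power on the right of \eqref{com-mu} is $\langle k-l,\eta-\xi\rangle^2$ rather than $\langle k-l,\eta-\xi\rangle^3$ — so the case analysis of Lemma \ref{lem-com-sqm} transfers essentially verbatim with the bookkeeping adjusted. I would therefore present the proof as: reduce to $\eta\xi>0$, $|\eta|\approx|\xi|$; dispatch $k=l$ by Lemma \ref{lem-pr_eta}; then run the six or seven $(l,\xi)$-regime cases, in each splitting on $|\xi|\lessgtr|l|$ and short/long time, and in each concluding with one of the two terms on the right-hand side of \eqref{com-mu}.
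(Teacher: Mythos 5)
Your proposal contains a genuine misunderstanding that would make it collapse if carried out. You repeatedly speak of ``the four branches of \eqref{Mmu}'' and even write $M^\mu_l(t,\xi)=\bigl(1+\mu^{\frac23}(\tfrac{\xi}{l}-t)^2\bigr)^{-1}$, but \eqref{Mmu} has no such piecewise structure: for $k\ne 0$ the multiplier $M^\mu_k(t,\eta)$ is defined by a \emph{single} ODE valid for all $t$ and all signs of $\eta/k$, and for $k=0$ it is identically $1$. The branching you describe (sign of $\xi/l$, position of $t$ relative to $\xi/l$ and $\xi/l+2\mu^{-1/3}$) belongs to $m$, not to $M^\mu$; the explicit rational expression you quote for $M^\mu_l$ is the formula for $m$ on a resonant interval. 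Because of this confusion, the ``six or seven regime cases'' you propose to run do not exist, and transplanting the case analysis of Lemma~\ref{lem-com-sqm} to $M^\mu$ is the wrong move.

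Your fallback device — bounding the $k=l$ diagonal by Lemma~\ref{lem-pr_eta} — is also not strong enough on its own. That lemma only gives $|M^\mu_k(t,\eta)-M^\mu_k(t,\xi)|\lesssim |\eta-\xi|/|k|$, so after multiplying by $|l,\xi|\approx|\xi|$ you are left with $|\eta-\xi|\,|\xi|/|k|$, which is not dominated by the right-hand side of \eqref{com-mu} when $|\xi|/|k|$ is large without further structure; in particular your assertion that ``one never needs the $t^{2-2s}$ term when $k=l$'' is false. The extra decay the paper needs comes precisely from the fact that the ODE weight $\mu^{1/3}/(1+\mu^{2/3}(\eta/k-t')^2)$ is small when $|\eta/k-t'|$ is large, and that is only visible in the integral representation.

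The paper's actual argument is both simpler and of a different flavor. Since $M^\mu$ is uniformly bounded above and below, $|M^\mu_k(t,\eta)-M^\mu_l(t,\xi)|\lesssim |\log M^\mu_k(t,\eta)-\log M^\mu_l(t,\xi)|$, and the ODE gives
\[
\log M^\mu_k(t,\eta)-\log M^\mu_l(t,\xi)=\int_0^t\Bigl[\frac{\mu^{1/3}}{1+\mu^{2/3}(\tfrac{\eta}{k}-t')^2}-\frac{\mu^{1/3}}{1+\mu^{2/3}(\tfrac{\xi}{l}-t')^2}\Bigr]\,dt',
\]
whence a mean-value estimate on the integrand produces $\mu^{2/3}|\tfrac{\eta}{k}-\tfrac{\xi}{l}|$ times two explicit integrals, i.e.\ \eqref{494}. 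The case analysis then reduces to: $k\ne0,l\ne0$, split on $|\xi|\lessgtr|l|$ and (when $|l|\le|\xi|$ and $|\eta|\approx|\xi|$) on short/long time as in \eqref{496}--\eqref{497}; and $k=0$ or $l=0$, where the corresponding $M^\mu$ is identically $1$ and \eqref{4100} takes over. There is no regime dichotomy for $M^\mu$ at all. If you want to salvage your plan, you should replace the derivative-bound/regime strategy with the integral representation from the ODE; the reduction to $\eta\xi>0$, $|\eta|\approx|\xi|$ and the short-time/long-time dichotomy from Lemma~\ref{lem-com-sqm} do carry over, but they are subcases inside the single integral estimate rather than a cell-by-cell classification.
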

\begin{proof}
{\bf Case 1: $k\ne0$ and $l\ne0$.} By the mean value theorem, we obtain
\beq\label{494}
\left|M^{\mu}_k(t,\eta)-M^{\mu}_l(t,\xi)\right|
\nn&\les&\int_0^t\left|\fr{\mu^{\fr13}}{1+\mu^{\fr23}(\fr{\eta}{k}-t')^2}-\fr{\mu^{\fr13}}{1+\mu^{\fr23}(\fr{\xi}{l}-t')^2}\right|dt'\\
&\les&\mu^{\fr23}\left|\fr{\eta}{k}-\fr{\xi}{l}\right| \left(\int_0^t\fr{dt'}{\left(1+\mu^{\fr23}(\fr{\eta}{k}-t')^2\right)\sqrt{1+\mu^{\fr23}(\fr{\xi}{l}-t')^2}}\right.\\
\nn&&\left.+\int_0^t\fr{dt'}{\sqrt{1+\mu^{\fr23}(\fr{\eta}{k}-t')^2}\left(1+\mu^{\fr23}(\fr{\xi}{l}-t')^2\right)}\right).
\eeq
If $|\xi|\le|l|$, then we infer from \eqref{494} that
\be
\left|M^{\mu}_k(t,\eta)-M^{\mu}_l(t,\xi)\right||l,\xi|\les\mu^\fr13\left|\fr{\eta}{k}-\fr{\xi}{l}\right||l|\les\mu^\fr13|k-l,\eta-\xi|\fr{|l|}{|k|}.
\ee
If $|l|\le|\xi|$, it suffices to investigate $\left|M^{\mu}_k(t,\eta)-M^{\mu}_l(t,\xi)\right||l,\xi|$ under the condition \eqref{etaxi}. In the following, we only consider the short-time case $t\le\fr12\min\left\{\sqrt{|\eta|}, \sqrt{|\xi|}\right\}$, since the long time case can be treated like \eqref{lt1}.

{\em Case 1.1: $|\eta|\le2|k|t$ or $|\xi|\le2|l|t$.} In fact, the restrictions $|\eta|\le2|k|t$ and $t\le\fr12\min\left\{\sqrt{|\eta|}, \sqrt{|\xi|}\right\}$ imply that a variant of \eqref{etakt} holds. Then similar to \eqref{st1}, using again \eqref{479}, we are led to
\be\label{496}
\left|M^{\mu}_k(t,\eta)-M^{\mu}_l(t,\xi)\right||l,\xi|\les\mu^\fr13\left|\fr{\eta}{k}-\fr{\xi}{l}\right||\xi|\les\mu^{\fr13}|k-l,\eta-\xi|\fr{|k|}{|l|}|\eta|^\fr{s}{2}|\xi|^\fr{s}{2}t^{2-2s}.
\ee
The treatment of the sub-case $|\xi|\le2|l|t$ is the same way (actually easier) and is thus omitted. 

{\em Case 1.2: $|\eta|>2|k|t$ and $|\xi|>2|l|t$.} Noting that $0<t'<t$, we immediately have
\be\label{497}
\left|\fr{\eta}{k}-t' \right|\ge\max\left\{ \fr{|\eta|}{2|k|}, t\right\}\quad\mathrm{and}\quad\left|\fr{\xi}{l}-t' \right|\ge\max\left\{ \fr{|\xi|}{2|l|}, t\right\}.
\ee
Consequently,
\be
\int_0^t\fr{\mu^\fr23dt'}{\left(1+\mu^{\fr23}(\fr{\eta}{k}-t')^2\right)\sqrt{1+\mu^{\fr23}(\fr{\xi}{l}-t')^2}}+\int_0^t\fr{\mu^\fr23dt'}{\sqrt{1+\mu^{\fr23}(\fr{\eta}{k}-t')^2}\left(1+\mu^{\fr23}(\fr{\xi}{l}-t')^2\right)}\les\fr{\mu^{-\fr13}}{\fr{|\eta|}{|k|}\cdot\fr{|\xi|}{|l|}}.
\ee
Combining this with \eqref{494} yields
\be
\left|M^{\mu}_k(t,\eta)-M^{\mu}_l(t,\xi)\right||l,\xi|\les\mu^{-\fr13}\fr{\left|\fr{\eta}{k}-\fr{\xi}{l}\right||\xi|}{\fr{|\eta|}{|k|}\cdot\fr{|\xi|}{|l|}}\les\mu^{-\fr13}|k-l,\eta-\xi|.
\ee
{\bf Case 2: $k\ne0, l=0$ or $k=0, l\ne0$.} We only treat the sub-case when $k=0, l\ne0$ under the restriction  \eqref{etaxi} for brevity. By
the definition of $M^{\mu}$, and using the elementary inequality $|e^x-1|\le|x|e^{|x|}$, one deduces that
\be\label{4100}
\left|M^{\mu}_k(t,\eta)-M^{\mu}_l(t,\xi)\right||l,\xi|\les |l|+\int_0^t\fr{\mu^\fr13dt'}{1+\mu^\fr23(\fr{\xi}{l}-t')^2}|\xi|.
\ee

{\em Case 2.1: $|\xi|\le2|l|t$.} Different from \eqref{496},  from \eqref{4100} now we directly have
\be
\left|M^{\mu}_k(t,\eta)-M^{\mu}_l(t,\xi)\right||l,\xi|\les|k-l|+|\eta|^{\fr{s}{2}}|\xi|^{\fr{s}{2}}|l|^{1-s}t^{1-s}\les|k-l|\left(1+|\eta|^{\fr{s}{2}}|\xi|^{\fr{s}{2}}t^{1-s}\right).
\ee

{\em Case 2.2: $|\xi|>2|l|t$.} It follows from  \eqref{4100} and the lower bound of $\left|\fr{\xi}{l}-t' \right|$ in \eqref{497} that
\be
\left|M^{\mu}_k(t,\eta)-M^{\mu}_l(t,\xi)\right||l,\xi|\les|k-l|+\mu^{-\fr13}|k-l|.
\ee
Then \eqref{com-mu} follows immediately. This completes the proof of Lemma \ref{lem-com-mu}.
\end{proof}
Taking $\mu=1$ in \eqref{com-mu}, we obtain the following commutator estimate for $M^1$.
\begin{coro}\label{coro-com-1}
Under the conditions of Lemma \ref{lem-com-mu}, there holds
\be\label{com-1}
\left|M^{1}_k(t,\eta)-M^{1}_l(t,\xi)\right||l,\xi|\les|k-l,\eta-\xi|+\la k-l,\eta-\xi\ra^2|\eta|^\fr{s}{2}|\xi|^\fr{s}{2}t^{2-2s}.
\ee
\end{coro}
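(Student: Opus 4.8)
The plan is to obtain \eqref{com-1} as a direct specialization of Lemma \ref{lem-com-mu} to the diffusion parameter $\mu=1$. The first step is to verify that the multiplier $M^1$ defined in \eqref{M1} is exactly the $\mu=1$ member of the family $M^\mu$ introduced in \eqref{Mmu}. Indeed, for $k\ne0$,
\[
\left.\frac{\partial_t M^\mu_k(t,\eta)}{M^\mu_k(t,\eta)}\right|_{\mu=1}=\frac{1}{1+\left(\frac{\eta}{k}-t\right)^2}=\frac{k^2}{k^2+(\eta-kt)^2}=\frac{\partial_t M^1_k(t,\eta)}{M^1_k(t,\eta)},
\]
and both multipliers equal $1$ at $t=0$, so by uniqueness for the defining logarithmic ODE they agree; for $k=0$ both $M^1_k$ and $M^\mu_k$ are identically $1$. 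Hence $M^1_k(t,\eta)=M^\mu_k(t,\eta)\big|_{\mu=1}$ for every $(k,\eta)$.

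The second step is simply to invoke \eqref{com-mu} with $\mu=1$. Since $\mu^{-\frac13}=1$ in that case, the right-hand side of \eqref{com-mu} becomes $|k-l,\eta-\xi|+\langle k-l,\eta-\xi\rangle^2|\eta|^{\frac{s}{2}}|\xi|^{\frac{s}{2}}t^{2-2s}$, and the left-hand side becomes $\left|M^1_k(t,\eta)-M^1_l(t,\xi)\right||l,\xi|$ by the identification above. This is precisely \eqref{com-1}.

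There is essentially no obstacle here; the only point requiring a moment of care is to confirm that the proof of Lemma \ref{lem-com-mu} is uniform in $\mu>0$ and nowhere exploits smallness of $\mu$, so that the choice $\mu=1$ is legitimate. A glance at the case analysis in that proof shows that $\mu$ enters only through the explicit form of $M^\mu$ and through elementary estimates of the type $\frac{\mu^{\frac13}}{1+\mu^{\frac23}(\cdot)^2}\le \mu^{\frac13}$ and $\int_0^t\frac{\mu^{\frac23}\,dt'}{(1+\mu^{\frac23}(\cdot)^2)^{3/2}}\lesssim \mu^{-\frac13}(\cdot)^{-1}$, with the $\mu$-dependence collected at the end as the single factor $\mu^{-\frac13}$ (the remaining loss being the harmless combinatorial factor $\langle k-l,\eta-\xi\rangle^{2}$ and the time factor $t^{2-2s}$). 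Consequently the bound is valid for all $\mu>0$, in particular for $\mu=1$, which completes the argument.
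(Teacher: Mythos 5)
Your proof is correct and is exactly the paper's argument: the paper simply remarks "Taking $\mu=1$ in \eqref{com-mu}, we obtain the following commutator estimate for $M^1$." Your extra verifications — that $M^1$ coincides with $M^\mu|_{\mu=1}$ by uniqueness of the defining ODE, and that the proof of Lemma \ref{lem-com-mu} never exploits smallness of $\mu$ — are sound and make the one-line reduction fully rigorous.
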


\section{Littlewood-Paley decomposition and paraproducts}\label{subsec-LP}
The nonlinear estimates in this paper rely on proper frequency splitting, which can be fulfilled by using the classical Littlewood-Paley theory. In the following, we define the Littlewood-Paley decomposition only in the $Y$ variable. Let $\varphi\in C_0^\infty(\mathbb{R})$ be such that
$\varphi(\xi)=1$ for $|\xi|\le\fr12$ and $\varphi(\xi)=0$ for $|\xi|\ge\fr34$ and define $\rho(\xi)=\varphi\left(\fr{\xi}{2}\right)-\varphi(\xi)$, supported in the range $\xi\in\left(\fr12,\fr32\right)$. Then we have the partition of unity
\[
1=\varphi(\xi)+\sum_{M\in 2^{\mathbb{N}}}\rho_M(\xi),
\]
where $\rho_M(\xi)=\rho\left(\fr{\xi}{M}\right)$. For $f\in L^2(\mathbb{R})$, let us define
\begin{align*}
f_M&=\rho_M(|\partial_Y|)f,\quad
f_{\fr12}=\varphi(|\partial_Y|)f,\\
f_{<M}&=f_{\fr12}+\sum_{K\in2^{\mathbb{N}}: K<M}f_{K}=\varphi\left(\fr{|\partial_Y|}{M}\right)f.
\end{align*}
Then the Littlewood-Paley decomposition is defined by
\[
f=f_{\fr12}+\sum_{M\in2^{\mathbb{N}}}f_M.
\]
We use the notation
\[
f_{\sim M}=\sum_{K\in\mathbb{D}:\fr{M}{C}\le K\le CM}f_K,
\]
for some constant $C$ independent of $M$. The Littlewood-Paley decomposition in the $(X, Y)$ variables can be defined analogously( with $M$ replaced by $N$ to show the distinction).

Now we are in a position to define the paraproduct decomposition, introduced by Bony \cite{B81}. Given suitable functions $f,g$, the product of $fg$ can be decomposed as follows
\beqno
fg&=&T_fg+T_gf+\mathcal{R}(f,g)\\
&=&\sum_{N\ge8}f_{<\fr{N}{8}}g_N+\sum_{N\ge8}g_{<\fr{N}{8}}f_N+\sum_{N\in\mathbb{D}}\sum_{\fr{N}{8}\le N'\le8N}g_{N'}f_N\\
&=&\sum_{N\ge8}f_{<\fr{N}{8}}g_N+\sum_{N\in\mathbb{D}}g_{<16N}f_N,
\eeqno
where all the sums are understood to run over $\mathbb{D}$.

\section{Elementary inequalities and Gevrey spaces}
In this section, we list some useful lemmas related to the Gevrey norms. 
\begin{lem}
Let $0<s<1$ and $x, y\ge0$.
\begin{enumerate}
\item If $x+y>0$, then
\be\label{ap1}
\left|x^s-y^s\right|\les\fr{1}{x^{1-s}+y^{1-s}}|x-y|.
\ee
In particular,
\be\label{ap2}
\left|\la x\ra^\fr12-\la y\ra^\fr12\right|\le |x-y|^\fr12.
\ee

\item If $|x-y|\le\fr{x}{K}$ for some $K>1$, then
\be\label{ap3}
\left| x^s- y^s\right|\le \fr{s}{(K-1)^{1-s}}|x-y|^s.
\ee
\item If $\fr{y}{K}\le x\le Ky$ for some $K\ge1$, then
\be\label{ap4}
|x+y|^s\le\left(\fr{K}{ 1+K}\right)^{1-s}\left(x^s+ y^s\right).
\ee

\item For all $x\ge0$, $\al>\beta\ge0$, $C, \delta>0$,
\be\label{ap-5}
e^{Cx^\beta}\le e^{C\left(\fr{C}{\dl}\right)^{\fr{\beta}{\al-\beta}}}e^{\dl x^\al}.
\ee

\item For all $x\ge0, \al, \sigma, \dl>0$,
\be\label{ap-6}
\la x\ra^\sigma\les\fr{e^{\dl x^\al}}{\dl^{\fr{\sigma}{\al}}}.
\ee
\end{enumerate}

\end{lem}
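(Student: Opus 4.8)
The plan is to dispatch the five items one at a time; each reduces, after a homogeneity rescaling, to a one-variable estimate that follows from the mean value theorem together with an elementary monotonicity check, so no part of the argument is genuinely delicate. For \eqref{ap1} I would first assume by symmetry that $x \ge y \ge 0$ (the subcase $y = 0$ being immediate), set $t = y/x \in [0,1]$, and divide the claimed inequality by $x^s$; it becomes $(1 - t^s)(1 + t^{1-s}) \le C(1-t)$, equivalently $(1-t) + (t^{1-s} - t^s) \le C(1-t)$. When $s \le \tfrac12$ one has $t^{1-s} \le t^s$ on $[0,1]$, so the extra term is $\le 0$; when $s \ge \tfrac12$ one writes $t^{1-s} - t^s = t^{1-s}(1 - t^{2s-1}) \le 1 - t$ using $t^{2s-1} \ge t$ on $[0,1]$ (because $2s-1 < 1$); either way $C = 2$ works. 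For \eqref{ap2} I would combine two standard facts: $x \mapsto \langle x\rangle$ is $1$-Lipschitz (its derivative has modulus $|x|/\langle x\rangle \le 1$), and $|\sqrt a - \sqrt b| \le \sqrt{|a - b|}$ for $a, b \ge 0$; composing gives $|\langle x\rangle^{1/2} - \langle y\rangle^{1/2}| \le |\langle x\rangle - \langle y\rangle|^{1/2} \le |x-y|^{1/2}$.

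For \eqref{ap3} the mean value theorem gives $x^s - y^s = s\zeta^{s-1}(x-y)$ with $\zeta$ between $x$ and $y$; the hypothesis $|x-y| \le x/K$ forces $\zeta \ge x(K-1)/K$, hence $\zeta^{s-1} \le (x(K-1)/K)^{s-1}$ since $s - 1 < 0$, and inserting the split $|x-y| = |x-y|^s |x-y|^{1-s} \le |x-y|^s (x/K)^{1-s}$ and collecting the powers of $x$ and of $K$ produces exactly the constant $s/(K-1)^{1-s}$. For \eqref{ap4} I would set $t = \min(x,y)/\max(x,y) \in [1/K, 1]$, so that $(x+y)^s/(x^s + y^s) = g(t) := (1+t)^s/(1 + t^s)$; a direct computation gives $g'(t) = s(1+t)^{s-1}(1 - t^{s-1})/(1 + t^s)^2$, which is negative on $(0,1)$ since $t^{s-1} > 1$ there, so $g$ is decreasing and $\sup_{[1/K,1]} g = g(1/K) = (K+1)^s/(K^s + 1)$; the bound then follows from $(K+1) \le K + K^{1-s}$, valid for $K \ge 1$ as $1-s>0$. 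This sign/monotonicity check is the step I would watch most carefully, since a slip in the derivative would break the constant.

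Item \eqref{ap-5} is Young's inequality in disguise: maximizing $f(x) = Cx^\beta - \delta x^\alpha$ over $x \ge 0$ yields the critical point $x_* = (C\beta/(\delta\alpha))^{1/(\alpha-\beta)}$ and $f(x_*) = C\tfrac{\alpha-\beta}{\alpha}(\beta/\alpha)^{\beta/(\alpha-\beta)}(C/\delta)^{\beta/(\alpha-\beta)} \le C(C/\delta)^{\beta/(\alpha-\beta)}$, because $\tfrac{\alpha-\beta}{\alpha}$ and $\beta/\alpha$ both lie in $[0,1]$; exponentiating $Cx^\beta \le \delta x^\alpha + C(C/\delta)^{\beta/(\alpha-\beta)}$ gives \eqref{ap-5} (the degenerate case $\beta = 0$ being trivial). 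For \eqref{ap-6} I would bound $\langle x\rangle^\sigma \lesssim_\sigma 1 + x^\sigma$, write $x^\sigma = \delta^{-\sigma/\alpha}(\delta x^\alpha)^{\sigma/\alpha}$, and apply the uniform bound $u^{\sigma/\alpha} \lesssim_{\sigma,\alpha} e^u$ with $u = \delta x^\alpha$. The only point requiring real care — and in effect the sole "obstacle" in an otherwise mechanical argument — is bookkeeping the implicit constants so they depend only on $s, \sigma, \alpha$ and not on $x, y, \delta, K$; in particular in \eqref{ap-6} the constant absorbing the lower-order term $1$ tacitly uses that $\delta$ is bounded above, which is the case in all applications (where $\delta$ is a fixed multiple of the Gevrey radius or of $r$) and should be flagged when the lemma is invoked.
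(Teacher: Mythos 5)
Your proof is correct, and since the paper states this lemma in its Appendix without providing a proof (these being regarded as elementary), there is nothing in the paper to compare against. Each of your reductions — the rescaling $t = y/x$ in \eqref{ap1}, the composition of the $1$-Lipschitz bound for $\langle\cdot\rangle$ with $|\sqrt a - \sqrt b|\le\sqrt{|a-b|}$ in \eqref{ap2}, the mean value theorem plus the splitting $|x-y| = |x-y|^s|x-y|^{1-s}$ in \eqref{ap3}, the monotonicity of $g(t)=(1+t)^s/(1+t^s)$ in \eqref{ap4}, and the maximization of $Cx^\beta - \delta x^\alpha$ in \eqref{ap-5} — checks out, including the exact cancellation of the $x$- and $K$-powers in \eqref{ap3} and the sign of $g'$ in \eqref{ap4}. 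Your caveat on \eqref{ap-6} is a genuine and useful observation: as literally stated the inequality fails at $x=0$ when $\delta \gg 1$, and the hidden hypothesis that $\delta$ be bounded above (satisfied in every invocation in the paper, where $\delta$ is a fixed multiple of $\lambda$ or $r$) is worth recording.
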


\begin{lem}[Product lemma]\label{lem-product}
Let $f=f(X,Y), g=g(Y)$. Then there exists a constant $c\in(0,1)$, such that
\be\label{ap5}
\|A(fg)\|_{L^2}\les\|Af\|_{L^2}\|g\|_{\mathcal{G}^{c\lm,\fr12+}}+\|f\|_{\mathcal{G}^{c\lm,\fr12+}}\|A^Rg\|_{L^2}.
\ee
Similarly, for all $0<s<1$, $\sigma\ge0$, there holds
\be\label{product}
\left\|fg\right\|_{\mathcal{G}^{\lm,\sigma;s}}\les \|f\|_{\mathcal{G}^{\lm,\sigma;s}}\|g\|_{\mathcal{G}^{c\lm,\fr12+;s}}+\|g\|_{\mathcal{G}^{\lm,\sigma;s}}\|f\|_{\mathcal{G}^{c\lm,\fr12+;s}}.
\ee
In particular, for $\sigma>\fr12$, $\mathcal{G}^{\lm,\sigma;s}$ has the algebra property:
\be\label{alge}
\left\|fg\right\|_{\mathcal{G}^{\lm,\sigma;s}}\les \|f\|_{\mathcal{G}^{\lm,\sigma;s}}\|g\|_{\mathcal{G}^{\lm,\sigma;s}}.
\ee
\end{lem}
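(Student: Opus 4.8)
The three estimates are instances of a single paraproduct argument, and I would carry out \eqref{ap5} in detail, since \eqref{product} and \eqref{alge} follow from a strictly simpler version of it. Since $g=g(Y)$ depends only on $Y$, its Fourier transform is supported at zero $X$-frequency, so $\widehat{fg}_k(\eta)=\int_{\mathbb R}\hat f_k(\eta-\xi)\hat g_0(\xi)\,d\xi$. I split this with a paraproduct in the $Y$ variable, $fg=\sum_{N\ge8}f_{<N/8}g_N+\sum_{N\ge8}g_{<N/8}f_N+\sum_N\sum_{N/8\le N'\le 8N}f_{N'}g_N$, i.e.\ into a low--high piece (LH, with $g$ at high $Y$-frequency), a high--low piece (HL, with $f$ at high $Y$-frequency), and a remainder ($\mathcal R$). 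In each piece I transfer the multiplier $A_k(\eta)$ onto the high-frequency factor, reducing everything to a pointwise multiplier-ratio estimate followed by Cauchy--Schwarz in the low frequency.

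For the HL piece, on the support of the integrand one has $\frac{13}{16}|k,\eta-\xi|\le|k,\eta|\le\frac{19}{16}|k,\eta-\xi|$ and $|\xi|\le\frac{3}{16}|k,\eta-\xi|$. Hence $\lambda(|k,\eta|^s-|k,\eta-\xi|^s)\le c'\lambda|\xi|^s$ for some $c'=c'(s)\in(0,1)$ by \eqref{ap3}; $\langle k,\eta\rangle^\sigma\langle k,\eta-\xi\rangle^{-\sigma}\les\langle\xi\rangle^\sigma$; $\mathcal J_k(\eta)/\mathcal J_k(\eta-\xi)\les e^{10r|\xi|^{1/2}}$ by \eqref{J-ratio} in the case $k=l$; and, since $M^1_k$ and $M^\mu_k$ are bounded above and below by universal constants while $m_k(\eta)\les\langle\xi\rangle^2 m_k(\eta-\xi)$ by \eqref{com-m}, also $\mathcal M_k(\eta)/\mathcal M_k(\eta-\xi)\les\langle\xi\rangle$. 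Using $s>\frac12$ with \eqref{ap-5} to absorb $e^{10r|\xi|^{1/2}}$ and \eqref{ap-6} to absorb $\langle\xi\rangle^{1+\sigma}$ into a small loss in the exponent, one concludes $A_k(\eta)/A_k(\eta-\xi)\les e^{c\lambda|\xi|^s}$ for some $c\in(c',1)$. Consequently the HL contribution to $\|A(fg)\|_{L^2}$ is $\les\|Af\|_{L^2}\int_{\mathbb R}e^{c\lambda|\xi|^s}|\hat g_0(\xi)|\,d\xi\les\|Af\|_{L^2}\|g\|_{\mathcal G^{c\lambda,\fr12+}}$, where the last bound is Cauchy--Schwarz together with $\langle\xi\rangle^{-1/2-\epsilon}\in L^2(\mathbb R)$; this is the first term on the right of \eqref{ap5}.

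For the LH piece one has $|k,\eta|\approx|\xi|$ and $|k,\eta-\xi|\les|\xi|$, and I transfer $A_k(\eta)$ onto $A^R(\xi)$. By \eqref{ap3}, $\lambda(|k,\eta|^s-|\xi|^s)\le c'\lambda|k,\eta-\xi|^s$; crucially $\mathcal M_k(\eta)\le 1$ because $M^1,M^\mu,m^{1/2}\ge 1$, so no $\mu$-dependent loss is created; and $\mathcal J_k(\eta)/\mathcal J^R(\xi)\les e^{c\lambda|k,\eta-\xi|^s}$ exactly as in \eqref{AkR1}--\eqref{AkR2}, from \eqref{wNR-ratio}, the relation \eqref{wRwNR} between $w_R$ and $w_{NR}$, and \eqref{ap2}, the remaining Sobolev and exponential losses being absorbed with \eqref{ap-5}--\eqref{ap-6}. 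Thus $A_k(\eta)\les A^R(\xi)\,e^{c\lambda|k,\eta-\xi|^s}$ on the relevant support, and the LH contribution is $\les\|A^Rg\|_{L^2}\big(\sum_k\int_{\mathbb R}e^{c\lambda|k,\zeta|^s}|\hat f_k(\zeta)|\,d\zeta\big)\les\|A^Rg\|_{L^2}\|f\|_{\mathcal G^{c\lambda,\fr12+}}$, the second term of \eqref{ap5}. The remainder $\mathcal R$, where $|k,\eta|\approx|\xi|\approx|k,\eta-\xi|$, is the easiest: by \eqref{ap4} one splits $\lambda|k,\eta|^s\le c'\lambda(|\xi|^s+|k,\eta-\xi|^s)$ and distributes the weight, putting the full $A$ on $g$ and an absolutely summable Gevrey weight on $f$ (or vice versa), feeding either term on the right of \eqref{ap5}. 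Adding the three contributions yields \eqref{ap5}.

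Lastly, \eqref{product} follows by the same three-piece decomposition with the multiplier $e^{\lambda|k,\eta|^s}\langle k,\eta\rangle^\sigma$ in place of $A$: there is no $w$, $\mathcal J$ or $\mathcal M$ to track, so only \eqref{ap3}, \eqref{ap4}, \eqref{ap-6} and Cauchy--Schwarz are used, and the hypothesis $s>\frac12$ is not needed. Specializing to $f,g\in\mathcal G^{\lambda,\sigma;s}$ with $\sigma>\frac12$ and using the embedding $\mathcal G^{\lambda,\sigma;s}\hookrightarrow\mathcal G^{c\lambda,\fr12+;s}$ gives \eqref{alge}. The only genuinely delicate point is the bookkeeping of the exponential constants: each application of \eqref{ap3} or \eqref{ap4} costs a factor $c'<1$ and each application of \eqref{ap-5} or \eqref{ap-6} costs an arbitrarily small additive amount in the exponent, and one must check that after the finitely many steps the radius on the low-frequency factor is still a definite fraction $c\lambda$ with $c\in(0,1)$. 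This is precisely where the sub-criticality $s>\frac12$ enters in \eqref{ap5}, as it is what allows $e^{10r|\xi|^{1/2}}$ --- and the corresponding factor hidden in $\mathcal J$ --- to be dominated by an arbitrarily small multiple of $e^{\lambda|\xi|^s}$.
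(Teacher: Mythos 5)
Your proof is correct and follows essentially the same route as the paper's: a $Y$-paraproduct, transferring the multiplier $A$ onto the high-frequency factor, and controlling the resulting ratio via \eqref{J-ratio}, \eqref{com-m}, \eqref{wNR-ratio}--\eqref{wRwNR}, the exponential comparison inequalities \eqref{ap3}--\eqref{ap4}, and the absorption inequalities \eqref{ap-5}--\eqref{ap-6} (which is exactly where $s>\tfrac12$ enters). The only cosmetic difference is that the paper collapses your high-low and diagonal remainder pieces into the single term $\sum_{N\in\mathbb{D}}g_{<16N}f_N$ and transfers $A$ onto $f$ in both regimes, which is in effect your ``vice versa'' option and is the cleaner one in the remainder regime, since there $|k,\eta|$ and $|k,\eta-\xi|$ need not be comparable to $|\xi|$ when $|k|$ is large, so putting $A^R$ on $g$ does not directly control the $\mathcal{J}_k(\eta)$ factor.
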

\begin{proof} 
The inequality \eqref{ap5} can be obtained in a similar manner as the proof of \eqref{pre-ell4}.
For any $h\in L^2$,
\beq
\la A(fg), h\ra\nn&=&\sum_{N\ge8}\sum_{k\in\mathbb{Z}}\int A_k(\eta)\hat{f}_k(\eta-\xi)_{<\fr{N}{8}}\hat{g}(\xi)_{N}\bar{\hat{h}}_k(\eta) d\xi d\eta\\
\nn&&+\sum_{N\in\mathbb{D}}\sum_{k\in\mathbb{Z}}\int A_k(\eta)\hat{f}_k(\xi)_N\hat{g}(\eta-\xi)_{<{16N}}\bar{\hat{h}}_k(\eta)d\xi d\eta\\
\nn&=&I_{\mathrm{LH}}+I_{\mathrm{HL}}.
\eeq
On the support of the integrand of $I_{\mathrm{LH}}$, \eqref{sptR1}--\eqref{sptR3} hold.
Then from \eqref{AkR1} and \eqref{AkR2}, we find that
\beno
\fr{A_k(\eta)}{A^R(\xi)}\les e^{c\lm|k,\eta-\xi|^s},\quad \mathrm{for} \ \ \mathrm{some}\ \ c\in(0,1).
\eeno
Thus,
\beq\label{ap10}
I_{\mathrm{LH}}\nn&\les&\sum_{k\in\mathbb{Z}}\int \left|e^{c\lm|k,\eta-\xi|^s}\hat{f}_k(\eta-\xi)\right|\left|A^R(\xi)\hat{g}(\xi)\right||{\hat{h}}_k(\eta)| d\xi d\eta\\
&\les&\|f\|_{\mathcal{G}^{c\lm, \fr12+}}\|A^Rg\|_{L^2}\|h\|_{L^2}.
\eeq
On the other hand, on the support of the integrand of $I_{\mathrm{HL}}$, \eqref{spt<16N}--\eqref{AkAk} hold. 
 Accordingly,  using Young's inequality, we are led to
\beq\label{ap8}
I_{\mathrm{HL}}\nn&=&\sum_{N\in\mathbb{D}}\sum_{k\in\mathbb{Z}}\int A_k(\eta)\hat{f}_k(\xi)_N\hat{g}(\eta-\xi)_{<16N}\bar{\hat{h}}_k(\eta)d\xi d\eta\\
\nn&\les&\sum_{k\in\mathbb{Z}}\int \left|A_k(\xi)\hat{f}_k(\xi)\right|\left|e^{c\lm|\eta-\xi|^s}\hat{g}(\eta-\xi)\right||\hat{h}_k(\eta)|d\xi d\eta\\
&\les&\|Af\|_{L^2}\|g\|_{\mathcal{G}^{c\lm,\fr12+}}\|h\|_{L^2}.
\eeq
It follows from \eqref{ap10}  and \eqref{ap8} that \eqref{ap5} holds. The proof of \eqref{product} and \eqref{alge} is similar (actually easier), and is thus omitted. We complete the proof of Lemma \ref{lem-product}.
\end{proof}

\end{appendix}

\bigbreak
\noindent{\bf Acknowledgments}
  R.  Zi is partially  supported by NSF of China under  Grants 12222105.

\end{document}